\documentclass[10pt, a4paper]{amsart}

\usepackage{amssymb,amsfonts}
\usepackage[all,arc]{xy}
\usepackage{enumerate}
\usepackage{mathrsfs}
\usepackage{tikz-cd}
\usepackage{enumitem}
\usepackage{empheq}
\usepackage{kotex}
\usepackage{ifpdf}
\usepackage{mathtools}
\usepackage{pst-node, pst-plot, pstricks}
\usepackage{mathrsfs}
\usepackage{amsmath}
\usepackage{epsfig}
\usepackage{amscd}
\usepackage{graphicx, color}
\usepackage{soul}
\graphicspath{ {./images/} }

\def\E{\ifmmode{\mathbb E}\else{$\mathbb E$}\fi} 
\def\N{\ifmmode{\mathbb N}\else{$\mathbb N$}\fi} 
\def\R{\ifmmode{\mathbb R}\else{$\mathbb R$}\fi} 
\def\Q{\ifmmode{\mathbb Q}\else{$\mathbb Q$}\fi} 
\def\C{\ifmmode{\mathbb C}\else{$\mathbb C$}\fi} 
\def\H{\ifmmode{\mathbb H}\else{$\mathbb H$}\fi} 
\def\Z{\ifmmode{\mathbb Z}\else{$\mathbb Z$}\fi} 
\def\P{\ifmmode{\mathbb P}\else{$\mathbb P$}\fi} 
\def\T{\ifmmode{\mathbb T}\else{$\mathbb T$}\fi} 
\def\SS{\ifmmode{\mathbb S}\else{$\mathbb S$}\fi} 
\def\DD{\ifmmode{\mathbb D}\else{$\mathbb D$}\fi} 

\DeclareSymbolFont{yhlargesymbols}{OMX}{yhex}{m}{n}

\DeclareMathAccent{\widetriangle}{\mathord}{yhlargesymbols}{"E6}

\def\E{\ifmmode{\mathbb E}\else{$\mathbb E$}\fi} 
\def\N{\ifmmode{\mathbb N}\else{$\mathbb N$}\fi} 
\def\R{\ifmmode{\mathbb R}\else{$\mathbb R$}\fi} 
\def\Q{\ifmmode{\mathbb Q}\else{$\mathbb Q$}\fi} 
\def\C{\ifmmode{\mathbb C}\else{$\mathbb C$}\fi} 
\def\H{\ifmmode{\mathbb H}\else{$\mathbb H$}\fi} 
\def\Z{\ifmmode{\mathbb Z}\else{$\mathbb Z$}\fi} 
\def\P{\ifmmode{\mathbb P}\else{$\mathbb P$}\fi} 
\def\T{\ifmmode{\mathbb T}\else{$\mathbb T$}\fi} 
\def\SS{\ifmmode{\mathbb S}\else{$\mathbb S$}\fi} 
\def\DD{\ifmmode{\mathbb D}\else{$\mathbb D$}\fi} 

\newcommand{\ben}{\begin{enumerate}}
\newcommand{\een}{\end{enumerate}}
\newcommand{\be}{\begin{equation}}
\newcommand{\ee}{\end{equation}}
\newcommand{\bea}{\begin{eqnarray}}
\newcommand{\eea}{\end{eqnarray}}
\newcommand{\beastar}{\begin{eqnarray*}}
\newcommand{\eeastar}{\end{eqnarray*}}
\newcommand{\bc}{\begin{center}}
\newcommand{\ec}{\end{center}}

\theoremstyle{theorem}
\newtheorem{thm}{Theorem}[section]
\newtheorem{cor}[thm]{Corollary}
\newtheorem{lem}[thm]{Lemma}
\newtheorem{prop}[thm]{Proposition}
\newtheorem{'thm'}[thm]{'Theorem'}

\theoremstyle{definition}
\newtheorem{defn}[thm]{Definition}
\newtheorem{rem}[thm]{Remark}

\newtheorem{exam}[thm]{Example}

\newtheorem{proof-sketch}[thm]{Proof-Sketch}

\newtheorem{lem-defn}[thm]{Lemma-Definition}
\newtheorem{prop-defn}[thm]{Proposition-Definition}
\newtheorem{thm-defn}[thm]{Theorem-Definition}
\newtheorem{exam-defn}[thm]{Example-Definition}

\newtheorem{notation}[thm]{\rm\bfseries{Notation}}

\newtheorem*{thm*}{Theorem}

\numberwithin{equation}{section}

\def\R{{\mathbb R}}

\def\E{{\mathbb E}}
\def\Z{{\mathbb Z}}
\def\C{{\mathbb C}}
\def\R{{\mathbb R}}
\def\P{{\mathbb P}}

\def\N{{\mathbb N}}

\def\11{{\mathbb I}}

\def\sgn{{\text{\rm sgn}}}

\def\C{\mathbb{C}}
\def\Z{\mathbb{Z}}

\def\T{\mathbb{T}}

\def\Q{\mathbb{Q}}

\def\E{\ifmmode{\mathbb E}\else{$\mathbb E$}\fi} 
\def\N{\ifmmode{\mathbb N}\else{$\mathbb N$}\fi} 
\def\R{\ifmmode{\mathbb R}\else{$\mathbb R$}\fi} 
\def\Q{\ifmmode{\mathbb Q}\else{$\mathbb Q$}\fi} 
\def\C{\ifmmode{\mathbb C}\else{$\mathbb C$}\fi} 
\def\H{\ifmmode{\mathbb H}\else{$\mathbb H$}\fi} 
\def\Z{\ifmmode{\mathbb Z}\else{$\mathbb Z$}\fi} 
\def\P{\ifmmode{\mathbb P}\else{$\mathbb P$}\fi} 
\def\SS{\ifmmode{\mathbb S}\else{$\mathbb S$}\fi} 
\def\DD{\ifmmode{\mathbb D}\else{$\mathbb D$}\fi} 

\def\R{{\mathbb R}}

\def\E{{\mathbb E}}
\def\Z{{\mathbb Z}}
\def\C{{\mathbb C}}
\def\R{{\mathbb R}}

\def\N{{\mathbb N}}

\def\darr#1{\raise1.5ex\hbox{$\leftrightarrow$}
\mkern-16.5mu #1}

\def\roughly#1{\raise.3ex\hbox{$#1$\kern-.75em
\lower1ex\hbox{$\sim$}}}

\def\opname#1{\mathop{\kern0pt{\rm #1}}\nolimits}

\def\dim{\opname{dim}}

\def\Incl{\operatorname{Incl}}
\def\Eval{\operatorname{Eval}}

\begin{document}

\quad \vskip1.375truein

\bibliographystyle{plain}

\title[Kuranishi spaces in a category of fibrant objects]
{Kuranishi spaces in a category of fibrant objects and their homotopy fiber products}

\author{Taesu Kim}
\address{Department of Mathematics Education, Silla University, 140 Baegyang-daero, Sasang-Gu, Busan, Korea 46958}
\email{kimtaesu@silla.ac.kr}


\keywords{Kuranishi spaces, Category of fibrant objects, Homotopy categories, Homotopy fiber products}
\subjclass[2020]{53D35, 55U35}

\maketitle

\begin{abstract}
We study several fundamental properties of the category $\mathbf{Kur}$ introduced in \cite{Kim1}. By defining fibrations and weak equivalences, we show that morphisms with manifold targets admit homotopy fiber products; this is then applied to obtain a notion of generalized intersections of two submanifolds. A  definition of the Weinstein category in the context of Kuranishi spaces is also presented. Finally, we establish that $\mathbf{Kur}$ forms a category of fibrant objects in the sense of Brown \cite{Brown}, and discuss its homotopy category $\operatorname{Ho}(\mathbf{Kur})$.
\end{abstract}

\tableofcontents

\section{Introduction}

The study of Kuranishi spaces plays a central role in modern symplectic geometry, providing a framework for constructing virtual fundamental classes when transversality fails. However, their lack of canonical and categorical structures has long been pointed out. To address this issue, \cite{Kim1} proposed a categorical approach by introducing the category $\mathbf{Kur}$ of a modified version of Kuranishi spaces\footnote{Throughout this paper, for simplicity of presentation, we use the term \textit{Kuranishi space} to mean the $L_{\infty}$-\textit{Kuranishi space} defined in \cite{Kim1}.} equipped with $L_{\infty}[1]$-algebraic structures. It is also shown in \cite{Kim1} that smooth manifolds embed into $\mathbf{Kur}$ as Kuranishi spaces with zero obstruction bundles:
\begin{equation}\label{bhbnn}
\mathscr{F} : \mathbf{Man} \hookrightarrow \mathbf{Kur}.
\end{equation}

In this paper, we investigate the homotopy-theoretic aspects of $\mathbf{Kur}$ and establish two foundational results: first, it admits homotopy fiber products for appropriately defined fibrations; second, it carries the structure of a \textit{category of fibrant objects} in the sense of Brown \cite{Brown}. Specifically, we formulate the notions of fibrations and weak equivalences by building upon the technical results of \cite{Kim1}. As a consequence, we can explicitly describe the homotopy category $\mathrm{Ho}(\mathbf{Kur})$.

Recall that morphisms in $\mathbf{Kur}$ are defined by taking the quotient of \textit{pre-morphisms} by an equivalence relation. A technical advantage of this quotient construction is that the surjectivity and submersivity of the base maps can always be achieved by replacing a representative with one of sufficiently high dimension and extending it suitably within its equivalence class. This observation leads to several key applications, including proper definitions of fibrations and weak equivalences that satisfy the axioms for a category of fibrant objects.

We then illustrate a typical application of this homotopical language: distinct presentations of the same geometric object can sometimes be identified via weak equivalences, thereby defining isomorphic objects in the homotopy category. To see this, let $\widehat{\mathcal{U}}^{\mathrm{man}}$ denote the atlas consisting of the Kuranishi charts from the image of $\mathscr{F}$ in (\ref{bhbnn}). At the same time, any smooth manifold $M$ can be presented as the underlying space of an atlas, say $\widehat{\mathcal{U}}^{\mathrm{tr}},$ with transversal Kuranishi sections. We then obtain a canonical weak equivalence
\begin{equation}\label{gshg}
\left(M, \left[\widehat{\mathcal{U}}^{\mathrm{man}}\right]\right) \xrightarrow{\exists \mathrm{w. e.}} \left(M, \left[\widehat{\mathcal{U}}^{\mathrm{tr}}\right]\right),
\end{equation}
where $[\,\cdots\,]$ denotes the equivalence class of an atlas (cf. Definition \ref{eqats}):

\subsection*{Fibrations and weak equivalences}

\ \

Before proceeding to the statements of the main results, we first recall some preliminary notions, which are essentially reformulations of results from \cite{Kim1}.

Let $E \to U$ and $E' \to U'$ be vector bundles with sections $s$ and $s'$, respectively. Let $\widetilde{\phi} \colon E \to E'$ be a bundle embedding covering a smooth map $\phi \colon U \to U'$ and satisfying
\[
   \pi_{\mathrm{Im} \widetilde{\phi}}(s' \circ \phi) = \widetilde{\phi} \circ s,
\]
where $\pi_{\operatorname{Im} \widetilde{\phi}}(s' \circ \phi)$ denotes the projection of $s' \circ \phi$ onto the component $\operatorname{Im} \widetilde{\phi}$ in $\phi^* E'$.

In \cite{Kim1}, we assigned an $L_{\infty}[1]$-algebra and an open neighborhood $W_x \subset U$ to each zero $x$ of a chart $(U,E,s,\Gamma,\psi)$. As a component, this algebra contains $\bigwedge\nolimits^{-\bullet}\Gamma(E^*\vert{}_{W_x})$, the Koszul complex whose differential is given by the signed contraction $\iota_{s\vert{}_{W_x}}$ with trivial higher operations.

Then we have:
\begin{thm}\label{bwggg}
\begin{enumerate}[label = (\roman*)]
\item The bundle embedding $\widetilde{\phi}$ induces a chain map, and hence an $L_\infty[1]$-morphism
\[
\widehat{\eta}(\widetilde{\phi})_x : \bigwedge\nolimits^{-\bullet}\Gamma\left(E^*|_{W_x}\right) \rightarrow \bigwedge\nolimits^{-\bullet}\Gamma\left(E^{'*}|_{W'_{\phi(x)}}\right).
\]
\item Suppose, in addition, that the data in (i) satisfies the compatibility condition for the sections
\[
    s' \circ \phi = \widetilde{\phi} \circ s,
\]
and the tangent bundle condition; that is $ds'$ induces an isomorphism
\begin{equation}\label{tancond}
\left[ds'_{\phi(x)}\right] : \frac{T_{\phi(x)}U'}{\phi_*(T_xU)} \xrightarrow{\simeq} \frac{E'_{\phi(x)}}{\widetilde{\phi}(E_{x})},
\end{equation}
at each $x \in s^{-1}(0).$ Then $\widehat{\eta}(\widetilde{\phi})_x$ is a quasi-isomorphism.
\end{enumerate}
\end{thm}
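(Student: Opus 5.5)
The plan is to build the chain map by hand from the transpose of $\widetilde{\phi}$, check compatibility with the Koszul differentials, and then reduce the quasi-isomorphism claim to local Koszul homology computations near the zero locus.

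\textbf{Part (i): constructing the chain map.} The transpose of the fibrewise injection $\widetilde{\phi}_y : E_y \to E'_{\phi(y)}$ is a fibrewise surjection $\widetilde{\phi}^{*} : \phi^*E'^{*} \to E^*$. Shrinking $W'_{\phi(x)}$ and $W_x$ if necessary so that $\phi(W_x)\subset W'_{\phi(x)}$, we pull back sections and obtain a degree-preserving map
\[
\widehat{\eta}(\widetilde{\phi})_x : \bigwedge\nolimits^{-\bullet}\Gamma\left(E^{'*}|_{W'_{\phi(x)}}\right) \to \bigwedge\nolimits^{-\bullet}\Gamma\left(E^{*}|_{W_x}\right), \qquad \xi \mapsto \wedge^\bullet\widetilde{\phi}^{*}(\phi^*\xi).
\]
This map is naturally contravariant. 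The direction written in the statement is then obtained either by the paper's convention of identifying the Koszul algebra with its dual complex of polyvector-type sections $\bigwedge \Gamma(E)$, or by using a splitting $\phi^*E' = \operatorname{Im}\widetilde{\phi}\oplus N$. I will fix the convention of \cite{Kim1}.

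The key check is compatibility with the differentials $\iota_s$ and $\iota_{s'}$. For a generator $\xi\in\Gamma(E'^*)$ we have
\[
\iota_s\bigl(\widetilde{\phi}^{*}\phi^*\xi\bigr) = \xi\bigl(\widetilde{\phi}\circ s\bigr) = \xi\bigl(\pi_{\operatorname{Im}\widetilde{\phi}}(s'\circ\phi)\bigr).
\]
This agrees with $\phi^*(\iota_{s'}\xi)$ modulo the complementary component. To kill that discrepancy I will restrict to forms vanishing on the complement $N$, or equivalently compose with the projection $\pi_{\operatorname{Im}\widetilde{\phi}}$. The hypothesis $\pi_{\operatorname{Im}\widetilde{\phi}}(s'\circ\phi)=\widetilde{\phi}\circ s$ is exactly what makes the square commute on generators. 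Since both differentials are derivations and the map is multiplicative, commutation extends to the whole exterior algebra. Because the higher operations are trivial, a chain map is automatically a strict $L_\infty[1]$-morphism with vanishing higher components.

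\textbf{Part (ii): quasi-isomorphism.} Under $s'\circ\phi=\widetilde{\phi}\circ s$, the map is a chain map without any projection. To show it is a quasi-isomorphism I will work locally near $x$, which is legitimate since the $W$'s are small neighbourhoods of zeros.

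First, choose coordinates near $\phi(x)$ in which $\phi$ is locally an embedding $U\hookrightarrow U'$ after composing with a submersion. Next, split $E'=\widetilde{\phi}(E)\oplus N$ with $N\cong T U'/\phi_*TU$ along the image, using the isomorphism (\ref{tancond}). Transversality of the $N$-component $s'_N$ of $s'$ to $\phi(U)$, which is what (\ref{tancond}) gives, means $s'_N$ is a regular sequence cutting out $\phi(U)$ locally. The Koszul complex of $s'$ then factors as
\[
K(s') \simeq K(s'_N)\otimes K(s'_{E}),
\]
and $K(s'_N)$ is a resolution of $C^\infty(\phi(U))$ near $\phi(x)$. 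Consequently $K(s')$ is quasi-isomorphic to $K(s'_E|_{\phi(U)}) = K(s)$, and the map $\widehat{\eta}(\widetilde{\phi})_x$ realises this equivalence.

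\textbf{Main obstacle.} The hard step is the smooth analogue of "a Koszul complex on a regular sequence is a resolution." In the $C^\infty$ setting, exactness of $K(s'_N)$ in negative degrees needs a Hadamard-lemma argument, namely division by coordinate functions in smooth rings. It also needs flatness considerations when tensoring with $K(s'_E)$. I would first reduce to the case where $s'_N$ consists of coordinate functions $y_1,\dots,y_k$ via the implicit function theorem. I would then prove exactness by induction on $k$, using a homotopy built from Hadamard's lemma.

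A further point is that when $\phi$ is not an embedding (e.g.\ $\dim U$ large), (\ref{tancond}) still forces $\phi$ to be an immersion at zeros. Ensuring this requires first shrinking $W_x$ to an open set on which $\phi$ is an embedding.
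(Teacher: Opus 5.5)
\textbf{Gap in part (i): the map in the statement is never constructed.} Your pullback $\xi\mapsto\wedge\widetilde{\phi}^{*}(\phi^{*}\xi)$ goes from $\bigwedge\nolimits^{-\bullet}\Gamma(E^{\prime *}|_{W'_{\phi(x)}})$ to $\bigwedge\nolimits^{-\bullet}\Gamma(E^{*}|_{W_x})$. Appealing to a convention does not reverse it. A splitting $\phi^{*}E'=\operatorname{Im}\widetilde{\phi}\oplus N$ by itself only gives sections of $E^{\prime *}$ over $\phi(U)$, not over the open set $W'_{\phi(x)}\subset U'$. Moreover, in your direction the hypothesis $\pi_{\operatorname{Im}\widetilde{\phi}}(s'\circ\phi)=\widetilde{\phi}\circ s$ does not give a chain map, as you noticed, and your repair (passing to forms annihilating $N$) changes the domain.

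The projected compatibility is designed for the covariant, extension-by-zero map the paper uses:
\begin{itemize}
\item choose a retraction $\pi\colon U'\twoheadrightarrow\phi(U)$; this is where $\phi$ being an embedding is used;
\item let $\widetilde{\phi}(a)$ extend $a\in\Gamma(E^{*})$ to $E'|_{\operatorname{Im}\phi}$ by acting through $\widetilde{\phi}^{-1}\circ\pi_{\operatorname{Im}\widetilde{\phi}}$, i.e.\ by zero on the complement;
\item set $\widehat{\eta}(\widetilde{\phi})_{x,1}(a)=\pi^{*}\widetilde{\phi}(a)$, with vanishing higher components.
\end{itemize}
On degree-one generators, $\widetilde{\phi}(a_i)(s'|_{\operatorname{Im}\phi})=a_i\bigl(\widetilde{\phi}^{-1}\pi_{\operatorname{Im}\widetilde{\phi}}(s'\circ\phi)\bigr)=a_i(s)$, which is exactly the weak hypothesis. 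The derivation property of the contractions then extends $\iota_{s'}\circ\widehat{\eta}=\widehat{\eta}\circ\iota_s$ to all wedge products (Lemma~\ref{pokhgjb}). Neither the extension by zero nor the retraction appears in your plan. One caveat: the paper's check defines $\pi^{*}$ by first restricting the argument to $\operatorname{Im}\phi$. With the ordinary pullback of sections, $\iota_{s'}(\pi^{*}b)$ evaluates $s'$ at $u$ rather than at $\pi(u)$. So one also needs the $\widetilde{\phi}(E)$-component of $s'$ to be constant along the fibres of $\pi$.

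\textbf{Part (ii): a different route, but it proves the claim for the wrong map.} The paper (Corollary~\ref{ygsdnn}) does no local computation. It relates $\widehat{\eta}(\widetilde{\phi})_x$ to the completed component $\widehat{\phi}^{\mathrm{c}}_x$ of the FOOO embedding and to the completion map $\widehat{\varepsilon}_{\phi(x),\phi}$. It takes the quasi-isomorphism property of the former from \cite[Proposition 2.30]{Kim1} and of the latter from Lemma~\ref{cochqim}, then concludes by two-out-of-three. Your argument splits off $N$, notes that $s'_N$ is a regular sequence cutting out $\phi(U)$ by Hadamard's lemma, and filters $K(s')\cong K(s'_E)\otimes K(s'_N)$ by $K(s'_E)$-degree using freeness. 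This is a sound-in-outline, completion-free alternative, close to what the paper itself does for path objects in Section~5 (Lemma~\ref{bkkjjhhh} and the Taylor-expansion surjectivity).

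However, it shows that the \emph{restriction} map $r\colon K(s')\to K(s)$ is a quasi-isomorphism, not $\widehat{\eta}(\widetilde{\phi})_x$. The missing bridge is the identity $r\circ\widehat{\eta}(\widetilde{\phi})_x=\mathrm{id}$, which holds because $\pi|_{\phi(U)}=\mathrm{id}$ and $\widetilde{\phi}(a)\circ\widetilde{\phi}=a$. From it, $H(\widehat{\eta})=H(r)^{-1}$. This requires $\widehat{\eta}$ to be an honest chain map. Under the hypotheses of (ii) you can arrange that as follows:
\begin{itemize}
\item take the components of $s'_N$ as normal coordinates to $\phi(U)$;
\item let $\pi$ be the associated projection;
\item shear $N$ using Hadamard's lemma so that the $\widetilde{\phi}(E)$-component of $s'$ is constant along the fibres of $\pi$.
\end{itemize}

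\textbf{The immersion claim is false.} The tangent bundle condition does not force $\phi$ to be an immersion. The projection $\mathbb{R}^2\to\mathbb{R}$ with $E=E'=0$ satisfies both hypotheses of (ii), yet your pullback $C^\infty(W'_{\phi(x)})\to C^\infty(W_x)$ is not surjective. The embedding property must be assumed, as the paper does when it chooses $\pi$.
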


Building on the preceding theorem, we introduce the notions of \textit{fibrations} and \textit{weak equivalences} in $\operatorname{Mor}(\mathbf{Kur})$ (see Definition \ref{adfaf} for their precise definitions).
\begin{defn}

Let $F \colon \mathfrak{X} \to \mathfrak{X}'$ be a morphism of Kuranishi spaces represented by a pre-morphism
\[
\overline{F} = \left(\widehat{\mathcal{U}}, \widehat{\mathcal{U}}', f, \left\{f_{p}\right\}, \left\{\widehat{f}_{p,x}\right\}\right).
\] We say:

\begin{enumerate}[label = (\alph*)]
\item $F$ is a \textit{fibration} if there exists a representative pre-morphism $\overline{F}$ of $F$ satisfying the following properties:

\begin{enumerate}[label = (\roman*)]

    \item $\widehat{\mathcal{U}}$ consists of bases of minimal dimension among all representing atlases for the equivalence class $[\widehat{\mathcal{U}}]$.
    \item For each $p$, the base map $f_p$ is surjective and submersive.
    \item The Koszul $L_\infty$-component map is given by the composition
\[
\widehat{\eta}(\widetilde{\phi}_p^{(\mathrm{i})}) \circ \widehat{\eta}(\widetilde{\phi}_p^{(\mathrm{ii})})^{-1},
\] 
where $\widehat{\eta}(\widetilde{\phi}_p^{(\mathrm{i})})$ and $\widehat{\eta}(\widetilde{\phi}_p^{(\mathrm{ii})})$ are the $L_\infty[1]$-morphisms arising from cases (i) and (ii) of Theorem \ref{bwggg}, respectively, and $\widehat{\eta}(\widetilde{\phi}_p^{(\mathrm{ii})})^{-1}$ denotes a homotopy inverse of the quasi-isomorphism $\widehat{\eta}(\widetilde{\phi}_p^{(\mathrm{ii})})$.

    \item Consider a decomposition of $E_p$ into subbundles
\begin{equation}\label{jbwwsxxx}
E_p = E_p^{\mathrm{i}} \oplus {E}^{\mathrm{c}}_p := \pi_p^{*}{\mathrm{Im}}_p\left(\widetilde{\phi}_p^{(\mathrm{i})}\right) \oplus {E}^{\mathrm{c}}_p
\end{equation}
for each $p$. Here, $\pi_p \colon U_p \twoheadrightarrow \operatorname{Im}_p(\phi_p^{(\mathrm{i})})$ is a chosen projection between contractible open sets. We then require that the Koszul $L_{\infty}$-coordinate change for $\widehat{\mathcal{U}}'$ decompose into those on the Koszul complexes of $E_p^{\mathrm{i}}$ and $E^{\mathrm{c}}_p$.

     \item The zero loci satisfy
\[
[s_p^{-1}(0)] \cap f_{p}^{-1}((s_{f(p)}^{-1}(0)) = s_p^{-1}(0)
\]
for each $p$, where $[(\cdots)]$ denotes the image of $(\cdots)$ at $p \in X$ in the quotient bundle $E'_{f(p)} / \pi_p^{*}\!\left(\operatorname{Im}(\widetilde{\phi}_p^{(\mathrm{i})})\right)$.
\end{enumerate}

     \item $F$ is a \textit{weak equivalence} if there exists a representative pre-morphism $\overline{F}$ of $F$ satisfying the following properties:

\begin{enumerate}[label = (\roman*)]
\item The induced map $f$ is a homeomorphism.
\item $\widehat{f}_{p,x}$ is a quasi-isomorphism for all $p \in X$ and $x \in s_p^{-1}(0)$.
\end{enumerate}
\end{enumerate}

We denote by $\mathrm{Fib}$ and $\mathrm{W}$ the classes of fibrations and weak equivalences, respectively.

\end{defn}

\subsection*{Main results}

\ \

The first main theorem of this paper is the existence of the weak equivalence (\ref{gshg}):

\begin{thm}\label{gsjjagmhmh}
There exists a weak equivalence from any smooth manifold to its presentation as a
transversal zero locus in the Kuranishi setting.
\end{thm}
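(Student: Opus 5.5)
The plan is to build the weak equivalence (\ref{gshg}) chart by chart, using the identity homeomorphism on $M$ as the underlying map. Condition (i) of a weak equivalence then holds automatically, and everything reduces to the local quasi-isomorphism condition (ii). Fix a point $p \in M$.

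First, take the chart of $\widehat{\mathcal{U}}^{\mathrm{man}}$ at $p$. It has the form $(U_p, 0, 0, \{1\}, \psi_p)$, where $U_p$ is a contractible coordinate neighbourhood and the obstruction bundle is zero.

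Second, take the chart $(U'_p, E'_p, s'_p, \Gamma'_p, \psi'_p)$ of $\widehat{\mathcal{U}}^{\mathrm{tr}}$ at $p$. The section $s'_p$ is transversal to the zero section and satisfies $\psi'_p\bigl(s_p'^{-1}(0)\bigr) \subset M$. Since $M$ is a manifold, I may pass to an equivalent atlas in $[\widehat{\mathcal{U}}^{\mathrm{tr}}]$, shrinking so that $\Gamma'_p$ is trivial. By transversality, $s_p'^{-1}(0)$ is a submanifold of $U'_p$ of dimension $\dim U'_p - \operatorname{rank} E'_p = \dim M$. Shrinking $U_p$, I obtain an embedding
\[
\phi_p := \bigl(\psi'_p\bigr)^{-1} \circ \psi_p \colon U_p \to s_p'^{-1}(0) \subset U'_p .
\]
The map $\phi_p$ is covered by the zero bundle embedding $\widetilde{\phi}_p \colon 0 \to E'_p$.

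Third, I check the hypotheses of Theorem \ref{bwggg}(ii). The compatibility condition $s'_p \circ \phi_p = 0 = \widetilde{\phi}_p \circ 0$ holds because $\phi_p$ lands in the zero locus. For the tangent bundle condition (\ref{tancond}), note that $\phi_{p*}(T_xU_p) = T_{\phi_p(x)} s_p'^{-1}(0) = \ker ds'_{\phi_p(x)}$. Transversality makes $ds'$ surjective, so it induces an isomorphism
\[
T_{\phi_p(x)}U'_p/\ker ds'_{\phi_p(x)} \xrightarrow{\ \simeq\ } E'_{p,\phi_p(x)} = E'_{p,\phi_p(x)}/\widetilde{\phi}_p(0).
\]
Theorem \ref{bwggg}(ii) then shows that
\[
\widehat{f}_{p,x} := \widehat{\eta}(\widetilde{\phi}_p)_x \colon \mathbb{R} = \bigwedge\nolimits^{-\bullet}\Gamma(0) \to \bigwedge\nolimits^{-\bullet}\Gamma\bigl(E_p'^{*}|_{W'_{\phi_p(x)}}\bigr)
\]
is a quasi-isomorphism. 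This matches the fact that the Koszul complex of a regular section resolves the functions on the zero locus.

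Finally, I assemble these local data into a pre-morphism $\bigl(\widehat{\mathcal{U}}^{\mathrm{man}}, \widehat{\mathcal{U}}^{\mathrm{tr}}, \mathrm{id}_M, \{\phi_p\}, \{\widehat{f}_{p,x}\}\bigr)$ and pass to its class. This last step is where I expect the main difficulty. Compatibility with coordinate changes is easy on the manifold side, since there the coordinate changes are just transition diffeomorphisms. On the $\widehat{\mathcal{U}}^{\mathrm{tr}}$ side, however, the coordinate changes $\Phi'_{pq}$ embed the zero locus of $U'_q$ into that of $U'_p$. So $\phi_p \circ \phi_{pq} = \phi'_{pq} \circ \phi_q$ holds on the nose, because both sides equal $(\psi'_p)^{-1} \circ \psi_q$. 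The $L_\infty$ components, though, agree only up to homotopy, as the induced maps $\widehat{\eta}(\widetilde{\Phi}'_{pq})$ are quasi-isomorphisms by Theorem \ref{bwggg}(ii). I would handle this by using the definition of morphism as a quotient of pre-morphisms: homotopic $L_\infty$ data should be identified. Where needed, I would also use the freedom to enlarge a representative to one with submersive base maps, so that the required homotopies can be chosen coherently.

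Canonicity comes from uniqueness. Any two choices of $\phi_p$ agree, because each must equal $(\psi'_p)^{-1} \circ \psi_p$. Any two induced $L_\infty$ maps are homotopic, because both are unit maps into a resolution of the same ring. Hence the class $[\overline{F}]$ is independent of the choices, and it is a weak equivalence.
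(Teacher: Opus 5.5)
Your construction is the paper's: the identity on $M$, base maps given by the inclusion of the zero locus into the transversal bases, the $L_\infty$ component induced by the zero-rank bundle embedding, and the tangent bundle condition coming from surjectivity of $ds'$. The only structural difference is in how the transversal atlas arises. The paper builds $\widehat{\mathcal{U}}^{\mathrm{tr}}$ by restricting one global transverse section $s\colon U\to E$ to neighbourhoods $U_p$, and takes the manifold charts to be $V_p=s_p^{-1}(0)$, so its base maps are literal inclusions. You instead start from an arbitrary transversal atlas and set $\phi_p=(\psi'_p)^{-1}\circ\psi_p$. Several steps, however, do not work as written.

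First, the step you single out as the main difficulty is not settled by your argument. The equivalence relation is defined on pre-morphisms, so passing to the class $[\overline{F}]$ cannot be used to show that $\overline{F}$ is a pre-morphism in the first place. It is also not a real difficulty. Condition (ii) of Definition~\ref{morphismkur} only asks for commutativity up to $L_\infty[1]$-homotopy, with no coherence among the homotopies. Both composites are quasi-isomorphisms with the same source and target, so Theorem~\ref{anhp} gives the homotopy at once; enlarging to submersive base maps plays no role.

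Second, your $L_\infty$ component points the wrong way. A chart-morphism component must run $\mathcal{C}'_{\phi_p(x)}\to\mathcal{C}_{p,x}$ and factor through the completion $\mathcal{C}'_{\phi_p(x),\phi_p}$. This is a real constraint, since $\phi_p$ has codimension $\operatorname{rk}E'_p$. The map $\widehat{\eta}(\widetilde{\phi}_p)_x$ of Theorem~\ref{bwggg} goes in the opposite direction. Once you have checked that $(\phi_p,\widetilde{\phi}_p)$ is an FOOO embedding, invoke Proposition~\ref{afec} and Corollary~\ref{crtbc}, as the paper does. This gives the component in the correct direction, as a quasi-isomorphism, and also handles the de Rham summand of $\mathcal{C}_{p,x}$, which you omit. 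Relatedly, $\bigwedge^{-\bullet}\Gamma(0)$ over $W_x$ is $C^\infty(W_x)$ (Example~\ref{man}), not $\mathbb{R}$. So the ``unit map'' reasoning in your canonicity paragraph, which the statement does not need anyway, is off.

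Third, you cannot shrink to trivial isotropy. Open subcharts keep $\Gamma$, and Definition~\ref{eqats}(ii) requires $\Gamma^0_{1,p}\simeq\Gamma^0_{2,p}$. Trivial isotropy must therefore be part of the meaning of ``transversal presentation''. The same goes for $s^{-1}(0)\cong M$ as smooth manifolds, which you need for $\phi_p$ to be smooth. This is precisely the paper's standing assumption.
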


The second main result concerns the existence of fiber products in $\mathbf{Kur}$. In Section~4, we construct an explicit model for the fiber product of two morphisms in $\mathbf{Kur}$, which yields:

\begin{thm}
The fiber product of a pair of morphisms exists whenever at least one of them is a fibration.
\end{thm}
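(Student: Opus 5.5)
Let $F\colon \mathfrak{X}\to\mathfrak{Z}$ be a fibration and $G\colon\mathfrak{Y}\to\mathfrak{Z}$ an arbitrary morphism in $\mathbf{Kur}$. The plan is to build the fiber product chart by chart from a well-chosen pair of representatives, and then check that the result is independent of choices up to the equivalence relation on pre-morphisms.

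Choose a representative $\overline{F}=(\widehat{\mathcal{U}},\widehat{\mathcal{U}}'',f,\{f_p\},\{\widehat f_{p,x}\})$ satisfying conditions (i)--(v) of the fibration definition, and a representative $\overline{G}=(\widehat{\mathcal{V}},\widehat{\mathcal{U}}'',g,\{g_q\},\{\widehat g_{q,y}\})$ of $G$ with the same target atlas. If the targets differ, replace both by a common refinement, enlarging the base dimensions within the equivalence class as in \cite{Kim1}. The topological space is $X\times_Z Y$. For $(p,q)$ with $f(p)=g(q)=r$, set the base to be $U_p\times_{U''_r}V_q$. This is a manifold because $f_p$ is a submersion by (ii), and it is nonempty near $(p,q)$ because $f_p$ is surjective. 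For the bundle, use the decomposition (\ref{jbwwsxxx}): take $E^{\mathrm{c}}_p\boxplus F_q$ restricted to the fiber product, so that the part $E^{\mathrm{i}}_p$, which is pulled back from the target, is identified with its counterpart on the $G$ side rather than doubled. The section is $s^{\mathrm c}_p\oplus t_q$. The group is $\Gamma_p\times_{\Gamma''_r}\Gamma_q$, or $\Gamma_p\times\Gamma_q$ acting on the fiber product of bases. Condition (v) is what shows that the zero locus of this section, modulo the group, is exactly the topological fiber product $X\times_Z Y$ near $(p,q)$. That gives the footprint map $\psi$.

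Next come the coordinate changes and the $L_\infty[1]$-data. The coordinate changes are induced componentwise from those of $\widehat{\mathcal U}$ and $\widehat{\mathcal V}$; by (iv) the Koszul coordinate changes split along $E^{\mathrm i}\oplus E^{\mathrm c}$, so they restrict to the complement factor. The $L_\infty[1]$-algebra at a zero is then modelled on the (homotopy) fiber product, that is, the tensor product over the target Koszul algebra, of the algebras at $x$ and $y$. Condition (iii) is used here: the component of $\widehat f$ factors as $\widehat\eta(\widetilde\phi^{(\mathrm i)})\circ\widehat\eta(\widetilde\phi^{(\mathrm{ii})})^{-1}$. By Theorem~\ref{bwggg}(ii), the tensor product can therefore be computed, up to quasi-isomorphism, by the honest Koszul complex of $E^{\mathrm c}_p\boxplus F_q$, so no derived correction is needed. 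The projections to $\mathfrak X$ and $\mathfrak Y$ are the obvious base projections with bundle inclusions, and they form pre-morphisms.

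Finally, the universal property. Given $H_1\colon\mathfrak W\to\mathfrak X$ and $H_2\colon\mathfrak W\to\mathfrak Y$ with $F H_1=G H_2$, choose representatives over a common atlas of $\mathfrak W$, again after enlarging dimensions. The maps then land in $U_p\times_{U''_r}V_q$, and the bundle maps project to the $E^{\mathrm c}\oplus F$ components. Uniqueness holds up to the equivalence relation on pre-morphisms. The minimality condition (i) is used to show that the constructed atlas, and hence its class, does not depend on the chosen representative of $F$.

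I expect the main obstacle to be the second step: checking that the $L_\infty$ coordinate changes on the fiber product are compatible, and that the result is well defined on equivalence classes. This is because the homotopy inverse in (iii) is only defined up to homotopy. My first approach would be to fix contractible $W_x$ and use the splitting in (iv) to choose strict inverses on the $E^{\mathrm i}$ factor, reducing everything to the strict product of Koszul complexes.
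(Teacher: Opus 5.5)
Your chart-level construction agrees with the paper's:
\begin{itemize}
\item base $U_p\times_{U''_r}V_q$;
\item bundle $E^{\mathrm c}_p\boxplus F_q$, which is the paper's $\pi'^*E'_{p'}\oplus\pi''^*[E''_{p''}]$ since $[E'']\simeq E^{\mathrm c}$;
\item section $s^{\mathrm c}_p\oplus t_q$, with the zero locus identified by the zero-locus axiom;
\item componentwise base coordinate changes, with the splitting axiom used for the Koszul ones.
\end{itemize}
The paper takes the Koszul complex of this bundle, plus a foliation de Rham part, as the local algebra by definition, so no derived tensor-product comparison is made or needed.

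The genuine gap is in the morphisms. The projection onto the fibration's domain $\mathfrak X$ is not ``the base projection with a bundle inclusion''. Its Koszul component must send $\bigwedge^{-\bullet}\Gamma(E_p^*)$ into $\bigwedge^{-\bullet}\Gamma((E^{\mathrm c}_p\boxplus F_q)^*)$. For $a\in\Gamma(E^{\mathrm i*}_p)$, the chain-map condition requires the image of $a$ to contract with $s^{\mathrm c}_p\oplus t_q$ to the pullback of $a(s^{\mathrm i}_p)$. That function is generally nonzero away from the zero locus, yet the fiber-product section has no $E^{\mathrm i}$-component. So killing $E^{\mathrm i*}$ fails, and a section-compatible embedding $E^{\mathrm i}_p\hookrightarrow F_q$ does not exist in general, because $G$ is arbitrary and its $L_\infty$-component need not come from any bundle map.

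The paper's $\widehat{\mathrm{pr}}''$ in \eqref{xxbswb} handles this as follows. It sends the $E^{\mathrm i}$-part through $\overline{\mathrm{em}}_p^{-1}$ and the fibration's comparison data $\underline{\widehat\eta}$ into the target's Koszul complex. It then passes through the Koszul component of the \emph{other} morphism into the $F_q$-factor; only the $E^{\mathrm c}$-part is included directly. This same formula is what makes $F\circ\mathrm{Pr}_{\mathfrak X}=G\circ\mathrm{Pr}_{\mathfrak Y}$ hold on $L_\infty$-components, so without it you do not even have a commuting square.

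Similarly, in the universal property there are no bundle maps to ``project''. The paper assembles $\widehat k_{r,y}$ by bidegree from $\widehat i$ and $\widehat j$ as in \eqref{khtob1}, using Lemma~\ref{wnqb} in degree zero. It proves uniqueness by constructing an explicit $L_\infty[1]$-homotopy $\underline{\widehat k}\,t+c_m\,dt$. Your ``uniqueness up to equivalence'' is asserted, not argued.

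Two further points. First, the minimality axiom (i) does not make the construction independent of the representative: minimal base dimension still leaves the open subcharts, $\mathrm{em}_p$, $E^{\mathrm c}_p$ and $\pi_p$ free. The paper explicitly assumes all fibration axioms \emph{except} (i). It treats choice-dependence separately in Proposition~\ref{eobjkk}, which only produces weak equivalences between the outcomes.

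Second, the cocycle condition is imposed on base maps only. So your ``main obstacle'' really amounts to showing that each fiber-product Koszul coordinate change is a quasi-isomorphism. Restricting to the complement factor does not preserve this automatically, and choosing strict inverses on $E^{\mathrm i}$ does not prove it. The paper's argument is the diagram \eqref{abbgh}:
\begin{itemize}
\item short exact sequences $0\to\mathrm B'\to\mathrm A\to\mathrm D\to 0$ at $q$ and at $p$;
\item the full product coordinate change is a quasi-isomorphism on $\mathrm A$;
\item the fibration's $\widehat\eta^{\vee}$ makes the induced map on $\mathrm B'$ a quasi-isomorphism;
\item the five lemma on the long exact sequences then gives the quasi-isomorphism on $\mathrm D$, which is exactly the fiber-product Koszul complex.
\end{itemize}
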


Even without the fibration assumption, fiber products may still exist. In fact, any morphism whose target is a smooth manifold can be made to satisfy the axioms of a fibration upon choosing an atlas with bases of sufficiently large dimension. In other words, we have the following result:

\begin{thm}\label{twomdoku}
Any two morphisms in $\mathbf{Kur}$ with a common manifold target admit a homotopy fiber product.
\end{thm}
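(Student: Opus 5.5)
The plan is to reduce Theorem \ref{twomdoku} to the preceding theorem on fiber products, by showing that one of the two morphisms can be replaced, within its equivalence class, by a fibration. Let $F_1\colon \mathfrak{X}_1 \to \mathscr{F}(M)$ and $F_2\colon \mathfrak{X}_2\to\mathscr{F}(M)$ be morphisms with common manifold target. Throughout, $\mathscr{F}(M)$ carries the atlas $\widehat{\mathcal{U}}^{\mathrm{man}}$ whose charts have zero obstruction bundle. With this target, the Koszul complex on the target side is trivial, $\bigwedge^{-\bullet}\Gamma(0)=C^\infty$, so several fibration conditions in Definition \ref{adfaf} become almost automatic.

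\textbf{Step 1: make $F_1$ a fibration.} Choose a representative pre-morphism $\overline{F}_1=(\widehat{\mathcal{U}}_1,\widehat{\mathcal{U}}^{\mathrm{man}},f,\{f_p\},\{\widehat f_{p,x}\})$.
\begin{enumerate}
\item Stabilize each chart $(U_p,E_p,s_p,\Gamma_p,\psi_p)$ of $\widehat{\mathcal{U}}_1$ to $(U_p\times V_p, E_p\oplus \underline{\R^{m}}, s_p\oplus \mathrm{pr}_{V_p}, \dots)$, where $V_p\subset\R^m$ is a ball around the origin. This replaces $f_p$ by $f_p\circ\mathrm{pr}_1$ plus a linear term in the $V_p$-coordinates, for instance via a local chart $\R^m\supset$ neighbourhood of $f(p)$ in $M$.
\item Use the equivalence relation on atlases (Definition \ref{eqats}) and on pre-morphisms to see that this stabilized representative lies in the same class. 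Then $f_p$ becomes a submersion and, after shrinking the target chart, surjective, which is condition (ii).
\item Condition (i) on minimal dimension concerns the source atlas class, which is unchanged. Here I would reinterpret the requirement as referring to the minimal representative to which the stabilization is related by the maps $\widetilde\phi^{(\mathrm{i})},\widetilde\phi^{(\mathrm{ii})}$ of Theorem \ref{bwggg}.
\item Conditions (iii) and (iv) hold because the target obstruction bundle is zero. The $\widetilde\phi^{(\mathrm{i})}$ part is the zero embedding, and $\widetilde\phi^{(\mathrm{ii})}$ is the stabilization embedding $E_p\hookrightarrow E_p\oplus\underline{\R^m}$. The latter satisfies the tangent condition (\ref{tancond}) since $d(\mathrm{pr}_{V_p})$ is an isomorphism on the normal directions, so Theorem \ref{bwggg}(ii) yields the required quasi-isomorphism and homotopy inverse. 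The decomposition (\ref{jbwwsxxx}) is then trivial.
\item Condition (v) reduces to $s_p^{-1}(0)\cap f_p^{-1}(\text{all of the chart})=s_p^{-1}(0)$, which holds tautologically.
\end{enumerate}

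\textbf{Step 2: form the fiber product and identify it as homotopy fiber product.} Apply the fiber-product theorem to the fibration $F_1$ and to $F_2$. It remains to check that the construction is homotopy invariant: replacing $F_1$ by another fibration representative, or $\mathfrak{X}_i$ by a weakly equivalent object, changes the fiber product only by a weak equivalence.
\begin{itemize}
\item Locally the fiber product chart is $U_{1,p}\times_M U_{2,q}$. This is a manifold by submersivity, and it carries the bundle $E_{1,p}\boxplus E_{2,q}$.
\item Its Koszul complex is the tensor product of the two Koszul complexes over $C^\infty(M)$, which is a derived tensor product because $f_p$ is a submersion (flatness).
\item Quasi-isomorphisms are therefore preserved, giving condition (ii) of a weak equivalence. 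The underlying map on zero loci is a homeomorphism $X_1\times_M X_2$, giving condition (i).
\end{itemize}

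\textbf{Main obstacle.} I expect the hardest part to be Step 1, specifically making the stabilization compatible with the coordinate changes of $\widehat{\mathcal{U}}_1$ globally. The trivial factors $V_p$ must be chosen so that chart embeddings $U_q\to U_p$ extend to $U_q\times V_q\to U_p\times V_p$ compatibly with the $f_p$'s and with the $L_\infty$ coordinate changes. I would first try taking $V_p$ to be a neighbourhood in $T_{f(p)}M$ via a fixed exponential map, so that the extensions are induced canonically from $f$. If that fails, I would fall back on a good-coordinate-system refinement with a single uniform $m=\dim M$.
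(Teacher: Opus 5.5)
Your architecture matches the paper's: enlarge the bases of one map, as in Lemma~\ref{genbvf}; form the chart-level fiber product of Section~4 with Proposition~\ref{fbprokur}; then show independence of choices, as in Proposition~\ref{eobjkk}. Your explicit extension $(y,v)\mapsto f_p(y)+v$ is a good concrete form of the enlargement step. The genuine gap in Step~1 is condition (i) of Definition~\ref{adfaf}.
\begin{itemize}
\item Condition (i) constrains the chosen representative atlas, not its class. A stabilized atlas $\widehat{\mathcal{U}}_1\times\mathbb{R}^m$ with $m\geq 1$ is never of minimal base dimension.
\item Worse, if the minimal base dimension is below $\dim M$ (e.g.\ a point mapping to $M$), no atlas in $\mathscr{A}_0(\mathfrak{X}_1)$ admits submersive $f_p$ at all.
\item Reinterpreting (i) changes the definition, so $F_1$ is not a fibration and the fibration theorem cannot be quoted as is.
\end{itemize}
The idea you are missing is that (i) is never used. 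Lemma~\ref{genbvf} only claims (ii)--(v), and Section~4 (hence Proposition~\ref{fbprokur}) is carried out under exactly the hypothesis ``all axioms except (i)''. This is also why the result is only a \emph{homotopy} fiber product.

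Your data for (iii) is also mis-assigned. Both bundle embeddings must start at an auxiliary bundle $\underline{E}'_{f(p)}\to\underline{U}'_{f(p)}$: one goes into the target and carries the tangent condition, the other goes into the source. Your map $E_p\hookrightarrow E_p\oplus\underline{\mathbb{R}^m}$ goes source to source, so your composite starts from a source Koszul complex instead of from $C^\infty(W'_{f(p)})$. For a manifold target, take $\mathrm{Em}'_p=\mathrm{id}$ on the rank-zero bundle over $U^{\mathrm{man}}_{f(p)}$. Take $\mathrm{Em}_p$ to send the rank-zero bundle into the zero section over any embedding $U^{\mathrm{man}}_{f(p)}\hookrightarrow U_p\times\mathbb{R}^m$; a slice $\{y_0\}\times\mathbb{R}^m$ will do. Then $[E_p]=E_p$, and (iv) and (v) are immediate.

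Step~2 does not go through as written either. The Koszul complex of a fiber-product chart is $\bigwedge^{-\bullet}\Gamma\bigl((\pi^{\prime*}E_{1}\oplus\pi^{\prime\prime*}E_{2})^*|_{W^{\mathrm{f}}}\bigr)$, a module over $C^\infty(W^{\mathrm{f}})$. That ring is not the algebraic tensor product $C^\infty(W_1)\otimes_{C^\infty(M)}C^\infty(W_2)$. Flatness of $C^\infty(U)$ over $C^\infty(M)$ along a submersion is also not available off the shelf. More basically, you never construct a pre-morphism between two fiber products whose $L_\infty$-component would be the asserted quasi-isomorphism.

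The paper does this explicitly in Proposition~\ref{eobjkk}, by reducing to the two moves that generate the equivalence of representatives:
\begin{itemize}
\item an expansion $U''_{p''}\times\mathbb{R}^{n''}$, compared via the base embedding $(y',y'')\mapsto(y',(y'',0))$ and the Koszul map induced by the projection onto the unexpanded bundle;
\item an identification of open pieces by a diffeomorphism $\epsilon$, compared via $\mathrm{id}\times\epsilon$ and the isomorphism \eqref{bnhh}.
\end{itemize}
Each is checked to be a weak equivalence. Invariance under replacing $\mathfrak{X}_i$ by weakly equivalent objects is not needed for the statement.

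Finally, your ``main obstacle'' is not one. Base compatibility of a pre-morphism is imposed only on zero loci. After a uniform expansion with coordinate changes $\phi_{pq}\times\mathrm{id}_{\mathbb{R}^m}$ (Definition~\ref{expatla}), the zero loci are $s_p^{-1}(0)\times\{0\}$, where your extension restricts to $f_p$. This is precisely the extension \eqref{extfi}.
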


This description of fiber products (and of direct products as a special case) is expected to contribute to a better understanding of moduli spaces in symplectic geometry, whose boundaries frequently arise as naive fiber products of lower-dimensional moduli. While various approaches exist in the literature, none appears to provide an entirely satisfying or well-behaved machinery for this purpose. In this regard, we remark that the moduli space of Morse trajectories can be studied within our proposed setting, producing algebraic structures such as Morse chain complexes in a canonical manner \cite{Kim2}.

On the other hand, the term \textit{homotopy} fiber product in the preceding paragraph emphasizes that its construction inevitably involves \textit{noncanonical} choices, such as representatives of morphisms; nevertheless, the resulting structure is well-defined \textit{up to weak equivalence}. Consequently, these constructions determine the ordinary fiber product in the homotopy category obtained by formally inverting weak equivalences.

With $\operatorname{Fib}$ and $\mathrm{W}$ as defined above, we establish the following:

\begin{thm}\label{hhba}
The triple $(\mathbf{Kur}, \operatorname{Fib}, \mathrm{W})$ forms a category of fibrant objects.
\end{thm}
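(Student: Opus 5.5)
We verify Brown's axioms for a category of fibrant objects one at a time, using the definitions of $\operatorname{Fib}$ and $\mathrm{W}$ together with Theorem \ref{bwggg} and the fiber product construction of Section 4. The axioms are:
\begin{enumerate}[label = (\Alph*)]
\item $\mathbf{Kur}$ has finite products and a terminal object (the one-point manifold $\mathscr{F}(\mathrm{pt})$).
\item Isomorphisms lie in $\operatorname{Fib}\cap\mathrm{W}$, and both classes are closed under composition.
\item $\mathrm{W}$ satisfies two-out-of-three.
\item Pullbacks of fibrations along arbitrary morphisms exist and are fibrations, and pullbacks of trivial fibrations are trivial fibrations.
\item Every object has a path object.
\item Every object is fibrant.
\end{enumerate}

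\textbf{Axioms (A), (F) and (C).} For (A), the map $\mathfrak{X}\to \mathrm{pt}$ can be represented by the zero base map. I would first enlarge the atlas within its equivalence class so that all base maps to a point are trivially surjective and submersive, then check conditions (iii)--(v) of the fibration definition directly. With $E'=0$ the decomposition (\ref{jbwwsxxx}) is trivial, and the zero locus condition is automatic. This gives (F) at once. Binary products then arise as fiber products over $\mathrm{pt}$, which exist by the fiber product theorem (or by Theorem \ref{twomdoku}).

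For two-out-of-three, I would use that $\mathrm{W}$ is detected by two things: the underlying map $f$ being a homeomorphism, and $\widehat f_{p,x}$ being a quasi-isomorphism. Both properties satisfy two-out-of-three, the latter for $L_\infty[1]$-quasi-isomorphisms, provided the representatives of the composable morphisms can be chosen compatibly. The step to justify is that composition in $\mathbf{Kur}$ is computed on representatives after a common refinement of atlases, and that refinement preserves quasi-isomorphisms.

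\textbf{Axiom (B).} Isomorphisms are weak equivalences trivially. They are also fibrations after replacing the source by a minimal-dimensional representative, which is condition (i). Closure of $\mathrm{W}$ under composition follows as in (C). Closure of $\operatorname{Fib}$ under composition needs three things: submersions and surjections compose; the Koszul component of the composite is again of the form $\widehat\eta(\widetilde\phi^{(\mathrm i)})\circ\widehat\eta(\widetilde\phi^{(\mathrm{ii})})^{-1}$, using functoriality of $\widehat\eta$ and the fact that composites of maps of type (ii) still satisfy the tangent condition (\ref{tancond}); and the decompositions of (iv) are compatible, since images of composed embeddings are nested.

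\textbf{Axioms (D) and (E), and the main obstacle.} For (D), the explicit model of Section 4 gives the pullback $\mathfrak{X}\times_{\mathfrak{Z}}\mathfrak{Y}$ when $\mathfrak{Y}\to\mathfrak{Z}$ is a fibration. The base of the pulled-back chart is $U_p\times_{U''}V_q$, which is a manifold because the base map $V_q\to U''$ is a submersion. The projection to $U_p$ is again a surjective submersion, and the bundle decomposition (\ref{jbwwsxxx}) pulls back. If in addition $\mathfrak{Y}\to\mathfrak{Z}$ is a weak equivalence, then the homeomorphism pulls back to a homeomorphism. The quasi-isomorphism pulls back because the Koszul complex of the pulled-back bundle is a base change of the one over $V_q$, and the tangent condition (\ref{tancond}) is preserved under transverse pullback, so Theorem \ref{bwggg}(ii) applies.

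For (E), I would take the path object to be $\mathfrak{X}^{I}$ modeled on $\mathfrak{X}\times[0,1]$: thicken each chart $U_p$ to $U_p\times(-\epsilon,1+\epsilon)$ with the pulled-back bundle and section. The map $\mathfrak{X}\to\mathfrak{X}^I$ is a weak equivalence by Theorem \ref{bwggg}(ii), since the interval direction is contractible and contributes no obstruction. Evaluation at both ends, $\mathfrak{X}^I\to\mathfrak{X}\times\mathfrak{X}$, must then be shown to be a fibration.

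The main obstacle is this last point. The underlying map $X\times I\to X\times X$ is not surjective, so one cannot simply use the cylinder. Instead I would take the fiber product $\mathfrak{X}\times_{\mathfrak X\times\mathfrak X}(\mathfrak X\times\mathfrak X)$ along the diagonal, replacing the diagonal by a fibration through the factorization lemma strategy. Concretely, I would enlarge the base dimension of the representative, as in the argument for Theorem \ref{twomdoku}, until the base maps to $U_p\times U_p$ become surjective submersions. Condition (v) on zero loci is then the delicate verification.
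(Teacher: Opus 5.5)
Apart from the choice of terminal object (see below), your treatment of isomorphisms, two-out-of-three and pullbacks is essentially the paper's: Lemma~\ref{afdhjdasf}, the representative-level argument for two-out-of-three, and Proposition~\ref{hnuun}. The path-object axiom, which you flag yourself as the main obstacle, is left without a construction that can work, and this is a genuine gap.

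\textbf{Why your path-object candidates fail.} The cylinder fails for a more basic reason than non-surjectivity. Its underlying space is $X\times[0,1]$, or the non-compact $X\times(-\epsilon,1+\epsilon)$. So $\mathfrak{X}\to\mathfrak{X}^I$ is not a homeomorphism on underlying spaces, and hence is not a weak equivalence in the sense of Definition~\ref{adfaf}(b), whatever happens on the $L_\infty$ side. There is also no meaningful ``evaluation at both ends'' $X\times[0,1]\to X\times X$ beyond $(x,t)\mapsto(x,x)$.

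Your fallback does not repair this. It is circular, because Brown's factorization lemma is constructed \emph{from} a path object. Also, the pullback of $\mathrm{id}_{\mathfrak{X}\times\mathfrak{X}}$ along $\Delta$ is just $\mathfrak{X}$. Enlarging the base of a representative of $\Delta$ by $\mathbb{R}^n$ stays inside the class $[\widehat{\mathcal{U}}]$, so you would in effect be claiming that $\Delta$ itself is a fibration with $\Delta_1=\mathrm{id}$. Condition (i) of Definition~\ref{adfaf}(a) forbids exactly this: the source atlas must lie in $\mathscr{A}_0(\mathfrak{X})$, and a minimal-dimensional $U_p$ has no room to submerse onto a product base of dimension $2\dim U_p$. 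Lemma~\ref{genbvf} is precisely the observation that this expansion trick gives every fibration axiom \emph{except} (i).

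\textbf{The missing idea: a genuinely new object $\mathrm{P}\mathfrak{X}$.} You need an object that is weakly equivalent to $\mathfrak{X}$ but not atlas-equivalent to it. The paper builds it as follows.
\begin{enumerate}
\item Each chart has base $\mathrm{P}_gU_p\times W$. Here $\mathrm{P}_gU_p$ is the manifold of short geodesics of Lemma~\ref{beaxx}: an open neighborhood of the zero section of $TU_p$, identified via endpoints with a neighborhood of the diagonal in $U_p\times U_p$. The factor $W$ has $\dim W=\operatorname{rk}E_p$.
\item The obstruction bundle is $\mathrm{ev}_1^*E_p\oplus\mathrm{ev}_0^*TU_p\oplus E_{p,W}$, with section $s^{\mathrm{P}}_p(\gamma,w)=\bigl(w+s_p(\gamma(0))+s_p(\gamma(1)),\,\dot\gamma(0),\,2w+s_p(\gamma(0))+s_p(\gamma(1))\bigr)$.
\item The component $\dot\gamma(0)$ cuts the zero locus down to constant paths over $s_p^{-1}(0)$. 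So the underlying space is still $X$ and the virtual dimension is unchanged.
\item $\Delta_1\colon y\mapsto(\gamma_{y,y},0)$ is a weak equivalence, because the bundle map $v\mapsto(2v,0,2v)$ satisfies the tangent bundle condition (the normal differential is $(v,w)\mapsto(v,2w)$).
\item $\Delta_2\colon(\gamma,w)\mapsto(\gamma(0),\gamma(1),|w|^2)$ is shown to be a fibration using the bundle map $\overline{\mathrm{em}}_{p,p}$ of \eqref{bwzzzzbwq} and a direct check of the zero-locus condition.
\end{enumerate}
Your instinct to add $\dim U_p$ base directions and cut them back in the obstruction bundle is the right one, but it must be packaged as a new object. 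That object also needs its own coordinate changes, which your outline does not address. In the paper, the quasi-isomorphism (3) alone requires a regular-sequence and completion argument for the Koszul part (Lemma~\ref{bkkjjhhh}).

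\textbf{Smaller points.}
\begin{enumerate}
\item The one-point manifold $\mathscr{F}(\mathrm{pt})$ is not terminal. Its local algebra has cohomology $\mathbb{R}$ in degree $0$, so the $L_\infty$-components of a map into it (e.g.\ $1\mapsto 0$ versus $1\mapsto 1$) are not unique even up to homotopy. Use the paper's $*$ with $\mathcal{C}_*=0$ instead.
\item In (D), the tangent bundle condition for a trivial fibration is not given to you to pull back. The paper first extracts it from the quasi-isomorphism hypothesis, via acyclicity of the quotient complex and surjectivity in degree $0$; see \eqref{bjsjhhh}.
\item The paper's Definition~\ref{njjs} does not require $\operatorname{Fib}$ to be closed under composition. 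If you keep that axiom, your sketch is incomplete. Composing two zig-zags of the form $\widehat\eta(\cdot)\circ\widehat\eta(\cdot)^{-1}$ requires commuting a homotopy inverse past an $\widehat\eta$, which functoriality alone does not give.
\end{enumerate}
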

\noindent (See Definition~\ref{njjs} for the axioms satisfied by $(\mathbf{Kur}, \operatorname{Fib}, \mathrm{W})$.)

By virtue of Theorem~\ref{hhba}, we obtain an explicit construction of the homotopy category $\operatorname{Ho}(\mathbf{Kur})$, in which weak equivalences (such as the one in Theorem~\ref{gshg}) identify any smooth manifold with its transversal Kuranishi presentations up to isomorphism.

An immediate corollary of Theorem~\ref{twomdoku} is then the following:

\begin{cor}
Any two morphisms in $\operatorname{Ho}(\mathbf{Kur})$ with a common smooth manifold target admit a fiber product in the ordinary sense.
\end{cor}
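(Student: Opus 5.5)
The corollary follows from Theorem~\ref{twomdoku}, Theorem~\ref{hhba}, and the general theory of categories of fibrant objects in the sense of Brown \cite{Brown}. Let $\mathfrak{X} \to M$ and $\mathfrak{Y} \to M$ be two morphisms in $\operatorname{Ho}(\mathbf{Kur})$ with $M = \mathscr{F}(M)$ a smooth manifold. I will first represent them by honest morphisms in $\mathbf{Kur}$, then take the homotopy fiber product given by Theorem~\ref{twomdoku}, and finally show that its image in $\operatorname{Ho}(\mathbf{Kur})$ satisfies the universal property.

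\textbf{Step 1: representing the morphisms.} By Theorem~\ref{hhba}, Brown's theorem gives the calculus of fractions for a category of fibrant objects. Every morphism $\mathfrak{X} \to M$ in $\operatorname{Ho}(\mathbf{Kur})$ is therefore a right fraction $g \circ w^{-1}$, with $w \colon \mathfrak{X}' \to \mathfrak{X}$ a trivial fibration and $g \colon \mathfrak{X}' \to M$ a morphism in $\mathbf{Kur}$. Since $w$ is an isomorphism in $\operatorname{Ho}(\mathbf{Kur})$, it suffices to treat morphisms $f \colon \mathfrak{X} \to M$ and $g \colon \mathfrak{Y} \to M$ that actually lie in $\mathbf{Kur}$.

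\textbf{Step 2: the candidate and its homotopy invariance.} Using the observation preceding Theorem~\ref{twomdoku}, I replace $f$ within its equivalence class by a representative with bases of sufficiently large dimension, so that $f$ is a fibration. Theorem~\ref{twomdoku} then yields the fiber product $\mathfrak{X} \times_M \mathfrak{Y}$ in $\mathbf{Kur}$, and this is my candidate. In a category of fibrant objects, pullbacks along fibrations preserve weak equivalences; this is Brown's standard consequence of the axioms, via factorization and the fact that trivial fibrations are stable under pullback. Hence the weak-equivalence class of $\mathfrak{X} \times_M \mathfrak{Y}$ does not depend on the chosen fibration replacement. This matches the remark after Theorem~\ref{twomdoku} that the construction is well-defined up to weak equivalence.

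\textbf{Step 3: the universal property.} Let $\mathfrak{Z}$ be an object with morphisms $a \colon \mathfrak{Z} \to \mathfrak{X}$ and $b \colon \mathfrak{Z} \to \mathfrak{Y}$ in $\operatorname{Ho}(\mathbf{Kur})$ such that $f a = g b$.
\begin{enumerate}[label=(\alph*)]
\item \emph{Existence.} Writing $a$ and $b$ as fractions over a common trivial fibration $\mathfrak{Z}' \to \mathfrak{Z}$, the relation $fa = gb$ holds in $\operatorname{Ho}(\mathbf{Kur})$. By Brown's description of $\operatorname{Ho}$, this means the two composites $\mathfrak{Z}' \to M$ are right homotopic after a further trivial fibration, via a path object $M^I$.
\item \emph{Strictification.} I then use that $M$ is a manifold. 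A path object for $M$ can be taken to be a manifold of paths, or a finite-dimensional tubular model $M \times \mathbb{R}^N$. Since $f$ is a fibration, the homotopy lifting property of fibrations in a category of fibrant objects lets me lift the homotopy along $f$. This replaces $a$ by a homotopic map $a'$ with $f a' = g b$ strictly, so $(a', b)$ factors through the strict pullback $\mathfrak{X} \times_M \mathfrak{Y}$.
\item \emph{Uniqueness.} Two factorizations differ by a homotopy into $\mathfrak{X} \times_M \mathfrak{Y}$. Pulling back path objects along the fibration shows they agree in $\operatorname{Ho}(\mathbf{Kur})$.
\end{enumerate}

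I expect the main obstacle to be Step 3. In a general category of fibrant objects, homotopy pullbacks are only weak fiber products in $\operatorname{Ho}$, because homotopies between homotopies can obstruct uniqueness. The argument must therefore exploit the manifold target, where the relevant path spaces have no higher homotopical content. My first attempt at uniqueness would be to show that any two liftings are connected by a path in $\mathfrak{X} \times_M \mathfrak{Y}$ using the contractibility of the local bases. If this proves too hard, I would fall back on the interpretation that ``fiber product in the ordinary sense'' means the image of the homotopy fiber product, which is unique up to isomorphism. In that case the corollary follows directly from Step 2.
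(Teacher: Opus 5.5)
\paragraph{Where the proposal breaks.} The gap is Step~3, which is where the content of the statement lies.

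In (b) you invoke a strict homotopy lifting property for fibrations. Brown's axioms supply none: there are no cofibrations, and trivial fibrations need not have sections. At best, a right homotopy lifts after precomposing with some trivial fibration $\mathfrak{Z}'\to\mathfrak{Z}$.

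More seriously, your leg $f$ need not lie in $\mathrm{Fib}$ at all. Lemma~\ref{genbvf} only achieves axioms (ii)--(v) of Definition~\ref{adfaf}(a). Enlarging the bases is exactly what can violate the minimal-dimension axiom (i). So neither Brown-type lifting nor the pullback invariance of weak equivalences you use in Step~2 is available for $f$.

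In (c), equality of the two projections in $\operatorname{Ho}(\mathbf{Kur})$ only gives separate right homotopies into $\mathrm{P}\mathfrak{X}$ and $\mathrm{P}\mathfrak{Y}$, after trivial fibrations. Gluing them into a homotopy into $\mathfrak{X}\times_M\mathfrak{Y}$ requires their images in $\mathrm{P}M$ to agree over $M\times M$, i.e.\ a homotopy between homotopies. ``Pulling back path objects along the fibration'' does not produce one.

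Your heuristic about manifold path spaces also aims at the wrong object. Here $\mathrm{P}M$ is the path Kuranishi space of Definition~\ref{mawehe}. Its underlying space is $M$, and its bases $\mathrm{P}_gU$ embed openly in $U\times U$ by Lemma~\ref{beaxx}. So right homotopies never move underlying continuous maps; they only change base maps off the zero loci and the $L_\infty[1]$-components. If the manifold hypothesis is to remove the higher obstruction, it must be through this rigidity on bases, together with a separate argument for the $L_\infty[1]$-components. Loop spaces or a model $M\times\mathbb{R}^N$ are not the relevant mechanism.

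\paragraph{Relation to the paper's argument.} Your fallback does not prove the stated universal property: a pullback in $\operatorname{Ho}(\mathbf{Kur})$ is more than an object well defined up to isomorphism. It is, however, essentially the paper's argument. The paper proceeds as follows:
\begin{itemize}
\item Lemma~\ref{genbvf} brings one leg into the form the chart-level construction needs, which never uses axiom (i).
\item Proposition~\ref{fbprokur} gives the strict fiber product in $\mathbf{Kur}$.
\item Proposition~\ref{eobjkk} shows that changing representatives changes it only by weak equivalences.
\end{itemize}
The paper then reads this off as a fiber product in $\operatorname{Ho}(\mathbf{Kur})$.

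Note that the independence in Proposition~\ref{eobjkk} is an explicit comparison, via expansions by $\mathbb{R}^{n''}$ and identified open subcharts. It is not an appeal to pullback invariance of weak equivalences. That tool is beside the point here anyway, since different representatives define the same morphism of $\mathbf{Kur}$.

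So your Steps~1--2 plus the fallback match the paper. The $\operatorname{Ho}$-level existence and uniqueness, which you correctly identify as the crux, are not established by your Step~3, and the paper does not argue them separately either.
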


\subsection*{Geometric applications}

\ \

The aforementioned results lead to interesting geometric applications as well. In particular, the constructions in this paper prove useful in at least three contexts: (i) generalized intersections of smooth submanifolds, (ii) boundary descriptions of the moduli space of pseudoholomorphic discs, and (iii) the formulation of the Weinstein category.

\ \

\noindent (i) (Generalized intersections) 
Given two submanifolds $\iota_i \colon M_i \hookrightarrow M$ ($i = 1, 2$) of a compact smooth manifold $M$, we define their \textit{generalized intersection} by
\[
(M_1, \iota_1) \cap (M_2, \iota_2) := M_1 \,{}_{\iota_1}\!\times_{\iota_2} M_2
\]
as an object of $\mathbf{Kur}$, which extends the classical intersection of submanifolds to a general setting.

\ \

\noindent (ii) (The moduli space of pseudoholomorphic disks)
Consider the Kuranishi space structure on the moduli space $\mathcal{M}_k(L; \beta)$ (and the family in $k$) discussed in \cite{Kim3} and Example~\ref{moduli} of this paper, equipped with the evaluation morphisms $\mathrm{Ev}_i \colon \mathcal{M}_k(L; \beta) \to L$. Since their common target $L$ is a smooth (Lagrangian) manifold, the \textit{homotopy} fiber product
\[
\partial \mathcal{M}_k(L; \beta) = \bigcup\limits_{\substack{k = k_1 + k_2,\\ \beta = \beta_1 + \beta_2}} \mathcal{M}_{k_1}(L; \beta_1) {}_{\mathrm{Ev}_1} \!\times_{\mathrm{Ev}_2} \mathcal{M}_{k_2}(L; \beta_2),
\]
is well-defined as an object in $\mathbf{Kur}$.

\ \

\noindent (iii) (Weinsten category)
The existence of fiber products in $\operatorname{Ho}(\mathbf{Kur})$ allows us to define the \textit{Weinstein category}, denoted by $\mathbf{Wein}$, without any transversality assumptions: its objects are symplectic manifolds naturally regarded as Kuranishi spaces, and the morphisms from $M$ to $M'$ are \textit{Lagrangian} Kuranishi correspondences.
\[
\mathrm{Mor}_{\mathbf{Wein}}\left(\mathfrak{X}, \mathfrak{X}'\right) := \bigl\{ \mathfrak{L} \overset{\iota}{\hookrightarrow} \mathfrak{X} \times \mathfrak{X}', \text{ Lagrangian} \bigr\},
\]
with composition given by fiber products and identities by diagonal Lagrangians. (See Example-Definition~\ref{bjsjbhb} for a more precise statement.)

\subsection*{Outline of the paper} 

\ \

The paper is organized as follows. In Section~2, we recall the necessary background on Kuranishi spaces and fix our conventions. Section~3 introduces two distinguished classes of morphisms in the category $\mathbf{Kur}$, namely fibrations and weak equivalences. Section~4 provides an explicit model of homotopy fiber products for fibrations, alongside several geometric applications. In Section~5, we develop the homotopy theory of $\mathbf{Kur}$ by establishing the factorization axiom for a category of fibrant objects. Finally, Section~6 concludes with the construction of the homotopy category of $\mathbf{Kur}$. Appendices~A through~C collect related supplementary materials from earlier papers by the author.

\section{Kuranishi spaces and their category}

In this section, we recall the definition of Kuranishi spaces in \cite{Kim1} and show that their collection form a category where the category of smooth manifolds embeds.

\subsection{Kuranishi charts and atlases}

We define Kuranishi charts by associating $L_{\infty}[1]$-algebras to presymplectic neighborhoods of the zero locus of the Kuranishi section.

\begin{defn}[$L_\infty$-Kuranishi charts]\label{kurdef}
Let $X$ be a compact metrizable space. An $L_\infty$-\textit{Kuranishi chart} of $X$ is given by a tuple
\begin{equation}\nonumber
\mathcal{U} = (U, E, s, \Gamma, \psi),
\end{equation}
where
\begin{enumerate}
\item[--] $U = (U, \beta)$ is a pair of a smooth manifold with a closed two-form $\beta \in Z^2(U).$
\item[--] $\pi : E \rightarrow U$ is a (finite rank) vector bundle.
\item[--] $s : U \rightarrow E$ is a smooth section.
\item[--] $\Gamma$ is a finite group acting on $U$ that restricts to the zero set of $s$, that is, $\Gamma \cdot s^{-1}(0) \subset s^{-1}(0).$
\item[--] $\psi : {s^{-1}(0)}/{\Gamma} \overset{\simeq}{\hookrightarrow} X$ is a homeomorphism onto the image.
\end{enumerate}
The \textit{dimension} of $\mathcal{U}$ is defined by $\dim \mathcal{U} := \dim U - \text{rk}E.$

We require that the chart  $\mathcal{U}$ be endowed with the following structures:
\begin{itemize}
\item[--] $U$ has a decomposition 
\begin{equation}\label{wstri}
U = \bigcup\limits_i \mathcal{S}_i,
\end{equation}
into (possibly nonconnected) submanifolds,
\[
\mathcal{S}_i := \{x \in U \mid \text{rk} (\ker\beta_x) =i \}, \ 0 \leq i \leq \dim U,
\]
together with their tubular neighborhoods for each $i$.

\item[--] To each zero point $x \in s^{-1}(0),$ we assign:
\begin{enumerate}[label = (\roman*)]
\item A presymplectic open neighborhood $W_x$ of $x$ in $U$ with $W_x \simeq B^n,$
\item A local $ L_{\infty}[1]$-algebra $\mathcal{C}_x$
\[
\mathcal{C}_x := \overbrace{\bigwedge\nolimits^{-\bullet}\Gamma(E^*|_{W_x})}^{\text{Koszul}} \oplus \overbrace{\Omega^{\bullet + 1}_{\mathrm{aug}}(\mathcal{F}_x)}^{\text{de Rham}},
\]
\end{enumerate}
whose detailed descriptions are provided in Appendix B.
\end{itemize}
\end{defn}

We  now define morphisms of Kuranishi charts, with embeddings as a special case, which yields the construction of a global object called a Kuranishi atlas.

Let $f : X \rightarrow X'$ be a continuous map between compact topological spaces. 

\begin{defn}[Chart morphisms]
A \textit{chart morphism} between Kuranishi charts ${\mathcal{U}} = (U,E,\Gamma,s,\psi)$ and ${\mathcal{U}}' = (U',E',\Gamma',s',\psi')$ of $X$ and $X',$ respectively, is defined by a pair
\[
\Phi = \left(\phi, \widehat{\phi}\right) : {\mathcal{U}} \rightarrow {\mathcal{U}}' ,
\]
given by:
\begin{enumerate}
\item[--] $\phi : U \rightarrow U', \text{ a } (\Gamma, \Gamma')\text{-equivariant map of manifolds,}$
\item[--] $\widehat{\phi} = \left\{\widehat{\phi}_x :  \mathcal{C}'_{\phi(x)} \rightarrow \mathcal{C}_x \right\}_{x \in s^{-1}(0)}$ is a family of $L_{\infty}[1]$-morphisms,
\end{enumerate}
satisfying 
\begin{enumerate}[label = (\roman*)]
\item $\psi' \circ \phi = f \circ \psi$ on $s^{-1}(0),$
\item $\phi(W_x) \subset W'_{\phi(W_x)},$
\item $\widehat{\phi}_x$ factors through $\mathcal{C}'_{\phi(x), \phi},$ that is, we have $\widehat{\phi}_x = \widehat{\phi}_x^{\mathrm{c}} \circ \widehat{\varepsilon}_{\phi(x), \phi}$ for some $L_{\infty}[1]$-morphism, $\widehat{\phi}^{\mathrm{c}}_x :  \mathcal{C}'_{\phi(x), \phi} \rightarrow \mathcal{C}_x.$
\end{enumerate}
Here, $\mathcal{C}_{\phi(x),\phi}'$ stands for \textit{the completion} of $\mathcal{C}'_{\phi(x)}$ \textit{at the image of} $\phi$, induced from the \textit{completed} V-algebra at the image (see Definition\ref{ladef} for details). We remark that for surjective $\phi$, we have an isomorphism $\mathcal{C}'_{\phi(x), \phi} \simeq \mathcal{C}'_{\phi(x)}$ (cf.\ Lemma \ref{surjcp}).

On the other hand, $\widehat{\varepsilon}_{\phi(x), \phi} : \mathcal{C}'_{\phi(x)} \rightarrow \mathcal{C}'_{\phi(x), \phi}$ is the $L_{\infty}[1]$-morphism given by considering elements in $\mathcal{C}'_{\phi(x)}$ naturally as those in $\mathcal{C}'_{\phi(x), \phi}$, which can be easily shown to define an $L_{\infty}[1]$-morphism. (See Definition \ref{varep} and Lemma \ref{vecpf}.)
\end{defn}

\begin{defn}[Embedding of charts]\label{ourcemb}
A chart morphism $\left(\phi, \widehat{\phi}\right)$ is called an \textit{embedding} if 
\begin{enumerate}
\item[--]$\phi$ is an (equivariant) embedding of manifolds, 
\item[--] $\widehat{\phi}^{\mathrm{c}}_{x} : \mathcal{C}'_{\phi(x), \phi} \rightarrow \mathcal{C}_x$ is a quasi-isomorphism of $L_{\infty}[1]$-algebras for each $x.$
\end{enumerate}
\end{defn}

A \textit{coordinate change} of Kuranishi charts provides a main example of embedding:

\begin{defn}[Coordinate changes]\label{kstr}
For two points $p, q \in X$ with $\mathrm{Im}\psi_p \cap \mathrm{Im}\psi_q \neq \emptyset,$ we define a \textit{coordinate change} $\Phi_{pq} : \mathcal{U}_p \rightarrow \mathcal{U}_q$ by a tuple
\[
\Phi_{pq} := \left(U_{pq}, \phi_{pq}, \left\{\widehat{\phi}_{pq,x}\right\}\right),
\]
where $U_{pq} \subset U_p$ is an open submanifold, and
\[
\left(\phi_{pq}, \widehat{\phi}_{pq}\right) : \mathcal{U}_p|_{U_{pq}} \rightarrow \mathcal{U}_q
\]
is an embedding of Kuranishi charts from $\mathcal{U}_p|_{U_{pq}},$ that is, the chart restricted to $U_{pq}.$ They are required to satisfy:

\begin{enumerate}[label = (\roman*)]
\item $\Phi_{pp} = \mathrm{id}_{\mathcal{U}_p},$
\item $\psi_q \circ \phi_{pq} = \psi_p$ on $s_{p}^{-1}(0) \cap U_{pq},$
\item $\phi_{qr} \circ \phi_{pq} = \phi_{pr}$ on $s_p^{-1}(0) \cap \phi_{qr}^{-1}(U_{pq}) \cap U_{pr},$
\item $\psi_p\big(s_p^{-1}(0) \cap U_{pq}\big) =\mathrm{Im} \psi_p \cap\mathrm{Im} \psi_q,$
\end{enumerate}
\end{defn}

\begin{rem}
Here the cocycle condition is imposed {on the base maps alone,} and not on the $L_{\infty}[1]$-component. The reason for this is that $L_{\infty}$-compatibilities always hold. We refer the reader to \cite{Kim3} for more detailed discussion on this issue.
\end{rem}

\begin{defn}
A pair of the compact topological space $X$ and a collection of Kuranishi charts with coordinate changes
\[
\left(X, \widehat{\mathcal{U}}\right),
\]
where $ \widehat{\mathcal{U}}=\left(\left\{\widehat{\mathcal{U}}_p\right\}, \left\{\Phi_{pq}\right\}\right),$ is called a \textit{Kuranishi atlas}. For technical reasons, we assume that $\max\limits_{p \in X} \dim U_p < \infty$ with the compactness of $X$.
\end{defn}

\subsection{Kuranishi spaces and their categorical structures}

By considering equivalence relations on atlases, we define Kuranishi spaces. We also define morphisms between them, obtaining the category of Kuranishi spaces.

Two atlases are said to be equivalent, denoted $\left(X, \widehat{\mathcal{U}}\right) \sim \left(X, \widehat{\mathcal{U}}'\right)$, or simply $\widehat{\mathcal{U}} \sim \widehat{\mathcal{U}}'$, if we have an \textit{equality between the expanded atlases}
\[
\widehat{\mathcal{U}}^0_1 \times \mathbb{R}^{m_1} = \widehat{\mathcal{U}}^0_2 \times \mathbb{R}^{m_2},
\]
for some $m_1, m_2 \geq 0$, where $\widehat{\mathcal{U}}^0_i$ is an open subcharts of $\widehat{\mathcal{U}}_i, \ i = 0,1.$ Since we are mostly interested in the abstract categorical structure, we omit the full details regarding them. Their precise definitions can be found in \cite[Definition 3.7]{Kim1}. By \cite[Lemma 3.8 (iii)]{Kim1}, this defines an equivalence relation.

\begin{defn}[Kuranishi spaces]
We define a \textit{Kuranishi space} to be an equivalence class with respect to the relation $\sim$
\[
\mathfrak{X} := \left(X, \left[\widehat{\mathcal{U}}\right]\right).
\]
\end{defn}

Let $\mathfrak{X} = \left(X, \left[\widehat{\underline{\mathcal{U}}}\right]\right)$ and $\mathfrak{Y} = \left(Y,\left[\underline{\widehat{\mathcal{U}}}'\right]\right)$ be Kuranishi spaces.
We consider two atlases $\left(X, \widehat{\mathcal{U}}\right)$ and $\left(X', \widehat{\mathcal{U}}'\right)$ such that $\widehat{\mathcal{U}} \sim \underline{\widehat{\mathcal{U}}}$ and $\widehat{\mathcal{U}}' \sim \underline{\widehat{\mathcal{U}}}'$ with $\widehat{\mathcal{U}} = \left(\{\widehat{\mathcal{U}}_p\}, \left\{\Phi_{pq}\right\}\right) $ and $\widehat{\mathcal{U}}' = \left(\{\widehat{\mathcal{U}}'_{p'}\}, \left\{\Phi'_{p'q'}\right\}\right).$

\begin{defn}[Pre-morphisms]\label{morphismkur}
A \textit{pre-morphism} is defined by the following tuple
\[
\overline{F} := \left(\widehat{\mathcal{U}}, \widehat{\mathcal{U}}', f, \{f_p\}, \left\{\widehat{f}_{p,x}\right\}\right).
\]
$f : X \rightarrow Y$ is a continuous map between the zero loci, while $\left\{\left(f_p, \left\{\widehat{f}_{p,x}\right\}\right)\right\}$ is a collection of chart morphisms. Then $\overline{F}$ is required to satisfy the following compatibilities with respect to the coordinate change $\Phi_{pq}=\left(\phi_{pq}, \left\{\widehat{\phi}_{pq,x}\right\}\right)$:
For $p,q \in X$ with Im$\psi_p \cap\mathrm{Im} \psi_q \neq \emptyset,$ we require
\begin{enumerate}[label = (\roman*)]
\item $\phi'_{f(p)f(q)} \circ f_p = f_q \circ \phi_{pq}$ on $s_{p}^{-1}(0) \cap U_{pq},$
\item $\widehat{\phi}_{pq,x} \circ \widehat{f}_{q, \phi_{pq}(x)} = \widehat{f}_{p,x} \circ \widehat{\phi}'_{f(p)f(q),f_p(x)}$ for each $x \in s_p^{-1}(0) \cap U_{pq}$ up to $L_{\infty}[1]$-homotopy.
\end{enumerate}
\end{defn}

We can define an equivalence relation on the set of pre-morphisms as in \cite[Definition 3.12]{Kim1} and \cite[Lemma 3.13]{Kim1}. 

\begin{defn}[Morphism of Kuranishi spaces]
We define a \textit{morphism} from $\mathfrak{X} = \left(X, \left[\widehat{\mathcal{U}}\right]\right)$ to $\mathfrak{X}' = \left(X', \left[\widehat{\mathcal{U}}'\right]\right)$
by an equivalence class of a pre-morphism $\overline{F}$ from $\mathfrak{X}$ to $\mathfrak{X}':$
\[F := \left[\overline{F}\right] : \mathfrak{X} \rightarrow \mathfrak{X}'.\]
\end{defn}

\begin{defn}[Composition of morphisms]
Let $\mathfrak{X} = \left(X, \left[\widehat{\mathcal{U}}\right]\right),$ $\mathfrak{X}' = \left(X', \left[\widehat{\mathcal{U}'}\right]\right),$ and $\mathfrak{X}'' = \left(X'', \left[\widehat{\mathcal{U}''}\right]\right)$ be Kuranishi spaces. Let $F : \mathfrak{X} \rightarrow \mathfrak{X}'$ and $G : \mathfrak{X}' \rightarrow \mathfrak{X}''$ be morphisms between them represented by 
\begin{equation}\nonumber
\begin{cases}
\overline{F} = \left(\widehat{\mathcal{U}}, \widehat{\mathcal{U}'}, f, \left\{f_{p}\right\}, \left\{\widehat{f}_{p,x}\right\}\right),\\
\overline{G} = \left(\underline{\widehat{\mathcal{U}'}}, \widehat{\mathcal{U}''}, g, \left\{g_{f(p)}\right\}, \left\{\widehat{g}_{f(p),y}\right\}\right),
\end{cases}
\end{equation}
respectively with $\left[ \widehat{\mathcal{U}'}\right] = \left[\underline{\widehat{\mathcal{U}'}}\right]$.

In fact, we may assume that ${\widehat{\mathcal{U}'}} = \underline{\widehat{\mathcal{U}'}}$; if not, we take the \textit{extension} of the pre-morphisms having equivalent charts as components. (See \cite[(3.6) \& Definition 3.15]{Kim1} for its precise definition). We then define the \textit{composition} $G \circ F$ to be the following equivalence class:
\begin{equation}\label{morcom}
G \circ F := \left[\left(\widehat{\mathcal{U}}, \widehat{\mathcal{U}}'', g\circ f, \left\{{g}_{f(p)} \circ {f}_{p}\right\}, \left\{{\widehat{f}}_{p,x} \circ {\widehat{g}}_{f(p),f_p(x)}\right\} \right)\right].
\end{equation}
\end{defn}

\begin{prop}\cite[Proposition 3.16]{Kim1}\label{ppidx}
The composition is well-defined and associative with the identity given by
\begin{equation}\label{idxm}
\mathrm{id}_{\mathfrak{X}}:= \left[\left(\widehat{\mathcal{U}}, \widehat{\mathcal{U}}, \mathrm{id}_X, \left\{\mathrm{id}_p\right\}, \left\{\widehat{\mathrm{id}}_{p,x}\right\}\right)\right]
\end{equation}
of each $\mathfrak{X} = \left(X, \left[\widehat{\mathcal{U}}\right]\right).$
\end{prop}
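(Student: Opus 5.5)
The plan is to check the three assertions separately: well-definedness (independence of the chosen representatives), associativity, and the unit laws. Each is reduced to the corresponding property of chart morphisms and their $L_\infty[1]$-components, and then to the equivalence relations on atlases and pre-morphisms from \cite[Definitions 3.7, 3.12]{Kim1}.

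\emph{Well-definedness.} First, the formula (\ref{morcom}) must define a pre-morphism. The base compatibility (i) of Definition \ref{morphismkur} for $\overline{G}\circ\overline{F}$ follows by stacking the squares for $\overline{F}$ and $\overline{G}$:
\[
\phi''_{gf(p)gf(q)}\circ g_{f(p)}\circ f_p=g_{f(q)}\circ\phi'_{f(p)f(q)}\circ f_p=g_{f(q)}\circ f_q\circ\phi_{pq}
\]
on $s_p^{-1}(0)\cap U_{pq}$. The second equality uses $f_p(s_p^{-1}(0))\subset s'^{-1}_{f(p)}(0)$, which holds because $\psi'\circ f_p=f\circ\psi$. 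Condition (ii) is the same stacking argument, applied to the $L_\infty[1]$-components up to homotopy. This works because composition of $L_\infty[1]$-morphisms respects homotopy, since homotopy is a congruence.

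One must also check that $\widehat f_{p,x}\circ\widehat g_{f(p),f_p(x)}$ factors through the completion $\mathcal{C}''_{gf(x),g\circ f}$, as required by condition (iii) of a chart morphism. The idea is that completion along the image of $g\circ f$ is dominated by completion along the image of $g$, because the completed V-algebras satisfy a functoriality property (Definition \ref{ladef}). If necessary, I would first replace $\overline{G}$ by an extension with surjective base maps, so that Lemma \ref{surjcp} makes the completion harmless.

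Next comes independence of choices. The choices are representatives $\overline{F}\sim\overline{F}'$, representatives $\overline{G}\sim\overline{G}'$, and the extension used to match the middle atlases. An equivalence of pre-morphisms is generated by restriction to open subcharts and by stabilization $\times\mathbb{R}^m$, with the $L_\infty$-parts changed by quasi-isomorphic Koszul factors. Both operations commute with composition: restricting or stabilizing the source of $\overline{F}$ restricts or stabilizes $\overline{G}\circ\overline{F}$, and the same holds on the target side. The middle atlas is absorbed because extending $\overline{F}$ and $\overline{G}$ along the same common expansion $\widehat{\mathcal U}'^0_1\times\mathbb{R}^{m_1}=\widehat{\mathcal U}'^0_2\times\mathbb{R}^{m_2}$ produces composites that differ only by the stabilization factor. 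That factor is contracted by the canonical quasi-isomorphism of the Koszul complex of the trivial bundle with section given by the coordinate.

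\emph{Associativity and unit.} Associativity holds on representatives strictly. Base maps compose associatively. $L_\infty[1]$-morphisms compose associatively, since they are coalgebra maps, and the order reversal in (\ref{morcom}) is consistent. After arranging, by well-definedness, that all middle atlases coincide, the two triple composites are literally equal pre-morphisms. For the unit, (\ref{idxm}) has identity base maps and identity $L_\infty$-maps, so $\mathrm{id}\circ F$ and $F\circ\mathrm{id}$ equal $\overline{F}$ on representatives. If $\overline{F}$ uses a different atlas in $[\widehat{\mathcal U}]$, one first extends the identity, and well-definedness finishes the argument.

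The main obstacle I expect is well-definedness under the extension that matches middle atlases. There one must show that the $L_\infty$-components of the extended pre-morphisms compose to something homotopic to the original composite, compatibly with the completions. I would attack this first by reducing to a single elementary stabilization $\widehat{\mathcal U}'\to\widehat{\mathcal U}'\times\mathbb{R}$, and then invoking Theorem \ref{bwggg}(ii). The zero-section embedding satisfies the tangent bundle condition (\ref{tancond}), so the induced Koszul map is a quasi-isomorphism, and its homotopy inverse can be cancelled inside the composite.
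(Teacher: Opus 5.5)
The paper gives no argument for this statement beyond citing \cite[Proposition 3.16]{Kim1}, so your outline has to stand on its own. Its architecture is the natural one: verify the pre-morphism axioms for the composite, prove independence of representatives via common extensions, then read off associativity and the unit laws on matched representatives. There is, however, a genuine gap in the step you treat as a side remark, namely axiom (iii) of a chart morphism for the composite.

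Abbreviate $\widehat\varepsilon_{\phi}:=\widehat{\varepsilon}_{\phi(x),\phi}$. Since $\mathrm{Im}(g\circ f)\subset\mathrm{Im}(g)$, you have $I_{g}\subset I_{g\circ f}$. So the comparison map between completions goes $\rho\colon\mathcal{C}''_{g}\to\mathcal{C}''_{g\circ f}$, and $\widehat\varepsilon_{g\circ f}=\rho\circ\widehat\varepsilon_{g}$. Factoring $\widehat f\circ\widehat g=\widehat f\circ\widehat g^{\mathrm c}\circ\widehat\varepsilon_g$ through $\widehat\varepsilon_{g\circ f}$ therefore requires $\widehat f\circ\widehat g^{\mathrm c}$ to descend along $\rho$. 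That is, it must kill elements that are negligible $I_{g\circ f}$-adically but not $I_g$-adically. An arbitrary $L_\infty[1]$-morphism $\widehat g^{\mathrm c}$ is only $\mathbb{R}$-linear and need not respect these filtrations, so your ``domination'' argument runs in the wrong direction. Your fallback does not close the gap either. Making only the base maps of $\overline G$ surjective trivializes $\widehat\varepsilon_g$, but $g\circ f$ can still have proper image (e.g.\ when $f_p$ is an embedding). So Lemma~\ref{surjcp} does not apply to $\widehat\varepsilon_{g\circ f}$.

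The fix is to compose representatives in which \emph{both} base maps are surjective. This is exactly what the extensions (\ref{extfi}) provide, since condition (iii) there demands surjective extended base maps, and it is how the paper's phrase ``take the extension of the pre-morphisms'' should be read. Then $\mathcal{C}''_{g\circ f}\simeq\mathcal{C}''$ and axiom (iii) is vacuous. An extension must enlarge the source at least as much as the target ($n_i\ge n_i'$), so such matched surjective representatives have to be built from right to left. First extend $\overline G$, then extend $\overline F$ onto the expanded source atlas of $\overline G$. The same ordering is needed to match both middle atlases in your associativity argument.

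Relatedly, your description of pre-morphism equivalence as generated by restriction and stabilization is too narrow. In Definition~\ref{defmoreq}, the extended base maps are arbitrary surjective extensions rather than products $f_p\times\mathrm{id}$. They need only agree on neighborhoods of the zero loci, and the $L_\infty$-parts need only agree up to homotopy. Composition preserves both kinds of identification. Base maps carry zero loci into zero loci, so preimages of such neighborhoods are again such neighborhoods. Homotopy is stable under pre-composition directly from the definition; post-composition needs either a functorial path model such as Example~\ref{hhex} or an input from \cite{Kim2}. This is where the real content of independence of representatives lies, and it should be checked explicitly.
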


The above data give rise to a category denoted by $\mathbf{Kur}$ that consists of:
\begin{equation}\nonumber
\begin{cases}
\text{Ob}(\textbf{Kur}) =\{L_{\infty}\text{-Kuranishi spaces}\}\\
\text{Mor}(\textbf{Kur}) = \{\text{Equivalence classes of pre-morphisms} \}.
\end{cases}
\end{equation}

Indeed, $\mathbf{Kur}$ contains the category of smooth manifolds as a subcategory, allowing us to treat Kuranishi spaces and smooth manifolds on equal footing.
\begin{thm}\cite[Propositions 3.16 \& 3.18]{Kim1}\label{popl}
Kuranishi spaces form a category that admits an embedding from the category of smooth manifolds.
\end{thm}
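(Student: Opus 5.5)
The statement is Theorem~\ref{popl}, cited from \cite{Kim1}. Its proof splits into two parts: the category axioms for $\mathbf{Kur}$, and the construction of a faithful functor $\mathscr{F}\colon \mathbf{Man}\to\mathbf{Kur}$.

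For the category structure I would rely on Proposition~\ref{ppidx}, and only check the points that depend on representatives.

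\emph{Composition is well defined.} Given $F=[\overline F]$ and $G=[\overline G]$ with possibly different representing atlases for $\mathfrak X'$, I would first pass to common extensions (products with $\mathbb{R}^{m}$ and open subcharts). This makes the target atlas of $\overline F$ equal to the source atlas of $\overline G$. I would then check that the composite tuple in (\ref{morcom}) is again a pre-morphism:
\begin{itemize}
\item Compatibility (i) for base maps follows by concatenating the two squares $\phi'\circ f_p=f_q\circ\phi$ and $\phi''\circ g=g\circ\phi'$.
\item Compatibility (ii) holds up to $L_\infty[1]$-homotopy, because homotopies compose and are preserved under pre- and post-composition with $L_\infty[1]$-morphisms.
\end{itemize}
Independence of the class follows because extending either factor extends the composite.

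\emph{Associativity and identities.} Associativity holds strictly at the level of representatives, since both maps and $L_\infty[1]$-morphisms compose associatively. The identity (\ref{idxm}) is clearly a pre-morphism and is a two-sided unit.

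For the embedding, define $\mathscr F(M)$ using the atlas $\widehat{\mathcal U}^{\mathrm{man}}$, built as follows:
\begin{itemize}
\item charts $(U_p,0,0,\{1\},\mathrm{id})$, with $U_p$ contractible coordinate balls carrying $\beta=0$;
\item coordinate changes given by inclusions of open sets.
\end{itemize}
With $E=0$ the Koszul part is just $C^\infty(W_x)$ in degree $0$, and $\mathcal C_x$ reduces to the de Rham part, which is fixed by $\beta=0$. For a smooth map $h\colon M\to N$, choose charts with $h(U_p)\subset U'_{h(p)}$. Take base maps $h|_{U_p}$, and take $\widehat h_{p,x}$ to be pullback $h^*$, which is a strict morphism of dg (hence $L_\infty[1]$) algebras. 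Pullback factors through the completion at the image, and compatibility with coordinate changes holds strictly because everything is restriction.

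Functoriality is then immediate: $(g\circ h)^*=h^*\circ g^*$, and identities go to (\ref{idxm}). Faithfulness follows because the continuous map $f$ of the class recovers $h$ on underlying spaces. For injectivity on objects I would use that the underlying space with zero obstruction bundle determines $M$ up to diffeomorphism.

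The main obstacle is showing that $\mathscr F$ is well defined on morphism classes. Different chart choices must give equivalent pre-morphisms, and one must ensure that the equivalence relation of \cite{Kim1} on pre-morphisms does not identify distinct smooth maps. I would first compare any two choices through a common refinement of coordinate balls. Then I would verify that equivalent pre-morphisms have equal underlying continuous maps, which gives faithfulness.
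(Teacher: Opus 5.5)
Your outline matches what the paper relies on: the paper gives no separate argument and simply cites \cite{Kim1}. That means Proposition~\ref{ppidx} for well-defined, associative composition with identity (\ref{idxm}), and the zero-obstruction-bundle atlases of Example~\ref{man} with pullback $L_\infty[1]$-components for $\mathscr F$; your faithfulness argument is right, since it comes exactly from condition (i) of Definition~\ref{defmoreq}. Two small corrections: with $E=0$ the local algebra is $C^\infty(W_x)\oplus\Omega^{\bullet+1}_{\mathrm{aug}}(W_x)$, so the degree-$0$ Koszul summand does not disappear and your $h^*$ must act on it too; and injectivity on objects holds because $\mathscr F(M)$ has $M$ itself as underlying space and the atlas class fixes its smooth structure, which is stronger than ``determines $M$ up to diffeomorphism''.
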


\subsection{Examples of Kuranishi space}

{Smooth manifold as a Kuranishi space}\label{masks}

\begin{exam}[{Smooth manifolds}]\label{man}
Smooth manifolds are Kuranishi spaces endowed with a Kuranishi atlas 
\[
\widehat{\mathcal{U}}^{\mathrm{man}} = \left(\left\{\mathcal{U}^{\mathrm{man}}_p\right\}, \left\{\Phi_{pq}\right\}\right) = \left(\left\{\left(U_p, E_p, s_p, \Gamma_p, \psi_p\right)\right\}, \left\{\left(U_{pq}, \phi_{pq}, \left\{\widehat{\phi}_{pq,x}\right\}\right)\right\}\right)
\] 
of the following restrictive type:
\begin{itemize}
\item[--] $U_p = (U_p, \alpha)$ is the pair of a Euclidean space $\mathbb{R}^n$ of fixed dimension $n$ for all $p$ and the zero form $\alpha \equiv 0.$
\item[--] The decompositions (\ref{wstri}) exist for trivial reasons.
\item[--] $E_p = U_p \times \{0\} \simeq U_p$ is the zero-rank vector bundle.
\item[--] $s_p : U_p \xrightarrow{\simeq} E_p$ is the zero section.
\item[--] $\Gamma_p$ is the trivial group.
\item[--] $\psi_p : s^{-1}_p(0) \simeq U_p \hookrightarrow \mathbb{R}^n$ is the manifold coordinate chart.
\item[--] $x \in W_x \subset U_p$ is an open ball $\simeq B^n.$
\item[--] $T\mathcal{F}_x = TU_p|_{W_x}$ is the total tangent bundle,
\item[--] $\mathcal{C}_{p,x} := \overset{\text{Koszul}}{C^{\infty}(W_x)} \oplus \overset{\text{de Rham}}{\Omega_{\mathrm{aug}}^{\bullet +1}(W_x)}$ is given by the direct sum of the one component trivial chain complex $C^{\infty}(W_x)$ and the augmented de Rham chain complex $\Omega_{\mathrm{aug}}^{\bullet +1}(W_x)$ with the $L_{\infty}[1]$-algebra $\{l^{\mathrm{man}}_k\}_{k \geq 1}$ with $l^{\mathrm{man}}_{k \geq 2} = 0$. In other words, $\mathcal{C}_{p,x}$ is only a chain complex whose cohomolgy is isomorphic to $C^{\infty}(W_x).$

Let $\mathcal{U}_p$ and $\mathcal{U}_q$ be Kuranishi charts at $p$ and $q,$ respectively. The coordinate change $\Phi_{pq} := \left(U_{pq}, \phi_{pq}, \left\{\widehat{\phi}_{pq,x}\right\}\right) : \mathcal{U}_p \rightarrow \mathcal{U}_q$ is given by:
\begin{enumerate}
\item[--] $U_{pq} := \psi_p^{-1}\left(\mathrm{Im} \psi_p \cap \mathrm{Im} \psi_q\right).$
\item[--] $\phi_{pq} : U_{pq} \rightarrow U_q$ is the (usual) coordinate change for manifolds
\begin{equation}\nonumber
\phi_{pq} := \psi_q^{-1} \circ \psi_p\big|_{U_{pq}},
\end{equation}
which is an open embedding.
\item[--] $\widehat{\phi}_{pq, x}: \mathcal{C}'_{\phi_{pq}(x)} \rightarrow \mathcal{C}_x$ at each $x \in s^{-1}_p(0) \cap U_{pq} = U_{pq}$ is an isomorphism constructed as in \cite[Example 3.3]{Kim1}.
\end{enumerate}
\end{itemize}
\end{exam}

Example \ref{man} illustrates that a smooth manifold can be regarded as a special type of Kuranishi space in the sense that it naturally determines a Kuranishi atlas, and therefore gives rise to a Kuranishi space. In fact, we further obtain the following result.

\begin{prop}\label{pman}\cite[Proposition 6.20]{Kim1}
The category of smooth manifolds $\mathbf{Man}$ is a subcategory of $\mathbf{Kur}$. In other words, there exists an embedding of categories
\begin{equation}\nonumber
\mathscr{F} : \mathbf{Man} \hookrightarrow \mathbf{Kur}
\end{equation}
\end{prop}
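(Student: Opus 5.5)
The plan is to construct the functor $\mathscr{F}$ explicitly and check functoriality and faithfulness chart by chart. On objects, I would send a smooth manifold $M$ of dimension $n$ to the Kuranishi space $\mathscr{F}(M) := \left(M, \left[\widehat{\mathcal{U}}^{\mathrm{man}}\right]\right)$ of Example \ref{man}. I must first check that this is well defined: different choices of manifold atlases of $M$ (different coordinate balls, refinements) must give equivalent Kuranishi atlases in the sense of the relation $\sim$. The argument is to pass to a common refinement by open subcharts. Since all obstruction bundles are zero and the coordinate changes $\widehat{\phi}_{pq,x}$ are isomorphisms of the chain complexes $C^\infty(W_x)\oplus\Omega^{\bullet+1}_{\mathrm{aug}}(W_x)$, this yields an equality of expanded atlases with $m_1=m_2=0$.

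On morphisms, a smooth map $g\colon M\to N$ is sent to the class of the pre-morphism
\[
\overline{G} = \left(\widehat{\mathcal{U}}^{\mathrm{man}}_M, \widehat{\mathcal{U}}^{\mathrm{man}}_N, g, \{g_p\}, \{\widehat{g}_{p,x}\}\right).
\]
Here $g_p := \psi_{g(p)}^{-1}\circ g\circ\psi_p$ on a ball $U_p$, shrunk so that $g(\mathrm{Im}\psi_p)\subset \mathrm{Im}\psi_{g(p)}$ and $g_p(W_x)\subset W'_{g_p(x)}$. The map $\widehat{g}_{p,x}$ is pullback $g_p^*$ on $C^\infty$ and on augmented de Rham forms, precomposed with the completion map $\widehat{\varepsilon}$. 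Pullback is a chain map, and all $l_{k\ge2}$ vanish, so $\widehat{g}_{p,x}$ is a strict $L_\infty[1]$-morphism. Condition (iii) of a chart morphism holds because pullback of functions and forms along $g_p$ extends to the completion at the image; I would prove this by noting that jets along $\mathrm{Im}\, g_p$ suffice to pull back. The compatibilities (i) and (ii) of Definition \ref{morphismkur} hold strictly, not just up to homotopy, because $\phi'\circ g_p=g_q\circ\phi$ as manifold maps and pullback is contravariantly functorial: $\phi^*g_q^* = g_p^*\phi'^*$.

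For functoriality, I would use $\mathscr{F}(h\circ g)=\mathscr{F}(h)\circ\mathscr{F}(g)$. This follows from formula (\ref{morcom}) together with $(h_{g(p)}\circ g_p)^* = g_p^*\circ h_{g(p)}^*$, possibly after shrinking charts, which preserves the equivalence class of the pre-morphism. Identities map to (\ref{idxm}) by construction.

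It remains to show that $\mathscr{F}$ is an embedding, that is, injective on objects up to the obvious identification and faithful. Faithfulness is immediate: the class $[\overline{G}]$ determines the underlying continuous map $g$, since equivalent pre-morphisms have the same map of zero loci. Injectivity on objects holds because $X=M$ is recovered as the underlying space, and the smooth structure is recovered from the base charts $U_p$ with their transition maps $\phi_{pq}$.

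I expect the main obstacle to be checking that the chosen representatives respect the equivalence relation on pre-morphisms from \cite[Definition 3.12]{Kim1}. Concretely, shrinking charts or changing the manifold atlas must give equivalent pre-morphisms, and the completion condition (iii) must hold when $g_p$ is not surjective. For the completion issue I would first try reducing to the surjective case via Lemma \ref{surjcp}, replacing $U_p$ by $U_p\times\mathbb{R}^m$ and making $g_p$ a submersion, which is allowed inside the equivalence class.
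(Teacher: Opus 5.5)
The paper gives no proof of its own here: it cites \cite[Proposition 6.20]{Kim1} and records only the atlas of Example~\ref{man}, which is exactly your object assignment, and your proposal is correct and the natural way to complete it (chartwise pullback on morphisms, functoriality via (\ref{morcom}), faithfulness read off from the underlying continuous map); your first argument for the completion condition already suffices, since pullback along $g_p$ annihilates $I_{g_p}$ and so descends to a chain map on the completion, making the reduction to surjective $g_p$ unnecessary. The one point to state explicitly is that Definition~\ref{kurdef} only admits compact metrizable underlying spaces, so $\mathscr{F}$ as you define it lands in $\mathbf{Kur}$ only for compact $M$, which is evidently the intended reading of $\mathbf{Man}$.
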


\begin{exam}[Moduli space of pseudoholomorphic disks]
Under the assumption on the stratifications on the bases, \label{moduli}{$\mathcal{M}_{k+1}(L, \beta)$ as a Kuranishi space}. (See \cite{Kim4} for more details on the assumption.)

\begin{thm-defn}\label{krnsthm}\cite[Theorem-Definition 4.13]{Kim3}
$$\mathcal{M}_{k+1}(L, \beta) := \left( \mathcal{M}_{k+1}(L, \beta), \left[\{\mathcal{U}_{\mathbf{p}}\}, \{ \Phi_{\mathbf{p} \mathbf{q}} \}\right] \right)$$ is a Kuranishi space, which we call \textit{the moduli space of pseudoholomorphic curves with Lagrangian boundary conditions.}
\end{thm-defn}

\begin{thm-defn}[Evaluation morphisms]\cite[Theorem-Definiton 5.2]{Kim3}
The equivalence class 
\[\mathrm{Ev}_i := \left[\left(\widehat{\mathcal{U}}_{\mathbf{p}}, \widehat{\mathcal{U}}^{\mathrm{man}}_{\mathrm{ev}_i(\mathbf{p})}, \mathrm{ev}_i, \{\mathrm{ev}_{i,\mathbf{p}}\}, \{\widehat{\mathrm{ev}}_{i,\mathbf{p},\mathbf{x}}\} \right)\right]
\] 
defines a morphism of Kuranishi spaces
\[ 
\mathrm{Ev}_i : \mathcal{M}_{k+1}(L, \beta) \rightarrow L
\]
for each $i$, which we call the \textit{evaluation morphism} of the moduli space $\mathcal{M}_{k+1}(L, \beta).$
\end{thm-defn}

\begin{thm-defn}[Forgetful morphisms]\label{thqi}\cite[Theorem-Definition 5.6]{Kim3}
The equivalence class
\[
\mathrm{Ft}_i:=\left[\left(\mathcal{U}_{\mathbf{p}^+}, \mathcal{U}_{\mathbf{p}}, {\mathrm{ft}_i}, \left\{{\mathrm{ft}_i}_{\mathbf{p}^+}\right\}, \left\{\widehat{\mathrm{ft}}_{i, \mathbf{p}^+, \mathbf{x}^+}\right\}\right)\right]
\]
defines a morphism of Kuranishi spaces
\begin{equation}\nonumber
{\mathrm{Ft}_i} : \left( \mathcal{M}_{k+1}(L, \beta), \left[\widehat{\mathcal{U}}^+\right] \right) \rightarrow \left( \mathcal{M}_{k}(L, \beta), \left[\widehat{\mathcal{U}}\right] \right)
\end{equation}
for each $i,$ which we call the $i$-th \textit{forgetful morphism} of the moduli space $\mathcal{M}_{k+1}(L, \beta).$
\end{thm-defn}

Consider the forgetful morphism $\mathrm{Ft}_{i}$ that forgets the $i$-th marked point of each element in $\mathcal{M}_{k+1}(L, \beta)$ for $k \geq 1.$ For the 0-th evaluation morphism $\mathrm{Ev}_0^{(k+1)}$ and  $\mathrm{Ev}_0^{(k)}$ from  $\mathcal{M}_{k+1}(L, \beta)$ and  $\mathcal{M}_{k}(L, \beta)$ to $L,$ respectively, we have the following diagram.
\[
\begin{tikzcd}
\mathcal{M}_{k+1}(L, \beta) \arrow{rd}[swap]{\mathrm{Ev}_0^{(k+1)}} \arrow{rr}{\mathrm{Ft}_{i}} & {} & \mathcal{M}_{k}(L, \beta) \arrow{ld}{\mathrm{Ev}_0^{(k)}}\\
{} & L. & {}
\end{tikzcd}
\]

\begin{cor}\label{efe}\cite[Corollary 5.7]{Kim3}
As morphisms of Kuranishi spaces, the equality
\begin{equation}\nonumber
\mathrm{Ev}^{(k)}_0 \circ \mathrm{Ft}_i = \mathrm{Ev}^{(k+1)}_0
\end{equation}
holds for each $1 \leq i \leq k.$
\end{cor}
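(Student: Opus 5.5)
The corollary follows by composing the two morphisms, which is exactly how \cite[Corollary 5.7]{Kim3} is obtained. I would work at the level of representative pre-morphisms and invoke the well-definedness of composition in Proposition~\ref{ppidx}.

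\textbf{Step 1: fix compatible representatives.} Choose representatives
\[
\overline{\mathrm{Ft}}_i=\left(\widehat{\mathcal{U}}^+,\widehat{\mathcal{U}},\mathrm{ft}_i,\{\mathrm{ft}_{i,\mathbf{p}^+}\},\{\widehat{\mathrm{ft}}_{i,\mathbf{p}^+,\mathbf{x}^+}\}\right),
\qquad
\overline{\mathrm{Ev}}^{(k)}_0=\left(\widehat{\mathcal{U}},\widehat{\mathcal{U}}^{\mathrm{man}},\mathrm{ev}_0,\{\mathrm{ev}_{0,\mathbf{p}}\},\{\widehat{\mathrm{ev}}_{0,\mathbf{p},\mathbf{x}}\}\right).
\]
The middle atlases should agree. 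If they do not, I would pass to extensions as in the definition of composition.

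\textbf{Step 2: compare the base data.} By (\ref{morcom}), the composite is represented by
\[
\left(\widehat{\mathcal{U}}^+,\widehat{\mathcal{U}}^{\mathrm{man}},\mathrm{ev}_0\circ\mathrm{ft}_i,\{\mathrm{ev}_{0,\mathrm{ft}_i(\mathbf{p}^+)}\circ\mathrm{ft}_{i,\mathbf{p}^+}\},\{\widehat{\mathrm{ft}}_{i,\mathbf{p}^+,\mathbf{x}^+}\circ\widehat{\mathrm{ev}}_{0,\ldots}\}\right).
\]
On the topological level, $\mathrm{ev}_0\circ\mathrm{ft}_i=\mathrm{ev}_0$ holds because forgetting the $i$-th marked point with $i\ge1$ does not move the $0$-th one. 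For the same reason, the chart-level base maps agree on the zero loci, provided the stabilization used to define $\mathrm{ft}_i$ leaves the $0$-th marked point alone.

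\textbf{Step 3: compare the $L_\infty[1]$-components.} The remaining task is to show that the composite $L_\infty[1]$-morphisms agree with $\widehat{\mathrm{ev}}_{0,\mathbf{p}^+,\mathbf{x}^+}$ up to $L_\infty[1]$-homotopy, so that the two pre-morphisms lie in the same equivalence class. I expect this to be the main obstacle.

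\begin{itemize}
\item Because the target carries the manifold atlas, the Koszul component is trivial and the de Rham component is a pullback of forms.
\item Pullback of forms is functorial: pulling back along $\mathrm{ev}_0$ and then along $\mathrm{ft}_i$ gives pullback along $\mathrm{ev}_0\circ\mathrm{ft}_i$. Since $\mathrm{ev}_0\circ\mathrm{ft}_i=\mathrm{ev}_0$ on charts, this should identify the composite with $\widehat{\mathrm{ev}}_0$.
\item Any residual discrepancy, coming from the completions $\mathcal{C}'_{\phi(x),\phi}$, should be absorbed by the factorization condition in the definition of chart morphisms.
\end{itemize}

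Having both the base data and the $L_\infty[1]$-data match up to homotopy gives $\mathrm{Ev}^{(k)}_0\circ\mathrm{Ft}_i=\mathrm{Ev}^{(k+1)}_0$ as morphisms in $\mathbf{Kur}$.
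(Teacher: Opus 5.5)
\textbf{There is a genuine gap, in Step~3.} The paper gives no argument for this statement; it is quoted from \cite[Corollary 5.7]{Kim3}, so its content lies in the constructions of $\mathrm{Ev}_i$ and $\mathrm{Ft}_i$ made there. Your outline is the natural one: compose representatives via (\ref{morcom}), then check (i)--(iii) of Definition~\ref{defmoreq}. But Step~3, which you rightly flag as the crux, is asserted rather than proved, and its first bullet is wrong. By Example~\ref{man}, the local algebra of a manifold chart is $C^{\infty}(W_y)\oplus\Omega^{\bullet+1}_{\mathrm{aug}}(W_y)$. Its Koszul part is $C^{\infty}(W_y)$ in degree $0$ with zero differential, not zero. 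So the Koszul component of $\widehat{\mathrm{ev}}_0$ is a map from functions on $L$ into the degree-$0$ part $C^{\infty}(W_{\mathbf{x}})$ of the moduli chart's Koszul complex, and nothing you say makes it trivial. The composite $\widehat{\mathrm{ft}}_{i,\mathbf{p}^+,\mathbf{x}^+}\circ\widehat{\mathrm{ev}}^{(k)}_0$ therefore depends on what $\widehat{\mathrm{ft}}_i$ does on functions. Nothing in the definition of a chart morphism forces this to be pullback along $\mathrm{ft}_{i,\mathbf{p}^+}$. The factorization $\widehat{\phi}_x=\widehat{\phi}^{\mathrm{c}}_x\circ\widehat{\varepsilon}_{\phi(x),\phi}$ is a property of each morphism separately, so it cannot ``absorb'' a discrepancy or produce the homotopy required in Definition~\ref{defmoreq}(iii). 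On the other hand, your functoriality argument for the de Rham part is unnecessary. Those components go between acyclic algebras (Proposition~\ref{augomega}), so they are quasi-isomorphisms, and any two are homotopic by Theorem~\ref{anhp}. This is exactly how the paper argues when proving $F\circ\mathrm{Pr}'=G\circ\mathrm{Pr}''$.

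\textbf{What is missing is a concrete degree-$0$ comparison.} The Koszul source is concentrated in degree $0$ with zero differential, and the Koszul higher operations vanish. So in the model of Example~\ref{hhex}, the assignment $a\mapsto(1-t)\,f_0(a)+t\,f_1(a)+dt\,\beta(a)$ is an $L_\infty[1]$-homotopy from $f_0$ to $f_1$ whenever $f_1(a)-f_0(a)=\iota_{s_{\mathbf{p}^+}}\beta(a)$ for some linear $\beta$. In other words, it suffices that the two maps on functions agree modulo the ideal generated by the components of $s_{\mathbf{p}^+}$ on $W_{\mathbf{x}^+}$. You need to extract two facts from Theorem-Definition~\ref{thqi} and \cite[Theorem-Definition 5.2]{Kim3}: on functions, $\widehat{\mathrm{ft}}_i$ acts as pullback along $\mathrm{ft}_{i,\mathbf{p}^+}$, and $\widehat{\mathrm{ev}}_0$ acts as pullback along $\mathrm{ev}_{0,\mathbf{p}}$, at least modulo that ideal. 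Then combine these with $\mathrm{ev}_{0,\mathbf{p}}\circ\mathrm{ft}_{i,\mathbf{p}^+}=\mathrm{ev}_{0,\mathbf{p}^+}$.

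\textbf{Step~2 also needs strengthening.} The identity $\mathrm{ev}_{0,\mathbf{p}}\circ\mathrm{ft}_{i,\mathbf{p}^+}=\mathrm{ev}_{0,\mathbf{p}^+}$ must hold on an open neighbourhood of the zero locus, not merely on zero loci as you state. Definition~\ref{defmoreq}(ii) compares the extended base maps on such neighbourhoods. It does so after passing to common expansions of $\widehat{\mathcal{U}}^+$ and the atlas of Theorem-Definition~\ref{krnsthm} on which $\mathrm{Ev}^{(k+1)}_0$ is defined. If points of the chart are perturbed marked maps, your own reason gives this. Forgetting $z_i$ and stabilizing collapses only components on which the map is constant, so the value at the $0$-th marked point is unchanged even if that point moves to a node. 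In particular, your proviso that the stabilization leave $z_0$ alone is not needed.
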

\end{exam}

\section{Special classes of morphisms}

In this section, we define several distinguished classes of morphisms in the category $\mathbf{Kur}$ in preparation for the construction of the homotopy category $\mathrm{Ho}(\mathbf{Kur})$ in Section~6.

\subsection{Induced $L_{\infty}[1]$-morphisms from bundle embeddings}\label{geeeee}

We briefly review the relevant results from \cite{Kim1}, in particular, the theorem stating that a compatible bundle embedding induces an $L_{\infty}[1]$-morphism. We will use this result repeatedly throughout the paper.

Let $E$ and $E'$ be vector bundles over $U$ and $U'$, respectively, with sections
$s$ and $s'$. Let
\[
\widetilde{\phi}:E\to E'
\]
be a bundle embedding covering a smooth map
\[
\phi:U\to U',
\]
and suppose that
\[
\pi_{\operatorname{Im}\widetilde{\phi}}(s'\circ\phi)
=
\widetilde{\phi}\circ s,
\]
where
$\pi_{\operatorname{Im}\widetilde{\phi}}(s'\circ\phi)$
denotes the component of $s'\circ\phi$ in
$\operatorname{Im}\widetilde{\phi}\subset\phi^*E'$.
Then, for each $x\in s^{-1}(0)$, the bundle embedding induces a map
\[
\widehat{\eta}(\widetilde{\phi})_x:
\Gamma\left(
\bigwedge\nolimits^{-\bullet}E^*|_{W_x}
\right)
\longrightarrow
\Gamma\left(
\bigwedge\nolimits^{-\bullet}E^{\prime *}|_{W'_{\phi(x)}}
\right).
\]

\subsubsection*{(Preparatory constructions)}

To define $\Xi(\widetilde{\phi})_x$, we first take some preliminary steps.
We begin by choosing a projection
\[
\pi:U'\twoheadrightarrow\phi(U)
\]
that restricts to the identity map
\[
\pi|_{\phi(U)}=\mathrm{id}_{\phi(U)}
\]
on $\phi(U)\subset U'$. Such a projection exists because $\phi$ is an embedding.

The bundle embedding
\[
\widetilde{\phi}:E\hookrightarrow E'
\]
naturally induces a bundle embedding
\[
\widetilde{\phi}:
\bigwedge\nolimits^{-\bullet}E^*
\hookrightarrow
\bigwedge\nolimits^{-\bullet}E^{\prime *}.
\]
By abuse of notation, we denote this induced embedding by the same symbol
$\widetilde{\phi}$.

\subsubsection*{(Definition of $\widehat{\eta}(\widetilde{\phi})_x$)}

We now define
\begin{equation}\label{haahb}
\widehat{\eta}_{x,k}
:=
\widehat{\eta}(\widetilde{\phi})_{x,k}
:
\Gamma\left(\bigwedge\nolimits^{-\bullet}E^*|_{W_x}\right)^{\otimes k}
\rightarrow
\Gamma\left(\bigwedge\nolimits^{-\bullet}E^{\prime *}|_{W'_{\phi(x)}}\right)
\end{equation}
by
\begin{equation}\label{eta1k}
\widehat{\eta}_{x,k}(a_1,\ldots,a_k)
:=
\begin{cases}
\overline{a_1}, & \text{if } k=1,\\
0, & \text{if } k\geq 2,
\end{cases}
\end{equation}
for
\[
a_i\in
\Gamma\left(\bigwedge\nolimits^{-\bullet}E^*|_{W_x}\right),
\qquad i=1,\ldots,k,
\]
where
\begin{equation}
\overline{a}
:=
\pi^*\circ\widetilde{\phi}(a)
\in
\Gamma\left(\bigwedge\nolimits^{-\bullet}E^{\prime *}|_{W'_{\phi(x)}}\right).
\end{equation}

Here, the pullback by $\pi$,
\[
\pi^*:
\Gamma\left(\bigwedge\nolimits^{-\bullet}E^{\prime *}|_{\operatorname{Im}\phi}\right)
\rightarrow
\Gamma\left(\bigwedge\nolimits^{-\bullet}E^{\prime *}|_{W'_{\phi(x)}}\right),
\]
is defined by
\[
\pi^*(b)(u)
:=
\pi^*\left(b\left(u|_{\operatorname{Im}\phi}\right)\right)
\]
for
\[
u\in
\Gamma\left(\bigwedge\nolimits^{-\bullet}E'|_{W'_{\phi(x)}}\right),
\]
inductively on the degree of
\[
b\in
\Gamma\left(\bigwedge\nolimits^{-\bullet}E^{\prime *}|_{\operatorname{Im}\phi}\right).
\]

\begin{lem}\label{pokhgjb}
$\widehat{\eta}_{x}$ is an $L_{\infty}[1]$-morphism.
\end{lem}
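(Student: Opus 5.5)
Since $\widehat{\eta}_{x}$ has only a linear component ($\widehat{\eta}_{x,k}=0$ for $k\geq 2$), and both source and target Koszul complexes carry trivial higher operations ($l_k=0$ for $k\geq 2$, with $l_1$ the signed contraction $\iota_{s}$, resp.\ $\iota_{s'}$), the $L_\infty[1]$-relations for $\widehat{\eta}_x$ collapse to a single identity: $\widehat{\eta}_{x,1}$ must be a chain map,
\[
\iota_{s'}\bigl(\pi^*\widetilde{\phi}(a)\bigr) = \pi^*\widetilde{\phi}\bigl(\iota_{s}a\bigr), \qquad a\in \Gamma\left(\bigwedge\nolimits^{-\bullet}E^*|_{W_x}\right).
\]
Indeed, in the $L_\infty[1]$-morphism equation at arity $k$, every term involves either some $l_{j\geq 2}$ (zero) or some $\widehat{\eta}_{x,j\geq 2}$ (zero), except for the $k=1$ equation $l'_1\circ\widehat{\eta}_{x,1}=\widehat{\eta}_{x,1}\circ l_1$. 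Degree compatibility is also immediate, since $\pi^*\circ\widetilde{\phi}$ preserves exterior degree. So the plan is: first state this reduction, then verify the chain-map identity.

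For the chain-map identity I would argue by induction on the exterior degree, matching the inductive definition of $\pi^*$. Both $\iota_{s'}$ and $\iota_s$ are (graded) derivations of the exterior algebra, and $\pi^*\circ\widetilde{\phi}$ is an algebra map (wedge products are carried to wedge products, since $\widetilde{\phi}$ is induced fiberwise from a bundle map and $\pi^*$ is a pullback). Hence it suffices to check the identity on generators: functions (degree $0$, where both sides vanish) and sections $\xi\in\Gamma(E^*|_{W_x})$ of degree $-1$. For such $\xi$, the left side is the function $u\mapsto \langle \pi^*\widetilde{\phi}(\xi), s'(u)\rangle$, which by the definition of $\pi^*$ evaluates $\widetilde{\phi}(\xi)$ on the restriction to $\operatorname{Im}\phi$, i.e.\ on $s'\circ\phi$ at $\pi(u)$. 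Since $\widetilde{\phi}(\xi)$, viewed as a covector on $E'$ along $\operatorname{Im}\phi$, only pairs with the $\operatorname{Im}\widetilde{\phi}$-component, the hypothesis $\pi_{\operatorname{Im}\widetilde{\phi}}(s'\circ\phi)=\widetilde{\phi}\circ s$ gives $\langle\widetilde{\phi}(\xi), s'\circ\phi\rangle=\langle \xi, s\rangle$ transported by $\pi$, which is exactly the right side $\pi^*\widetilde{\phi}(\iota_s\xi)$. Signs are handled by noting that both contractions use the same Koszul sign convention and $\pi^*\widetilde{\phi}$ is degree-preserving and even.

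The main obstacle is this degree $-1$ step: one must fix precisely how $\widetilde{\phi}$ extends covectors from $E$ to $E'$ (i.e.\ that the induced map on duals is the one killing a chosen complement of $\operatorname{Im}\widetilde{\phi}$), so that the pairing with $s'\circ\phi$ sees only the projection $\pi_{\operatorname{Im}\widetilde{\phi}}(s'\circ\phi)$; and one must check that the pullback along $\pi$ commutes with contraction by $s'$ on all of $W'_{\phi(x)}$, not just on $\operatorname{Im}\phi$. I would handle the latter by using that $\pi^*(b)(u)$ is defined via $u|_{\operatorname{Im}\phi}$, so the contraction of $\pi^*b$ with $s'$ is by construction the pullback of the contraction along the image, and then invoke the compatibility hypothesis there. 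Once the generator case is done, the derivation property extends the identity to all degrees, completing the proof.
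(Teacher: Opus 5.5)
Your proposal is correct and is essentially the paper's proof. The paper likewise disposes of $k\geq 2$ because all higher operations and higher components vanish. For $k=1$ it writes a homogeneous $a=a_1\wedge\cdots\wedge a_l$, uses $\iota_{s'}\pi^*(\widetilde{\phi}(a))=\pi^*\bigl(\widetilde{\phi}(a)(s'|_{\operatorname{Im}\phi})\bigr)$, which comes straight from the definition of $\pi^*$, expands the contraction, and checks $\widetilde{\phi}(a_i)(s'|_{\operatorname{Im}\phi})=a_i(s)$ via the projection hypothesis. That is your derivation-plus-generators argument written out on decomposable elements.

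As you suspected, the step away from $\operatorname{Im}\phi$ is purely definitional in both versions. It holds only because $\pi^*b$ is defined by $u\mapsto\pi^*\bigl(b(u|_{\operatorname{Im}\phi})\bigr)$, which is $\mathbb{R}$-linear but not $C^\infty$-linear in $u$; it would not hold for a naive fibrewise pullback of covectors along $\pi$.
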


\begin{proof}
It suffices to show that
\begin{equation}\nonumber
l'_k\big(
\widehat{\eta}_{x,1}(a_1),\ldots,
\widehat{\eta}_{x,1}(a_k)
\big)
=
\widehat{\eta}_{x,1}
\big(
l_k(a_1,\ldots,a_k)
\big).
\end{equation}

If $k=1$, then
\begin{equation}\nonumber
\begin{split}
l'_1\big(\widehat{\eta}_{x,1}(a)\big)
&=
l^{'\mathrm{K}}_1(\overline{a})
=
\iota_{s'}\overline{a}\\
&\overset{*}{=}
\overline{\iota_s a}
=
\overline{l^K_1(a)}
=
\widehat{\eta}_{x,1}\big(l^K_1(a)\big)
=
\widehat{\eta}_{x,1}\big(l_1(a)\big).
\end{split}
\end{equation}

Assume that $a$ is homogeneous and write
\[
a=a_1\wedge\cdots\wedge a_l
\]
as a product of degree-$1$ elements. The equality $*$ follows from the fact that the operations $\iota_s$ and $\iota_{s'}$ are compatible with the restriction maps. Indeed,
\begin{equation}\label{isp1}
\begin{split}
\iota_{s'}\overline{a}
&=
\iota_{s'}\left(\pi^*\big(\widetilde{\phi}(a)\big)\right)\\
&=
\pi^*\big(\widetilde{\phi}(a)\big)(s')\\
&=
\pi^*\left(
\widetilde{\phi}(a)(s'|_{\operatorname{Im}\phi})
\right)\\
&=
\pi^*\left(
\widetilde{\phi}(a_1)\wedge\cdots\wedge
\widetilde{\phi}(a_l)
(s'|_{\operatorname{Im}\phi})
\right)\\
&=
\pi^*\left(
\sum_i(-1)^{i+1}
\widetilde{\phi}(a_i)(s'|_{\operatorname{Im}\phi})
\cdot
\widetilde{\phi}(a_1)\wedge\cdots\wedge
\widehat{\widetilde{\phi}(a_i)}
\wedge\cdots\wedge
\widetilde{\phi}(a_l)
\right),
\end{split}
\end{equation}
where
\begin{equation}\nonumber
\begin{split}
\widetilde{\phi}(a_i)
\left(s'|_{\operatorname{Im}\phi}\right)
&=
\widetilde{\phi}(a_i)
\left(
\pi_{\operatorname{Im}\widetilde{\phi}}
\left(s'|_{\operatorname{Im}\phi}\right)
\right)\\
&=
a_i\left(
\left(\widetilde{\phi}^{-1}
\circ
\pi_{\operatorname{Im}\widetilde{\phi}}\right)
\left(s'|_{\operatorname{Im}\phi}\right)
\right)\\
&=
a_i\left(
\widetilde{\phi}^{-1}
\left(s'|_{\operatorname{Im}\phi}\right)
\right)
=
a_i(s),
\end{split}
\end{equation}
by the bundle map property of $\widetilde{\phi}$. Thus, \eqref{isp1} becomes
\begin{equation}\nonumber
\begin{split}
&\pi^*\left(
\sum_i(-1)^{i+1}
a_i(s)\cdot
\widetilde{\phi}(a_1)\wedge\cdots\wedge
\widehat{\widetilde{\phi}(a_i)}
\wedge\cdots\wedge
\widetilde{\phi}(a_l)
\right)\\
&=
\pi^*\circ\widetilde{\phi}
\left(
\sum_i(-1)^{i+1}
a_i(s)\cdot
a_1\wedge\widehat{a_i}\wedge\cdots\wedge a_l
\right)\\
&=
\pi^*\circ\widetilde{\phi}(\iota_s a)
=
\overline{\iota_s a}.
\end{split}
\end{equation}

If $k\geq 2$, then for
\[
a_i\in
\Gamma\left(\bigwedge\nolimits^{-\bullet}E^*|_{W_x}\right),
\qquad 1\leq i\leq k,
\]
we have
\begin{equation}\nonumber
\begin{split}
l'_k\big(
\widehat{\eta}_{x,1}(a_1),\ldots,
\widehat{\eta}_{x,1}(a_k)
\big)
&=
l'_k(\overline{a_1},\ldots,\overline{a_k})\\
&=
l^{'\mathrm{K}}_k(\overline{a_1},\ldots,\overline{a_k})\\
&=
0
=
\widehat{\eta}_{x,1}
\big(l^K_k(a_1,\ldots,a_k)\big).
\end{split}
\end{equation}
\end{proof}

We recall the following definitions and results.

\begin{defn}[FOOO Kuranishi charts]\label{deflkur}
Let $X$ be a compact metrizable space. We call a tuple
$\mathscr{U} := (U,E,s,\Gamma,\psi)$ an \textit{FOOO Kuranishi chart}
of $X$ if the following conditions are satisfied:
\begin{enumerate}[label=(\roman*)]
\item[--] $U$ is a simply connected orbifold.
\item[--] $E$ is a finite-rank vector bundle over $U$.
\item[--] $s:U\rightarrow E$ is a smooth section.
\item[--] $\Gamma$ is a finite group acting on $U$ and preserving $s^{-1}(0)$.
\item[--] $\psi:s^{-1}(0)/\Gamma\overset{\simeq}{\hookrightarrow}X$ is a homeomorphism onto its image.
\end{enumerate}
\end{defn}

\begin{defn}\label{foemb}
Given two FOOO Kuranishi charts $\mathcal{U}$ and $\mathcal{U}'$ of $X$,
an \textit{FOOO embedding}
$\Phi=(\phi,\widetilde{\phi}):\mathcal{U}\hookrightarrow\mathcal{U}'$
consists of:
\begin{enumerate}
\item[--] an orbifold embedding $\phi:U\hookrightarrow U'$,
\item[--] a vector bundle embedding $\widetilde{\phi}:E\hookrightarrow E'$,
\end{enumerate}
satisfying the following conditions:
\begin{enumerate}
\item[(i)] $\widetilde{\phi}\circ s=s'\circ\phi$,
\item[(ii)] $\psi'\circ\phi=\psi$ on $s^{-1}(0)$,
\item[(iii)] (\textit{Tangent bundle condition}) $ds'$ induces an isomorphism
\begin{equation}\label{tancond}
\left[ds'_{\phi(x)}\right]:
\frac{T_{\phi(x)}U'}{\phi_*(T_xU)}
\xrightarrow{\simeq}
\frac{E'_{\phi(x)}}{\widetilde{\phi}(E_x)},
\end{equation}
at each $x\in s^{-1}(0)$.
\end{enumerate}
\end{defn}

\begin{prop}\cite[Proposition 2.30, Theorem 2.36]{Kim1}\label{afec}
In the setting of Proposition \ref{afec}, an FOOO embedding induces an embedding of Kuranishi charts in the sense of Definition \ref{ourcemb}. In particular, the tangent bundle condition yields the quasi-isomorphism condition for the associated $L_{\infty}[1]$-algebras.
\end{prop}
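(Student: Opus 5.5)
The statement to prove is Proposition~\ref{afec}: an FOOO embedding $\Phi=(\phi,\widetilde{\phi})\colon\mathcal{U}\hookrightarrow\mathcal{U}'$ (Definition~\ref{foemb}) should give an embedding of Kuranishi charts in the sense of Definition~\ref{ourcemb}.

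The plan is to build the chart-embedding data component by component. The base map is $\phi$ itself, which is already an equivariant embedding. For the $L_\infty[1]$-component $\widehat{\phi}^{\mathrm{c}}_x\colon\mathcal{C}'_{\phi(x),\phi}\to\mathcal{C}_x$, I would treat the Koszul and de Rham summands of $\mathcal{C}_x$ separately and then check that the resulting map is a quasi-isomorphism.

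\emph{Koszul summand.} Since $\widetilde{\phi}\circ s=s'\circ\phi$ holds exactly, the hypothesis $\pi_{\operatorname{Im}\widetilde{\phi}}(s'\circ\phi)=\widetilde{\phi}\circ s$ of \S\ref{geeeee} is satisfied.
\begin{itemize}
\item Lemma~\ref{pokhgjb} then gives a strict $L_\infty[1]$-morphism $\widehat{\eta}(\widetilde{\phi})_x$ from the Koszul complex of $E$ to that of $E'$.
\item The map needed in Definition~\ref{ourcemb} points the other way, from the complex at $\phi(x)$ to the complex at $x$. So I take restriction along $\phi$ followed by the dual projection $\widetilde{\phi}^*\colon E'^*|_{\operatorname{Im}\phi}\to E^*$.
\item Because $\iota_{s'}$ restricts to $\iota_{s}$ on $\operatorname{Im}\phi$ by the section compatibility, this restriction map is a chain map. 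It factors through the completion $\mathcal{C}'_{\phi(x),\phi}$ essentially by construction, since restriction to $\operatorname{Im}\phi$ only sees jets along the image.
\item $\widehat{\eta}(\widetilde{\phi})_x$ is a one-sided inverse of it: restriction composed with $\pi^*\circ\widetilde{\phi}$ is the identity.
\end{itemize}

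\emph{Quasi-isomorphism via the tangent bundle condition.}
\begin{itemize}
\item Choose a local splitting $E'|_{W'}\cong \pi^*E\oplus N$ near $\phi(x)$, together with tubular coordinates $U'\cong U\times\mathbb{R}^k$ near $\operatorname{Im}\phi$.
\item The condition \eqref{tancond} says that the $N$-component of $s'$ has a normal derivative that is an isomorphism $\mathbb{R}^k\to N$ along $\operatorname{Im}\phi$.
\item After shrinking, that component $s'_N$ is therefore a regular section, cutting out exactly $\operatorname{Im}\phi$ transversally.
\item The Koszul complex of $E'$ is the tensor product of the Koszul complex of $(N,s'_N)$ and that of $(\pi^*E,s'_E)$. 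The first factor is a resolution of the (completed) functions on $\operatorname{Im}\phi$, and it is here that the completion enters.
\item A spectral sequence, filtering by the $N$-degree, then identifies the cohomology with that of the Koszul complex of $(E,s)$ on $W_x$.
\end{itemize}
This shows that restriction, and hence $\widehat{\eta}(\widetilde{\phi})_x$, is a quasi-isomorphism. This is precisely part (ii) of Theorem~\ref{bwggg}, which I would invoke directly.

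\emph{de Rham summand.} The natural map is pullback of leafwise forms $\phi^*\colon\Omega^{\bullet+1}_{\mathrm{aug}}(\mathcal{F}'_{\phi(x)})\to\Omega^{\bullet+1}_{\mathrm{aug}}(\mathcal{F}_x)$, which is compatible with the presymplectic forms if $\beta=\phi^*\beta'$. Both neighborhoods are balls, so by a Poincaré lemma the augmented complexes are acyclic up to their degree-zero functions. Pullback then matches the degree-zero parts, and so it is a quasi-isomorphism.

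The main obstacle is the mixed higher brackets of $\mathcal{C}_x$ linking the two summands (Appendix~B). I would show that pullback intertwines them because every operation is built naturally from $s$, $\beta$ and the foliation, all of which are pulled back by $\phi$. I would then conclude with a filtration argument: a strict morphism inducing a quasi-isomorphism on each graded piece is a quasi-isomorphism.

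Finally, I would check conditions (i)–(iii) of the chart-morphism definition with $f=\mathrm{id}$:
\begin{itemize}
\item (i) is FOOO condition (ii).
\item (ii) follows by choosing $W'_{\phi(x)}\supset\phi(W_x)$.
\item (iii) is the factorization through the completion established above.
\end{itemize}
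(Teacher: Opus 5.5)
Do not cite Theorem~\ref{bwggg}(ii) here; it is circular. In this paper that statement is Corollary~\ref{ygsdnn}, and its proof sketch derives it \emph{from} the present proposition together with Lemma~\ref{cochqim}. Your own sketch therefore has to carry the quasi-isomorphism.

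The de Rham part does not work as written. An FOOO embedding imposes no relation between $\beta$ and $\beta'$. Even $\beta=\phi^*\beta'$ does not give $d\phi(T\mathcal{F}_x)\subset T\mathcal{F}'_{\phi(x)}$, which is what makes $\phi^*$ a chain map on leafwise forms. For example, take $\phi(t)=(t,0)$ into $(\mathbb{R}^2,dx\wedge dy)$: then $\beta=\phi^*\beta'=0$ and $T\mathcal{F}'=0$, so every $f$ is leafwise closed upstairs, yet $d_{\mathcal{F}}(f\circ\phi)\neq 0$.

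Your cohomological picture is also off. By Proposition~\ref{augomega} the \emph{augmented} complexes are acyclic, so any $L_\infty[1]$-morphism between them is a quasi-isomorphism, even the zero morphism, which trivially factors through the completion. This is how the paper handles every de Rham component. Your claim that pullback matches the degree-zero parts would actually be false for the unaugmented complexes: for $\beta'=dy\wedge dz$ on $\mathbb{R}^3$ and $\phi$ the $x$-axis, basic functions $g(y,z)$ restrict to constants.

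Finally, there are no mixed brackets. By Appendix~B, $l_k=l^{\mathrm K}_k\oplus l^{\mathrm{dR}}_k$ acts on the two summands separately. Your stated main obstacle and the closing filtration argument are therefore moot; the quasi-isomorphism is checked summand by summand.

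On the Koszul summand your route genuinely differs from the paper's and is sound in outline. The paper only cites this result from \cite{Kim1}. Still, the proof sketch of Corollary~\ref{ygsdnn} and the construction of the map (3) in Section~5 show the intended route:
\begin{enumerate}
\item build the map in the \emph{opposite} direction, $\widehat{\varepsilon}_{\phi(x),\phi}\circ\widehat{\eta}_x$, where $\widehat{\eta}_x=\pi^*\circ\widetilde{\phi}$ depends on a chosen retraction $\pi$;
\item show it is injective with acyclic cokernel, using a double complex, Lemma~\ref{bkkjjhhh} for the regular sequence, \cite[Lemma 2.9]{Kim1} to pass to the completion, and a Taylor expansion in the section coordinates for degree-$0$ surjectivity;
\item take a homotopy inverse via Theorem~\ref{wht}.
\end{enumerate}
You instead use restriction to $\mathrm{Im}\phi$ followed by $\widetilde{\phi}^*$. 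This map is canonical, is a strict chain map precisely because $s'\circ\phi=\widetilde{\phi}\circ s$, and already points in the direction Definition~\ref{ourcemb} requires. You then compute its cohomology via $E'\cong\pi^*E\oplus N$ and a spectral sequence. This avoids both the retraction and the Whitehead inversion. The paper's version instead gives an explicit injection whose cokernel is analyzed degree by degree. Both rest on the same input: the tangent bundle condition makes the normal component of $s'$ a regular sequence cutting out $\mathrm{Im}\phi$.

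Two points in your Koszul sketch need repair. First, the filtration whose $E_0$-differential is $\iota_{s'_N}$ is the one by $\pi^*E$-degree, not by $N$-degree.

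Second, and more substantively, the tangent bundle condition holds only near $\phi(s^{-1}(0))$. But $W_x$ and $W'_{\phi(x)}$ are fixed by the chart data through the stratifications of $\beta$ and $\beta'$. So your \emph{after shrinking} step, and likewise your choice $W'_{\phi(x)}\supset\phi(W_x)$, needs justification, for instance via Example~\ref{dcotopwx}. Concretely:
\begin{itemize}
\item $W'_{\phi(x)}$ can contain zeros of $s'$ off $\mathrm{Im}\phi$ (for example, for coordinate changes). These disappear only after completing along $\mathrm{Im}\phi$; that is the real role of the completion, rather than resolving completed functions on $\mathrm{Im}\phi$.
\item Along $\mathrm{Im}\phi$ away from $\phi(s^{-1}(0))$, normal regularity is unavailable. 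Use instead that $s'\circ\phi=\widetilde{\phi}\circ s\neq 0$ there, which makes the Koszul complex locally acyclic, and glue with a partition of unity.
\item Exactness of the Koszul complex of $s'_N$ must be established over the completed ring, not merely over $C^\infty$.
\end{itemize}
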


\begin{lem}\cite[Lemma 2.37]{Kim1}\label{cochqim}
Suppose that the bases $U$ and $U'$ are contractible. Let
$\Phi:\mathcal{U}\rightarrow\mathcal{U}'$ be an embedding induced by an
$\mathrm{FOOO}$ embedding (cf. Proposition \ref{afec}). Then, for each
$x\in s^{-1}(0)$, the $L_{\infty}[1]$-morphism
\[
\widehat{\varepsilon}_{\phi(x),\phi}:
\mathcal{C}'_{\phi(x)}
\rightarrow
\mathcal{C}'_{\phi(x),\phi}
\]
is a quasi-isomorphism.
\end{lem}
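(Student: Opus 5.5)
The plan is to reduce the claim to a statement about the linear part $\widehat{\varepsilon}_{\phi(x),\phi,1}$, since an $L_{\infty}[1]$-morphism is a quasi-isomorphism exactly when its linear component is a quasi-isomorphism of the underlying complexes. Both $\mathcal{C}'_{\phi(x)}$ and $\mathcal{C}'_{\phi(x),\phi}$ split as a Koszul summand plus a de Rham summand, and $\widehat{\varepsilon}_{\phi(x),\phi}$ is the natural ``view an element as an element of the completion'' map (Definition \ref{varep}, Lemma \ref{vecpf}). I will assume this map respects the splitting. I would then treat the two summands separately.

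\textbf{De Rham summand.} Since $U'$ is contractible and $W'_{\phi(x)}\simeq B^{n}$, the augmented complex $\Omega^{\bullet+1}_{\mathrm{aug}}(\mathcal{F}'_{\phi(x)})$ is acyclic by the (leafwise) Poincaré lemma. Its completion along $\operatorname{Im}\phi$ is acyclic by the formal Poincaré lemma, using the radial homotopy operator in tubular coordinates around $\operatorname{Im}\phi\cap W'_{\phi(x)}$. That operator preserves the adic filtration, so it passes to the completion. Hence the linear part is a quasi-isomorphism on this summand.

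\textbf{Koszul summand.} Here I use the FOOO embedding data of Definition \ref{foemb}.

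1. Choose a complement and write $E'|_{W'}=\widetilde{\phi}(E)\oplus N$, after shrinking $W'$ and extending $\widetilde{\phi}(E)$ off $\operatorname{Im}\phi$ via the projection $\pi$.
2. Split the section accordingly as $s'=(s'_{\mathrm{i}},s'_N)$.
3. The tangent bundle condition (\ref{tancond}) says that $ds'_N$ maps the normal bundle of $\operatorname{Im}\phi$ isomorphically onto $N$.
4. By the implicit function theorem, after a change of tubular coordinates $s'_N$ is the normal coordinate function. So the components of $s'_N$ form a regular sequence in $C^\infty(W')$ cutting out $\operatorname{Im}\phi$, and $s'_{\mathrm{i}}|_{\operatorname{Im}\phi}=\widetilde{\phi}\circ s$ by condition (i).
5. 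The Koszul complex $\bigwedge^{-\bullet}\Gamma(E'^*|_{W'})$ is the tensor product of the Koszul complex of $s'_N$ with that of $s'_{\mathrm{i}}$. The first factor resolves $C^\infty(\operatorname{Im}\phi\cap W')$ by a Hadamard-lemma contracting homotopy.
6. Therefore $\bigwedge^{-\bullet}\Gamma(E'^*|_{W'})$ is quasi-isomorphic to $\bigwedge^{-\bullet}\Gamma(E^*|_{W_x})$, consistent with Theorem \ref{bwggg}(ii).
7. The same argument is then needed after completion along $\operatorname{Im}\phi$. The Hadamard homotopy is $C^\infty(W')$-linear up to terms of higher normal order, so it is continuous for the adic topology and extends to the completion. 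This shows the completed Koszul complex of $s'_N$ also resolves $C^\infty(\operatorname{Im}\phi\cap W')$, which is unchanged by completion.
8. Consequently $\widehat{\varepsilon}$ on this summand is compatible with the two quasi-isomorphisms to the Koszul complex of $s$, and so is itself a quasi-isomorphism by two-out-of-three.

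\textbf{Main obstacle.} I expect step 7, exactness of the completed Koszul complex in the $C^\infty$ setting, to be the hardest part. Formal completion is not automatically exact for smooth function rings. If the explicit homotopy does not extend cleanly, I would instead invoke flatness of the ring of formal Taylor expansions along $\operatorname{Im}\phi$ over $C^\infty(W')$, via Malgrange–Tougeron. Combined with regularity of $s'_N$, flatness gives that completion preserves acyclicity in negative degrees. Contractibility of $U,U'$ is used to trivialize $E$, $E'$ and $N$ globally on the neighborhoods, so that the regular-sequence description and the tubular coordinates make sense on all of $W'_{\phi(x)}$.
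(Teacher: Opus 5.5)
Your Koszul argument is correct, but it runs in the opposite direction from the arguments the paper actually contains. The paper gives no proof of this lemma; it only cites \cite[Lemma 2.37]{Kim1}. The closest argument it does carry out is the construction of (3) in Subsection 5.2. There the (completed) Koszul complex on the smaller base is mapped \emph{into} the completed one on the larger base by the retraction pullback $\pi^*\circ\widetilde{\phi}$, and the big bundle is split as image $\oplus$ complement. The cokernel is then shown acyclic by a double-complex argument. Its columns are Koszul complexes of the transverse complementary section: they are exact in negative degrees by Lemma \ref{bkkjjhhh} and \cite[Lemma 2.9]{Kim1}, and surjective in degree zero onto the ideal of the image by a Taylor expansion in the section coordinates.

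Writing $K(\cdot)$ for Koszul complexes, you instead factor $K(s')=K(s'_{\mathrm{i}})\otimes K(s'_N)$ and map \emph{out of} the big complex by the augmentation of $K(s'_N)$. Concretely, this is restriction to $Z:=\operatorname{Im}\phi\cap W'_{\phi(x)}$ followed by projection away from $\bigwedge N^*$. That map has three advantages:
\begin{itemize}
\item it is canonical and $C^\infty(W'_{\phi(x)})$-linear, whereas the paper has to remark that its inclusion \eqref{b3wgw} is only $\mathbb{R}$-linear;
\item it commutes strictly with $\widehat{\varepsilon}$;
\item it yields the uncompleted comparison $K(s')\simeq K(s)$ first, which reverses the paper's logic (the paper deduces Corollary \ref{ygsdnn} from this lemma).
\end{itemize}

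Your step 7 is also easier than you fear. In the normal coordinates $y$ of step 4, Hadamard's lemma with parameters gives $C^\infty(W'_{\phi(x)})/I_\phi^{j}\cong\bigoplus_{|\alpha|<j}C^\infty(Z)\,y^{\alpha}$. Hence $C^\infty_\phi(W'_{\phi(x)})\cong C^\infty(Z)[[y_1,\dots,y_k]]$, over which $y_1,\dots,y_k$ is a regular sequence with quotient $C^\infty(Z)$. No Malgrange--Tougeron flatness is needed.

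Two points need repair.

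\textbf{The de Rham paragraph.} A radial homotopy in tubular coordinates around $\operatorname{Im}\phi$ proves the formal Poincar\'e lemma for the full de Rham complex. But $\Omega^{\bullet+1}(\mathcal{F}'_{\phi(x)})$ is the leafwise complex of $\ker\beta'$, and a chart embedding imposes no relation between $\phi$ and $\beta'$. So the normal Euler field is in general not tangent to the leaves, and that operator does not act on leafwise forms. For example, if $\operatorname{Im}\phi$ is a leaf, leafwise cohomology must be killed by integrating \emph{along} $\operatorname{Im}\phi$, not normal to it. Argue as the paper does instead: the augmentation is built so that $\Omega^{\bullet+1}_{\mathrm{aug}}$ and its completion are acyclic (Proposition \ref{augomega} and its completed version in \cite{Kim1}), and any chain map between acyclic complexes is a quasi-isomorphism.

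\textbf{The shrinking of $W'$.} ``After shrinking $W'$'' cannot be a reduction step, since restricting a Koszul complex to a smaller open set is not in general a quasi-isomorphism. It has to be a standing choice of $W'_{\phi(x)}$: a tubular product domain on which $s'_N$ gives normal coordinates and $Z=\phi(W_x)$. The paper arranges this via Example \ref{dcotopwx}. Some such smallness is genuinely needed for the statement to hold. Suppose $s'$ vanished at a point $z\in W'_{\phi(x)}$ off the closure of $Z$, and let $f$ be a bump function with $f(z)\neq0$ supported away from $Z$. Then $f$ gives a nonzero class in $H^0=C^\infty(W'_{\phi(x)})/(s')$. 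Yet, taking $\rho\in I_\phi$ with $\rho\equiv1$ on $\operatorname{supp}f$, we have $f=\rho^{j-1}f\in I_\phi^{j}$ for every $j$, so $\widehat{\varepsilon}$ would kill it.
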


\begin{cor}\cite[Corollary 2.38]{Kim1}\label{crtbc}
For an embedding of Kuranishi charts satisfying the tangent bundle condition,
the $L_{\infty}[1]$-component
\[
\widehat{\phi}_x
=
\widehat{\phi}^{\mathrm{c}}_x
\circ
\widehat{\varepsilon}_{\phi(x),\phi}
\]
is a quasi-isomorphism for each $x\in s^{-1}(0)$.
\end{cor}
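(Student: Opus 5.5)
The statement to prove is Corollary~\ref{crtbc}: for an embedding satisfying the tangent bundle condition, the composite $\widehat{\phi}_x = \widehat{\phi}^{\mathrm{c}}_x \circ \widehat{\varepsilon}_{\phi(x),\phi}$ is a quasi-isomorphism. The plan is to show that each factor is a quasi-isomorphism and then use that quasi-isomorphisms of $L_\infty[1]$-algebras are closed under composition. Closure holds because an $L_\infty[1]$-morphism is a quasi-isomorphism exactly when its linear part $(\cdot)_1$ is a quasi-isomorphism of the underlying $l_1$-complexes. The linear part of a composite is the composite of the linear parts, so the claim reduces to composing two chain-level quasi-isomorphisms.

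First, the factor $\widehat{\phi}^{\mathrm{c}}_x \colon \mathcal{C}'_{\phi(x),\phi}\to\mathcal{C}_x$ is a quasi-isomorphism by definition. We are assuming $(\phi,\widehat{\phi})$ is an embedding of Kuranishi charts, and Definition~\ref{ourcemb} requires exactly this for each $x\in s^{-1}(0)$.

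Second, we must show that $\widehat{\varepsilon}_{\phi(x),\phi}\colon \mathcal{C}'_{\phi(x)}\to\mathcal{C}'_{\phi(x),\phi}$ is a quasi-isomorphism. This is where the tangent bundle condition enters. The chart embedding satisfying the tangent bundle condition comes from an FOOO embedding in the sense of Definition~\ref{foemb}: the section compatibility together with (\ref{tancond}) are its defining conditions. Proposition~\ref{afec} then places us in the setting of Lemma~\ref{cochqim}. That lemma requires contractible bases, which we arrange by working locally. The $L_\infty[1]$-algebras at $x$ and $\phi(x)$ only involve $W_x\simeq B^n$ and $W'_{\phi(x)}$, which are balls. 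So we restrict $\mathcal{U}$ and $\mathcal{U}'$ to these neighborhoods, possibly shrinking $W'_{\phi(x)}$ to a ball containing $\phi(W_x)$, using condition (ii) of chart morphisms. Lemma~\ref{cochqim} then applies and gives that $\widehat{\varepsilon}_{\phi(x),\phi}$ is a quasi-isomorphism.

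Composing the two quasi-isomorphisms gives the result. The main obstacle is the localization step. We must check that restricting to contractible neighborhoods does not change the algebras $\mathcal{C}'_{\phi(x)}$, $\mathcal{C}'_{\phi(x),\phi}$ or the map $\widehat{\varepsilon}$, and that the completion at the image of $\phi$ is compatible with this restriction. Since all these objects are defined in terms of $W_x$ and $W'_{\phi(x)}$ only, I expect this to be immediate. In the surjective case it is trivial by Lemma~\ref{surjcp}.
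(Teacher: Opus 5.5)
Your argument matches the one the paper relies on: $\widehat{\phi}^{\mathrm{c}}_x$ is a quasi-isomorphism by Definition~\ref{ourcemb} (or Proposition~\ref{afec}), $\widehat{\varepsilon}_{\phi(x),\phi}$ is a quasi-isomorphism by Lemma~\ref{cochqim}, and the composite is a quasi-isomorphism because linear parts compose. The only difference is your localization step, which the paper does not make because it works in the contractible-base setting of Lemma~\ref{cochqim}; it is also less immediate than you claim, since $\mathcal{C}'_{\phi(x),\phi}$ is completed along all of $\operatorname{Im}\phi\cap W'_{\phi(x)}$ rather than just $\phi(W_x)$, and shrinking $W'_{\phi(x)}$ changes the source algebra $\mathcal{C}'_{\phi(x)}$ itself, so non-contractible bases would need an extra comparison argument.
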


\begin{cor}\label{ygsdnn}
In the situation of Lemma \ref{pokhgjb}, suppose that the tangent bundle
condition holds and that the sections satisfy
\[
s'\circ\phi=\widetilde{\phi}\circ s.
\]
Then $\widehat{\eta}_x$ is a quasi-isomorphism.
\end{cor}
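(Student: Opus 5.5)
The plan is to reduce Corollary \ref{ygsdnn} to Corollary \ref{crtbc}, using Proposition \ref{afec} as the bridge. First, the data in Lemma \ref{pokhgjb} now satisfy the stronger hypotheses $s'\circ\phi=\widetilde{\phi}\circ s$ and the tangent bundle condition (\ref{tancond}). Together with the embedding $\phi$ and the bundle embedding $\widetilde{\phi}$, these are exactly conditions (i) and (iii) of Definition \ref{foemb}. Condition (ii) concerns only the footprint maps and can be arranged tautologically by setting $\psi'\circ\phi=\psi$. So $(\phi,\widetilde{\phi})$ is an FOOO embedding. By Proposition \ref{afec} it induces an embedding of Kuranishi charts, and by Corollary \ref{crtbc} its $L_\infty[1]$-component $\widehat{\phi}_x=\widehat{\phi}^{\mathrm{c}}_x\circ\widehat{\varepsilon}_{\phi(x),\phi}$ is a quasi-isomorphism at each $x\in s^{-1}(0)$.

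Second, I would identify the Koszul part of this component with $\widehat{\eta}_x$, or more precisely with its dual direction, since $\widehat{\phi}_x$ goes from $\mathcal{C}'$ to $\mathcal{C}$. Concretely, I would show that on the Koszul summand the composite $\widehat{\phi}_x\circ\widehat{\eta}_x$ equals restriction along $\phi$. In the other order, $\widehat{\eta}_x\circ\widehat{\phi}_x$ equals $\pi^*\circ(\text{restriction to }\operatorname{Im}\phi)$, and this should be chain homotopic to the identity. The homotopy would be built by shrinking the contractible $W'_{\phi(x)}$ onto $W'_{\phi(x)}\cap\operatorname{Im}\phi$ along the fibers of $\pi$, via $\pi_t$ with $\pi_0=\mathrm{id}$ and $\pi_1=\pi$. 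The two-out-of-three property for quasi-isomorphisms then gives the claim.

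A cleaner route, and the one I would try first, is to compute cohomology directly. Since $W_x\simeq B^n$ and the bundles are trivializable, the Koszul complexes are local. The tangent bundle condition says $ds'$ maps the normal directions of $\operatorname{Im}\phi$ isomorphically onto the complement $E'/\widetilde{\phi}(E)$. So, near $\phi(x)$, I would choose coordinates $U'\cong U\times\mathbb{R}^k$ and a splitting $E'\cong\phi^*E\oplus\mathbb{R}^k$ such that $s'(u,y)=(s(u)+O(y),\,y)$. After possibly shrinking, and changing the splitting by a bundle automorphism, I would take $s'=(s\circ\pi,\,y)$. The Koszul complex of $s'$ is then the tensor product of the Koszul complex of $s\circ\pi$ with the Koszul complex of the regular sequence $y_1,\dots,y_k$. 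The latter is a resolution of $C^\infty(U)$, and the map $a\mapsto\pi^*\widetilde{\phi}(a)$ is the inclusion of the first tensor factor. Because $y$ is a regular sequence in $C^\infty$ (Hadamard's lemma), the Koszul complex of $y$ is quasi-isomorphic to its $H^0=C^\infty(U)$. The inclusion of the first factor is therefore a quasi-isomorphism. Since the higher operations vanish, this also makes $\widehat{\eta}_x$ an $L_\infty[1]$ quasi-isomorphism.

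The main obstacle is the normal-form step. One has to arrange, at least up to an isomorphism of Koszul complexes compatible with $\widehat{\eta}_x$, that $s'$ splits as $(s\circ\pi,y)$, with the $O(y)$ terms in the first component removed. I would handle this with a homological perturbation argument, filtering by powers of the ideal $(y)$. The correction terms lie in $(y)$ and are therefore killed on cohomology, so the perturbed complex has the same cohomology and $\widehat{\eta}_x$ still induces an isomorphism. Here I would use the smooth version of Hadamard's lemma on the ball $W'_{\phi(x)}$.
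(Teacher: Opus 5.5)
Your proposal contains a workable proof, but not in the form you wrote it, and it is not the paper's proof.

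The paper argues entirely by citation. By the construction in \cite{Kim1}, the chart-morphism component factors as $\widehat{\phi}^{\mathrm{c}}_x=\widehat{\varepsilon}_{\phi(x),\phi}\circ\widehat{\eta}_x$. Here $\widehat{\phi}^{\mathrm{c}}_x$ is a quasi-isomorphism by Proposition~\ref{afec}, and the completion map $\widehat{\varepsilon}_{\phi(x),\phi}$ is one by Lemma~\ref{cochqim}, which assumes contractible bases. Two-out-of-three then finishes.

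Your first route has this shape; you even say you would identify the Koszul part of $\widehat{\phi}_x$ with $\widehat{\eta}_x$. With that factorization and Lemma~\ref{cochqim} it \emph{is} the paper's proof. The concrete version you give does not work, for two reasons:
\begin{itemize}
\item ``$\widehat{\phi}_x$ is restriction along $\phi$'' is a guess about the definition in \cite{Kim1}, not what the paper uses.
\item The homotopy $\widehat{\eta}_x\circ\widehat{\phi}_x\simeq\mathrm{id}$ via a fibrewise shrinking $\pi_t$ does not exist. The Koszul differential is the $C^\infty$-linear contraction $\iota_{s'}$, and $\pi_t^*$ intertwines $\iota_{s'}$ with $\iota_{s'\circ\pi_t}$, not with itself. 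So the $\pi_t^*$ are not chain endomorphisms, and there is no Cartan formula to integrate.
\end{itemize}
That homotopy is unnecessary anyway: a left inverse of $\widehat{\eta}_x$ that is a quasi-isomorphism suffices.

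Your direct computation is a genuinely different, self-contained proof. It closes without perturbation theory once you compare through the right map. Write $K(s)$, $K(s')$ for the Koszul complexes and set $r(b):=\widetilde{\phi}^*\bigl(b|_{\operatorname{Im}\phi}\bigr)$.
\begin{itemize}
\item The strict identity $s'\circ\phi=\widetilde{\phi}\circ s$ makes $r$ a chain map. Under the weaker hypothesis of Lemma~\ref{pokhgjb} it is not.
\item $r\circ\widehat{\eta}_x=\mathrm{id}$, since $\pi|_{\operatorname{Im}\phi}=\mathrm{id}$ and $\widetilde{\phi}^*\widetilde{\phi}=\mathrm{id}$. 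So it suffices to show $r$ is a quasi-isomorphism, and $r$ depends neither on $\pi$ nor on a splitting.
\item In your normal form, the coordinate change $(u,y)\mapsto(u,s'_2(u,y))$ fixes $\operatorname{Im}\phi$ pointwise.
\item Hadamard gives $s'_1=\pi^*s+\sum_j y_jg_j$. The bundle automorphism $A(p,c)=(p-\sum_j c_jg_j,\,c)$ then turns $s'$ into exactly $(\pi^*s,y)$. Both changes are the identity on $\widetilde{\phi}(E)$ over $\operatorname{Im}\phi$, so the induced isomorphisms of Koszul complexes commute with $r$.
\item Afterwards $r$ is the augmentation $K(\pi^*s)\otimes_R K(y)\to K(\pi^*s)\otimes_R R/(y)\cong K(s)$. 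This is a quasi-isomorphism because $K(y)$ resolves $R/(y)$ and $K(\pi^*s)$ is a bounded complex of free modules.
\end{itemize}

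What you wrote instead does not get there. $A$ changes the complement of $\widetilde{\phi}(E)$, so $\widehat{\eta}_x$ is in general not the inclusion of the first tensor factor. A $(y)$-adic filtration cannot repair this. The map $\iota_y$ also raises $(y)$-order, so the associated graded loses the Koszul complex of $y$. In addition, $\bigcap_n(y)^n\neq0$ in $C^\infty$ because of flat functions.

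Compared with the paper, your route avoids completions and \cite{Kim1}. It also shows where each hypothesis enters: strict compatibility gives $r$ and $s'_2|_{\operatorname{Im}\phi}=0$, and the tangent bundle condition gives the normal form $s'_2=y$. This is close in spirit to the paper's own computation of the map (3) in Section~5 via Lemma~\ref{bkkjjhhh}. The price is shrinking $W'_{\phi(x)}$ so that $\partial_y s'_2$ is invertible along $\operatorname{Im}\phi$ and $s'_2$ has no other zeros, because the tangent bundle condition is imposed only at zeros of $s$.
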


\begin{proof}[Proof sketch]
According to the definition of the map $\widehat{\phi}_x^\mathrm{c}$
in the proof of \cite[Proposition 2.30]{Kim1}, we have
\[
\widehat{\phi}_x^\mathrm{c}
=
\widehat{\varepsilon}_{\phi(x),\phi}
\circ
\widehat{\eta}_x.
\]
By \cite[Proposition 2.30]{Kim1} and Lemma \ref{cochqim}, both
$\widehat{\phi}_x^\mathrm{c}$ and
$\widehat{\varepsilon}_{\phi(x),\phi}$ are quasi-isomorphisms.
Hence $\widehat{\eta}_x$ is a quasi-isomorphism.
\end{proof}

\subsection{Embeddings, fibrations, weak equivalences, and isomorphisms}

In this subsection, we define fibrations and weak equivalences for later use in the construction a category of fibrant objects. We also consider embeddings as natural generalizations of the corresponding notions in the topological and smooth categories.

Let $F:\mathfrak{X}\to\mathfrak{X}'$ be a morphism of Kuranishi spaces represented by a pre-morphism
\[
\overline{F}
=
\left(
\widehat{\mathcal{U}},
\widehat{\mathcal{U}}',
f,
\left\{f_p\right\},
\left\{\widehat{f}_{p,x}\right\}
\right).
\]
We say:

\begin{defn}
\begin{enumerate}[label=(\alph*)]
\item
$F$ is an \textit{embedding} if there exists $\overline{F}$ such that each chart morphism
$\left(f_p,\left\{\widehat{f}_{p,x}\right\}\right)$ is an embedding for each
$p\in X$ and $x\in s_p^{-1}(0)$.

\item
$F$ is a \textit{base embedding} if there exists $\overline{F}$ such that each base component
$f_p$ is an embedding for each $p\in X$.
\end{enumerate}
\end{defn}

Denote by $\mathscr{A}(\mathfrak{X})$ the set of all atlases of $\mathfrak{X}$.
Consider the subset $\mathscr{A}_0(\mathfrak{X})\subset\mathscr{A}(\mathfrak{X})$
consisting of atlases with the smallest base dimension, namely those satisfying
$\dim U_p\leq\dim U'_p$ for each $p\in X$.

More precisely,
\begin{equation}\label{cevb}
\begin{split}
\mathscr{A}_0(\mathfrak{X})
:=
\Bigl\{\,
\widehat{\mathcal{U}}\in\mathscr{A}(\mathfrak{X})
\;\Big|\;
&\dim U_p\leq\dim U'_p \text{ for each }p\in X,\\
&\text{for some }\widehat{\mathcal{U}}'\in\mathscr{A}(\mathfrak{X})
\text{ such that }
\widehat{\mathcal{U}}\sim\widehat{\mathcal{U}}'
\,\Bigr\}.
\end{split}
\end{equation}

This is well-defined by the definition of the equivalence relation for
$L_\infty$-Kuranishi atlases (cf. Definition \ref{eqats}).

\begin{defn}\label{adfaf}
Let $F:\mathfrak{X}\to\mathfrak{X}'$ be a morphism of Kuranishi spaces represented by a pre-morphism
\[
\overline{F}
=
\left(
\widehat{\mathcal{U}},
\widehat{\mathcal{U}}',
f,
\left\{f_p\right\},
\left\{\widehat{f}_{p,x}\right\}
\right).
\]
We say:

\begin{enumerate}[label=(\alph*)]

\item
$F$ is a \textit{fibration} if there exists $\overline{F}$ satisfying the following properties:

\begin{enumerate}[label=(\roman*)]

\item
$\widehat{\mathcal{U}}\in\mathscr{A}_0(\mathfrak{X})$.

\item
$f_p$ is surjective and a submersion for each $p\in X$.

\item
The Koszul part morphism $\widehat{f}^{\mathrm{K}}_{p,x}$ is homotopic to the degree-wise injective $L_{\infty}[1]$-morphism
\[
\widehat{\eta}_{p,x}:
\bigwedge\nolimits^{-\bullet}
\Gamma\left(E^{\prime *}_{f(p)}|_{W'_{f(p)}}\right)
\rightarrow
\bigwedge\nolimits^{-\bullet}
\Gamma\left(E^*_p|_{W_x}\right)
\]
defined by
\begin{equation}\label{bhuaiuhnn}
\widehat{\eta}_{p,x}
:=
\widehat{\eta}^{\vee}_{p,x}
\circ
\underline{\widehat{\eta}}_{p,x},
\end{equation}
where $\widehat{\eta}^{\vee}_{p,x}$ and
$\underline{\widehat{\eta}}_{p,x}$ are defined as follows.

Let $\underline{E}^{\prime}_{f(p)}$ be a vector bundle over
$\underline{U}'_{f(p)}$.

There exists a pair of bundle embeddings
\[
\begin{cases}
\mathrm{Em}_p := (\overline{\mathrm{em}}_p, \mathrm{em}_p),\\
\mathrm{Em}'_p := (\overline{\mathrm{em}}'_p, \mathrm{em}'_p),
\end{cases}
\]

\[
\begin{tikzcd}
E'_{f(p)} \arrow{d} &  \underline{E}^{\prime}_{f(p)} \arrow{d} \arrow[hook]{l}[swap]{\overline{\mathrm{em}}'_p} \arrow[hook]{r}{\overline{\mathrm{em}}_p}  & E_p  \arrow{d} \\
U'_{f(p)} & \underline{U}'_{f(p)} \arrow[hook]{l}[swap]{{\mathrm{em}}'_p} \arrow[hook]{r}{\mathrm{em}_p} & U_p \\
\end{tikzcd}
\]

for each $p,$ satisfying the compatibilities
\begin{equation}\label{gadgashhhh2}
\begin{split}
\left(s_p \circ \mathrm{em}_p\right)|_{\mathrm{Im}\left(\overline{\mathrm{em}}_p\right)} = \overline{\mathrm{em}}_p \circ \underline{s}^{\prime \mathrm{c}}_{f(p)},\\
\left(s'_{f(p)} \circ \mathrm{em}'_p\right) = \overline{\mathrm{em}}'_p \circ \underline{s}^{\prime \mathrm{c}}_{f(p)}
\end{split} 
\end{equation}
together with the tangent bundle condition for $\mathrm{Em}'_p.$

By Lemma \ref{pokhgjb}, we obtain $L_{\infty}[1]$-morphisms
\[
\widehat{\eta}^{\vee}_{p,x}:
\bigwedge\nolimits^{-\bullet}
\Gamma\left(
\underline{E}^{\prime *}_{f(p)}
\big|_{\underline{W}'_{f(p)}}
\right)
\rightarrow
\bigwedge\nolimits^{-\bullet}
\Gamma\left(
E^*_p|_{W_p}
\right)
\]
and
\[
\underline{\widehat{\eta}}_{p,x}^{-1}:
\bigwedge\nolimits^{-\bullet}
\Gamma\left(
E^{\prime *}_{f(p)}|_{W'_{f(p)}}
\right)
\xrightarrow{\simeq}
\bigwedge\nolimits^{-\bullet}
\Gamma\left(
\underline{E}^{\prime *}_{f(p)}
\big|_{\underline{W}'_{f(p)}}
\right).
\]
In particular, $\underline{\widehat{\eta}}_{p,x}^{-1}$ is a quasi-isomorphism by Lemma \ref{ygsdnn}, since it has homotopy inverse $\underline{\widehat{\eta}}_{p,x}$.

\item
Consider a decomposition of the vector bundle
\begin{equation}\label{jbwwsxxx}
E_p
=
E_p^{\mathrm{i}}\oplus E_p^{\mathrm{c}}
:=
\pi_p^*\overline{\mathrm{em}}_p\left(E'_{f(p)}\right)
\oplus E_p^{\mathrm{c}}
\end{equation}
for some subbundle $E_p^{\mathrm{c}}\subset E_p$. Here,
\[
\pi_p:
U_p\twoheadrightarrow
\mathrm{em}_p(\underline{U}'_{f(p)})
\]
is a choice of projection between the contractible open sets.

We require that the $L_{\infty}$-coordinate change
$\widehat{\phi}_{pq,x}$ respect the decomposition \eqref{jbwwsxxx} in the following sense.
There exist well-defined $L_{\infty}[1]$-morphisms
\begin{equation}\label{hqrwbb}
\begin{split}
\widehat{\phi}^{\mathrm{i}}_{pq,x}
&:=
\widehat{\phi}_{pq,x}
\big|_{\bigwedge\nolimits^{-\bullet}
\left(E_{q}^{\mathrm{i}*}|_{W_{\phi_{pq}(x)}}\right)},\\
\widehat{\phi}^{\mathrm{c}}_{pq,x}
&:=
\widehat{\phi}_{pq,x}
\big|_{\bigwedge\nolimits^{-\bullet}
\left(E_{q}^{\mathrm{c}*}|_{W_{\phi_{pq}(x)}}\right)}
\end{split}
\end{equation}
for each $x\in s_p^{-1}(0)$ and $p,q\in X$ with
$\operatorname{Im}\psi_p\cap\operatorname{Im}\psi_q\neq\emptyset$.

We denote by
\[
[E_p]
:=
E_p\big/
\pi_p^*\overline{\mathrm{em}}_p\left(E'_{f(p)}\right)
\simeq
E_p^{\mathrm{c}}
\]
the quotient bundle, and by $[s_p]$ the corresponding section induced by $s_p$.

\item
\[
[s_{p'}']^{-1}(0)
\cap
f_p^{-1}\bigl(s_p^{-1}(0)\bigr)
=
(s_{p'}')^{-1}(0)
\]
for each $p\in X$.

\end{enumerate}

\item
$F$ is a \textit{weak equivalence} if there exists $\overline{F}$ satisfying the following properties:

\begin{enumerate}[label=(\roman*)]

\item
$f$ is a homeomorphism.

\item
$\widehat{f}_{p,x}$ is a quasi-isomorphism for each
$x\in s_p^{-1}(0)$ and $p\in X$.

\end{enumerate}

\item
$F$ is a \textit{trivial fibration} if it is both a fibration and a weak equivalence.

\end{enumerate}
\end{defn}

\begin{lem}\label{afdhjdasf}
\begin{enumerate}[label=(\roman*)]
\item An isomorphism in $\mathbf{Kur}$ is represented by a pre-morphism
\[
\overline{F}
=
\left(
\widehat{\mathcal{U}},
\widehat{\mathcal{U}}',
f,
\left\{f_p\right\},
\left\{\widehat{f}_{p,x}\right\}
\right)
\]
with the following properties: for each $p\in X$ and
$x\in s_p^{-1}(0)$,
\begin{enumerate}[label=(\roman*)]
\item[--] $f:X\rightarrow X'$ is a homeomorphism.
\item[--] $f_p:U_p\rightarrow U'_{f(p)}$ is a diffeomorphism of smooth manifolds.
\item[--] $\widehat{f}_{p,x}:{\mathcal{C}}'_{f(p),f_p(x)}
\rightarrow {\mathcal{C}}_{p,x}$ is a quasi-isomorphism.
\end{enumerate}

\item Any isomorphism in $\mathbf{Kur}$ is a fibration.
\end{enumerate}
\end{lem}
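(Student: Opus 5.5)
The plan is to prove (i) by unwinding what it means for two morphisms to be mutually inverse at the level of pre-morphisms, and then to get (ii) by checking the fibration axioms of Definition~\ref{adfaf}(a) one by one on the representative produced in (i).

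\textbf{Part (i).} Let $F\colon\mathfrak{X}\to\mathfrak{X}'$ be an isomorphism with inverse $G$.
\begin{enumerate}[label=(\arabic*)]
\item Using the extension procedure from the definition of composition (\cite[Definition 3.15]{Kim1}), choose representatives
\[
\overline{F}=\left(\widehat{\mathcal{U}},\widehat{\mathcal{U}}',f,\{f_p\},\{\widehat f_{p,x}\}\right),\qquad
\overline{G}=\left(\widehat{\mathcal{U}}',\widehat{\mathcal{U}},g,\{g_{p'}\},\{\widehat g_{p',y}\}\right)
\]
on a common pair of atlases.
\item By (\ref{morcom}), $G\circ F=\mathrm{id}_{\mathfrak{X}}$ means the composite pre-morphism is equivalent to the identity pre-morphism (\ref{idxm}). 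After passing to open subcharts and stabilizing by some $\mathbb{R}^m$ (cf.\ \cite[Definition 3.12]{Kim1}), this gives three things:
\begin{itemize}
\item[--] $g\circ f=\mathrm{id}_X$;
\item[--] $g_{f(p)}\circ f_p=\mathrm{id}_{U_p}$ on the shrunken, stabilized bases;
\item[--] $\widehat f_{p,x}\circ\widehat g_{f(p),f_p(x)}\simeq\widehat{\mathrm{id}}_{p,x}$ up to $L_\infty[1]$-homotopy.
\end{itemize}
\item The symmetric relation $F\circ G=\mathrm{id}_{\mathfrak{X}'}$ gives the same three statements with the roles of $\mathfrak{X}$ and $\mathfrak{X}'$ reversed.
\item Replace $\overline{F}$ by its restriction to the common shrunken, stabilized atlases. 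This is still a representative of $F$. Then:
\begin{itemize}
\item[--] $f$ is a homeomorphism with inverse $g$;
\item[--] each $f_p$ is a diffeomorphism with inverse $g_{f(p)}$;
\item[--] each $\widehat f_{p,x}$ has homotopy inverse $\widehat g_{f(p),f_p(x)}$, hence is a quasi-isomorphism.
\end{itemize}
\end{enumerate}
The delicate point is that the shrinkings needed for the two composites must be made compatible. I would handle this by intersecting the open subcharts on each side and transporting them by $f_p$ and $g_{f(p)}$, which is possible because these maps are mutually inverse.

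\textbf{Part (ii).} Take the representative from (i). Since each $f_p$ is a diffeomorphism, $\dim U_p=\dim U'_{f(p)}$. If some common trivial factor $\mathbb{R}^m$ appears on both sides, remove it on both sides simultaneously. The resulting pair of charts is still related by a diffeomorphism, so we may assume $\widehat{\mathcal{U}}\in\mathscr{A}_0(\mathfrak{X})$; this is condition (i). Condition (ii) holds because a diffeomorphism is a surjective submersion.

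For condition (iii), choose the data as follows:
\[
\underline{U}'_{f(p)}=U'_{f(p)},\quad \underline{E}'_{f(p)}=E'_{f(p)},\quad \mathrm{Em}'_p=\mathrm{id},\quad \mathrm{em}_p=f_p^{-1},
\]
with $\overline{\mathrm{em}}_p$ the bundle isomorphism $E'_{f(p)}\to E_p$ covering $f_p^{-1}$ that intertwines the sections. The tangent bundle condition for $\mathrm{Em}'_p$ is then trivial. Moreover $\underline{\widehat\eta}_{p,x}$ is the identity, and $\widehat\eta^{\vee}_{p,x}$ is the isomorphism induced by pullback along $\overline{\mathrm{em}}_p$. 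It remains to show that $\widehat f^{\mathrm{K}}_{p,x}$ is homotopic to this pullback map, which I would do by comparing both with the homotopy inverse of $\widehat g^{\mathrm{K}}$.

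For condition (iv), the decomposition (\ref{jbwwsxxx}) has $E_p^{\mathrm{i}}=E_p$ and $E_p^{\mathrm{c}}=0$. The restrictions (\ref{hqrwbb}) are then just $\widehat\phi_{pq,x}$ itself and zero, so they are trivially well defined. For condition (v), the quotient bundle $[E_p]$ is zero, so $[s]^{-1}(0)$ is the whole base. The required intersection therefore reduces to $f_p^{-1}$ of a zero locus, and this equals the zero locus because $f_p$ intertwines the sections.

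\textbf{Main obstacle.} I expect the hardest step to be producing the bundle isomorphism $E_p\cong f_p^*E'_{f(p)}$ that intertwines the sections. The definition of a pre-morphism only provides base maps and $L_\infty[1]$-components, not bundle maps. My first attempt would be to extract the bundle isomorphism from the equivalence of atlases: since $f_p$ is a diffeomorphism compatible with the charts, the stabilized atlases coincide up to the identification $f_p$, and an equality of expanded atlases includes an identification of the bundles and sections. Failing that, I would reconstruct the bundle map from the quasi-isomorphism on Koszul complexes in degree $-1$, together with the rank count $\dim U_p-\operatorname{rk}E_p=\dim\mathcal{U}'$, which forces the ranks of $E_p$ and $E'_{f(p)}$ to agree.
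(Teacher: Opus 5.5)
Your part (i) is the paper's argument, namely componentwise composition \eqref{morcom} together with condition (iii) of Definition \ref{defmoreq}, carried out with more care about the shrinkings. The gap is in part (ii), at condition (iii) of Definition \ref{adfaf}(a), and it sits at both of the places you leave open.

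First, the section-intertwining isomorphism $E_p\cong f_p^*E'_{f(p)}$ that your choices for (iii)--(iv) depend on need not exist, and neither of your proposed routes produces it. Definition \ref{eqats} only compares atlases of a single space, so it says nothing about $\widehat{\mathcal U}$ versus the pullback of $\widehat{\mathcal U}'$. Koszul quasi-isomorphisms are only $\mathbb{R}$-linear, so they do not encode a bundle map. For example, take one-chart atlases of a point with base $\mathbb{R}^2$, trivial rank-$2$ bundle, and sections $(x^2,y^2)$ resp.\ $(x^4,y)$. Use the identity on the base and de Rham parts and any chain homotopy equivalence of the two Koszul complexes; both have cohomology $\mathbb{R}^4$ in degree $0$ only. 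This is an isomorphism in $\mathbf{Kur}$. Yet $C^\infty/(x^2,y^2)\not\cong C^\infty/(x^4,y)$ as rings, so no bundle map over a diffeomorphism intertwines the sections. The paper does not construct such a map either; it simply declares all bundle data to be identities.

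Second, and decisively, the homotopy $\widehat f^{\mathrm K}_{p,x}\simeq\widehat\eta_{p,x}$ is not established. Comparing with $(\widehat g^{\mathrm K})^{-1}$ only gives $\widehat f^{\mathrm K}\simeq(\widehat g^{\mathrm K})^{-1}$. The $L_\infty$-components of a pre-morphism are arbitrary $L_\infty[1]$-morphisms with no built-in relation to bundle maps, so this does not reach $\widehat\eta_{p,x}$.

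The paper disposes of this step by Theorem \ref{anhp} alone (``any quasi-isomorphism is homotopic to an isomorphism induced by the identity embedding''). That is the ingredient your plan lacks, but be aware that it carries all the weight and, read this way, cannot hold outside the acyclic case. By Definition \ref{mdl1}(ii)--(iii), $(\mathrm{Eval}_0)_*=(\mathrm{Eval}_1)_*=(\mathrm{Incl}_*)^{-1}$, so homotopic morphisms induce the same map on cohomology. Concretely, let $X$ be a point with the chart of Example \ref{man} ($n=0$). Identity base data with Koszul component $h\mapsto 2h$ on $C^\infty(W_x)=\mathbb{R}$ is an isomorphism, with inverse $h\mapsto h/2$. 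Every $\widehat\eta_{p,x}$ built from Lemma \ref{pokhgjb} fixes $[1]$, while this component sends $[1]$ to $2[1]$. So for this representative the required homotopy does not exist, and no comparison argument of the kind you sketch can supply it.

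Separately, your reduction to $\mathscr A_0(\mathfrak X)$ by ``removing a common trivial factor'' is unjustified. Minimal atlases are related to $\widehat{\mathcal U}$ only through common stabilizations of open subatlases, and you would still have to move the representative of $F$ onto such an atlas. The paper treats this condition as trivial.
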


\begin{proof}
\begin{enumerate}[label=(\roman*)]
\item
By \eqref{morcom}, the maps on the underlying topological spaces and
the base components are given by isomorphisms in the corresponding
categories. By condition (iii) in Definition \ref{defmoreq},
the $L_{\infty}$-component is homotopic to a quasi-isomorphism and is
therefore itself a quasi-isomorphism.

\item
For condition $(\mathrm{iii})$ of (a) in Definition \ref{adfaf}, we recall that
\[
[s_p]^{-1}(0)\simeq U_p
\quad\text{and}\quad
(\mathrm{id}_p)^{-1}\bigl(s_p^{-1}(0)\bigr)
\simeq s_p^{-1}(0).
\]
Hence their intersection coincides with $s_p^{-1}(0)$.

All other conditions hold trivially. In particular, the bundle embedding
in $(\mathrm{ii})$ can be chosen to be the identity.

Any quasi-isomorphism is homotopic to an isomorphism induced by the
identity embedding.
\end{enumerate}
\end{proof}

We consider the special case of fibrations with manifold targets for later use.

\begin{lem}\label{genbvf}
Any morphism with a manifold target admits a representing pre-morphism
that satisfies all the axioms in $\mathrm{(a)}$ for a fibration except $\mathrm{(i)}$.
\end{lem}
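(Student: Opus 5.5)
Let $F\colon\mathfrak{X}\to M$ be a morphism whose target $M$ is a smooth manifold, regarded as a Kuranishi space through $\mathscr{F}$ with the atlas $\widehat{\mathcal{U}}^{\mathrm{man}}$ of Example~\ref{man}. Choose any representative $\overline{F}=(\widehat{\mathcal{U}},\widehat{\mathcal{U}}^{\mathrm{man}},f,\{f_p\},\{\widehat{f}_{p,x}\})$. The plan is to replace $\widehat{\mathcal{U}}$ by an equivalent, expanded atlas $\widehat{\mathcal{U}}\times\mathbb{R}^{m}$, which is allowed by the definition of $\sim$ and the extension of pre-morphisms in \cite[(3.6) \& Definition 3.15]{Kim1}. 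On this atlas we will build a new representative whose base maps are surjective submersions. Condition (i) is deliberately given up, since the stabilization raises the base dimension above the minimum.

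\emph{Step 1: making the base maps surjective submersions.} Shrinking the charts if needed, we may assume each $U'_{f(p)}\simeq\mathbb{R}^n$ is a ball and each $U_p$ is contractible. Take $m=n$ and set
\[
\widetilde{U}_p := U_p\times\mathbb{R}^n,\qquad \widetilde{E}_p:=E_p\oplus\underline{\mathbb{R}}^n,\qquad \widetilde{s}_p(u,v):=\bigl(s_p(u),\,v\bigr).
\]
Define $\widetilde{f}_p(u,v):=f_p(u)+v$, using a diffeomorphism $U'_{f(p)}\cong\mathbb{R}^n$. This map is a surjective submersion, because its differential in the $v$-direction is the identity, and it agrees with $f_p$ on the zero locus $\widetilde{s}_p^{-1}(0)=s_p^{-1}(0)\times\{0\}$.

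The coordinate changes are stabilized as $\phi_{pq}\times\mathrm{id}$. To keep compatibility (i) of Definition~\ref{morphismkur}, the same trivialization of $M$ is used for overlapping charts, which is possible since the coordinate changes of $\widehat{\mathcal{U}}^{\mathrm{man}}$ are the manifold transition maps. The $L_\infty$-components are then obtained by composing $\widehat{f}_{p,x}$ with the quasi-isomorphism coming from the stabilization embedding $U_p\hookrightarrow U_p\times\mathbb{R}^n$. This embedding satisfies the tangent bundle condition, so the composite is a quasi-isomorphism by Corollary~\ref{crtbc}. Hence the new tuple is a pre-morphism equivalent to $\overline{F}$.

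\emph{Step 2: conditions (iii)--(v) of (a).} Because the target has zero obstruction bundle, we take $\underline{U}'_{f(p)}:=U'_{f(p)}$ and $\underline{E}'_{f(p)}$ of rank zero. We take $\mathrm{em}'_p=\mathrm{id}$, so the tangent bundle condition for $\mathrm{Em}'_p$ holds trivially. For $\mathrm{em}_p$ we take a section of the submersion $\widetilde{f}_p$, for instance $w\mapsto(u_0,w-f_p(u_0))$.

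The compatibilities \eqref{gadgashhhh2} are then vacuous, since the bundle has rank zero. The map $\widehat{\eta}_{p,x}$ reduces to pulling back functions along $\widetilde{f}_p$, and it is degree-wise injective because $\widetilde{f}_p$ is surjective.

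\emph{Step 3: the main checks.} Two points need real work.

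First, we must show that the Koszul part of the stabilized $\widehat{f}_{p,x}$ is homotopic to $\widehat{\eta}_{p,x}$. The Koszul complex of $M$ is concentrated in degree $0$ as $C^\infty(W')$, so both maps are determined up to homotopy by their effect on $C^\infty(W')\to H^0$ of the source. That effect is restriction of the pulled-back function to the zero locus, and both maps induce $f^*$ there. I therefore expect to build the homotopy by the standard Koszul contracting argument on the contractible $W_x$.

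Second, for (iv) the decomposition \eqref{jbwwsxxx} is $E^{\mathrm{i}}_p=0$ and $E^{\mathrm{c}}_p=\widetilde{E}_p$, so the requirement on the coordinate changes is automatic. For (v) the quotient section is $[\widetilde{s}_p]=\widetilde{s}_p$. The target section vanishes identically, so $\widetilde{f}_p^{-1}$ of its zero set is all of $\widetilde{U}_p$. The intersection is therefore $\widetilde{s}_p^{-1}(0)$, exactly as in the proof of Lemma~\ref{afdhjdasf}(ii).

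The hardest part is keeping the base-map compatibility on overlaps of charts while adding the translation parameter. If a single global trivialization does not suffice, I will instead use $\widetilde{f}_p(u,v)=\exp_{f_p(u)}(v)$ for a fixed Riemannian metric on $M$, restricted to small $v$.
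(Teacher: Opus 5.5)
Your outline is the paper's. You stabilize the domain atlas so the base maps become surjective; your $\widetilde f_p(u,v)=f_p(u)+v$ also makes them submersions, which the paper does not address. You then exploit the rank-zero target with $\mathrm{em}'_p=\mathrm{id}$ and $\overline{\mathrm{em}}_p$ into the zero section, so $E^{\mathrm{i}}_p=0$, $[\widetilde s_p]=\widetilde s_p$, and (iv), (v) are automatic.

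Taking $\mathrm{em}_p$ to be a section of $\widetilde f_p$, with projection $\pi_p:=\mathrm{em}_p\circ\widetilde f_p$, is a real improvement. It is exactly what makes $\widehat\eta_{p,x}$ the pullback along $\widetilde f_p$, which the paper's ``any embedding'' need not achieve.

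The point you flag as hardest is a non-issue. Condition (i) of a pre-morphism is imposed only on $s_p^{-1}(0)\cap U_{pq}$. After stabilization this is $s_p^{-1}(0)\times\{0\}$, where $\widetilde f_p=f_p$. So you need neither a common trivialization of $M$ (which does not exist in general, since the transition maps are nonlinear) nor the exponential map.

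The genuine gap is condition (iii). Your reduction is correct for the linear parts. The source is $C^\infty(W')$ in degree $0$ with zero differential, so a homotopy amounts to a degree $-1$ map $h$ with $\widehat f^{\mathrm{K}}_{p,x,1}-\widehat\eta_{p,x,1}=\iota_{\widetilde s_p}\circ h$.

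However, your claim that $\widehat f^{\mathrm{K}}_{p,x}$ ``induces $f^*$'' has no basis. Nothing in the definitions of chart morphism or pre-morphism ties the $L_\infty[1]$-component to the base map. For example, $\widehat f_{p,x}=0$ for all $p,x$ satisfies every condition trivially, yet no representative of the resulting morphism satisfies (iii):
\begin{itemize}
\item Homotopic $L_\infty[1]$-morphisms induce the same map on cohomology, since both $\mathrm{Eval}_j$ are left inverses of the quasi-isomorphism $\mathrm{Incl}$.
\item Equivalence of pre-morphisms changes components only up to homotopy and conjugation by quasi-isomorphisms.
\item Meanwhile $\widehat\eta_{p,x}(1)=1$ is nonzero in $H^0=C^\infty(W_{(x,0)})/\langle\widetilde s_p\rangle$, because $\widetilde s_p(x,0)=0$.
\end{itemize}
Likewise, any higher components $\widehat f^{\mathrm{K}}_{p,x,k}$ with $k\ge 2$ would have to take values in $\operatorname{Im}\iota_{\widetilde s_p}$, since $\widehat\eta_{p,x}$ has none. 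Note also that $H^0$ is $C^\infty/\langle s\rangle$, not the ring of functions on the zero locus. So agreement after restriction to the zero locus is not enough when $s_p$ is not transverse.

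What is missing is a hypothesis that the Koszul component of $\overline F$ is, up to homotopy, pullback along $f_p$, as one expects for morphisms in the image of $\mathscr F$. Under that hypothesis the homotopy is explicit by Hadamard's lemma. Write $a(f_p(u)+v)-a(f_p(u))=\sum_i v_i\,b_i(a)$ with $b_i(a)=\int_0^1\partial_i a(f_p(u)+tv)\,dt$. Then $h(a):=\sum_i b_i(a)\,e_i^*$, where $e_i^*$ is the dual frame of the $\underline{\mathbb{R}}^n$ summand, satisfies $\iota_{\widetilde s_p}h(a)=\sum_i v_i\,b_i(a)$.

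The paper's one-line treatment of (iii) does not supply this input either.
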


\begin{proof}
Let
\[
\overline{F}
=
\left(
\widehat{\mathcal{U}},
\widehat{\mathcal{U}}^{\mathrm{man}},
f,
\left\{f_p\right\},
\left\{\widehat{f}_{p,x}\right\}
\right)
\]
be a choice of pre-morphism.
\begin{enumerate}[label=(\roman*), start=2]
\item
By replacing it with another representing pre-morphism with
sufficiently large contractible domain bases $U_p$, we may assume that
$f_p$ is surjective.

\item
$\widehat{f}_{p,x}$ is homotopic to the zero morphism, as given by the
composition of the $L_{\infty}[1]$-morphisms in (iv) and (v).

\item
Recall that
${E}^{\mathrm{man}}_{f(p)}\equiv {U}^{\mathrm{man}}_{f(p)}$
is the rank-zero vector bundle. We may take
\[
\left(
\underline{E}^{\mathrm{man}}_{f(p)},
\underline{U}^{\mathrm{man}}_{f(p)}
\right)
=
\left(
{E}^{\mathrm{man}}_{f(p)},
{U}^{\mathrm{man}}_{f(p)}
\right)
\]
with the identity bundle map between them. In this case,
$\underline{\widehat{\eta}}_{p,x}$ is the identity
$L_{\infty}[1]$-morphism.

\item
Choose any embedding
\[
\underline{U}^{\mathrm{man}}_{f(p)}\hookrightarrow U_p.
\]
We take the bundle embedding $\overline{\mathrm{em}}_p$ to be the
trivial embedding into the zero section, so that \eqref{gadgashhhh2}
holds trivially. Then $\left[E_p\right]$ is given by the whole
$E_p$.

Since $[s_p]\equiv s_p$ and
$\left(s_p^{\mathrm{man}}\right)^{-1}(0)=U_p^{\mathrm{man}}$, it
follows that
\[
\left[s_p'\right]^{-1}(0)
\cap
f_p^{-1}\left(s_p^{-1}(0)\right)
=
\left[s_p'\right]^{-1}(0)
\cap
f_p^{-1}\left(U_p^{\mathrm{man}}\right)
=
\left[s_p'\right]^{-1}(0).
\]
\end{enumerate}
\end{proof}

Finally, we give a nontrivial example of a weak equivalence. Let $M$ be a smooth manifold, and let $E\rightarrow U$ be a vector bundle over a smooth manifold with a section $s$ that is transverse to the zero section at every zero point, satisfying $s^{-1}(0)\simeq M$. Then we have:

\begin{prop}
There exists a weak equivalence from a smooth manifold to its
presentation as a transverse zero set.
\end{prop}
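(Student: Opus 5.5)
The plan is to build a pre-morphism
\[
\overline{F}=\left(\widehat{\mathcal{U}}^{\mathrm{man}},\widehat{\mathcal{U}}^{\mathrm{tr}},\mathrm{id}_M,\{f_p\},\{\widehat{f}_{p,x}\}\right)
\]
out of FOOO embeddings, and then check conditions (i) and (ii) of Definition~\ref{adfaf}(b). Here $\widehat{\mathcal{U}}^{\mathrm{tr}}$ is the atlas of $M$ obtained by restricting $(U,E,s)$ to small contractible neighborhoods $U^{\mathrm{tr}}_p\subset U$ of the points $p\in M=s^{-1}(0)$, with trivial group and $\psi_p$ the identification $s^{-1}(0)\cap U^{\mathrm{tr}}_p\simeq M\cap U^{\mathrm{tr}}_p$. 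The coordinate changes are inclusions of open subsets together with the identity on $E$. These inclusions satisfy the tangent bundle condition trivially, since the quotients in \eqref{tancond} vanish, so Proposition~\ref{afec} shows they are coordinate changes. On the source side, after shrinking, I would take the charts $\mathcal{U}^{\mathrm{man}}_p$ of Example~\ref{man} to be coordinate balls $U_p^{\mathrm{man}}=M\cap U^{\mathrm{tr}}_p$, again contractible.

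\emph{Chart morphisms.} For each $p$, let $f_p:U^{\mathrm{man}}_p\hookrightarrow U^{\mathrm{tr}}_p$ be the inclusion of $M\cap U^{\mathrm{tr}}_p=s^{-1}(0)\cap U^{\mathrm{tr}}_p$, and let $\widetilde{f}_p$ be the embedding of the rank-zero bundle into $E$ as the zero section. The compatibility $s\circ f_p=0=\widetilde{f}_p\circ s^{\mathrm{man}}_p$ holds because $f_p$ lands in the zero locus. The key point is the tangent bundle condition \eqref{tancond} at each $x\in M$: the map $T_xU/T_xM\to E_x/0=E_x$ induced by $ds_x$ must be an isomorphism. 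Transversality makes $ds_x$ surjective with kernel $T_xs^{-1}(0)=T_xM$, so this holds. Thus $(f_p,\widetilde{f}_p)$ is an FOOO embedding. By Proposition~\ref{afec} it induces an embedding of Kuranishi charts, and by Corollary~\ref{crtbc} the $L_\infty[1]$-component $\widehat{f}_{p,x}$ is a quasi-isomorphism for every $x$. This gives condition (ii). Condition (i) holds because the underlying map is $f=\mathrm{id}_M$. The maps $\psi$ are compatible by construction.

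\emph{Compatibility with coordinate changes.} Condition (i) of Definition~\ref{morphismkur} is immediate, since all base maps are inclusions of subsets of $U$ and $\phi'_{pq}\circ f_p=f_q\circ\phi_{pq}$ on overlaps. Condition (ii), commutativity of the $L_\infty$-squares up to homotopy, is the step I expect to be the main obstacle, because the $L_\infty$-components are only defined up to choices: the projections $\pi$ in the definition of $\widehat{\eta}$ and the completions $\mathcal{C}'_{\phi(x),\phi}$. I would try to reduce it to the fact that both composites $\widehat{\phi}_{pq,x}\circ\widehat{f}_{q,\cdot}$ and $\widehat{f}_{p,x}\circ\widehat{\phi}'_{\cdot}$ are induced, via \cite[Proposition 2.30]{Kim1}, by FOOO embeddings with the same underlying bundle map, namely the composite of the inclusions. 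I would then argue that different choices of projection yield homotopic morphisms. This is in the spirit of the remark after Definition~\ref{kstr} that $L_\infty$-compatibilities always hold, and I would cite \cite{Kim3} for that homotopy-uniqueness statement. If a direct citation is not available, a fallback is to observe that all complexes involved are quasi-isomorphic to $C^\infty$ of the relevant ball, through the Koszul resolution of $C^\infty(M\cap W)$ by transversality together with the de Rham part. On these homotopy classes both composites induce the same restriction map, which gives the homotopy.

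Taking the class $F=[\overline{F}]$ then yields a morphism $(M,[\widehat{\mathcal{U}}^{\mathrm{man}}])\to(M,[\widehat{\mathcal{U}}^{\mathrm{tr}}])$ with $f$ a homeomorphism and every $\widehat{f}_{p,x}$ a quasi-isomorphism. This is the weak equivalence \eqref{gshg}, and it establishes Theorem~\ref{gsjjagmhmh}.
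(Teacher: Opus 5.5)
Your proposal is correct and follows essentially the same route as the paper: the zero-rank bundle over $M\cap U_p$ is embedded into $E|_{U_p}$ over the inclusion, transversality supplies the tangent bundle condition, and Proposition~\ref{afec} then gives quasi-isomorphic $L_\infty$-components, with $f=\mathrm{id}_M$ a homeomorphism. The paper never checks the $L_\infty$-compatibility with coordinate changes that you flag as the main obstacle, but in its framework this follows at once from Theorem~\ref{anhp}, since both composites are quasi-isomorphisms between the same pair of algebras.
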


\begin{proof}
We interpret the data $(U,E,s,M)$ as a Kuranishi space, where the isotropy group is taken to be trivial and is therefore omitted from the
notation. Namely, for each $p\in M$, take an open neighborhood $U_p$ of $p$ in $U$, where $p\in s^{-1}(0)\simeq M$. Consider the restricted bundle
\[
E\big|_{U_p}\rightarrow U_p
\]
with section $s_p:=s|_{U_p}$. Denote
\[
V_p:=s_p^{-1}(0),
\]
which is an open neighborhood of $p$ in $M$ and is a smooth manifold of dimension
\[
\dim U_p-\mathrm{rk}E|_{U_p}
=
\dim U-\mathrm{rk}E.
\]
Take the atlas $\widehat{\mathcal{U}}^{\mathrm{man}}_M$ for $M$ given by $\{V_p\}$, together with the data in Example \ref{man}.

For the collection of charts
\[
\bigl\{
(U_p,E|_{U_p},s_p,\psi_p)
\bigr\},
\]
consider, for each pair $p,q\in M$, the tuple
\[
\Phi_{pq}
=
\left(
U_{pq},\phi_{pq},\left\{\widehat{\phi}_{pq,x}\right\}
\right)_{x\in s_p^{-1}(0)},
\]
where the components are defined as follows. Set
\[
U_{pq}:=U_p\cap U_q.
\]
The map $\phi_{pq}:U_{pq}\rightarrow U_q$ is any smooth map covering
the base map
\[
\phi^{\mathrm{man}}_{pq}:V_{pq}\rightarrow V_q,
\]
defined by
\[
\phi^{\mathrm{man}}_{pq}
=
\psi_q^{-1}\circ\psi_p,
\]
where
\[
V_{pq}:=V_p\cap V_q.
\]

Thus, we have the commutative diagram
\[
\begin{tikzcd}
U_{pq} \arrow[r, "\phi_{pq}"] & U_q \\
V_{pq} \arrow[u, hook] \arrow[r, "\phi^{\mathrm{man}}_{pq}
=\psi_q^{-1}\circ\psi_p"'] &
V_q. \arrow[u, hook]
\end{tikzcd}
\]

The map
\[
\widehat{\phi}_{pq,x}:
\mathcal{C}_{q,\phi_{pq}(x)}
\rightarrow
\mathcal{C}_{p,x}
\]
is the $L_{\infty}[1]$-morphism induced by the bundle embedding
\[
\begin{tikzcd}
E|_{U_{pq}} \arrow[r, hook] \arrow[d] &
E|_{U_q} \arrow[d] \\
U_{pq} \arrow[u, "s_q|_{U_{pq}}", bend left=20] \arrow[r, hook] &
U_q \arrow[u, swap, "s_q", bend right=20].
\end{tikzcd}
\]
as provided by Lemma \ref{pokhgjb}. Moreover, it is a
quasi-isomorphism by Corollary \ref{crtbc}. It is straightforward to
verify that the data
\[
\left(\left\{\mathcal{U}_p\right\},
\left\{\Phi_{pq}\right\}\right)
\]
define an atlas, and hence we obtain a Kuranishi space
\[
\mathcal{M}
:=
\left(
M,\,
\left[
\left(
\left\{\mathcal{U}_p\right\},
\left\{\Phi_{pq}\right\}
\right)
\right]
\right).
\]

We then consider the pre-morphism
\[
I_M
:=
\Bigl(
\widehat{\mathcal{U}}^{\mathrm{man}}_M,\,
\widehat{\mathcal{U}},\,
i_M,\,
\{i_{M,p}\},\,
\{\widehat{i}_{M,p,x}\}
\Bigr).
\]
Here, the atlases
$\widehat{\mathcal{U}}^{\mathrm{man}}_M$ and
$\widehat{\mathcal{U}}$ are chosen as above, while
$i_M:M\rightarrow M$ is the identity map.

The base map
\[
i_{M,p}:V_p\rightarrow U_p
\]
is the natural inclusion, viewed as a homeomorphism from $V_p$ onto
the subset
\[
V_p\simeq s_p^{-1}(0)\subset U_p,
\]
for each $p\in M$.

The $L_{\infty}[1]$-map
\[
\widehat{i}_{M,p,x}:
\bigwedge\nolimits^{-\bullet}
\Gamma\left(E_p^*\big|_{W_{M,x}}\right)
\oplus
\Omega^{\bullet+1}_{\mathrm{aug}}(W_{M,x})
\rightarrow
C^\infty(W_{M,x})
\oplus
\Omega^{\bullet+1}_{\mathrm{aug}}(W_x)
\]
is the $L_{\infty}[1]$-morphism induced by Lemma \ref{pokhgjb} and the
bundle embedding
\[
\begin{tikzcd}
V_p \times \{*\} \arrow[r, hook, swap, "\widetilde{i}_p"'] \arrow[d] &
E|_{U_p} \arrow[d] \\
V_p \arrow[u, "s_p^0", bend left=20] \arrow[r, swap, hook, "i_p"'] &
U_p \arrow[u, swap, "s_p", bend right=20],
\end{tikzcd}
\]
where $V_p \times \{*\}$ is the total space of the zero-rank vector
bundle over $V_p$, and $s_p^0$ is its trivial zero section. Observe
that
\[
s_p \circ i_p
=
\widetilde{i}_p \circ s_p^0
=
0.
\]

By the transversality assumption on $s$, namely, that $ds_p$ has full
rank at each zero point, and by the fact that $V_p$ is a manifold of
dimension $\dim U_p - \mathrm{rk}E|_{U_p}$, the above data satisfy the
tangent bundle condition. Indeed, at each $y \in V_p$,
\[
\frac{T_{i_p(y)}U_p}{i_{p*}(T_yV_p)}
\xrightarrow{[d_{i_p(y)}s_p], \simeq}
\frac{(E|_{U_p})_{i_p(y)}}
{\widetilde{i}_p(T_yV_p \times \{*\})}.
\]
Thus, this is an FOOO embedding, and by Proposition \ref{afec},
$\widehat{i}_{M,p,x}$ is a quasi-isomorphism. Therefore, the morphism
represented by $I_M$ is a weak equivalence.

\end{proof}

\section{A model of homotopy fiber products}

In this section, we construct an explicit model of fiber products for Kuranishi spaces. Specifically, when one of two given morphisms is a fibration, we show that there exists an object completing the pullback diagram and satisfying the corresponding universal property.

\subsection{Chart-level construction}

In this subsection, we construct an atlas for the fiber product by working at the level of Kuranishi charts, under the assumption that one of the two morphisms is a fibration.

Let $\mathfrak{X}$, $\mathfrak{X}'$, and $\mathfrak{X}''$ be Kuranishi spaces, and let $F \colon \mathfrak{X}' \to \mathfrak{X}$ and $G \colon \mathfrak{X}'' \to \mathfrak{X}$ be morphisms represented by pre-morphisms
\[
\overline{F} := \left(\widehat{\mathcal{U}}', \widehat{\mathcal{U}}, f, \left\{f'_{p'}\right\}, \left\{\widehat{f}'_{p',x'}\right\}\right)
\quad\text{and}\quad
\overline{G} := \left(\widehat{\mathcal{U}}'', \widehat{\mathcal{U}}, g, \left\{g''_{p''}\right\}, \left\{\widehat{g}''_{p'',x''}\right\}\right),
\]
respectively. We assume that $G$ satisfies all the axioms of a fibration in (a) with the exception of (i).

The primary objective of this subsection is to construct a (homotopy) fiber product Kuranishi space
\[
\mathfrak{X}' {}_F \times_G \mathfrak{X}''
\]
associated with the diagram
\[
\begin{tikzcd}
    & \mathfrak{X}'' \arrow[d, "G"] \\
    \mathfrak{X}' \arrow[r, "F"'] & \mathfrak{X}.
\end{tikzcd}
\]

We begin by observing the relation
\[
f_{p'} \circ \pi' = g_{p''} \circ \pi'',
\]
where
\[
\pi' \colon U'_{p'} {}_{f_{p'}} \!\times_{g_{p''}} U''_{p''} \to U'_{p'},
\qquad
\pi'' \colon U'_{p'} {}_{f_{p'}} \!\times_{g_{p''}} U''_{p''} \to U''_{p''}
\]
denote the canonical projections from the fiber product. In other words, the following diagram commutes:
\[
\begin{tikzcd}
U'_{p'} {}_{f_{p'}} \times_{g_{p''}} U''_{p''} \arrow[r, "\pi''"] \arrow[d, "\pi'"'] & U''_{p''} \arrow[d, "g_{p''}"] \\
U'_{p'} \arrow[r, "f_{p'}"] & U_{f(p') = g(p'')}.
\end{tikzcd}
\]

We set
\[
\bigl[E''_{p''}\bigr] := E''_{p''} / \operatorname{Im}\left(\overline{\operatorname{em}}_{f(p')}\right),
\]
and write $\bigl[\tilde{s}''_{p''}\bigr]$ for the induced section.

To each pair $(p',p'') \in X' {}_f \times_g X''$, we assign a Kuranishi chart
\[
    \mathcal{U}^\mathrm{f}_{(p',p'')}
    := \Bigl( U^\mathrm{f}_{(p',p'')},\, E^\mathrm{f}_{(p',p'')},\, s^\mathrm{f}_{(p',p'')},\, \psi^\mathrm{f}_{(p',p'')} \Bigr)
\]
consisting of the following data:

\begin{itemize}
    \item[--] $U^\mathrm{f}_{(p',p'')} := U'_{p'} {}_{f_{p'}} \times_{g_{p''}} U''_{p''}$, which is a smooth manifold since $g_{p''}$ is a submersion.

    \item[--] $E^\mathrm{f}_{(p',p'')} := \pi^{\prime*} E'_{p'} \oplus \pi^{\prime\prime*}\bigl[E''_{p''}\bigr]
    = \pi^{\prime*} E'_{p'} \oplus \pi^{\prime\prime*}\!\left(E''_{p''} / \operatorname{Im}\left(\overline{\operatorname{em}}_{f(p')}\right)\right)$.

    \item[--] The section $s^\mathrm{f}_{(p',p'')} := \pi^{\prime*} s'_{p'} \oplus \pi^{\prime\prime*}\bigl[\tilde{s}''_{p''}\bigr] \colon U_{(p',p'')}^\mathrm{f} \to E_{(p',p'')}^\mathrm{f}$, given explicitly by
    \[
    (y', y'') \mapsto \pi^{\prime*} s'_{p'}(y') \oplus \pi^{\prime\prime*} \bigl[\tilde{s}''_{p''}\bigr](y'').
    \]

    \item[--] The parametrization $\psi^\mathrm{f}_{(p',p'')}$, defined by the composition
    \begin{equation}\nonumber
    \begin{split}
    \psi^\mathrm{f}_{(p',p'')} \colon \left(s^\mathrm{f}_{(p',p'')}\right)^{-1}(0) &= \Bigl(\pi^{\prime*} s'_{p'} \oplus \pi^{\prime\prime*}\bigl[\tilde{s}''_{p''}\bigr]\Bigr)^{-1}(0) \\
        &\overset{\text{Lemma~\ref{agagsdgaa}}}{\simeq}
        (s'_{p'})^{-1}(0) {}_{f_{p'}} \times_{g_{p''}} (s''_{p''})^{-1}(0) \\
        &\longrightarrow X' {}_f \times_g X'', \\
    (y',y'') &\longmapsto \left(\psi'_{p'}(y'), \psi''_{p''}(y'')\right).
    \end{split}
    \end{equation}
\end{itemize}

\begin{lem}\label{agagsdgaa}
For each pair $(p', p'')$, there is a natural identification
\[
\left(s^\mathrm{f}_{(p',p'')}\right)^{-1}(0) \;\simeq\;
(s'_{p'})^{-1}(0) {}_{f_{p'}}\!\times_{g_{p''}} (s''_{p''})^{-1}(0).
\]
\end{lem}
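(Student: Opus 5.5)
The identification will be obtained by unwinding the definitions of $s^\mathrm{f}_{(p',p'')}$ and $U^\mathrm{f}_{(p',p'')}$ and using condition (v) of a fibration for $G$, which is the only place where the quotient section $[\tilde{s}''_{p''}]$ is related to the honest section $s''_{p''}$.

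\emph{Step 1: rewriting the zero set.} A point $(y',y'')\in U^\mathrm{f}_{(p',p'')}$ satisfies $f_{p'}(y')=g_{p''}(y'')$ by definition of the fiber product of bases. Since $E^\mathrm{f}_{(p',p'')}$ is a direct sum of pullbacks, $s^\mathrm{f}_{(p',p'')}(y',y'')=0$ holds if and only if $s'_{p'}(y')=0$ and $[\tilde{s}''_{p''}](y'')=0$. Hence
\[
\left(s^\mathrm{f}_{(p',p'')}\right)^{-1}(0)=\left\{(y',y'') \;\middle|\; y'\in (s'_{p'})^{-1}(0),\ y''\in [\tilde{s}''_{p''}]^{-1}(0),\ f_{p'}(y')=g_{p''}(y'')\right\}.
\]

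\emph{Step 2: identifying the second factor.} I claim that, for such pairs, the condition $y''\in[\tilde{s}''_{p''}]^{-1}(0)$ is equivalent to $y''\in(s''_{p''})^{-1}(0)$. One direction is trivial: $s''=0$ forces its class to vanish. For the converse, use $y'\in(s'_{p'})^{-1}(0)$ together with $g_{p''}(y'')=f_{p'}(y')$ to see that $y''\in g_{p''}^{-1}\bigl(f_{p'}((s'_{p'})^{-1}(0))\bigr)$. Then apply condition (v) of Definition \ref{adfaf}(a) for $G$. The base map $f_{p'}$ sends zeros to zeros of the chart $\mathcal{U}_{f(p')}$ on $X$, by the compatibility $\psi\circ f_{p'}=f\circ\psi'$ of chart morphisms. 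So $y''$ lies in $[\tilde{s}''_{p''}]^{-1}(0)\cap g_{p''}^{-1}(s_{g(p'')}^{-1}(0))$, which by (v) equals $(s''_{p''})^{-1}(0)$.

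\emph{Step 3: naturality.} The resulting bijection is the restriction of the identity of $U'_{p'}\times U''_{p''}$, so it is a homeomorphism. It commutes with the projections $\pi',\pi''$, and therefore with the maps to $X'{}_f\times_g X''$ via $\psi'_{p'}\times\psi''_{p''}$.

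The main obstacle is Step 2. Condition (v) is stated somewhat loosely: the indices $p'$ versus $p$ and the direction of $f_p^{-1}$ are ambiguous in the text. I would read it as saying that the vanishing of the complementary component $[s'']$, together with the base point mapping into the zero locus of the target, forces full vanishing of $s''$. Intuitively, the $E^{\mathrm{i}}$-component of $s''$ is controlled via \eqref{gadgashhhh2} by the target section pulled back along $g_{p''}$, which vanishes there.

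If that reading fails, the fallback is to prove the needed statement directly. Decompose $s''_{p''}=s''^{\mathrm{i}}\oplus s''^{\mathrm{c}}$ according to \eqref{jbwwsxxx}. Then use \eqref{gadgashhhh2}, namely $s''\circ\mathrm{em}=\overline{\mathrm{em}}\circ\underline{s}^{\prime\mathrm{c}}$, and the tangent bundle condition for $\mathrm{Em}'$ to show that $s''^{\mathrm{i}}(y'')=0$ whenever $g_{p''}(y'')$ is a zero of the target section.
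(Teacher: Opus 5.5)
Your proposal is correct and follows the paper's route. The paper's proof is a one-line appeal to the fibration axioms of Definition~\ref{adfaf}(a) for $G$, and your Steps 1--2 simply unwind the definition of $s^{\mathrm{f}}_{(p',p'')}$ and apply the zero-locus condition. You read that condition correctly as $[s''_{p''}]^{-1}(0)\cap g_{p''}^{-1}\bigl(s_{g(p'')}^{-1}(0)\bigr)=(s''_{p''})^{-1}(0)$, and you combine it with the fact that $f_{p'}$ carries zeros to zeros. The paper points to item (iv), where $[s_p]$ is defined, but the property that actually does the work is the zero-locus condition (v) you invoke, so your argument just makes explicit what the paper leaves implicit.
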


\begin{proof}
This follows immediately from Definition~\ref{adfaf}~(a)~(iv).
\end{proof}

For $(x',x'') \in \left(s^\mathrm{f}_{(p',p'')}\right)^{-1}(0)$, the presymplectic local neighborhood is given by an open subset
\[
W^\mathrm{f}_{(x',x'')}
\subset
W'_{x'} {}_{f_{p'}}\!\times_{g_{p''}} W''_{x''}
\]
equipped with the closed two-form
\[
\alpha^\mathrm{f}_{W^{\mathrm{f}}_{(x',x'')}}
:=
\left(
\pi^{\prime*} \alpha'_{W'_{x'}}
+
\pi^{\prime\prime*} \alpha''_{W''_{x''}}
\right)\!\Bigg|_{W^{\mathrm{f}}_{(x',x'')}}.
\]
We then set
\[
T\mathcal{F}^\mathrm{f}_{(x',x'')}
:=
\ker \alpha^\mathrm{f}_{W^{\mathrm{f}}_{(x',x'')}}.
\]

The local $L_{\infty}[1]$-algebra is defined by
\begin{equation}\nonumber
\begin{split}
\mathcal{C}^\mathrm{f}_{(p',p''),(x',x'')}
&:=
\bigwedge\nolimits^{-\bullet}
\Gamma\left(
\left(E^\mathrm{f}_{(p',p'')}\right)^*
\Bigg|_{W^\mathrm{f}_{(x',x'')}}
\right)
\;\oplus\;
\Omega^{\bullet+1}_{\mathrm{aug}}
\left(
\mathcal{F}^\mathrm{f}_{(x',x'')}
\right) \\
&= \left( \bigoplus_{i \geq 0} \bigwedge\nolimits^{-\bullet + i} \Gamma\left(\pi^{\prime*} (E'_{p'})^* \Big|_{W^{\mathrm{f}}_{(x',x'')}}\right) \otimes \bigwedge\nolimits^{-i} \Gamma\left(\pi^{\prime\prime*} \left[E''_{p''}\right]^* \Big|_{W^{\mathrm{f}}_{(x',x'')}}\right) \right) \\
&\quad \oplus \Omega_{\mathrm{aug}}^{\bullet+1}\left(\mathcal{F}^\mathrm{f}_{(x',x'')}\right),
\end{split}
\end{equation}
where the $L_{\infty}$-operations
\[
l^{\mathrm{f}}_{(p',p''),(x',x''),k} \colon
\left(\mathcal{C}^\mathrm{f}_{(p',p''),(x',x'')}\right)^{\otimes k}
\to
\mathcal{C}^\mathrm{f}_{(p',p''),(x',x'')}
\qquad (k \geq 1)
\]
are given by
\[
l^{\mathrm{f}}_{(p',p''),(x',x''),k} := l^{\mathrm{f}, \mathrm{K}}_{(p',p''),(x',x''),k} \oplus l^{\mathrm{f}, \mathrm{dR}}_{(p',p''),(x',x''),k}.
\]
Here, $l^{\mathrm{f}, \mathrm{dR}}_{(p',p''),(x',x''),k}$ denotes the $L_{\infty}[1]$-algebra structure given in Proposition~\ref{augomega}.

For each homogeneous element
\[
a = a' \otimes a'' \in \bigwedge\nolimits^{-i}\Gamma\left(\pi^{\prime*} \left(E'_{p'}\right)^* \Big|_{W^{\mathrm{f}}_{(x',x'')}}\right) \otimes \bigwedge\nolimits^{-j}\Gamma\left(\pi^{\prime\prime*} \left[E''_{p''}\right]^* \Big|_{W^{\mathrm{f}}_{(x',x'')}}\right),
\]
the Koszul operation $l^{\mathrm{f}, \mathrm{K}}_{(p', p''), (x', x''), m}$ is defined by
\[
l^{\mathrm{f}, \mathrm{K}}_{(p', p''), (x', x''), m}(a) := 
\begin{cases} 
\pi^{\prime*} \left(l^{\mathrm{f}, \mathrm{K}}_{p',x', m}(a')\right) & \text{if } j=0 \text{ and } i \geq 1, \\
\pi^{\prime\prime*} \left(\left[l^{\mathrm{f}, \mathrm{K}}_{p'', x'', m}(a'')\right]\right) & \text{if } i=0 \text{ and } j \geq 1, \\
0 & \text{otherwise}.
\end{cases}
\]

For points $p',q' \in X'$, $p'',q'' \in X''$, and $(x',x'') \in \left(s^{\mathrm{f}}_{(p', p'')}\right)^{-1}(0) \cap \left(s^{\mathrm{f}}_{(q', q'')}\right)^{-1}(0)$, we consider the tuple
\[
\Phi^\mathrm{f}_{p'q',\,p''q''}
:=
\left(
U^\mathrm{f}_{p'q',\,p''q''},
\;
\phi^\mathrm{f}_{p'q',\,p''q''},
\;
\left\{\widehat{\phi}^\mathrm{f}_{p'q',\,p''q'',(x',x'')}\right\}
\right),
\]
consisting of:
\begin{itemize}
\item[--]
$U^\mathrm{f}_{p'q',\,p''q''} := U'_{p'q'} {}_{f_{p'}}\!\times_{g_{p''}} U''_{p''q''}$, which is a smooth manifold since $g_{p''}$ is a submersion.

\item[--] The coordinate change map
\[
\phi^\mathrm{f}_{p'q',\,p''q''}
\colon
U^\mathrm{f}_{p'q',\,p''q''}
\to
U'_{q'} {}_{f_{q'}}\!\times_{g_{q''}} U''_{q''},
\]
defined by
\[
\phi^\mathrm{f}_{p'q',\,p''q''}(y',y'')
:=
\left(
\phi'_{p'q'}(y'),
\;
\phi''_{p''q''}(y'')
\right),
\]
where $\phi'_{p'q'} \colon U'_{p'q'} \to U'_{q'}$ and $\phi''_{p''q''} \colon U''_{p''q''} \to U''_{q''}$ are the coordinate changes of $\mathfrak{X}'$ and $\mathfrak{X}''$, respectively.

\item[--] The cocycle condition for the base coordinate change $\phi^\mathrm{f}_{(\cdots)}$ follows immediately from the fact that it holds for each component, $\phi'_{p'q'}$ and $\phi''_{p''q''}$.

\item[--] The $L_\infty[1]$-morphism
\[
\widehat{\phi}^\mathrm{f}_{p'q',\,p''q'',(x',x'')}
\colon
\mathcal{C}^\mathrm{f}_{(q',q''),\phi^\mathrm{f}_{p'q',\,p''q''}(x',x'')}
\to
\mathcal{C}^\mathrm{f}_{(p',p''),(x',x'')},
\]
defined by
\begin{equation}\label{tvev}
\widehat{\phi}^\mathrm{f}_{p'q',\,p''q'',(x',x'')} := \widehat{\phi}^\mathrm{f,c}_{p'q',\,p''q'',(x',x'')} \circ \widehat{\varepsilon}_{(q',q''),(x',x''), \phi^{\mathrm{f}}_{p'q',\,p''q''}}.
\end{equation}
\end{itemize}

Here, we decompose
\[
\widehat{\phi}^{\mathrm{f,c}}_{p'q',\,p''q'',\,(x',x'')} := \widehat{\phi}^{\mathrm{f},\mathrm{K,c}}_{p'q',\,p''q'',\,(x',x'')} \oplus \widehat{\phi}_{(p'q',\,p''q''),(x',x'')}^{\mathrm{f,dR,c}}.
\]

\noindent\textit{(Koszul part)}
The Koszul part
\[
\begin{split}
\widehat{\phi}_{(p'q',\,p''q''),\,(x',x'')}^{\mathrm{f,K,c}} &\colon
\bigwedge\nolimits^{-\bullet}\Gamma\left(\left(E_{q',q''}^\mathrm{f}\right)^* \Big|_{W^\mathrm{f}_{\phi^\mathrm{f}_{p'q',p''q''}(x',x'')}}\right)_{\phi^\mathrm{f}_{p'q',p''q''}}\\
& \quad\quad\quad\quad\quad\quad\quad \longrightarrow
\bigwedge\nolimits^{-\bullet}\Gamma\left(\left(E_{p',p''}^\mathrm{f}\right)^* \Big|_{W^\mathrm{f}_{(x',x'')}}\right)
\end{split}
\]
is given by the restriction to the open subset
\[
W^\mathrm{f}_{\phi^\mathrm{f}_{p'q',p''q''}(x',x'')}
\subset
W'_{\phi'_{p'q'}(x')} {}_{f_{q'}}\!\times_{g_{q''}} W''_{\phi''_{p''q''}(x'')}
\]
of the $L_{\infty}[1]$-morphism
\begin{equation}\nonumber
\begin{split}
\bigwedge\nolimits^{-\bullet}\Gamma\left(
\pi^{\prime*}(E'_{q'})^*
\oplus
\pi^{\prime\prime*}\left[E''_{q''}\right]^*
\Bigg|_{W'_{\phi'_{p'q'}(x')} \times W''_{\phi''_{p''q''}(x'')}}
\right)_{\phi'_{p'q'} \times \phi''_{p''q''}} \\
\longrightarrow \bigwedge\nolimits^{-\bullet}\Gamma\left(
\pi^{\prime*}(E'_{p'})^* \oplus \pi^{\prime\prime*}\left[E''_{p''}\right]^*
\Bigg|_{W'_{x'} \times W''_{x''}}
\right).
\end{split}
\end{equation}

\begin{lem}\label{wnqb}
The following statements hold:
\begin{itemize}
    \item[(i)] There exists an $\mathbb{R}$-module isomorphism 
    \[
    \left(\pi^{\prime*} C^\infty\left(W'_{x'}\right) + \pi^{\prime\prime*} C^\infty\left(W''_{x''}\right)\right) / \mathbb{R} \;\simeq\; \pi^{\prime*} C^\infty\left(W'_{x'}\right) \oplus \pi^{\prime\prime*} C^\infty\left(W''_{x''}\right).
    \]
    \item[(ii)] There exists an $\mathbb{R}$-module projection
    \begin{equation}\label{kkoslg}
    \mathrm{p}_0 \colon C^\infty\left(W^\mathrm{f}_{(x', x'')}\right) \to \pi^{\prime*} C^\infty\left(W'_{x'}\right) \oplus \pi^{\prime\prime*} C^\infty\left(W''_{x''}\right),
    \end{equation}
    \[
    g \mapsto \left(\pi^{\prime*} \mathrm{p}_0'(g),\, \pi^{\prime\prime*} \mathrm{p}_0''(g)\right).
    \]
\end{itemize}
\end{lem}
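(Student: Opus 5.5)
The proof reduces to elementary facts about smooth functions on a product-type domain. Throughout, I will use that $W^\mathrm{f}_{(x',x'')}$ is an open subset of the fiber product $W'_{x'}\,{}_{f_{p'}}\!\times_{g_{p''}} W''_{x''}$, and that each of $W'_{x'}$ and $W''_{x''}$ is diffeomorphic to a ball (Definition~\ref{kurdef}). I will regard $\pi^{\prime*}C^\infty(W'_{x'})$ and $\pi^{\prime\prime*}C^\infty(W''_{x''})$ as subspaces of $C^\infty(W^\mathrm{f}_{(x',x'')})$ via pullback. Because $g_{p''}$ is a surjective submersion (fibration axiom (ii)), the projection $\pi'$ restricted to $W^\mathrm{f}$ can be arranged to be surjective onto $W'_{x'}$, after shrinking $W^\mathrm{f}$ to a product-like neighborhood if necessary. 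Likewise $\pi''$ is a submersion. Consequently both pullbacks are injective.

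For (i), I first compute the intersection $\pi^{\prime*}C^\infty(W'_{x'})\cap \pi^{\prime\prime*}C^\infty(W''_{x''})$. Suppose $h'\circ\pi' = h''\circ\pi''$ on $W^\mathrm{f}$. Then $h'$ is constant along the fibers of $\pi'$ and $h''$ is constant along the fibers of $\pi''$. I then use connectivity of the fibers: locally the fiber product looks like $W'\times(\text{fibre of } g)$. Moving $y''$ within a fiber of $g_{p''}$ while fixing $y'$ shows $h''$ is constant on the $g$-fibers. So $h''$ descends to a function $k$ on the image in $U_{f(p')}$, and similarly $h' = k\circ f_{p'}$. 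Here I expect the honest intersection to be $f^*C^\infty$ of the common base rather than just $\mathbb{R}$. To get exactly $\mathbb{R}$, I will use the shrinking freedom of $W^\mathrm{f}$ and the contractible balls, possibly restricting to the case where the common image is handled by quotienting the constant part. The cleanest route I would try first is to define the isomorphism explicitly instead of relying on the intersection. Fix a base point $(x',x'')$ and send the class of $h'\circ\pi' + h''\circ\pi''$ to the pair
\[
\bigl((h'-h'(x'))\circ\pi',\ (h''+h'(x'))\circ\pi''\bigr),
\]
i.e. normalize the first summand to vanish at $x'$. Checking that this is well defined and bijective is exactly the intersection computation above. \textbf{This is the main obstacle:} if the intersection is larger than $\mathbb{R}$, then the sum must be read as the external (formal) sum. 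In that case (i) is the tautological identity of $(A\oplus B)/\mathbb{R}$, with $\mathbb{R}$ embedded antidiagonally as $(c,-c)$, with the normalized-pair model; I would state the proof that way.

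For (ii), I will construct $\mathrm{p}_0$ by restriction to slices through the base point, in the spirit of an ANOVA/Hadamard decomposition. Write $x^\mathrm{f}=(x',x'')$. Since $g_{p''}$ is a submersion, choose a smooth local section $\sigma''\colon W'_{x'}\to W''$ with $g_{p''}\circ\sigma'' = f_{p'}$ and $\sigma''(x')=x''$, giving a section $y'\mapsto (y',\sigma''(y'))$ of $\pi'$ through $x^\mathrm{f}$. Choose also a slice $\iota''\colon W''_{x''}\supset \{g_{p''}=g_{p''}(x'')\}\to W^\mathrm{f}$, $y''\mapsto(x',y'')$, extended to all of $W''_{x''}$ via a retraction onto that fiber, which exists by contractibility. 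Then set
\[
\mathrm{p}_0'(g) := g\circ(\mathrm{id},\sigma'') - g(x^\mathrm{f}),\qquad \mathrm{p}_0''(g) := g\circ\iota''.
\]
Both maps are $\mathbb{R}$-linear. It remains to verify that $\mathrm{p}_0$ is idempotent on the image of the sum, i.e. $\mathrm{p}_0(h'\circ\pi'+h''\circ\pi'')$ recovers the normalized pair of (i). This is a direct evaluation using $\pi'\circ(\mathrm{id},\sigma'')=\mathrm{id}$ and $\pi''\circ\iota''=\mathrm{id}$ on the slice, which is what makes $\mathrm{p}_0$ a projection. The delicate point is the cross terms $h''\circ\sigma''$ and $h'(x')$, which are absorbed precisely by the normalization chosen in (i). I would adjust the constants there so the two conventions agree.
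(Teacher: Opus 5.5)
\textbf{Where the paper's argument and yours diverge.} Write $A:=\pi^{\prime*}C^\infty(W'_{x'})$ and $B:=\pi^{\prime\prime*}C^\infty(W''_{x''})$. You are right that on the honest fiber product $A\cap B$ contains every $(k\circ f_{p'})\circ\pi'=(k\circ g_{p''})\circ\pi''$ with $k$ a function on the common base, so it is not just $\mathbb{R}$. The paper's proof simply asserts that $A\cap B$ consists of constants and normalizes at $(0,0)$. That is correct on the product $W'_{x'}\times W''_{x''}$ of two balls, but not on the fiber product. The paper then obtains (ii) by choosing an arbitrary linear complement $V$ of $A+B$ in $C^\infty(W^{\mathrm{f}}_{(x',x'')})$ and composing the projection with the map of (i).

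Your proposal does not resolve the issue you found. For (i) you retreat to the external sum, which changes the statement. Moreover, your normalized-pair map lives on $A\oplus B$ modulo the antidiagonal constants (which is $A+B$ only in the product case), not on $(A+B)/\mathbb{R}$: adding a constant $c$ to a representative shifts the second slot by $c$. To divide by constants you have to normalize both slots, as the paper does. The target is then the space of pairs vanishing at the base points, identified with $A\oplus B$ only abstractly; the same caveat applies to the paper's map. Also, $\pi''$ is a submersion only where $f_{p'}$ is one, so your ``likewise'' is not justified.

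\textbf{The decisive gap is in (ii).} The step you defer --- that $\mathrm{p}_0(h'\circ\pi'+h''\circ\pi'')$ returns the normalized pair after adjusting constants --- is false in general. First, $\mathrm{p}'_0$ picks up $h''\circ\sigma''-h''(x'')$, which is a nonconstant function on $W'_{x'}$ as soon as $h''$ varies along $\sigma''$. Second, since $\pi''\circ\iota''=r\neq\mathrm{id}$, the slot $\mathrm{p}''_0(g)=g\circ\iota''$ only records $g$ on the slice over $f_{p'}(x')$.

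Concretely, take $W'_{x'}=\mathbb{R}$, $W''_{x''}=\mathbb{R}^2$, $f_{p'}=\mathrm{id}$ and $g_{p''}(a,b)=a$, so the fiber product has coordinates $(t,b)$. With $\sigma''(t)=(t,0)$ and $r(a,b)=(0,b)$ you get $\mathrm{p}_0(tb)=(0,0)$, although $tb=\pi^{\prime\prime*}(ab)$ is a nonconstant element of $A+B$. So $\mathrm{p}_0$ is not injective on $(A+B)/\mathbb{R}$, and its second slot only ever produces functions of $b$.

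No choice of constants repairs this, and in fact nothing can. By your own intersection computation, $(k\circ f_{p'},0)$ and $(0,k\circ g_{p''})$ have the same sum but different normalized pairs unless $k$ is constant on $f_{p'}(W'_{x'})$. Hence no map defined on $C^\infty(W^{\mathrm{f}}_{(x',x'')})$ can recover the pair.

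You have two options.
\begin{itemize}
\item Work on the product $W'_{x'}\times W''_{x''}$, where the paper's intersection claim holds. There your slices with $\sigma''\equiv x''$ and $r=\mathrm{id}$ give the explicit retraction $g\mapsto g(\cdot,x'')+g(x',\cdot)-g(x',x'')$ of $C^\infty(W'_{x'}\times W''_{x''})$ onto $A+B$, a choice-free alternative to the paper's complement $V$. Moreover, $g\mapsto\bigl(g(\cdot,x'')-g(x',x''),\,g(x',\cdot)-g(x',x'')\bigr)$ restricts on $A+B$ exactly to the paper's normalization in (i).
\item Accept that on the genuine fiber product, (i) and (ii) can only be obtained non-canonically by abstract linear algebra, with no normalized-pair description available.
\end{itemize}
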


\begin{proof}
\begin{itemize}
    \item[(i)] Suppose that $\pi^{\prime*} g_1 + \pi^{\prime\prime*} g_2 = \pi^{\prime*} h_1 + \pi^{\prime\prime*} h_2$ represents two expressions for the sum of two functions. Then
    \[
    \pi^{\prime*}(g_1 - h_1) = \pi^{\prime\prime*}(h_2 - g_2) \in \pi^{\prime*} C^\infty\left(W'_{x'}\right) \cap \pi^{\prime\prime*} C^\infty\left(W''_{x''}\right).
    \]
    Since the intersection $\pi^{\prime*} C^\infty\left(W'_{x'}\right) \cap \pi^{\prime\prime*} C^\infty\left(W''_{x''}\right)$ consists solely of constant functions, the assignment
    \[
    \pi^{\prime*} g_1 + \pi^{\prime\prime*} g_2 \mapsto \left( \pi^{\prime*} g_1 - \pi^{\prime*} g_1(0,0),\, \pi^{\prime\prime*} g_2 - \pi^{\prime\prime*} g_2(0,0) \right)
    \]
    defines a canonical linear map
    \[
    \begin{split}
    \left(\pi^{\prime*} C^\infty\left(W'_{x'}\right) + \pi^{\prime\prime*} C^\infty\left(W''_{x''}\right)\right) / \mathbb{R} &\to \pi^{\prime*} C^\infty\left(W'_{x'}\right) \oplus \pi^{\prime\prime*} C^\infty\left(W''_{x''}\right), \\
    \left[\pi^{\prime*} g_1 + \pi^{\prime\prime*} g_2\right] &\mapsto \left( \pi^{\prime*} g_1 - \pi^{\prime*} g_1(0,0),\, \pi^{\prime\prime*} g_2 - \pi^{\prime\prime*} g_2(0,0) \right),
    \end{split}
    \]
    which is readily seen to be bijective, completing the proof of (i).
    
    \item[(ii)] Consider a direct sum decomposition
    \[
    C^\infty\left(W^\mathrm{f}_{(x', x'')}\right) = \left( \pi^{\prime*} C^\infty\left(W'_{x'}\right) + \pi^{\prime\prime*} C^\infty\left(W''_{x''}\right) \right) \oplus V
    \]
    and the corresponding projection onto the first summand $\pi^{\prime*} C^\infty\left(W'_{x'}\right) + \pi^{\prime\prime*} C^\infty\left(W''_{x''}\right)$. Composing this projection with the map obtained in (i) yields the desired morphism.
\end{itemize}
\end{proof}

For each homogeneous element
\[
\begin{split}
a &= a' \otimes a'' \\
  &\in \bigwedge\nolimits^{-\bullet + i}\Gamma\left(\pi^{\prime*} (E'_{q'})^* \Big|_{W'_{\phi'_{p'q'}(x')} \times W''_{\phi''_{p''q''}(x'')}}\right)_{\phi'_{p'q'} \times \phi''_{p''q''}} \\
  &\quad\quad\quad \otimes \bigwedge\nolimits^{-i}\Gamma\left(\pi^{\prime\prime*} \left[E''_{q''}\right]^* \Big|_{W'_{\phi'_{p'q'}(x')} \times W''_{\phi''_{p''q''}(x'')}}\right)_{\phi'_{p'q'} \times \phi''_{p''q''}} \\
  &= \bigwedge\nolimits^{-\bullet}\Gamma\left(
  \pi^{\prime*}(E'_{q'})^*
  \oplus
  \pi^{\prime\prime*}\left[E''_{q''}\right]^*
  \Bigg|_{W'_{\phi'_{p'q'}(x')} \times W''_{\phi''_{p''q''}(x'')}}
  \right)_{\phi'_{p'q'} \times \phi''_{p''q''}},
\end{split}
\]
we define the Koszul component
\begin{equation}\label{bbexx}
\widehat{\phi}^{\mathrm{f, K, c}}_{(p'q',\, p''q''),\, (x', x''),\, k}(a) := 
\begin{cases} 
\pi^{\prime*} \left( \widehat{\phi}^{\prime, \mathrm{K, c}}_{p'q',\, x',\, k}(a') \right) & \text{if } j = 0 \text{ and } i \geq 1, \\
\pi^{\prime\prime*} \left( \left[ \widehat{\phi}^{\prime\prime, \mathrm{K, c}}_{p''q'',\, x'',\, k}(a'') \right] \right) & \text{if } i = 0 \text{ and } j \geq 1, \\
\pi^{\prime*} \left( \widehat{\phi}^{\prime, \mathrm{K, c}}_{p'q',\, x',\, k}(\operatorname{pr}'(a)) \right) \\
\quad +\, \pi^{\prime\prime*} \left( \left[ \widehat{\phi}^{\prime\prime, \mathrm{K, c}}_{p''q'',\, x'',\, k}(\operatorname{pr}''(a)) \right] \right) & \text{if } i = 0 \text{ and } j = 0, \\
0 & \text{otherwise}.
\end{cases}
\end{equation}

\noindent\textit{(de Rham part)}
For the de Rham component
\[
\widehat{\phi}_{p'q',\, p''q'',\, (x',x'')}^{\mathrm{f}, \mathrm{dR,c}} \colon \Omega_{\mathrm{aug}}^{\bullet+1}\left(\mathcal{F}_{\left(\phi'_{p'q'}(x'),\, \phi''_{p''q''}(x'')\right)}^\mathrm{f}\right)_{\phi^\mathrm{f}_{p'q',\,p''q''}} \longrightarrow \Omega_{\mathrm{aug}}^{\bullet+1}\left(\mathcal{F}_{(x', x'')}^\mathrm{f}\right),
\]
we consider the following sequence of morphisms:
\begin{equation}\label{gageeeeee}
\begin{split}
&\Omega^{\bullet+1}\left(\mathcal{F}_{\left(\phi'_{p'q'}(x'),\, \phi''_{p''q''}(x'')\right)}^\mathrm{f}\right)_{\phi^{\mathrm{f}}_{p'q',\,p''q''}} = \Gamma\left(\bigwedge\nolimits^{\bullet+1} T^*\mathcal{F}^\mathrm{f}_{\left(\phi'_{p'q'}(x'),\, \phi''_{p''q''}(x'')\right)}\right)_{\phi^{\mathrm{f}}_{p'q',\,p''q''}} \\
&\xrightarrow{\ (1)\ } \Gamma\left(\bigwedge\nolimits^{\bullet+1} T^*\widetilde{\mathcal{F}}_{\left(\phi'_{p'q'}(x'),\, \phi''_{p''q''}(x'')\right)}\right)_{\phi'_{p'q'} \times \phi''_{p''q''}} \\
&\xleftarrow{\ (2)\ } \pi^{\prime*}\Gamma\left(\bigwedge\nolimits^{\bullet+1} T^*\mathcal{F}_{\phi'_{p'q'}(x')}'\right)_{\phi'_{p'q'} \times \phi''_{p''q''}} \oplus \pi^{\prime\prime*}\Gamma\left(\bigwedge\nolimits^{\bullet+1} T^*\mathcal{F}_{\phi''_{p''q''}(x'')}^{\prime\prime}\right)_{\phi'_{p'q'} \times \phi''_{p''q''}} \\
&\xrightarrow{\widehat{\phi}_{p'q',x'}^{\prime\mathrm{dR,c}} \oplus \widehat{\phi}_{p''q'', x''}^{\prime\prime\mathrm{dR,c}}} \pi^{\prime*}\Gamma \left(\bigwedge\nolimits^{\bullet+1} T^*\mathcal{F}_{x'}'\right) \oplus \pi^{\prime\prime*}\Gamma\left(\bigwedge\nolimits^{\bullet+1} T^*\mathcal{F}_{x''}^{\prime\prime}\right) \\
&\xrightarrow{\ (3)\ } \Gamma\left(\bigwedge\nolimits^{\bullet+1} T^*\widetilde{\mathcal{F}}_{(x', x'')}\right) \xleftarrow{\ (4)\ } \Gamma\left(\bigwedge\nolimits^{\bullet+1} T^*\mathcal{F}^\mathrm{f}_{(x', x'')}\right) = \Omega^{\bullet+1}\left(\mathcal{F}_{(x', x'')}^\mathrm{f}\right),
\end{split}
\end{equation}
where
\[
T\widetilde{\mathcal{F}} := \ker \widetilde{\alpha}
\]
for the closed two-form $\widetilde{\alpha} := \pi^{\prime*} \alpha'_{W'_{x'}} - \pi^{\prime\prime*} \alpha''_{W''_{x''}} \in \Omega^2\left(W'_{\phi'_{p'q'}(x')} \times W''_{\phi''_{p''q''}(x'')}\right)$.

The maps (1) through (4) are defined as follows:

\noindent (1) Let $\pi \colon W^\times_{(x', x'')} \to W^\mathrm{f}_{(x', x'')}$ be a choice of retraction between the contractible domains. It satisfies
\[
d\pi\left(\ker \alpha^\times\right) \subset \ker \alpha^\mathrm{f},
\]
where
\[
\alpha^\times := \pi^{\prime*} \alpha'_{W'_{x'}} - \pi^{\prime\prime*} \alpha''_{W''_{x''}} \in \Omega^2\left(W^\times_{(x',x'')}\right)
\quad\text{and}\quad
\alpha^\mathrm{f} := \alpha^\times \Big|_{W^\mathrm{f}_{(x', x'')}}.
\]
Consequently, we obtain pullback morphisms between the foliation de Rham complexes:
\[
\pi^* \colon \Omega^{\bullet+1}\left(\mathcal{F}^\mathrm{f}_{\left(\phi'_{p'q'}(x'),\, \phi''_{p''q''}(x'')\right)}\right) \longrightarrow \Omega^{\bullet+1}\left(\mathcal{F}^\times_{\left(\phi'_{p'q'}(x'),\, \phi''_{p''q''}(x'')\right)}\right),
\]
and similarly on the localized sections,
\[
\pi^* \colon \Omega^{\bullet+1}\left(\mathcal{F}^\mathrm{f}_{\left(\phi'_{p'q'}(x'),\, \phi''_{p''q''}(x'')\right)}\right)_{\phi^\mathrm{f}_{p'q',\,p''q''}} \longrightarrow \Omega^{\bullet+1}\left(\mathcal{F}^\times_{\left(\phi'_{p'q'}(x'),\, \phi''_{p''q''}(x'')\right)}\right)_{\phi^\times_{p'q',\,p''q''}}.
\]
Passing to augmentations yields the induced $L_\infty[1]$-morphism:
\[
\Omega^{\bullet+1}_{\mathrm{aug}}\left(\mathcal{F}^\mathrm{f}_{\left(\phi'_{p'q'}(x'),\, \phi''_{p''q''}(x'')\right)}\right)_{\phi^\mathrm{f}_{p'q',\,p''q''}} \longrightarrow \Omega^{\bullet+1}_{\mathrm{aug}}\left(\mathcal{F}^\times_{\left(\phi'_{p'q'}(x'),\, \phi''_{p''q''}(x'')\right)}\right)_{\phi^\times_{p'q',\,p''q''}}.
\]

\noindent (2) In an analogous manner, the canonical projections
\[
\mathrm{pr}' \colon W^\times_{(x', x'')} \to W'_{x'}
\quad\text{and}\quad
\mathrm{pr}'' \colon W^\times_{(x', x'')} \to W''_{x''}
\]
satisfy
\[
d\mathrm{pr}'\left(\ker \alpha^\times_{W^\times_{(x', x'')}}\right) \subset \ker \alpha'_{W'_{x'}},
\qquad
d\mathrm{pr}''\left(\ker \alpha^\times_{W^\times_{(x', x'')}}\right) \subset \ker \alpha''_{W''_{x''}}.
\]
We therefore obtain the pullback maps between the respective foliation de Rham complexes:
\[
\Omega^{\bullet+1}\left(\mathcal{F}'_{\phi'_{p'q'}(x')}\right)_{\phi'_{p'q'} \times \phi''_{p''q''}} \longrightarrow \Omega^{\bullet+1}\left(\mathcal{F}^\times_{\left(\phi'_{p'q'}(x'),\, \phi''_{p''q''}(x'')\right)}\right)_{\phi^\times_{p'q',\,p''q''}},
\]
\[
\Omega^{\bullet+1}\left(\mathcal{F}''_{\phi''_{p''q''}(x'')}\right)_{\phi'_{p'q'} \times \phi''_{p''q''}} \longrightarrow \Omega^{\bullet+1}\left(\mathcal{F}^\times_{\left(\phi'_{p'q'}(x'),\, \phi''_{p''q''}(x'')\right)}\right)_{\phi^\times_{p'q',\,p''q''}}.
\]
Taking their direct sum and passing to augmentations provides the morphism
\[
\begin{split}
\Omega^{\bullet+1}_{\mathrm{aug}}\left(\mathcal{F}'_{\phi'_{p'q'}(x')}\right)_{\phi'_{p'q'} \times \phi''_{p''q''}} \oplus \Omega^{\bullet+1}_{\mathrm{aug}}\left(\mathcal{F}''_{\phi''_{p''q''}(x'')}\right)_{\phi'_{p'q'} \times \phi''_{p''q''}} \\
\longrightarrow \Omega^{\bullet+1}_{\mathrm{aug}}\left(\mathcal{F}^\times_{\left(\phi'_{p'q'}(x'),\, \phi''_{p''q''}(x'')\right)}\right)_{\phi^\times_{p'q',\,p''q''}}.
\end{split}
\]

\noindent (3) and (4) are constructed in the same manner as (2) and (1), respectively.

The morphisms pointing in the opposite direction, namely (2) and (4), become homotopy invertible once extended to the augmentations. Inverting these arrows up to homotopy in the sequence \eqref{gageeeeee} yields the required de Rham $L_\infty[1]$-morphism
\[
\widehat{\phi}_{p'q',\, p''q'',\, (x',x'')}^{\mathrm{f}, \mathrm{dR,c}} \colon \Omega_{\mathrm{aug}}^{\bullet+1}\left(\mathcal{F}_{\left(\phi'_{p'q'}(x'),\, \phi''_{p''q''}(x'')\right)}^\mathrm{f}\right)_{\phi^\mathrm{f}_{p'q',\,p''q''}} \longrightarrow \Omega_{\mathrm{aug}}^{\bullet+1}\left(\mathcal{F}_{(x', x'')}^\mathrm{f}\right).
\]

Before proceeding to the next lemma concerning the quasi-isomorphism property of $\widehat{\phi}^{\mathrm{f,c}}_{p'q',p''q'', (x', x'')}$, we introduce the following notation for simplicity of presentation:
\[
\begin{split}
\overline{\mathrm{A}}_q &:= \bigwedge\nolimits^{-\bullet}\Gamma\left(\pi^{\prime*} E'_{q'} \oplus \pi^{\prime\prime*} E''_{q''}\big|_{W^\mathrm{f}_{\left(\phi'_{p'q'}(x'),\phi''_{p''q''}(x'')\right)}}\right)_{\phi'_{p'q'}\times\phi''_{p''q''}}, \\
\mathrm{A}_q &:= \bigwedge\nolimits^{-\bullet}\Gamma\left(\pi^{\prime*} E'_{q'} \oplus \pi^{\prime\prime*} E''_{q''}\big|_{W^\mathrm{f}_{\left(\phi'_{p'q'}(x'),\phi''_{p''q''}(x'')\right)}}\right), \\
\mathrm{A}_p &:= \bigwedge\nolimits^{-\bullet}\Gamma\left(\pi^{\prime*} E'_{p'} \oplus \pi^{\prime\prime*} E''_{p''}\big|_{W^\mathrm{f}_{(x', x'')}}\right);
\end{split}
\]

\[
\begin{split}
\overline{\mathrm{B}}_q &:= \bigwedge\nolimits^{-\bullet}\Gamma\left(\pi^{\prime\prime*}E_{q}\big|_{W^\mathrm{f}_{\left(\phi'_{p'q'}(x'),\phi''_{p''q''}(x'')\right)}}\right)_{\phi'_{p'q'}\times \phi''_{p''q''}}, \\
\mathrm{B}_q &:= \bigwedge\nolimits^{-\bullet}\Gamma\left(\pi^{\prime\prime*}E_{q}\big|_{W^\mathrm{f}_{\left(\phi'_{p'q'}(x'),\phi''_{p''q''}(x'')\right)}}\right), \\
\mathrm{B}_p &:= \bigwedge\nolimits^{-\bullet}\Gamma\!\left(\pi^{\prime*} E'_{p'} \oplus \pi^{\prime\prime*}E''_{p''}\big|_{W'_{x'} \times W''_{x''}}\right);
\end{split}
\]

\[
\begin{split}
\overline{\mathrm{C}}_q &:= \bigwedge\nolimits^{-\bullet}\Gamma\!\left(\pi^{\prime*} E'_{q'} \oplus \pi^{\prime\prime*} E''_{q''}\big|_{W'_{\phi'_{p'}(x')}\times W''_{\phi''_{p''}(x'')}}\right)_{\phi'_{p'q'}\times\phi''_{p''q''}}, \\
\mathrm{C}_q &:= \bigwedge\nolimits^{-\bullet}\Gamma\!\left(\pi^{\prime*} E'_{q'} \oplus \pi^{\prime\prime*} E''_{q''}\big|_{W'_{\phi'_{p'}(x')}\times W''_{\phi''_{p''}(x'')}}\right), \\
\mathrm{C}_p &:= \bigwedge\nolimits^{-\bullet}\Gamma\!\left(\pi^{\prime*} E'_{p'} \oplus \pi^{\prime\prime*}E''_{p''}\big|_{W'_{x'} \times W''_{x''}}\right);
\end{split}
\]

\[
\begin{split}
\overline{\mathrm{D}}_q &:= \bigwedge\nolimits^{-\bullet}\Gamma\!\left(\pi^{\prime*} E'_{q'} \oplus \pi^{\prime\prime*} \left[E''_{q''}\right]\big|_{W^\mathrm{f}_{\left(\phi'_{p'q'}(x'),\phi''_{p''q''}(x'')\right)}}\right)_{\phi'_{p'q'}\times\phi''_{p''q''}}, \\
\mathrm{D}_q &:= \bigwedge\nolimits^{-\bullet}\Gamma\!\left(\pi^{\prime*} E'_{q'} \oplus \pi^{\prime\prime*} \left[E''_{q''}\right]\big|_{W^\mathrm{f}_{\left(\phi'_{p'q'}(x'),\phi''_{p''q''}(x'')\right)}}\right), \\
\mathrm{D}_p &:= \bigwedge\nolimits^{-\bullet}\Gamma\!\left(\pi^{\prime*} E'_{p'} \oplus \pi^{\prime\prime*}\left[E''_{p''}\right]\big|_{W^\mathrm{f}_{(x', x'')}}\right).
\end{split}
\]

\begin{lem}
$\widehat{\phi}^{\mathrm{f,c}}_{p'q',p''q'', (x', x'')}$ is a quasi-isomorphic $L_\infty[1]$-morphisms.
\end{lem}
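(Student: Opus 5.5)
The map $\widehat{\phi}^{\mathrm{f,c}}$ is the direct sum of a Koszul part and a de Rham part, and a direct sum of $L_\infty[1]$-morphisms is a quasi-isomorphism exactly when each summand is. I will therefore treat the two parts separately, using the auxiliary complexes $\overline{\mathrm{A}}_q,\dots,\mathrm{D}_p$ just introduced.

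\emph{Koszul part.} The plan is to factor $\widehat{\phi}^{\mathrm{f,K,c}}$ through a commutative (up to homotopy) ladder of chain maps, $\overline{\mathrm{D}}_q\to\overline{\mathrm{C}}_q\to\mathrm{C}_p\to\mathrm{D}_p$, where the outer vertical maps are induced by the inclusion $W^{\mathrm f}\subset W'\times W''$ and by the quotient $E''\to[E'']$.

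The middle map $\overline{\mathrm{C}}_q\to\mathrm{C}_p$ is the exterior tensor product of the coordinate changes $\widehat{\phi}^{\prime,\mathrm{K,c}}_{p'q',x'}$ and $\widehat{\phi}^{\prime\prime,\mathrm{K,c}}_{p''q'',x''}$ on the product $W'\times W''$. Both factors are quasi-isomorphisms, since they are the components of embeddings (Definition \ref{ourcemb}, Corollary \ref{crtbc}). Because the Koszul complex of a direct sum is the tensor product of Koszul complexes, a Künneth argument shows that their tensor product is a quasi-isomorphism. I will take care to use the formula \eqref{bbexx} together with the projection $\mathrm{p}_0$ of Lemma \ref{wnqb} to handle the degree-zero ($i=j=0$) term, where the two copies of the constants must be identified.

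Next I pass from $E''$ to $[E'']$. The fibration axiom (iv) in Definition \ref{adfaf}(a) says that the coordinate changes of $\mathfrak{X}''$ respect the splitting $E''=E''^{\mathrm i}\oplus E''^{\mathrm c}$. Hence $\widehat{\phi}''$ restricts to a quasi-isomorphism on the $E''^{\mathrm c}\simeq[E'']$ factor: the $E''^{\mathrm i}$ factor is a Koszul complex that is split off functorially, so its contribution cancels.

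Finally I restrict from $W'\times W''$ to the fibre product $W^{\mathrm f}$. Because $g_{p''}$ is a submersion, $W^{\mathrm f}$ is a submanifold cut out by the equation $f_{p'}(y')=g_{p''}(y'')$. The retraction $\pi$ used in step (1) gives a homotopy inverse to restriction. Its effect on the cohomology of the Koszul complex matches the removal of the $E''^{\mathrm i}$-directions, which is exactly the tangent bundle condition imposed on $\mathrm{Em}'_p$. I will make this precise via Corollary \ref{ygsdnn} and Lemma \ref{cochqim}, which also handle the completion maps $\overline{(\ )}\to(\ )$.

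\emph{de Rham part.} By construction this part is the zigzag \eqref{gageeeeee}, composed with the homotopy inverses of arrows (2) and (4), so it suffices that every arrow in the zigzag is a quasi-isomorphism.

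The middle arrow is the direct sum of the de Rham coordinate changes of $\mathfrak{X}'$ and $\mathfrak{X}''$, which are quasi-isomorphisms. Arrows (1) and (4) are pullbacks along retractions between contractible domains that preserve the kernel foliations. By the foliated Poincaré lemma for augmented complexes (Proposition \ref{augomega}), the cohomology of each complex is that of basic functions, so these pullbacks are quasi-isomorphisms. Arrows (2) and (3) are likewise quasi-isomorphisms after augmentation, by the Künneth formula for product foliations.

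\emph{Main obstacle.} I expect the hardest step to be the restriction from $W'\times W''$ to $W^{\mathrm f}$ in the Koszul part: showing that the quotient by $\operatorname{Im}\overline{\mathrm{em}}$ exactly compensates for imposing the fibre-product equation, and doing so compatibly with the completions. My first attempt will reduce this to the FOOO-embedding situation of Corollary \ref{ygsdnn}, applied to the embedding $W^{\mathrm f}\hookrightarrow W'\times W''$, with $E''^{\mathrm i}$ playing the role of the normal obstruction bundle.
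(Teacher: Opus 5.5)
Your Koszul argument has a genuine gap. Suppose the ladder $\overline{\mathrm D}_q\to\overline{\mathrm C}_q\to\mathrm C_p\to\mathrm D_p$ does reproduce $\widehat{\phi}^{\mathrm{f,K,c}}$. A quasi-isomorphism in the middle still tells you nothing about the composite unless the outer arrows are quasi-isomorphisms. Those are the arrows moving between $W'\times W''$ and $W^{\mathrm f}$ and between $E''$ and $[E'']$, and in general they are not quasi-isomorphisms.

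Take the case the paper cares most about, a manifold target as in Lemma~\ref{genbvf}. There $\overline{\mathrm{em}}$ lands in the zero section, so $E^{\prime\prime\,\mathrm i}=0$ and $[E'']=E''$. Your last arrow is then plain restriction of the Koszul complex of $\pi^{\prime*}E'\oplus\pi^{\prime\prime*}E''$ from $W'\times W''$ to the codimension-$\dim M$ submanifold $W'\times_M W''$. Already for $E'=E''=0$ it is the map $C^\infty(W'\times W'')\to C^\infty(W'\times_M W'')$, which is not injective. So there are no $E^{\prime\prime\,\mathrm i}$-directions whose removal could compensate for the fibre-product equation.

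The tangent bundle condition on $\mathrm{Em}'_p$ cannot supply them either. It compares $TU/T\underline{U}$ with $E/\underline{E}$ inside the target chart. By contrast, the normal bundle of $W^{\mathrm f}$ in $W'\times W''$ has rank $\dim U$, while $E^{\prime\prime\,\mathrm i}$ has rank $\mathrm{rk}\,\underline{E}$; these differ in general (here $\dim M$ versus $0$). Hence Corollary~\ref{ygsdnn} does not apply to $W^{\mathrm f}\hookrightarrow W'\times W''$.

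Your intermediate step of splitting off and cancelling the $E^{\prime\prime\,\mathrm i}$ factor is also unjustified. For $\alpha\in\Gamma(E^{\prime\prime\,\mathrm i*})$ and $\beta\in\bigwedge\nolimits^{-\bullet}\Gamma(E^{\prime\prime\,\mathrm c*})$ one has $\iota_{s''}(\alpha\wedge\beta)=\alpha(s^{\prime\prime\,\mathrm i})\,\beta-\alpha\wedge\iota_{s''}\beta$. So projecting away the $E^{\prime\prime\,\mathrm i*}$-directions is not a chain map. Moreover, the $E^{\prime\prime\,\mathrm i}$ Koszul factor is not acyclic, and the tensor product is over $C^\infty$ rather than $\mathbb{R}$, so nothing simply cancels.

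The missing idea is the one the paper uses. Do not compare complexes over $W'\times W''$ with complexes over $W^{\mathrm f}$; stay over $W^{\mathrm f}$ and use the fibration data as a two-out-of-three input rather than as a splitting to be cancelled.

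The embeddings $(\overline{\mathrm{em}},\overline{\mathrm{em}}')$ give short exact sequences $0\to\overline{\mathrm B}'_q\to\overline{\mathrm A}_q\to\overline{\mathrm D}_q\to 0$ and $0\to\mathrm B'_p\to\mathrm A_p\to\mathrm D_p\to 0$. Here $\mathrm A$ is built on the full bundle $\pi^{\prime*}E'\oplus\pi^{\prime\prime*}E''$ and $\mathrm D$ on the quotient. The paper then argues as follows:
\begin{itemize}
\item It takes $\widehat{\phi}^{\mathrm c}_{\mathrm A}$, the product coordinate change restricted to $W^{\mathrm f}$, as a quasi-isomorphism.
\item It obtains the map on $\mathrm B'$ from the target's coordinate change $\widehat{\phi}^{\mathrm c}_{\mathrm B}$, pulled back along $\pi''$ and conjugated by the quasi-isomorphisms $\widehat{\eta}^{\vee}$.
\item It uses axiom (iv) of Definition~\ref{adfaf} to make the ladder \eqref{abbgh} commute up to homotopy.
\item The long exact cohomology sequences and the five lemma then force the induced map on the quotient, $\widehat{\phi}^{\mathrm c}_{\mathrm D}=\widehat{\phi}^{\mathrm{f,K,c}}$, to be a quasi-isomorphism.
\end{itemize}

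Your de Rham part reaches the right conclusion, but more laboriously than needed. By Proposition~\ref{augomega} every augmented complex in \eqref{gageeeeee} is acyclic, so any chain map between them is a quasi-isomorphism. Note also that, once augmented, their cohomology is zero, not that of basic functions.
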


\begin{proof}
\noindent(\textit{Koszul part})

In the following proof, we use the abbreviations $\mathrm{A}, \mathrm{B}, \mathrm{B'}, \mathrm{C},$ and $\mathrm{D}$ for simplicity in each respective case.

\medskip
\noindent(1) (\textit{$\widehat{\phi}^{\mathrm{c}}_\mathrm{A}$ is a quasi-isomorphism.})

We define the map $\widehat{\phi}^{\mathrm{c}}_\mathrm{A} \colon \overline{\mathrm{A}}_q \rightarrow \mathrm{A}_p$ given by
\[
\begin{split}
&\bigwedge\nolimits^{-\bullet}\Gamma\left(\pi^{\prime*} E'_{q'} \oplus \pi^{\prime\prime*} E''_{q''}\big|_{W^\mathrm{f}_{\left(\phi'_{p'q'}(x'),\phi''_{p''q''}(x'')\right)}}\right)_{\phi'_{p'q'}\times\phi''_{p''q''}} \\
&\qquad\qquad\qquad\xrightarrow{\widehat{\phi}^{\mathrm{K,c}}_{p'q', p''q'', (x',x'')}} \bigwedge\nolimits^{-\bullet}\Gamma\left(\pi^{\prime*} E'_{p'} \oplus \pi^{\prime\prime*} E''_{p''}\big|_{W^\mathrm{f}_{(x', x'')}}\right)
\end{split}
\]
as the restriction to $W^\mathrm{f}_{\left(\phi'_{p'q'}(x'),\phi''_{p''q''}(x'')\right)} \subset W_{\left(\phi'_{p'q'}(x'),\phi''_{p''q''}(x'')\right)}$ of the morphism
\[
\begin{split}
&\bigwedge\nolimits^{-\bullet}\Gamma\!\left(\pi^{\prime*} E'_{q'} \oplus \pi^{\prime\prime*} E''_{q''}\big|_{W'_{\phi'_{p'}(x')}\times W''_{\phi''_{p''}(x'')}}\right)_{\phi'_{p'q'}\times\phi''_{p''q''}} \\
&\qquad\qquad\qquad\xrightarrow{\widehat{\phi}^{\mathrm{K,c}}_{p'q', p''q'', (x',x'')}} \bigwedge\nolimits^{-\bullet}\Gamma\!\left(\pi^{\prime*} E'_{p'} \oplus \pi^{\prime\prime*}E''_{p''}\big|_{W'_{x'} \times W''_{x''}}\right).
\end{split}
\]
Since the Koszul differentials on both section spaces commute with the restriction maps, the quasi-isomorphism property of $\widehat{\phi}^{\mathrm{K,c}}_{p'q',p''q'',(x',x'')}$ implies that of $\widehat{\phi}^{\mathrm{c}}_\mathrm{A}$.

\noindent(2) (\textit{The quasi-isomorphism $\widehat{\phi}^{\mathrm{c}}_\mathrm{B}$ is induced by $\widehat{\phi}^{\mathrm{c}}_\mathrm{C}$.})

The morphism $\widehat{\phi}^{\mathrm{c}}_\mathrm{B} \colon \overline{\mathrm{B}} \rightarrow \mathrm{B}$ given by
\[
\bigwedge\nolimits^{-\bullet}\Gamma\left(\pi^{\prime\prime*}E_{q}\big|_{W^\mathrm{f}_{\left(\phi'_{p'q'}(x'),\phi''_{p''q''}(x'')\right)}}\right)_{\phi'_{p'q'}\times \phi''_{p''q''}}
\xrightarrow{\widehat{\phi}^{\mathrm{K,c}}_{pq,x}}
\bigwedge\nolimits^{-\bullet}\Gamma\left(\pi^{\prime\prime*}E_p\big|_{W^\mathrm{f}_{(x', x'')}}\right)
\]
is induced by $\widehat{\phi}^{\mathrm{c}}_\mathrm{C} \colon \overline{\mathrm{C}} \rightarrow \mathrm{C}$ given by
\[
\bigwedge\nolimits^{-\bullet}\Gamma\left(E_{q}\big|_{W_{\phi_{pq}(x)}}\right)_{\phi_{pq}}
\xrightarrow{\widehat{\phi}^{\mathrm{K,c}}_{pq,x}}
\bigwedge\nolimits^{-\bullet}\Gamma\left(E_p\big|_{W^\mathrm{f}_{x}}\right)
\]
via pullback along the projection $\pi^{\prime\prime}$.

\noindent(3) (\textit{The quasi-isomorphism $\widehat{\phi}^{\mathrm{c}}_{\mathrm{B}'}$ is induced by $\widehat{\phi}^{\mathrm{c}}_\mathrm{B}$.})

We make use of the fibration property of $\overline{G}$. Recall that, in Definition~\ref{adfaf}, $[E''_{p''}]$ is the quotient with respect to the bundle embedding $\overline{\mathrm{em}}_p$.

It is straightforward to see that the Koszul $L_{\infty}[1]$-component for $\overline{G}$ yields an $L_{\infty}[1]$-morphism
\[
\widehat{\gamma}_q \colon \mathrm{B}_q \longrightarrow \mathrm{A}_q
\]
given by
\[
\bigwedge\nolimits^{-\bullet}\Gamma\left(\pi^{\prime\prime*}E_q\big|_{W^\mathrm{f}_{(x', x'')}}\right) \longrightarrow \bigwedge\nolimits^{-\bullet}\Gamma\left(\pi^{\prime*} E'_{q'} \oplus \pi^{\prime\prime*} E''_{q''}\big|_{W^\mathrm{f}_{(x', x'')}}\right)
\]
between the section spaces of the pullback bundles. Then the fibration property of $G$ implies that there exists a decomposition
\begin{equation}\label{qvvagag}
\widehat{\gamma}_q \colon \mathrm{B}_q \xrightarrow{\ (\widehat{\eta}^{\vee}_q)^{-1},\ \simeq\ } \mathrm{B}'_q \xrightarrow{\ \widehat{\underline{\eta}}_q\ } \mathrm{A}_q,
\end{equation}
where $\mathrm{B}'_q$ is defined as in \eqref{bhuaiuhnn}. Here $\mathrm{B}'_q$, $\widehat{\underline{\eta}}_q$, and $\widehat{\eta}^{\vee}_q$ arise from the bundle embedding pair $(\overline{\mathrm{em}}_q, \overline{\mathrm{em}}_q')$ in Definition~\ref{adfaf}\,(iii).

Similarly, we obtain
\[
\widehat{\gamma}_p \colon \mathrm{B}_p \longrightarrow \mathrm{B}'_p \longrightarrow \mathrm{A}_p
\]
with the decomposition
\[
\widehat{\gamma}_p = \widehat{\underline{\eta}}_p \circ (\widehat{\eta}^{\vee}_p)^{-1}
\]
for some $\mathrm{B}'_p$.

In fact, \eqref{qvvagag} composed with the quotient map $\mathrm{A}_q \rightarrow \mathrm{D}_q$ naturally induces maps on the completions:
\[
\overline{\mathrm{B}}_q \xleftarrow{\ \widehat{\overline{\eta}}_q^{\vee},\ \simeq\ } \overline{\mathrm{B}}'_q \xrightarrow{\ \widehat{\underline{\overline{\eta}}}_q\ } \overline{\mathrm{A}}_q \longrightarrow \overline{\mathrm{D}}_q,
\]
where $\overline{\mathrm{B}}_q \xleftarrow{\ \widehat{\overline{\eta}}_q^{\vee},\ \simeq\ } \overline{\mathrm{B}}'_q$ is a quasi-isomorphism, following the construction of quasi-isomorphisms between completed section spaces from compatible bundle embeddings in Subsection~5.2.

Note that we have a short exact sequence
\[
0 \longrightarrow \overline{\mathrm{B}}'_q \xrightarrow{\ \widehat{\underline{\overline{\eta}}}_q\ } \overline{\mathrm{A}}_q \longrightarrow \overline{\mathrm{D}}_q \longrightarrow 0
\]
by Definition~\ref{adfaf}\,(iii) and the definition of the quotient bundle $[E''_{q''}]$. In the same way, for $p$ (without completion), we also have another short exact sequence:
\[
0 \longrightarrow {\mathrm{B}}'_p \xrightarrow{\ \widehat{\underline{\eta}}_p\ } {\mathrm{A}}_p \longrightarrow {\mathrm{D}}_p \longrightarrow 0.
\]

Setting
\[
\widehat{\phi}^{\mathrm{c}}_{\mathrm{B}'} := (\widehat{\eta}_p^\vee)^{-1} \circ \widehat{\phi}^{\mathrm{c}}_\mathrm{B} \circ \widehat{\overline{\eta}}_q^\vee,
\]
we obtain the following \textit{homotopy commutative} diagram:
\begin{equation}\label{abbgh}
\begin{tikzcd}[column sep=large, row sep=large]
0 \ar[r] & \overline{\mathrm{B}}'_q \ar[r, "\widehat{\underline{\overline{\eta}}}_q"] \ar[d, swap, "\widehat{\overline{\eta}}^{\vee}_q"'] \ar[ddd, bend right=20, swap, "\widehat{\phi}^{\mathrm{c}}_{\mathrm{B}'}"] & \overline{\mathrm{A}}_q \ar[r, "\widehat{\overline{\kappa}}_q"] \ar[ddd, "\widehat{\phi}^{\mathrm{c}}_\mathrm{A}"] & \overline{\mathrm{D}}_q \ar[r] \ar[ddd, "\widehat{\phi}^{\mathrm{c}}_\mathrm{D}"] & 0 \\
{} & \overline{\mathrm{B}}_q \ar[d, "\widehat{\phi}^{\mathrm{c}}_{\mathrm{B}}"] & {} & {} & {} \\
{} & \mathrm{B}_p \ar[d, swap, "(\widehat{\eta}^{\vee}_p)^{-1}"'] & {} & {} & {} \\
0 \ar[r] & \mathrm{B}'_p \ar[r, "\widehat{\underline{\eta}}_p"] & \mathrm{A}_p \ar[r, "\widehat{\kappa}_p"] & \mathrm{D}_p \ar[r] & 0.
\end{tikzcd}
\end{equation}
The homotopy commutativity of \eqref{abbgh} follows from the short exactness of the rows and the decomposition axiom of the Koszul $L_{\infty}$-coordinate changes for fibrations in Definition~\ref{adfaf}\,(iv), together with the quasi-isomorphisms $\widehat{\overline{\eta}}^{\vee}_q$ and $\widehat{\eta}^{\vee}_p$.

\noindent(4) (\textit{$\widehat{\phi}^{\mathrm{c}}_\mathrm{D}$ is a quasi-isomorphism.})

The morphism $\widehat{\phi}^{\mathrm{c}}_\mathrm{D} \colon \overline{\mathrm{D}}_q \rightarrow \mathrm{D}_p$, given explicitly by
\[
\begin{split}
&\bigwedge\nolimits^{-\bullet}\Gamma\!\left(\pi^{\prime*} E'_{q'} \oplus \pi^{\prime\prime*} \left[E''_{q''}\right]\big|_{W^\mathrm{f}_{\left(\phi'_{p'q'}(x'),\phi''_{p''q''}(x'')\right)}}\right)_{\phi'_{p'q'}\times\phi''_{p''q''}}\\ 
&\quad\quad\quad\quad\quad\quad\quad\quad\quad\quad\quad   \xrightarrow{\widehat{\phi}^{\mathrm{K,c}}_{p'q', p''q'', (x',x'')}} \bigwedge\nolimits^{-\bullet}\Gamma\!\left(\pi^{\prime*} E'_{p'} \oplus \pi^{\prime\prime*}\left[E''_{p''}\right]\big|_{W^\mathrm{f}_{(x', x'')}}\right),
\end{split}
\]
is the Koszul coordinate change $\widehat{\phi}^{\mathrm{f,c}}_{p'q',p''q'', (x', x'')}$ for the fiber product, and we show that it is a quasi-isomorphism.

By taking cohomology in \eqref{abbgh} with the quasi-isomorphisms $\widehat{\phi}^{\mathrm{c}}_\mathrm{A}$, $\widehat{\phi}^{\mathrm{c}}_\mathrm{B}$, and $\widehat{\eta}^{\vee}$, we obtain the following \textit{commutative} diagram with exact rows:
\begin{equation}\label{bkllass}
\begin{tikzcd}[column sep=large, row sep=large]
\cdots \ar[r] & H^*(\overline{\mathrm{B}}'_q) \ar[r, "\overline{\gamma}"] \ar[d, "\simeq"'] & H^*(\overline{\mathrm{A}}_q) \ar[r, "\overline{\kappa}"] \ar[d, "\simeq"'] & H^*(\overline{\mathrm{D}}_q) \ar[r, "\overline{\delta}"] \ar[d, "\widehat{\phi}^{\mathrm{c}}_{\mathrm{D}*}"] & H^*(\overline{\mathrm{B}'}_q) \ar[r, "\overline{\gamma}"] \ar[d, "\simeq"'] & H^*(\overline{\mathrm{A}}_q) \ar[r] \ar[d, "\simeq"'] & \cdots \\
\cdots \ar[r] & H^*(\mathrm{B'}_p) \ar[r, "\gamma"] & H^*(\mathrm{A}_p) \ar[r, "\kappa"] & H^*(\mathrm{D}_p) \ar[r, "\delta"] & H^*(\mathrm{B}'_p) \ar[r, "\gamma"] & H^*(\mathrm{A}_p) \ar[r] & \cdots.
\end{tikzcd}
\end{equation}
Splitting these long exact sequences into short exact sequences yields:
\[
\begin{tikzcd}[column sep=large, row sep=large]
0 \ar[r] \ar[d, "\simeq"'] & \ker \overline{\delta} \ar[r, hook] \ar[d, "(\mathrm{i})"] & H^*(\overline{\mathrm{D}}_q) \ar[r] \ar[d, "\widehat{\phi}^{\mathrm{c}}_{\mathrm{D}*}"] & \operatorname{Im} \overline{\delta} \ar[r] \ar[d, "(\mathrm{ii})"] & 0 \ar[d, "\simeq"'] \\
0 \ar[r] & \ker \delta \ar[r, hook] & H^*(\mathrm{D}_p) \ar[r] & \operatorname{Im} \delta \ar[r] & 0.
\end{tikzcd}
\]
We then apply the five lemma to verify that $\widehat{\phi}^{\mathrm{c}}_{\mathrm{D}*}$ is an isomorphism. Therefore, it suffices to show that $(\mathrm{i})$ and $(\mathrm{ii})$ are isomorphisms.

\begin{enumerate}[label = (\roman*)]
\item The exactness of the rows in \eqref{bkllass} yields the isomorphisms:
\[
\ker \overline{\delta} \simeq \operatorname{Im} \overline{\kappa} \quad \text{and} \quad \ker \delta \simeq \operatorname{Im} \kappa.
\]
Applying the four lemma to the commutative diagram
\[
\begin{tikzcd}[column sep=large, row sep=large]
H^*(\overline{\mathrm{B}}'_q) \ar[r, "\overline{\gamma}"] \ar[d, "\simeq"'] & H^*(\overline{\mathrm{A}}_q) \ar[d, "\simeq"'] \ar[r, "\overline{\kappa}"] & \operatorname{Im} \overline{\kappa} \ar[d] \ar[r] & 0 \ar[d, "\simeq"'] \\
H^*(\mathrm{B}'_p) \ar[r, "\gamma"] & H^*(\mathrm{A}_p) \ar[r, "\kappa"] & \operatorname{Im} \kappa \ar[r] & 0
\end{tikzcd}
\]
with exact rows, we obtain the isomorphism $\operatorname{Im} \overline{\kappa} \simeq \operatorname{Im} \kappa$.

\item The exactness of the rows in \eqref{bkllass} yields the isomorphisms:
\[
\operatorname{Im} \overline{\delta} \simeq \ker \overline{\gamma} \quad \text{and} \quad \operatorname{Im} \delta \simeq \ker \gamma.
\]
From the commutative diagram
\[
\begin{tikzcd}[column sep=large, row sep=large]
H^*(\overline{\mathrm{B}}'_q) \ar[r, "\overline{\gamma}"] \ar[d, "\simeq"'] & H^*(\overline{\mathrm{A}}_q) \ar[d, "\simeq"'] \\
H^*(\mathrm{B}'_p) \ar[r, "\gamma"] & H^*(\mathrm{A}_p)
\end{tikzcd}
\]
with isomorphic vertical maps, we obtain the isomorphism $\ker \overline{\gamma} \simeq \ker \gamma$ induced by the isomorphism of the left column.
\end{enumerate}

\noindent\textit{(de Rham part)}

The morphism $\widehat{\phi}^{\mathrm{f}, \mathrm{dR,c}}_{p'q',p''q'',(x',x'')} \colon \Omega_{\mathrm{aug}}^{\bullet+1}\left(\mathcal{F}^{\mathrm{f}}_{(\phi'_{p'q'}(x'), \phi''_{p''q''}(x''))}\right)_{\phi^\mathrm{f}_{p'q'}} \longrightarrow \Omega_{\mathrm{aug}}^{\bullet+1}\left(\mathcal{F}^{\mathrm{f}}_{(x', x'')}\right)$ is a quasi-isomorphic $L_\infty[1]$-morphism, as it is a map between acyclic $L_{\infty}[1]$-algebras.
\end{proof}

\subsection{Fiber product Kuranishi spaces}

In this subsection, we show that the data from the previous subsection
provide a fiber product as a Kuranishi space, that is, as an object in
the category $\mathbf{Kur}$. We also interpret the canonical projection
maps from this object to each factor as morphisms in $\mathbf{Kur}$.
This discussion will be used to show that morphisms with manifold
targets always give rise to \textit{homotopy} fiber products. Roughly
speaking, this means that the construction involves some noncanonical
choices, while the resulting object is well defined up to weak
equivalence.

We consider the Kuranishi atlas
$\widehat{\mathcal{U}}
:=
\left(
\left\{\widehat{\mathcal{U}}_p\right\},
\left\{\widehat{\Phi}_{pq}\right\}
\right)$.
Let
$F: \mathfrak{X}' \rightarrow \mathfrak{X}$
and
$G: \mathfrak{X}'' \rightarrow \mathfrak{X}$
be morphisms of Kuranishi spaces, and assume that $G$ is a fibration.
In this situation, we define their \textit{fiber product} to be the
Kuranishi space
\[
\mathfrak{X}' {}_F \!\times_G \mathfrak{X}''
:=
\left(
X' {}_f\!\times_g X'',
\left[\widehat{\mathcal{U}}^\mathrm{f}\right]
\right),
\]
where $\widehat{\mathcal{U}}^\mathrm{f}$ is the atlas constructed in
the previous subsection.

We further define a morphism
\[
\mathfrak{X}'{}_F \!\times_G \mathfrak{X}''
\xrightarrow{\mathrm{Pr}'}
\mathfrak{X}'
\]
determined by the following pre-morphism:
\[
\overline{\mathrm{Pr}}'
:=
\left(
\widehat{\mathcal{U}}^\mathrm{f},
\widehat{\mathcal{U}}',
\mathrm{pr}',
\mathrm{pr}'_{(p',p'')},
\widehat{\mathrm{pr}}'_{(p',p''),(x',x'')}
\right).
\]
The underlying maps are the projections to the first factors:
\[
\begin{split}
&\mathrm{pr}':
X' {}_f\!\times_g X''
\twoheadrightarrow X',
\\
&\mathrm{pr}_{(p',p'')}' :
U'_{p'} {}_{f_{p'}}\!\times_{g_{p''}} U''_{p''}
\twoheadrightarrow U_{p'}'.
\end{split}
\]
Since $g_{p''}$ is assumed to be a surjective submersion, the base
projection $\mathrm{pr}_{(p',p'')}'$ is surjective. Hence,
\[
\mathcal{C}_{p',x',\mathrm{pr}_{(p',p'')}'}'
=
\mathcal{C}_{p',x'}'.
\]

For each $(p',p'')\in X'{}_f\!\times_g X''$ and
$(x',x'')\in(s_{p',p''}^{\mathrm{f}})^{-1}(0)$, we have
\[
(x',x'')
\in
(s'_{p'})^{-1}(0)
{}_{f_{p'}}\!\times_{g_{p''}}
(s''_{p''})^{-1}(0).
\]
The corresponding $L_{\infty}[1]$-algebra is
\[
\mathcal{C}_{(p',p''),(x',x'')}^\mathrm{f}
:=
\bigwedge\nolimits^{-\bullet}
\Gamma\left(
(E_{p',p''}^{\mathrm{f}})^*
\big|_{{W}_{(x',x'')}^\mathrm{f}}
\right)
\oplus
\Omega_{\mathrm{aug}}^{\bullet+1}
(\mathcal{F}_{(x',x'')}^\mathrm{f}).
\]

The obstruction bundle is
\[
{E}^\mathrm{f}_{p',p''}
:=
\pi^{\prime*}{E}'_{p'}
{}_{f_{p'}}\!\times_{g_{p''}}
\pi^{\prime\prime*}{E}''_{p''}.
\]

Moreover,
\[
\begin{split}
&\Gamma\Bigl(
E^{\mathrm{f}*}_{p',p''}
\big|_{{W}_{(x',x'')}^\mathrm{f}}
\Bigr)
\\
&\qquad:=
\Gamma\Bigl(
(\pi'{}^{*}E'_{p'}
{}_{f_{p'}}\!\times_{g_{p''}}
\pi''{}^{*}E''_{p''})^{*}
\big|_{{W}_{(x',x'')}^\mathrm{f}}
\Bigr)
\\
&\qquad\simeq
\Gamma\Bigl(
(\pi'{}^{*}E'_{p'}
\oplus
\pi''{}^{*}E''_{p''})^{*}
\big|_{W'_{x'}\times W''_{x''}}
\Bigr)
\\
&\qquad\simeq
\Gamma\Bigl(
\pi'{}^{*}(E'_{p'})^{*}
\oplus
\pi''{}^{*}(E''_{p''})^{*}
\big|_{W'_{x'}\times W''_{x''}}
\Bigr).
\end{split}
\]

The target $L_{\infty}[1]$-algebra for the base projection is
\[
\mathcal{C}_{p',x',\mathrm{pr}_{(p',p'')}'}'
:=
\bigwedge\nolimits^{-\bullet}
\Gamma\left(
E_{p'}^{\prime*}
\big|_{{W}_{x'}}
\right)_{\mathrm{pr}_{(p',p'')}'}
\oplus
\Omega_{\mathrm{aug}}^{\bullet+1}
(\mathcal{F}'_{x'})_{\mathrm{pr}_{(p',p'')}'}.
\]

Its $L_{\infty}[1]$-component map
\[
\widehat{\mathrm{pr}}^{\prime\mathrm{c}}_{(p',p''),(x',x'')}:
\mathcal{C}_{p',x',\mathrm{pr}'_{(p',p'')}}'
\rightarrow
\mathcal{C}_{(p',p''),(x',x'')}^\mathrm{f}
\]
is defined as follows.

For the Koszul part, set
\begin{equation}\label{bzqqqqgbbbq}
\begin{split}
\widehat{\mathrm{pr}}^{\prime\mathrm{K}}_{(p',p''),(x',x''),1} :
\bigwedge\nolimits^{-\bullet}
\Gamma\left(
(E_{p'}')^{*}
\big|_{{W}_{x'}}
\right)
&\rightarrow
\bigwedge\nolimits^{-\bullet}
\Gamma\left(
(E_{p',p''}^{\mathrm{f}})^*
\big|_{{W}_{(x',x'')}^\mathrm{f}}
\right)
\\
&\simeq
\bigwedge\nolimits^{-\bullet}
\Gamma\Bigl(
\pi'{}^{*}(E'_{p'})^{*}
\oplus
\pi''{}^{*}(E''_{p''})^{*}
\big|_{W'_{x'}\times W''_{x''}}
\Bigr),
\\
\widehat{\mathrm{pr}}^{\prime\mathrm{K}}_{(p',p''),(x',x''),1}(a')
&:=
\left(\pi^{\prime*}a',0\right).
\end{split}
\end{equation}
We set
\[
\widehat{\mathrm{pr}}^{\prime\mathrm{K}}_{(p',p''),(x',x''),m}
:=
\begin{cases}
\widehat{\mathrm{pr}}^{\prime\mathrm{K}}_{(p',p''),(x',x''),1}
& \text{if }m=1,\\
0
& \text{if }m\geq2.
\end{cases}
\]
It is straightforward to verify that
$\widehat{\mathrm{pr}}^{\prime\mathrm{K}}_{(p',p''),(x',x'')}$
with vanishing higher-order components is an
$L_{\infty}[1]$-morphism. In other words, it is a chain map. We omit
the verification.

For the de Rham part, observe that for
$X\in\ker\alpha^{\mathrm{f}}$, we have
\begin{equation}\label{ahbbbsxxxx}
\begin{split}
(\mathrm{pr}'^*_{p'}\alpha'
-
\mathrm{pr}''^*_{p''}\alpha'')(X,\cdot)
&=
\alpha'(d\mathrm{pr}'_{p'}(X,\cdot))
-
\alpha''(d\mathrm{pr}''_{p''}(X,\cdot))
\\
&=
\alpha'(X_1,d\mathrm{pr}'_{p'}(\cdot))
-
\alpha''(X_2,d\mathrm{pr}''_{p''}(\cdot))
=
0,
\end{split}
\end{equation}
where we use the decomposition
$X=(X_1,X_2)$ induced by the projections, with
\[
X_1:=d\mathrm{pr}'_{p'}(X),
\qquad
X_2:=d\mathrm{pr}''_{p''}(X).
\]

Since $X$ is arbitrary, \eqref{ahbbbsxxxx} implies that
$X_1\in\ker\alpha'$ and $X_2\in\ker\alpha''$. Consequently,
\[
\mathrm{pr}'_{p'}:
U^\mathrm{f}_{p',p''}\rightarrow U'_{p'},
\qquad
\mathrm{pr}''_{p''}:
U^\mathrm{f}_{p',p''}\rightarrow U''_{p''}
\]
satisfy
\[
d\mathrm{pr}'_{p'}
(\ker\alpha^\mathrm{f}_{x',x''})
\subset
\ker\alpha'_{x'},
\qquad
d\mathrm{pr}''_{p''}
(\ker\alpha^\mathrm{f}_{x',x''})
\subset
\ker\alpha''_{x''}.
\]
Hence, they induce pullback maps between the corresponding foliation
de Rham complexes, which we denote by
$\widehat{\mathrm{pr}}'^{\mathrm{dR}}_{p',x'}$ and
$\widehat{\mathrm{pr}}''^{\mathrm{dR}}_{p'',x''}$, respectively:
\begin{equation}\label{bwbwbwb}
\begin{split}
\widehat{\mathrm{pr}}'^{\mathrm{dR}}_{p',x'} :
\Omega^{\bullet+1}_{\mathrm{aug}}(\mathcal{F}'_{x'})
&\rightarrow
\Omega^{\bullet+1}_{\mathrm{aug}}
(\mathcal{F}^\mathrm{f}_{(x',x'')}),\\
\widehat{\mathrm{pr}}''^{\mathrm{dR}}_{p'',x''} :
\Omega^{\bullet+1}_{\mathrm{aug}}(\mathcal{F}''_{x''})
&\rightarrow
\Omega^{\bullet+1}_{\mathrm{aug}}
(\mathcal{F}^\mathrm{f}_{(x',x'')}).
\end{split}
\end{equation}

Similarly,
\[
\mathfrak{X}'{}_F\!\times_G\mathfrak{X}''
\xrightarrow{\mathrm{Pr}''}
\mathfrak{X}''
\]
is defined by
\[
\overline{\mathrm{Pr}}''
:=
\left(
\widehat{\mathcal{U}}^\mathrm{f},
\widehat{\mathcal{U}}'',
\mathrm{pr}'',
\mathrm{pr}''_{(p',p'')},
\widehat{\mathrm{pr}}''_{(p',p''),(x',x'')}
\right),
\]
where $\mathrm{pr}''$, $\mathrm{pr}''_{(p',p'')}$, and
$\widehat{\mathrm{pr}}''_{(p',p''),(x',x'')}$ are given as follows.

The first two maps
\[
\begin{split}
&\mathrm{pr}'':
X'{}_f\!\times_g X''
\twoheadrightarrow X'',
\\
&\mathrm{pr}''_{(p',p'')}:
U'_{p'}{}_{f_{p'}}\!\times_{g_{p''}}U''_{p''}
\twoheadrightarrow U''_{p''}
\end{split}
\]
are the projections to the second factors, as before.

The $L_{\infty}[1]$-component
\[
\widehat{\mathrm{pr}}''_{p'',x''}:
\mathcal{C}''_{p'',x''}
\rightarrow
\mathcal{C}^{\mathrm{f}}_{(p',p''),(x',x'')}
\]
is defined differently in this case. Its de Rham part is given in
\eqref{bwbwbwb}, while the Koszul part
$\widehat{\mathrm{pr}}''^{\mathrm{K}}_{p'',x''}$ is defined as follows.

\[
\widehat{\mathrm{pr}}''^{\mathrm{K}}_{p'',x''}:
\bigwedge\nolimits^{-\bullet}
\Gamma(E''_{p''}|_{W''_{x''}})
\rightarrow
\bigwedge\nolimits^{-\bullet}
\Gamma(E^\mathrm{f}_{(p',p'')}|_{W^\mathrm{f}_{(x',x'')}}).
\]

Set
\[
\widehat{\mathrm{pr}}''^{\mathrm{K}}_{p'',x''}
:=
\left\{
\widehat{\mathrm{pr}}''^{\mathrm{K}}_{p'',x'',m}
\right\}_{m\geq1}.
\]

For a homogeneous element
$a''\in\bigwedge\nolimits^{-i}\Gamma(E''_{p''}|_{W''_{x''}})$,
we define
\begin{equation}\label{xxbswb}
\widehat{\mathrm{pr}}''^{\mathrm{K}}_{p'',x'',m}(a'')
:=
\begin{cases}
\left(
\pi'^*
\left(
\widehat{f}^{\mathrm{K}}_{p,x,1}
\circ
\underline{\widehat{\eta}}^{-1}(\widehat{g}_{p,x})_1
\circ
\left(
\overline{\mathrm{em}}^{-1}_p
\left(
a''|_{\operatorname{Im}(\overline{\mathrm{em}}_p)}
\right)
\right)
\right),
0
\right)
+
(0,\pi''^*[a''])
& \text{if }m=1,
\\
0
& \text{if }m\geq2.
\end{cases}
\end{equation}
Here we use the notation from \eqref{bhuaiuhnn}
\[
\widehat{g}_{p,x}
:=
\widehat{\eta}^{\vee}(\widehat{g}_{p,x})
\circ
\underline{\widehat{\eta}}^{-1}(\widehat{g}_{p,x})
\]
for the $L_{\infty}[1]$-morphisms associated to the fibrations in
(iii) of Definition \ref{adfaf} (a).

By the compatibility of the embedding $\overline{\mathrm{em}}_p$ with
the sections, the definition of the $L_{\infty}$-structure on the
fiber product, and the fact that
$\widehat{f}^{\mathrm{K}}_{p,x,1}$ and
$\underline{\widehat{\eta}}^{-1}(\widehat{g}_{p,x})_1$ are
$L_{\infty}[1]$-morphisms, it is straightforward to verify that
$\widehat{\mathrm{pr}}''^{\mathrm{K}}_{p'',x''}$ indeed defines an
$L_{\infty}[1]$-morphism. In particular, the following relation holds:
\[
\widehat{\mathrm{pr}}''^{\mathrm{K}}_{p'',x'',1}
\left(
\iota_{s''_{p''}}(a'')
\right)
=
\iota_{\pi'^*s'_{p'}\oplus\pi''^*[s''_{p''}]}
\left(
\widehat{\mathrm{pr}}''^{\mathrm{K}}_{p'',x'',1}(a'')
\right)
=
\iota_{s^{\mathrm{f}}_{p',p''}}
\left(
\widehat{\mathrm{pr}}''^{\mathrm{K}}_{p'',x'',1}(a'')
\right).
\]
We omit the verification.

\begin{lem}
The tuples
\[
\begin{split}
\overline{\mathrm{Pr}}'
&:=
\Bigl(
\widehat{\mathcal{U}}^{\mathrm{f}}_{p',p''},
\widehat{\mathcal{U}}'_{p'},
\mathrm{pr}',
\{\,\mathrm{pr}'_{(p',p'')}\,\},
\widehat{\mathrm{pr}}'_{(p',p''),(x',x'')}
\Bigr),
\\
\overline{\mathrm{Pr}}''
&:=
\Bigl(
\widehat{\mathcal{U}}^{\mathrm{f}}_{p',p''},
\widehat{\mathcal{U}}''_{p''},
\mathrm{pr}'',
\{\,\mathrm{pr}''_{(p',p'')}\,\},
\widehat{\mathrm{pr}}''_{(p',p''),(x',x'')}
\Bigr)
\end{split}
\]
are pre-morphisms.
\end{lem}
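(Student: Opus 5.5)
To show that $\overline{\mathrm{Pr}}'$ and $\overline{\mathrm{Pr}}''$ are pre-morphisms in the sense of Definition~\ref{morphismkur}, I will check three things in turn. First, each component $\bigl(\mathrm{pr}'_{(p',p'')},\{\widehat{\mathrm{pr}}'_{(p',p''),(x',x'')}\}\bigr)$, and likewise for $\mathrm{Pr}''$, is a chart morphism. Second, the base maps are compatible with the coordinate changes on zero loci. Third, the $L_\infty[1]$-components are compatible with the coordinate changes up to $L_\infty[1]$-homotopy.

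\emph{Chart morphisms.} The base maps $\mathrm{pr}'_{(p',p'')}$ and $\mathrm{pr}''_{(p',p'')}$ are restrictions of the product projections. They are therefore smooth and equivariant for the product isotropy, and they carry $W^{\mathrm f}_{(x',x'')}$ into $W'_{x'}$ and $W''_{x''}$. Condition (i), $\psi'\circ\mathrm{pr}'=\mathrm{pr}'\circ\psi^{\mathrm f}$, follows from the formula $\psi^{\mathrm f}(y',y'')=(\psi'(y'),\psi''(y''))$ together with Lemma~\ref{agagsdgaa}.

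For condition (iii), the factorization through the completion, I use surjectivity. Since $g_{p''}$ is a surjective submersion, $\mathrm{pr}'_{(p',p'')}$ is surjective, so Lemma~\ref{surjcp} gives $\mathcal{C}'_{p',x',\mathrm{pr}'}\simeq\mathcal{C}'_{p',x'}$ and $\widehat{\varepsilon}$ is the identity. For $\mathrm{pr}''$ I will additionally choose the representatives of $F$ with $f_{p'}$ surjective, exactly as in the proof of Lemma~\ref{genbvf}, so that $\mathrm{pr}''$ is also surjective.

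The de Rham parts \eqref{bwbwbwb} are $L_\infty[1]$-morphisms by the kernel-preservation computation \eqref{ahbbbsxxxx}. The Koszul parts are chain maps:
\begin{itemize}
\item[--] for \eqref{bzqqqqgbbbq} this holds because $\iota_{s^{\mathrm f}}(\pi'^*a',0)=(\pi'^*\iota_{s'}a',0)$;
\item[--] for \eqref{xxbswb} it is the identity $\widehat{\mathrm{pr}}''^{\mathrm K}_1\circ\iota_{s''}=\iota_{s^{\mathrm f}}\circ\widehat{\mathrm{pr}}''^{\mathrm K}_1$ displayed before the statement.
\end{itemize}
I would verify the latter by splitting $a''$ according to the decomposition \eqref{jbwwsxxx}. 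On the $E^{\mathrm c}$ summand this reduces to $[\iota_{s''}a'']=\iota_{[s'']}[a'']$. On the $E^{\mathrm i}$ summand it reduces to the section compatibilities \eqref{gadgashhhh2} and the fact that $\widehat f^{\mathrm K}\circ\underline{\widehat\eta}^{-1}$ is a chain map, using Lemma~\ref{pokhgjb}.

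\emph{Base compatibility.} This is immediate from the definition $\phi^{\mathrm f}=\phi'\times\phi''$, which gives $\phi'_{p'q'}\circ\mathrm{pr}'=\mathrm{pr}'\circ\phi^{\mathrm f}_{p'q',p''q''}$ and the analogous identity for $\mathrm{pr}''$.

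\emph{Homotopy compatibility of $L_\infty$-components.} This is condition (ii) of Definition~\ref{morphismkur}, and I expect it to be the main obstacle. For the de Rham parts, all the complexes involved are acyclic after augmentation, so any two parallel $L_\infty[1]$-morphisms between them are homotopic; this disposes of that part.

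For the Koszul part of $\mathrm{Pr}'$, the formula \eqref{bbexx} for $\widehat\phi^{\mathrm{f,K,c}}$ restricts, on elements of the form $(\pi'^*a',0)$, to $\pi'^*\widehat\phi'^{\mathrm{K,c}}(a')$. Hence the square commutes strictly.

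For $\mathrm{Pr}''$, the $[a'']$-summand commutes strictly by the second case of \eqref{bbexx}. The $E^{\mathrm i}$-summand involves $\widehat f^{\mathrm K}\circ\underline{\widehat\eta}^{-1}$, and there I would combine three ingredients:
\begin{itemize}
\item[--] the homotopy compatibility of $\overline F$ with coordinate changes, which is axiom (ii) for the pre-morphism $\overline F$;
\item[--] the decomposition axiom (iv) of Definition~\ref{adfaf} for $G$, which guarantees that $\widehat\phi''_{p''q''}$ preserves the $E^{\mathrm i}$ summand;
\item[--] the homotopy commutative square \eqref{abbgh} constructed in the preceding lemma.
\end{itemize}
Together these should give the required homotopy, obtained by composing the homotopies for $\overline F$, for $\underline{\widehat\eta}$, and for the decomposition. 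The delicate point is ensuring that the homotopy inverse $\underline{\widehat\eta}^{-1}$ can be chosen coherently with the coordinate changes, and I would try to extract this directly from \eqref{abbgh}.
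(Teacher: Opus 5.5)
Your proposal is correct and follows essentially the paper's route: base and footprint compatibility come directly from $\phi^{\mathrm f}=\phi'\times\phi''$ and $\psi^{\mathrm f}=(\psi',\psi'')$; the Koszul square for $\overline{\mathrm{Pr}}'$ commutes strictly by the first case of \eqref{bbexx}; and for $\overline{\mathrm{Pr}}''$ the paper likewise just cites axiom (iv) of Definition~\ref{adfaf} together with \eqref{xxbswb}, so the coherence of $\underline{\widehat\eta}^{-1}$ that you flag is handled no more explicitly there. Your extra ingredients refine rather than change the argument: you check the chart-morphism conditions, treat the de Rham parts by acyclicity instead of the paper's direct computation, and invoke the pre-morphism compatibility of $\overline F$, which the $\widehat f^{\mathrm K}$ term in \eqref{xxbswb} genuinely needs; note, though, that re-choosing $\overline F$ with $f_{p'}$ surjective changes $\widehat{\mathcal U}^{\mathrm f}$, so it proves the lemma only for that choice of representative.
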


\begin{proof}
We first consider the case of $\overline{\mathrm{Pr}}'$. For
$p',q'\in X'$ and $p'',q''\in X''$ such that
\[
\operatorname{Im}\psi'_{p'}\cap\operatorname{Im}\psi'_{q'}
\neq\emptyset,
\qquad
\operatorname{Im}\psi''_{p''}\cap\operatorname{Im}\psi''_{q''}
\neq\emptyset,
\]
we verify the required conditions.

\begin{enumerate}[label=(\roman*)]
\item
We have
\[
\phi'_{p'q'}\circ\mathrm{pr}'_{(p',p'')}
=
\mathrm{pr}'_{(q',q'')}
\circ
\phi^{\mathrm{f}}_{p'q',p''q''}
\]
on
\[
(s^{\mathrm{f}}_{(p',p'')})^{-1}(0)
\cap
U^{\mathrm{f}}_{(p',p'')(q',q'')}.
\]
Indeed,
\[
\begin{split}
\phi'_{p'q'}\circ\mathrm{pr}'_{(p',p'')}(x',x'')
&=
\phi'_{p'q'}(x')
\\
&=
\mathrm{pr}'_{(q',q'')}
\bigl(
\phi'_{p'q'}(x'),
\phi''_{p''q''}(x'')
\bigr)
\\
&=
\mathrm{pr}'_{(q',q'')}
\circ
\phi^{\mathrm{f}}_{p'q',p''q''}(x',x'')
\end{split}
\]
for every
\[
(x',x'')
\in
(s^{\mathrm{f}}_{(p',p'')})^{-1}(0)
\cap
U^{\mathrm{f}}_{(p',p'')(q',q'')}.
\]

\item
We have
\[
\psi'_{p'}\circ\mathrm{pr}'_{(p',p'')}
=
\mathrm{pr}\circ\psi^{\mathrm{f}}_{(p',p'')}
\]
on
\[
(s^{\mathrm{f}}_{(p',p'')})^{-1}(0)
\cap
U^{\mathrm{f}}_{(p',p'')(q',q'')}.
\]
Indeed,
\[
\begin{split}
\psi'_{p'}\circ\mathrm{pr}'_{(p',p'')}(x',x'')
&=
\psi'_{p'}(x')
\\
&=
\mathrm{pr}
\bigl(
\psi'_{p'}(x'),
\psi''_{p''}(x'')
\bigr)
\\
&=
\mathrm{pr}\circ
\psi^{\mathrm{f}}_{(p',p'')}(x',x'')
\end{split}
\]
for every
\[
(x',x'')
\in
(s^{\mathrm{f}}_{(p',p'')})^{-1}(0)
\cap
U^{\mathrm{f}}_{(p',p'')(q',q'')}.
\]

\item
For each
\[
(x',x'')
\in
(s^{\mathrm{f}}_{(p',p'')})^{-1}(0)
\cap
U^{\mathrm{f}}_{(p',p'')(q',q'')},
\]
we have
\[
\widehat{\phi}^{\mathrm{f}}_{p'q',p''q'',(x',x'')}
\circ
\widehat{\mathrm{pr}}'_{(q',q''),(x',x'')}
\sim
\widehat{\mathrm{pr}}'_{(p',p''),(x',x'')}
\circ
\widehat{\phi}'_{p'q',x'}.
\]
Indeed,
\[
\begin{split}
&
\widehat{\phi}^{\mathrm{f}}_{p'q',p''q'',(x',x'')}
\circ
\widehat{\mathrm{pr}}'_{(q',q''),(x',x'')}(a,\xi)
\\
&\quad=
\widehat{\phi}^{\mathrm{f}}_{p'q',p''q'',(x',x'')}
\left(
\left(\pi^{\prime*}a,0\right),
\left(\pi^{\prime*}\xi,0\right)
\right)
\\
&\quad=
\left(
\widehat{\phi}^{\mathrm{f},\mathrm{K}}_{p'q',p''q'',(x',x'')}
(\pi^{\prime*}a,0),
\widehat{\phi}^{\mathrm{f},\mathrm{dR}}_{p'q',p''q'',(x',x'')}
(\pi^{\prime*}\xi,0)
\right)
\\
&\quad=
\left(
\left(
\pi^{\prime*}
\left(
\widehat{\phi}^{\prime\mathrm{K}}_{p'q',x'}(a)
\right),
0
\right),
\left(
\pi^{\prime*}
\left(
\widehat{\phi}^{\prime\mathrm{dR}}_{p'q',x'}(\xi)
\right),
0
\right)
\right)
\\
&\quad=
\widehat{\mathrm{pr}}'_{(p',p''),(x',x'')}
\left(
\widehat{\phi}^{\prime\mathrm{K}}_{p'q',x'}(a),
\widehat{\phi}^{\prime\mathrm{dR}}_{p'q',x'}(\xi)
\right)
\\
&\quad=
\widehat{\mathrm{pr}}'_{(p',p''),(x',x'')}
\circ
\widehat{\phi}'_{p'q',x'}(a,\xi).
\end{split}
\]
Thus, the required compatibility follows.
\end{enumerate}

The case of $\overline{\mathrm{Pr}}''$ follows in the same way, except
for the $L_{\infty}$-component
$\widehat{\mathrm{pr}}^{\prime\prime\mathrm{K}}_{p'',x''}$.
In this case, we need to verify
\[
\widehat{\phi}^{\mathrm{f}}_{p'q',p''q'',(x',x'')}
\circ
\widehat{\mathrm{pr}}''_{(q',q''),(x',x'')}
\sim
\widehat{\mathrm{pr}}''_{(p',p''),(x',x'')}
\circ
\widehat{\phi}''_{p''q'',x''}.
\]
This follows from axiom~(iv) of Definition~\ref{adfaf}, which gives the
compatibility between the $L_{\infty}$-component of a fibration and the
coordinate changes, together with the definition in~\eqref{xxbswb}.
\end{proof}

\begin{defn}\label{dfnvfb}
We call the morphisms
\[
\begin{cases}
\mathrm{Pr}' :
\mathfrak{X}' {}_{F}\!\times_{G} \mathfrak{X}''
\rightarrow
\mathfrak{X}',
\\
\mathrm{Pr}'' :
\mathfrak{X}' {}_{F}\!\times_{G} \mathfrak{X}''
\rightarrow
\mathfrak{X}'',
\end{cases}
\]
determined by the above pre-morphisms
$\overline{\mathrm{Pr}}'$ and $\overline{\mathrm{Pr}}''$, the
\emph{canonical projections}.
\end{defn}

Observe that we obtain the commutative diagram
\[
\begin{tikzcd}
\mathfrak{X}' \,{}_{F}\!\times_{G}\, \mathfrak{X}''
\arrow[r,"\mathrm{Pr}'"]
\arrow[d,"\mathrm{Pr}''"']
&
\mathfrak{X}'
\arrow[d,"F"]
\\
\mathfrak{X}''
\arrow[r,"G"']
&
\mathfrak{X}.
\end{tikzcd}
\]

\begin{lem}
We have
\[
F\circ\mathrm{Pr}'
=
G\circ\mathrm{Pr}''.
\]
\end{lem}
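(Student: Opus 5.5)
We compare representing pre-morphisms directly. Since composition in $\mathbf{Kur}$ is defined by (\ref{morcom}), both $F\circ\mathrm{Pr}'$ and $G\circ\mathrm{Pr}''$ are represented by pre-morphisms from $\widehat{\mathcal{U}}^{\mathrm{f}}$ to $\widehat{\mathcal{U}}$. Equality of morphisms then reduces to showing that these two representatives are equivalent in the sense of \cite[Definition 3.12]{Kim1}. We check this at three levels: topological spaces, bases, and $L_\infty[1]$-components.

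\emph{Topological and base levels.} On the underlying spaces we have $f\circ \mathrm{pr}'=g\circ\mathrm{pr}''$ on $X'{}_f\!\times_g X''$ by definition of the fiber product of sets. On each chart, the base maps are $f_{p'}\circ\mathrm{pr}'_{(p',p'')}$ and $g_{p''}\circ\mathrm{pr}''_{(p',p'')}$ on $U^{\mathrm{f}}_{(p',p'')}=U'_{p'}{}_{f_{p'}}\!\times_{g_{p''}}U''_{p''}$. These agree identically, since $f_{p'}\circ\pi'=g_{p''}\circ\pi''$ was observed at the start of the chart-level construction. So no extension or shrinking of atlases is needed at this level.

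\emph{$L_\infty[1]$-level, de Rham part.} Both composites of de Rham components are pullbacks along $f_{p'}\circ\mathrm{pr}'=g_{p''}\circ\mathrm{pr}''$, via the maps in (\ref{bwbwbwb}). They therefore coincide, and in any case they are maps between acyclic $L_\infty[1]$-algebras, so they are homotopic.

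\emph{$L_\infty[1]$-level, Koszul part.} This is the main obstacle. We must show
\[
\widehat{\mathrm{pr}}'^{\mathrm{K}}\circ\widehat{f}^{\mathrm{K}}_{p',x'}\;\sim\;\widehat{\mathrm{pr}}''^{\mathrm{K}}\circ\widehat{g}^{\mathrm{K}}_{p'',x''}
\]
as maps $\bigwedge^{-\bullet}\Gamma(E^*|_{W})\to\mathcal{C}^{\mathrm{f}}$. By (\ref{bzqqqqgbbbq}), the left side is $a\mapsto(\pi'^*\widehat{f}^{\mathrm{K}}(a),0)$.

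For the right side, first use fibration axiom (iii) to replace $\widehat{g}^{\mathrm{K}}$ up to homotopy by $\widehat{\eta}^{\vee}\circ\underline{\widehat{\eta}}^{-1}$. Its image $\widehat{g}^{\mathrm{K}}(a)$ lies in the exterior algebra on $\mathrm{Im}(\overline{\mathrm{em}}_p)$, so $[\widehat{g}^{\mathrm{K}}(a)]=0$ in the quotient $[E''_{p''}]$. Hence the second summand $(0,\pi''^*[\cdot])$ of (\ref{xxbswb}) vanishes. The first summand becomes
\[
\pi'^*\bigl(\widehat{f}^{\mathrm{K}}\circ\underline{\widehat{\eta}}^{-1}\circ\overline{\mathrm{em}}_p^{-1}(\widehat{\eta}^{\vee}\underline{\widehat{\eta}}^{-1}(a))\bigr).
\]
Here $\overline{\mathrm{em}}_p^{-1}\circ\widehat{\eta}^{\vee}$ is the identity on $\underline{E}'$-sections, by the construction of $\widehat{\eta}^{\vee}$ from Lemma \ref{pokhgjb} via $\overline{\mathrm{em}}_p$ and the projection $\pi_p$. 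So this expression reduces to $\pi'^*\widehat{f}^{\mathrm{K}}\circ(\underline{\widehat{\eta}}^{-1}\circ\underline{\widehat{\eta}}^{-1})(a)$, up to the ordering conventions in (\ref{xxbswb}).

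The delicate point is the bookkeeping of which $\underline{\widehat{\eta}}$ versus its homotopy inverse appears. I expect (\ref{xxbswb}) to have been designed precisely so that the composite collapses to $\underline{\widehat{\eta}}\circ\underline{\widehat{\eta}}^{-1}\sim\mathrm{id}$, using Corollary \ref{ygsdnn}. The remaining homotopies should then assemble into a homotopy between the two composites. Composing homotopies with $L_\infty[1]$-morphisms preserves homotopy, and the compatibility (\ref{gadgashhhh2}) guarantees that all maps respect the Koszul differentials.

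\emph{Compatibility with coordinate changes.} Finally, the chart-wise homotopies must be compatible with the coordinate changes $\widehat{\phi}^{\mathrm{f}}$. This follows from the pre-morphism property of $\overline{\mathrm{Pr}}'$ and $\overline{\mathrm{Pr}}''$ (the preceding lemma), together with the pre-morphism properties of $\overline{F}$ and $\overline{G}$. Hence the two representatives are equivalent, and $F\circ\mathrm{Pr}'=G\circ\mathrm{Pr}''$.
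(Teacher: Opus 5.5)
Your route is the paper's. You compare the two composite representatives $\widehat{\mathcal U}^{\mathrm f}\to\widehat{\mathcal U}$ on $X'{}_f\!\times_g X''$ and on the bases, where $f_{p'}\circ\pi'=g_{p''}\circ\pi''$ gives equality outright. You handle the de Rham part by Theorem~\ref{anhp}, and you read the Koszul homotopy off (\ref{xxbswb}), which the paper simply declares immediate. Three smaller points:
\begin{itemize}
\item The de Rham composites need not be pullbacks, since $\widehat f^{\mathrm{dR}}$ and $\widehat g^{\mathrm{dR}}$ are arbitrary components of the given chart morphisms. Only your fallback counts, and it is exactly the paper's argument.
\item Your last paragraph is unnecessary: Definition~\ref{defmoreq} imposes no condition involving coordinate changes.
\item Your hedge about $\underline{\widehat\eta}$ versus its inverse is settled by types. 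In (\ref{xxbswb}) the map is applied to $\overline{\mathrm{em}}_p^{-1}(a''|_{\operatorname{Im}})$, a section of the exterior algebra of $\underline{E}^{*}$, so it must be $\underline{\widehat\eta}$. In positive exterior degree your computation then gives $\pi'^*\widehat f^{\mathrm K}\,\underline{\widehat\eta}\,\underline{\widehat\eta}^{-1}(a)\sim\pi'^*\widehat f^{\mathrm K}(a)$, as required.
\end{itemize}

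The real gap is exterior degree $0$. You justify $[\widehat g^{\mathrm K}(a)]=0$ by saying the image lies in the exterior algebra on $\operatorname{Im}\overline{\mathrm{em}}_p$, but that says nothing in degree $0$. There $\bigwedge^0$ of every bundle is the whole function ring, so passing to $[E''_{p''}]$ kills nothing, and both summands of (\ref{xxbswb}) land in the same space $C^\infty(W^{\mathrm f}_{(x',x'')})$. Take $[\,\cdot\,]$ to be the identity on functions. Since $\widehat\eta^\vee$ and $\underline{\widehat\eta}$ both send $1$ to $1$, we have $\widehat g^{\mathrm K}_*[1]=[1]$, and formula (\ref{xxbswb}) then yields
\[
(\widehat{\mathrm{pr}}''^{\mathrm K}\circ\widehat g^{\mathrm K})_*[1]=[\pi'^*\widehat f^{\mathrm K}(1)]+[1],\qquad (\widehat{\mathrm{pr}}'^{\mathrm K}\circ\widehat f^{\mathrm K})_*[1]=[\pi'^*\widehat f^{\mathrm K}(1)].
\]
Here $[1]\neq0$ in $H^0$ of the fiber-product Koszul complex, because every element of $\operatorname{Im}\iota_{s^{\mathrm f}}$ in degree $0$ vanishes at the zero $(x',x'')$. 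So under this reading the two composites differ on cohomology, and no assembly of homotopies can repair that.

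To close the argument you must fix a degree-$0$ convention that kills this term. One option is to read $[h]$, for a function $h$, as $h-\pi_p^*(h|_{\operatorname{Im}\mathrm{em}_p})\in I_{\mathrm{em}_p}$, with $\pi_p$ the retraction from Definition~\ref{adfaf}. This is the splitting used in the proof of Proposition~\ref{hnuun}(ii), and it does annihilate $\widehat\eta^\vee$ of functions. You would then need to verify that $\widehat{\mathrm{pr}}''^{\mathrm K}$ is still a chain map under that convention. The paper's one-line treatment of the Koszul part does not supply this, so you cannot lean on it.
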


\begin{proof}
The morphism $\mathrm{Pr}'$ is represented by the pre-morphism
\[
\overline{\mathrm{Pr}}'
=
\left(
\widehat{\mathcal{U}}^\mathrm{f},
\widehat{\mathcal{U}}',
\mathrm{pr}',
\left\{\mathrm{pr}'_{(p',p'')}\right\},
\left\{
\widehat{\mathrm{pr}}'_{(p',p''),(x',x'')}
\right\}
\right),
\]
while $F$ is represented by
\[
\overline{F}
=
\left(
\underline{\widehat{\mathcal{U}}}',
\widehat{\mathcal{U}},
f,
\left\{f_{p'}\right\},
\left\{\widehat{f}_{p',x'}\right\}
\right).
\]
Thus,
\[
F\circ\mathrm{Pr}'
:=
\left[
\left(
\widehat{\mathcal{U}}^\mathrm{f},
\widehat{\mathcal{U}},
f\circ\mathrm{pr}',
\left\{
f_{p'}\circ\mathrm{pr}'_{(p',p'')}
\right\},
\left\{
\widehat{\mathrm{pr}}'_{(p',p''),(x',x'')}
\circ
\widehat{f}_{p',x'}
\right\}
\right)
\right].
\]

Similarly,
\[
G\circ\mathrm{Pr}''
:=
\left[
\left(
\widehat{\mathcal{U}}^\mathrm{f},
\widehat{\mathcal{U}},
g\circ\mathrm{pr}'',
\left\{
g_{p''}\circ\mathrm{pr}''_{(p',p'')}
\right\},
\left\{
\widehat{\mathrm{pr}}''_{(p',p''),(x',x'')}
\circ
\widehat{g}_{p'',x''}
\right\}
\right)
\right].
\]

By the definition of the fiber product bases, we have
\[
f\circ\mathrm{pr}'
=
g\circ\mathrm{pr}''
\]
on
\[
X' {}_{f}\!\times_{g} X'',
\]
and
\[
f_{p'}\circ\mathrm{pr}'_{(p',p'')}
=
g_{p''}\circ\mathrm{pr}''_{(p',p'')}
\]
on
\[
U'_{p'} {}_{f_{p'}}\!\times_{g_{p''}} U''_{p''}.
\]
These equalities follow from the usual fiber products in the
topological and smooth categories, respectively.

For the Koszul $L_{\infty}$-components, we have
\[
\widehat{\mathrm{pr}}^{\prime\mathrm{K}}_{(p',p''),(x',x'')}
\circ
\widehat{f}^{\mathrm{K}}_{p',x'}
\sim
\widehat{\mathrm{pr}}^{\prime\prime\mathrm{K}}_{(p',p''),(x',x'')}
\circ
\widehat{g}^{\mathrm{K}}_{p'',x''},
\]
which is an immediate consequence of the definition of
$\widehat{\mathrm{pr}}^{\prime\prime\mathrm{K}}_{(p',p''),(x',x'')}$ in  (\ref{xxbswb}).
For the de Rham components, the corresponding compatibility follows
from \cite[Theorem A.11]{Kim1} and \cite[Corollary 4.6]{Kim2}, which
state that any two quasi-isomorphisms are $L_{\infty}[1]$-homotopic.
Therefore,
\[
F\circ\mathrm{Pr}'
=
G\circ\mathrm{Pr}''.
\]
\end{proof}

\begin{prop}\label{fbprokur}
In the above setting, let
$G : \mathfrak{X}'' \rightarrow \mathfrak{X}$
be a fibration. Then
\[
\mathfrak{X}' {}_F \times_G \mathfrak{X}''
:=
\left(
X'_f \times_g X'',
\left[
\left(
\left\{
\mathcal{U}^\mathrm{f}_{(p',p'')}
\right\},
\left\{
\widetilde{\Phi}^\mathrm{f}_{(p'q',p''q'')}
\right\}
\right)
\right]
\right)
\]
is the fiber product of $\mathfrak{X}'$ and $\mathfrak{X}''$ over
$\mathfrak{X}$ in $\mathbf{Kur}$.
\end{prop}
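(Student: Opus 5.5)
The plan is to verify the universal property of the pullback square
\[
F\circ\mathrm{Pr}'=G\circ\mathrm{Pr}''
\]
established in the preceding lemma. Let $\mathfrak{Y}=(Y,[\widehat{\mathcal{V}}])$ be a Kuranishi space, and let $H'\colon\mathfrak{Y}\to\mathfrak{X}'$ and $H''\colon\mathfrak{Y}\to\mathfrak{X}''$ satisfy $F\circ H'=G\circ H''$. We must construct a morphism $H\colon\mathfrak{Y}\to\mathfrak{X}'{}_F\times_G\mathfrak{X}''$ with $\mathrm{Pr}'\circ H=H'$ and $\mathrm{Pr}''\circ H=H''$, and show it is unique.

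First we choose representatives. Replacing $\widehat{\mathcal{V}}$ by an equivalent atlas if necessary, we take pre-morphisms $\overline{H}'$ and $\overline{H}''$ with the same source atlas $\widehat{\mathcal{V}}$, whose targets are the atlases $\widehat{\mathcal{U}}'$ and $\widehat{\mathcal{U}}''$ used in building $\widehat{\mathcal{U}}^{\mathrm{f}}$. This uses the extension procedure of \cite[Definition 3.15]{Kim1}, exactly as in the definition of composition. The equality $F\circ H'=G\circ H''$ then gives, after possibly passing to a further common refinement, three pieces of data: an equality $f\circ h'=g\circ h''$ of topological maps; equalities $f_{p'}\circ h'_r=g_{p''}\circ h''_r$ on the zero loci of the charts $\mathcal{V}_r$; and an $L_\infty[1]$-homotopy between $\widehat{h}'_{r,y}\circ\widehat{f}_{p',h'_r(y)}$ and $\widehat{h}''_{r,y}\circ\widehat{g}_{p'',h''_r(y)}$.

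Next we define $\overline{H}$. The topological map is $h=(h',h'')\colon Y\to X'{}_f\times_g X''$. On bases we would like $h_r=(h'_r,h''_r)\colon V_r\to U'_{p'}{}_{f_{p'}}\times_{g_{p''}}U''_{p''}$. Since the base maps commute only on zero loci, we enlarge $V_r$ (stabilize by $\mathbb{R}^m$) and use the submersivity of $g_{p''}$ to correct $h''_r$ by a map into the fibres of $g_{p''}$, so that the base identity holds on all of $V_r$. For the Koszul part of $\widehat{h}_{r,y}$ we use the splitting $E^{\mathrm{f}}=\pi'^*E'\oplus\pi''^*[E'']$ and define the map on $\bigwedge^{-\bullet}\Gamma(E^{\mathrm{f}*})$ from $\widehat{h}'^{\mathrm{K}}$ on the first factor and from $\widehat{h}''^{\mathrm{K}}$ restricted to the complement $E''^{\mathrm{c}}$ on the second. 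The decomposition axiom (iv) of Definition~\ref{adfaf} ensures this is compatible with the coordinate changes $\widehat{\phi}^{\mathrm{f}}$ of (\ref{bbexx}). For the de Rham part we use the pullback along $h_r$ composed with the homotopy inverses of the maps (2) and (4) of (\ref{gageeeeee}). Checking that $\overline{H}$ is a pre-morphism then mirrors the proof for $\overline{\mathrm{Pr}}'$. The identity $\mathrm{Pr}'\circ H=H'$ is immediate from (\ref{bzqqqqgbbbq}). For $\mathrm{Pr}''\circ H=H''$ we compare with (\ref{xxbswb}): the $\operatorname{Im}\overline{\mathrm{em}}$-component of $\widehat{h}''$ agrees with the term $\widehat{f}^{\mathrm{K}}\circ\underline{\widehat{\eta}}^{-1}\circ\overline{\mathrm{em}}^{-1}$ up to exactly the homotopy supplied by $F\circ H'=G\circ H''$, using axiom (iii) for $G$.

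Finally, uniqueness. Suppose $\overline{K}$ is a second pre-morphism with the same two composites. On topological spaces and bases, $K$ is determined by its projections, because fibre products of spaces and of manifolds are universal. On $L_\infty[1]$-components, $\bigwedge^{-\bullet}\Gamma(E^{\mathrm{f}*})$ is generated by the images of $\widehat{\mathrm{pr}}'^{\mathrm{K}}$ and $\widehat{\mathrm{pr}}''^{\mathrm{K}}$, so $\widehat{k}$ is determined up to homotopy. The de Rham parts agree up to homotopy by \cite[Theorem A.11]{Kim1}. Hence $\overline{K}$ and $\overline{H}$ are equivalent pre-morphisms.

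I expect the main obstacle to be the second step: making the base maps land in the fibre product away from the zero locus, and producing a genuine $L_\infty[1]$-morphism into the Koszul complex of the quotient bundle $[E'']$. If the stabilization argument fails, the fallback is to use that morphisms are only defined up to the equivalence of \cite[Definition 3.12]{Kim1}: shrink to neighbourhoods $W_y$ and construct $h_r$ there, using the contractibility of $W_y$ together with the surjective submersion $g_{p''}$.
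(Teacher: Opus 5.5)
Your overall architecture matches the paper's proof: build the mediating pre-morphism chartwise from the two legs, check the pre-morphism axioms, verify the two composites, then prove uniqueness. On two points you are more careful than the paper, which simply sets $k_r=(i_r,j_r)$ and asserts both composite identities from \eqref{khtob1}. The first is the base maps off the zero locus. The second is the homotopy needed for $\mathrm{Pr}''\circ H=H''$. Definition~\ref{defmoreq}(ii) already gives agreement near the zero locus, so shrinking to an open subatlas suffices there.

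\textbf{The genuine gap is uniqueness of the $L_\infty[1]$-component.} The Koszul $L_\infty[1]$-structures have vanishing higher operations, so $L_\infty[1]$-morphisms between them are only $\mathbb{R}$-linear chain maps. They are not multiplicative, so saying the complex is ``generated by the images of $\widehat{\mathrm{pr}}'^{\mathrm{K}}$ and $\widehat{\mathrm{pr}}''^{\mathrm{K}}$'' determines nothing. The $\mathbb{R}$-span of those images misses two things: the mixed elements $a'\otimes a''$ with $|a'|\cdot|a''|\neq 0$, and, in degree $0$, a complement of $\pi'^*C^\infty(W'_{x'})+\pi''^*C^\infty(W''_{x''})$ in $C^\infty(W^{\mathrm f}_{(x',x'')})$ (Lemma~\ref{wnqb}). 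With the operations $l^{\mathrm f,\mathrm K}$ defined before the proposition, the mixed summand has zero differential and receives no boundaries. Hence two chain maps agreeing on the images of the projections need not even be homotopic. The two composite conditions on your competing $\overline{K}$ therefore cannot by themselves force uniqueness.

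The paper uses more of the competitor's structure. In its notation the constructed morphism is $K$ and the competitor is $\underline{K}$. It writes an explicit homotopy $\widehat{H}$ into $\mathcal{C}^Y_{r,y}\otimes\Omega^*(\Delta^1)$. This homotopy equals $\widehat{k}^{\mathrm K}$ on pure-type elements and $t\,\underline{\widehat{k}}^{\mathrm K}+c_m\,dt$ on mixed ones. The degree $-1$ maps $c_m$ solving \eqref{bkklss} come from two ingredients:
\begin{itemize}
\item the coordinate-change compatibility of $\underline{K}$, after expanding the atlas of $\mathfrak{Y}$ so its base maps are surjective;
\item the fact that the fiber-product coordinate change \eqref{bbexx} kills mixed terms.
\end{itemize}
Together these give \eqref{bajh}. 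So the missing idea is that uniqueness must invoke pre-morphism axiom (ii) of the second pre-morphism, not just its two composites.

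The same issue affects your existence step. Defining the Koszul map ``from $\widehat{h}'^{\mathrm K}$ on the first factor and $\widehat{h}''^{\mathrm K}$ on the second'' does not specify a chain map on all of $\bigwedge^{-\bullet}\Gamma(E^{\mathrm f*})$. You need a rule on mixed terms and on degree-$0$ functions. The paper supplies this in \eqref{khtob1}: zero on mixed terms, consistent with $l^{\mathrm f,\mathrm K}$ vanishing there, and the projection $\mathrm{p}_0$ of Lemma~\ref{wnqb}(ii) in degree $0$.
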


\begin{proof}
Let
\[
\mathfrak{Y}
=
\left(
Y,
\left[\widehat{\mathcal{U}}^Y\right]
\right)
\]
be a Kuranishi space. Suppose that we are given a commutative diagram
\[
\begin{tikzcd}
\mathfrak{Y} \arrow[r, "J"] \arrow[d, "I"'] &
\mathfrak{X}'' \arrow[d, "G"]
\\
\mathfrak{X}' \arrow[r, "F"'] &
\mathfrak{X},
\end{tikzcd}
\]
where $I : \mathfrak{Y} \rightarrow \mathfrak{X}'$ and
$J : \mathfrak{Y} \rightarrow \mathfrak{X}''$ are morphisms of
Kuranishi spaces represented by the pre-morphisms
\[
\begin{cases}
\overline{I}
:=
\left(
\widehat{\mathcal{U}}^Y,
\widehat{\mathcal{U}}',
i,
\left\{i_r\right\},
\left\{\widehat{i}_{r,y}\right\}
\right),
\\
\overline{J}
:=
\left(
\widehat{\mathcal{U}}^Y,
\widehat{\mathcal{U}}'',
j,
\left\{j_r\right\},
\left\{\widehat{j}_{r,y}\right\}
\right),
\end{cases}
\]
respectively.

We define a morphism
\[
\mathfrak{Y}
\xrightarrow{K}
\mathfrak{X}' {}_F \times_G \mathfrak{X}''
\]
by considering the pre-morphism
\[
\overline{K}
=
\left(
\widehat{\mathcal{U}}^Y,\;
\widehat{\mathcal{U}''}_{G} \times_F \widehat{\mathcal{U}}',\;
k,\;
\left\{k_r\right\},\;
\left\{\widehat{k}_{r,y}\right\}
\right).
\]
Thus, the underlying maps are
\begin{enumerate}
\item[--]
\[
k :
Y \rightarrow X' {}_f \times_g X'',
\qquad
r \mapsto \bigl(i(r),j(r)\bigr),
\]

\item[--]
\[
k_r :
U^Y_r
\rightarrow
U'_{i(r)} {}_{f_{i(r)}}\!\times_{g_{j(r)}} U''_{j(r)},
\qquad
y \mapsto \bigl(i_r(y),j_r(y)\bigr),
\]

\item[--]
\[
\widehat{k}_{r,y} :
\mathcal{C}^{\mathrm{f}}_{k(r),k_r(y)}
\rightarrow
\mathcal{C}^Y_{r,y},
\qquad
\widehat{k}_{r,y}
=
\left\{
\widehat{k}_{r,y,m}
\right\}_{m\geq1}.
\]
\end{enumerate}

For a homogeneous element
\[
a
=
\pi'^*a'
\otimes
\pi^{\prime\prime*}a''
\]
of
\[
\bigwedge\nolimits^{-i}
\Gamma
\left(
\pi^{\prime*}E_{p'}^{\prime*}
\big|_{W^{\mathrm{f}}_{(i_r(y),j_r(y))}}
\right)
\otimes
\bigwedge\nolimits^{-j}
\Gamma
\left(
\pi^{\prime\prime*}E_{p''}^{\prime\prime*}
\big|_{W^{\mathrm{f}}_{(i_r(y),j_r(y))}}
\right),
\]
we define the Koszul component
\[
\widehat{k}^{\mathrm{K}}_{r,y}
=
\left\{
\widehat{k}^{\mathrm{K}}_{r,y,m}
\right\}_{m\geq1}
\]
by
\begin{equation}\label{khtob1}
\widehat{k}^{\mathrm{K}}_{r,y,m}(a)
:=
\begin{cases}
\widehat{i}^{\mathrm{K}}_{i(r),i_r(y),m}(a')
& \text{if }j=0,\ 1\leq i,
\\[4pt]
\widehat{j}^{\mathrm{K}}_{j(r),j_r(y),m}(a'')
& \text{if }i=0,\ 1\leq j,
\\[4pt]
\widehat{i}^{\mathrm{K}}_{i(r),i_r(y),m}
\bigl(\mathrm{p}'_0(a')\bigr)
+
\widehat{j}^{\mathrm{K}}_{j(r),j_r(y),m}
\bigl(\mathrm{p}''_0(a'')\bigr)
& \text{if }i=j=0,
\\[4pt]
0
& \text{otherwise.}
\end{cases}
\end{equation}
Here, the projection maps $\mathrm{p}'_0$ and $\mathrm{p}''_0$ are
defined as in Lemma~\ref{wnqb}(ii).

\begin{lem}
The collection
\[
\widehat{k}^{\mathrm{K}}_{r,y}
=
\left\{
\widehat{k}^{\mathrm{K}}_{r,y,m}
\right\}_{m\geq1}
\]
is a well-defined $L_{\infty}[1]$-morphism.
\end{lem}

\begin{proof}
This follows from the fact that
$\widehat{i}^{\mathrm{K}}_{i(r),i_r(y)}$ and
$\widehat{j}^{\mathrm{K}}_{j(r),j_r(y)}$ are
$L_{\infty}[1]$-morphisms.
\end{proof}

For the de Rham component
$\widehat{k}^{\mathrm{dR}}_{r,y}$, recall that the discussion following
\eqref{ahbbbsxxxx} gives
\[
dp'(\ker\alpha^{\mathrm{f}})
\subset
\ker\alpha',
\qquad
dp''(\ker\alpha^{\mathrm{f}})
\subset
\ker\alpha''.
\]
Thus, the maps
$\widehat{\mathrm{pr}}'^{\mathrm{dR}}$ and
$\widehat{\mathrm{pr}}''^{\mathrm{dR}}$ are defined by pullback.
We then set
\[
\widehat{k}^{\mathrm{dR}}_{r,y}
:=
\left(
\widehat{i}^{\mathrm{dR}}_{i(r),i_r(y)}
\oplus
\widehat{j}^{\mathrm{dR}}_{j(r),j_r(y)}
\right)
\circ
\left(
\left(
\widehat{\mathrm{pr}}'^{\mathrm{dR}}_{k(r),k_r(y)}
\right)^{-1}
\oplus
\left(
\widehat{\mathrm{pr}}''^{\mathrm{dR}}_{k(r),k_r(y)}
\right)^{-1}
\right)^{-1},
\]
where $(\cdots)^{-1}$ denotes a homotopy inverse of each
quasi-isomorphism.

\begin{lem}\label{kfprm}
The tuple $\overline{K}$ defines a pre-morphism.
\end{lem}

\begin{proof}
Consider two points $r,z\in Y$ such that
\[
\operatorname{Im}\psi^Y_r
\cap
\operatorname{Im}\psi^Y_z
\neq
\emptyset.
\]
We verify the required conditions.

\begin{enumerate}[label=(\roman*)]
\item
We have
\[
\phi^{\mathrm{f}}_{k(r)k(z)}
\circ
k_r
=
k_z
\circ
\phi^Y_{rz}
\]
on
\[
(s^Y_r)^{-1}(0)\cap U^Y_{rz}.
\]
Indeed,
\[
\begin{split}
\phi^{\mathrm{f}}_{k(r)k(z)}
\circ k_r(y)
&=
\phi^{\mathrm{f}}_{k(r)k(z)}
\bigl(i_r(y),j_r(y)\bigr)
\\
&=
\left(
\phi_{i(r)i(z)}\circ i_r(y),
\phi_{j(r)j(z)}\circ j_r(y)
\right)
\\
&=
\left(
i_z\circ\phi^Y_{rz}(y),
j_z\circ\phi^Y_{rz}(y)
\right)
\\
&=
k_z\circ\phi^Y_{rz}(y)
\end{split}
\]
for every
\[
y\in(s^Y_r)^{-1}(0)\cap U^Y_{rz}.
\]

\item
We have
\[
\psi^{\mathrm{f}}_{k(r)}
\circ
k_r
=
k\circ\psi^Y_r
\]
on
\[
(s^Y_r)^{-1}(0)\cap U^Y_{rz}.
\]
Indeed,
\[
\begin{split}
\psi^{\mathrm{f}}_{k(r)}
\circ k_r(y)
&=
\psi^{\mathrm{f}}_{k(r)}
\bigl(i_r(y),j_r(y)\bigr)
\\
&=
\left(
\psi'_{i(r)}\circ i_r(y),
\psi''_{j(r)}\circ j_r(y)
\right)
\\
&=
\left(
i\circ\psi^Y_r(y),
j\circ\psi^Y_r(y)
\right)
\\
&=
k\circ\psi^Y_r(y)
\end{split}
\]
for every
\[
y\in(s^Y_r)^{-1}(0)\cap U^Y_{rz}.
\]

\item
We have
\[
\widehat{\phi}^Y_{rz,y}
\circ
\widehat{k}_{z,\phi^Y_{rz}(y)}
\sim
\widehat{k}_{r,y}
\circ
\widehat{\phi}^{\mathrm{f}}_{k(r)k(z),k_r(y)}
\]
for each
\[
y\in(s^Y_r)^{-1}(0)\cap U^Y_{rz}.
\]

Consider first a homogeneous element
\[
\begin{split}
a
&=
\pi^{\prime*}a'
\otimes
\pi^{\prime\prime*}a''
\\
&\in
\bigwedge\nolimits^{-i}
\Gamma
\left(
\pi^{\prime*}E_{p'}^{\prime*}
\big|_{W^{\mathrm{f}}_{(i_r(y),j_r(y))}}
\right)
\otimes
\bigwedge\nolimits^{-j}
\Gamma
\left(
\pi^{\prime\prime*}E_{p''}^{\prime\prime*}
\big|_{W^{\mathrm{f}}_{(i_r(y),j_r(y))}}
\right).
\end{split}
\]
When $j=0$ and $i\geq1$, equation~\eqref{khtob1} gives
\[
\begin{split}
\widehat{\phi}^Y_{rz,y}
\circ
\widehat{k}_{z,\phi^Y_{rz}(y)}(a)
&=
\widehat{\phi}^Y_{rz,y}
\circ
\widehat{i}_{z,\phi^Y_{rz}(y)}(a')
\\
&\sim
\widehat{i}_{r,i_r(y)}
\circ
\widehat{\phi}_{i(r)i(z),i_r(y)}(a')
\\
&=
\widehat{k}_{r,y}
\circ
\widehat{\phi}^{\mathrm{f}}_{k(r)k(z),k_r(y)}(a).
\end{split}
\]
The case $i=0$ and $j\geq1$, involving the other
$L_{\infty}[1]$-morphism
$\widehat{j}_{j(r),j_r(y)}$, follows in the same way. The case $i=0$and $j = 0$ follows from the linearity of the maps.

The homotopy commutativity for the de Rham component follows from
Theorem~\ref{anhp}, applied to the relevant quasi-isomorphisms between
the acyclic $L_{\infty}[1]$-algebras.
\end{enumerate}
\end{proof}

We remark that the relations
\[
I=\mathrm{Pr}'\circ K,
\qquad
J=\mathrm{Pr}''\circ K
\]
hold. The underlying topological and base components are immediate,
while the $L_{\infty}$-components follow directly from the definition
in~\eqref{khtob1}.

The universal properties for the zero-locus map $k$ and the base map
$k_r$ for each $r\in Y$ are immediate, since the corresponding fiber
products in the topological and smooth categories are standard.

It remains to verify the universal property for the
$L_{\infty}$-component. Let
\[
\underline{\widehat{k}}_{r,y} :
\mathcal{C}^\mathrm{f}_{k(r),k_r(y)}
\longrightarrow
\mathcal{C}^Y_{r,y}
\]
be an $L_{\infty}[1]$-morphism satisfying
\[
\begin{aligned}
\underline{\widehat{k}}_{r,y}
\circ
\widehat{\mathrm{pr}}'_{k(r),k_r(y)}
&=
\widehat{i}_{i(r),i_r(y)},
\\
\underline{\widehat{k}}_{r,y}
\circ
\widehat{\mathrm{pr}}''_{k(r),k_r(y)}
&=
\widehat{j}_{j(r),j_r(y)}.
\end{aligned}
\]

Then we have:
\begin{lem}
The $L_\infty[1]$-morphisms $\widehat{k}_{r,y}$ and $\widehat{\underline{k}}_{r,y}$ are $L_\infty[1]$-homotopic.
\end{lem}
\begin{proof}
It suffices to construct an $L_\infty[1]$-morphism
\[
\widehat{H} \colon \mathcal{C}^\mathrm{f}_{k(r),k_r(y)} \longrightarrow \mathcal{C}^Y_{r,y} \otimes \Omega^*(\Delta^1)
\]
satisfying
\[
\mathrm{Eval}_0 \circ \widehat{H} = \widehat{k}_{r,y}
\qquad\text{and}\qquad
\mathrm{Eval}_1 \circ \widehat{H} = \widehat{\underline{k}}_{r,y},
\]
where the codomain of $\widehat{H}$ is the model of $\mathcal{C}^Y_{r,y} \times \Delta^1$ equipped with the $L_{\infty}[1]$-algebra structure as in Example~\ref{hhex}. We only need to verify this for the Koszul part.

To construct the desired Koszul part of the $L_\infty[1]$-homotopy, we need to construct a family of $\mathbb{R}$-linear maps $\widehat{H}^{\mathrm{K}} = \left\{\widehat{H}^{\mathrm{K}}_m\right\}_{m \ge 1}$ that satisfies \eqref{ombb}.

For each homogeneous element
\[
\begin{split}
a_i &= a_i' \otimes a_i'' \\
&\in \bigwedge\nolimits^{-i}\Gamma\left(\pi^{\prime*} E'_{p'}\big|_{W'_{\phi'_{p'q'}(x')} \times W''_{\phi''_{p''q''}(x'')}}\right)_{\phi'_{p'q'} \times \phi''_{p''q''}} \\
&\qquad\qquad\qquad\qquad \otimes \bigwedge\nolimits^{-j}\Gamma\left(\pi^{\prime\prime*}\left[E''_{p''}\right]\big|_{W'_{\phi'_{p'q'}(x')} \times W''_{\phi''_{p''q''}(x'')}}\right)_{\phi'_{p'q'} \times \phi''_{p''q''}} \\
&= \bigwedge\nolimits^{-\bullet}\Gamma\left(\pi^{\prime*}E'_{q'} \oplus \pi^{\prime\prime*}\left[E''_{q''}\right]\big|_{W'_{\phi'_{p'q'}(x')} \times W''_{\phi''_{p''q''}(x'')}}\right)_{\phi'_{p'q'} \times \phi''_{p''q''}},
\end{split}
\]
we claim that we can define
\begin{equation}\label{bqqqqq}
\widehat{H}^\mathrm{K}_m(a_1,\ldots,a_m) :=
\begin{cases}
\widehat{k}^\mathrm{K}_{r,y,m}(a_1,\ldots,a_m) & \text{if either $i = 0$ or $j = 0$}, \\[6pt]
\widehat{\underline{k}}^\mathrm{K}_{r,y,m}(a_1,\ldots,a_m)\, t + c_m(a_1,\ldots,a_m)\, dt & \text{otherwise,}
\end{cases}
\end{equation}
where $c_m$ is an $\mathbb{R}$-linear map of degree $-1$ given by
\[
c_m \colon \left(\bigwedge\nolimits^{-\bullet} \Gamma\left(E^\mathrm{f}_{(p', p'')}\big|_{W^\mathrm{f}_{(x', x'')}}\right)\right)^{\otimes m} \longrightarrow \bigwedge\nolimits^{-\bullet} \Gamma\left(E^Y_{r}\big|_{W^Y_{y}}\right).
\]

It then remains to verify the $L_\infty$-relation
\begin{equation}\label{bjsps}
    l_1\bigl(\widehat{H}^\mathrm{K}_m(a_1,\ldots,a_m)\bigr)
    = \sum_i \pm \widehat{H}^\mathrm{K}_m\bigl(l_1(a_i),\ldots,\widehat{a_i},\ldots,a_m\bigr)
\end{equation}
for each $m \geq 1$.

For the first case in \eqref{bqqqqq}, where either $i = 0$ or $j = 0$, relation \eqref{bjsps} holds automatically because $\widehat{k}^\mathrm{K}_{r,y}$ is an $L_\infty[1]$-morphism. Thus, it suffices to prove \eqref{bjsps} for the second case in \eqref{bqqqqq}; that is, for each homogeneous element $a_i = a_i' \otimes a_i''$ with $|a_i'| \cdot |a_i''| \neq 0$, we need to verify the equality between
\begin{equation}\label{ombb}
\begin{split}
    l_1\bigl(\widehat{H}^{\mathrm{K}}_m(a_1,\ldots,a_m)\bigr)
    &= l_1\Bigl( t \cdot \underline{\widehat{k}}^\mathrm{K}_{r,y,m}(a_1,\ldots,a_m) + c_m(a_1,\ldots,a_m)\, dt \Bigr) \\
    &= t \cdot l_1\bigl(\underline{\widehat{k}}^\mathrm{K}_{r,y,m}(a_1,\ldots,a_m)\bigr) + \underline{\widehat{k}}^\mathrm{K}_{r,y,m}(a_1,\ldots,a_m)\, dt \\
    &\quad + l_1\bigl(c_m(a_1,\ldots,a_m)\bigr)\, dt
\end{split}
\end{equation}
and
\[
\begin{split}
    (\widehat{H}^{\mathrm{K}}_m \circ l_1)(a_1,\ldots,a_m)
    &= \sum_i \pm \widehat{H}^{\mathrm{K}}_m\bigl(l_1(a_i),\, a_1,\ldots,\widehat{a_i},\ldots,a_m\bigr) \\
    &= \sum_i \pm \Bigl[
        t \cdot \underline{\widehat{k}}^\mathrm{K}_{r,y,m}\bigl(l_1(a_i),a_1,\ldots,\widehat{a_i},\ldots,a_m\bigr) \\
        &\qquad\qquad + c_m\bigl(l_1(a_i),a_1,\ldots,\widehat{a_i},\ldots,a_m\bigr)\, dt
    \Bigr].
\end{split}
\]
Comparing the coefficients of $t$ and $dt$ separately, we obtain
\begin{equation}\label{bqpdmbm}
    l_1\bigl(\underline{\widehat{k}}^\mathrm{K}_{r,y,m}(a_1,\ldots,a_m)\bigr)
    = \sum_i \pm \underline{\widehat{k}}^\mathrm{K}_{r,y,m}\bigl(l_1(a_i),a_1,\ldots,\widehat{a_i},\ldots,a_m\bigr)
\end{equation}
and
\begin{equation}\label{bkklss}
    \underline{\widehat{k}}^\mathrm{K}_{r,y,m}(a_1,\ldots,a_m) + l_1\bigl({c}_m(a_1,\ldots,a_m)\bigr)
    = \sum_i \pm {c}_m\bigl(l_1(a_i),a_1,\ldots,\widehat{a_i},\ldots,a_m\bigr).
\end{equation}
Notice that \eqref{bqpdmbm} is precisely the $L_{\infty}$-relation for $\underline{\widehat{k}}^\mathrm{K}_{r,y}$, which is already valid. Thus, it remains to construct the maps $c_m$ satisfying \eqref{bkklss}. For this, by taking expansions of the atlas for $\mathfrak{Y}$ if necessary, we may assume without loss of generality that $\underline{k}_p$ is surjective for each $p \in Y$, so that $\widehat{\phi}_{pq,x} = \widehat{\phi}_{pq,x}^{\mathrm{c}}$. Since $\overline{\underline{K}}$ is a pre-morphism, we have
\begin{equation}\label{bajh}
    \widehat{\kappa} \circ \widehat{\underline{k}}^\mathrm{K}_p \circ \widehat{\phi}^{\mathrm{f},\mathrm{K}}_{k(p)k(q)}
    \sim
    \widehat{\kappa} \circ \widehat{\phi}^\mathrm{K}_{pq,x} \circ \widehat{\underline{k}}^\mathrm{K}_{k(q)}
    \sim
    \widehat{\underline{k}}^\mathrm{K}_{k(q)},
\end{equation}
where $\widehat{\kappa}$ is a homotopy inverse of $\widehat{\phi}^\mathrm{K}_{pq}$ under the surjectivity assumption on $\underline{k}_p$.

Relation \eqref{bajh} then guarantees the existence of an $\mathbb{R}$-linear map of degree $-1$,
\[
c_m \colon \left(\bigwedge\nolimits^{-\bullet} \Gamma\left(E^\mathrm{f}_{(p', p'')}\big|_{W^\mathrm{f}_{(x', x'')}}\right)\right)^{\otimes m} \longrightarrow \bigwedge\nolimits^{-\bullet} \Gamma\left(E^Y_{r}\big|_{W^Y_{y}}\right)
\]
for each $m \geq 1$. In particular, for $a_i = a_i' \otimes a_i''$ with $|a_i'| \cdot |a_i''| \neq 0$, relation \eqref{bkklss} holds because the leftmost term $\widehat{\kappa} \circ \widehat{\underline{k}}^\mathrm{K}_p \circ \widehat{\phi}^{\mathrm{f},\mathrm{K}}_{k(p)k(q)}$ in \eqref{bajh} vanishes on such elements by the definition of the coordinate change $\widehat{\phi}^{\mathrm{f},\mathrm{K}}_{k(p)k(q)}$.
\end{proof}

\end{proof}

The following propositions play a key role in the construction of the
category of fibrant objects in Section 6.

\begin{prop}\label{hnuun}
\begin{enumerate}[label=(\roman*)]
    \item Fibrations are preserved under pullbacks.
    \item Trivial fibrations are preserved under pullbacks.
\end{enumerate}
\end{prop}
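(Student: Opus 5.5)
The plan is to work directly with the explicit model of Proposition \ref{fbprokur}. Let $G\colon\mathfrak{X}''\to\mathfrak{X}$ be a fibration and $F\colon\mathfrak{X}'\to\mathfrak{X}$ arbitrary. Since pullbacks are unique up to isomorphism and isomorphisms are fibrations (Lemma \ref{afdhjdasf}), it suffices to show that the canonical projection $\mathrm{Pr}'\colon\mathfrak{X}'{}_F\times_G\mathfrak{X}''\to\mathfrak{X}'$ of Definition \ref{dfnvfb} is a fibration. We will check the axioms of Definition \ref{adfaf}(a) one by one on the representing pre-morphism $\overline{\mathrm{Pr}}'$.

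The base axiom (ii) is formal: $\mathrm{pr}'_{(p',p'')}\colon U'_{p'}{}_{f_{p'}}\times_{g_{p''}}U''_{p''}\to U'_{p'}$ is the base change of the surjective submersion $g_{p''}$, hence a surjective submersion. For (iii) and (iv), the obstruction bundle $E^{\mathrm f}=\pi'^*E'_{p'}\oplus\pi''^*[E''_{p''}]$ already splits. We take $\underline{E}'=E'_{p'}$ over $\underline U'=U'_{p'}$, with $\mathrm{Em}'$ the identity (so the tangent bundle condition is trivial). We take $\mathrm{Em}$ to be a section of $\mathrm{pr}'$, which exists locally since it is a submersion between contractible sets, covered by the inclusion of $\pi'^*E'_{p'}$ as the first summand. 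The compatibility (\ref{gadgashhhh2}) then holds because $s^{\mathrm f}$ has first component $\pi'^*s'_{p'}$. The map $\widehat{\eta}$ in (\ref{bhuaiuhnn}) reduces to the pullback $a'\mapsto(\pi'^*a',0)$, which is exactly $\widehat{\mathrm{pr}}'^{\mathrm K}$ in (\ref{bzqqqqgbbbq}). The decomposition $E^{\mathrm f}=E^{\mathrm i}\oplus E^{\mathrm c}$ with $E^{\mathrm c}=\pi''^*[E''_{p''}]$ is respected by the coordinate changes, by the case-wise definition (\ref{bbexx}). Axiom (v) reduces via Lemma \ref{agagsdgaa} to the corresponding property of $G$: the quotient section is $\pi''^*[\tilde s'']$, and its zero locus intersected with the preimage of $(s'_{p'})^{-1}(0)$ is the fiber product of zero loci. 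Finally, for (i), if the chosen atlas of $\mathfrak{X}'$ lies in $\mathscr{A}_0$, then the fiber product atlas is minimal among equivalent ones, since an expansion $\times\mathbb{R}^m$ of the fiber product is the fiber product of an expanded factor. I expect this step to need only a short argument via the definition of $\sim$.

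For (ii) of the proposition, suppose in addition that $G$ is a weak equivalence, so $g$ is a homeomorphism and $\widehat{g}_{p'',x''}$ are quasi-isomorphisms. Then $\mathrm{pr}'\colon X'{}_f\times_g X''\to X'$ is a homeomorphism, being the base change of a homeomorphism. It remains to show that $\widehat{\mathrm{pr}}'_{(p',p''),(x',x'')}$ is a quasi-isomorphism. The de Rham parts are maps between acyclic algebras and so are automatic. For the Koszul part, I would use the short exact sequence $0\to\bigwedge\Gamma(\pi'^*E'^*)\to\bigwedge\Gamma(E^{\mathrm f*})\to(\text{terms involving }[E'']^*)\to0$ and show the quotient is acyclic. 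The weak-equivalence property of $G$, combined with the decomposition $\widehat{g}=\widehat{\eta}^\vee\circ\underline{\widehat{\eta}}^{-1}$, should force the Koszul complex of the complementary bundle $[E''_{p''}]$ with section $[s'']$ to be quasi-isomorphic to functions. In other words, $[s'']$ is transverse in the fiber directions of $g_{p''}$. A tangent-bundle/FOOO argument as in Corollary \ref{ygsdnn} then yields acyclicity after base change. The five-lemma argument used earlier in the quasi-isomorphism lemma for $\widehat{\phi}^{\mathrm{f,c}}$ concludes.

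The main obstacle is this last step. Extracting from the abstract quasi-isomorphism condition on $\widehat{g}$ the geometric transversality of $[s'']$ along the fibers of $g_{p''}$, and showing this survives pullback along $f_{p'}$, is the delicate point. I would first try to prove directly that $\widehat{\eta}^\vee$ being a quasi-isomorphism is equivalent to the tangent bundle condition for $\mathrm{Em}$, and then base-change that condition.
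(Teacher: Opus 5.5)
Your overall architecture matches the paper's. In (i) the paper also uses a section $y'\mapsto(y',\epsilon_{p',p''}(y'))$ of $\mathrm{pr}'_{(p',p'')}$, identifies the induced Koszul morphism with \eqref{bzqqqqgbbbq}, takes the decomposition axiom from \eqref{bbexx}, and reads the zero-locus axiom off the form of $s^{\mathrm f}$. In (ii) it gets the homeomorphism by base change, dismisses the de Rham part, and reduces the Koszul part to a tangent bundle condition plus Proposition~\ref{afec}. The paper never checks the minimality axiom (i); Section~4 works throughout with fibrations ``except (i)''. Your one-line argument for it would not prove it anyway: knowing that expansions of $\widehat{\mathcal U}^{\mathrm f}$ are fiber products of expanded factors says nothing about equivalent atlases of \emph{smaller} base dimension.

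The genuine gap is the step you flag in (ii), which is the whole content of that part. As written, you only hope that the quasi-isomorphism forces transversality; the paper supplies the mechanism.
\begin{enumerate}
\item Since $\widehat{g}^{\vee\mathrm{K}}$ is an \emph{injective} quasi-isomorphism, its cokernel is acyclic. In degree $0$ this says that $\iota_{[s''_{p''}]}\colon\Gamma([E''_{p''}]^*|_{W''_{x''}})\to C^\infty(W''_{x''})/\pi^*(\mathrm{em}_{p''}^{-1})^*C^\infty(W_{g_{p''}(x'')})$ is onto.
\item The map $a\mapsto a-\pi^*(a|_{\operatorname{Im}\mathrm{em}_{p''}})$ identifies this quotient with the vanishing ideal $I_{\mathrm{em}_{p''}}$ of $\operatorname{Im}\mathrm{em}_{p''}$. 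Hence every function vanishing on $\operatorname{Im}\mathrm{em}_{p''}$ equals $a([s''_{p''}])$ up to a function pulled back by the retraction.
\item Differentiating at a zero $y\in\operatorname{Im}\mathrm{em}_{p''}$ in the directions of $\ker d\pi_y$ shows that $[ds''_{p''}]_y$ embeds the normal space into $[E''_{p''}]_y$. The paper records this as the isomorphism \eqref{bjsjhhh}.
\item The ``base change'' is then the observation that the normal space of $\mathrm{em}'_{(p',p'')}(U'_{p'})$ in $U'_{p'}{}_{f_{p'}}\times_{g_{p''}}U''_{p''}$ is that of $\operatorname{Im}\mathrm{em}_{p''}$ in $U''_{p''}$, while $E^{\mathrm f}/E'_{p'}\simeq[E''_{p''}]$. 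This is exactly the tangent bundle condition for \eqref{ahhheeeeefq}.
\end{enumerate}

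Your plan must also address two further points.

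First, Proposition~\ref{afec} and Corollary~\ref{ygsdnn} need the exact identity $s^{\mathrm f}\circ\mathrm{em}'=\overline{\mathrm{em}}'\circ s'_{p'}$. Your first-summand inclusion $e'\mapsto(e',0)$ only gives the projected compatibility of Lemma~\ref{pokhgjb}. That suffices for (i), but not for (ii) unless you choose $\epsilon_{p',p''}$ with image in $[s''_{p''}]^{-1}(0)$, which is possible once fiberwise transversality is known. The paper instead adds $\pi^{\prime\prime*}[s''_{p''}(\epsilon_{p',p''}(y'))]$ to $\overline{\mathrm{em}}'$; this makes the identity exact at the cost of fiberwise linearity.

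Second, when $\mathrm{Em}'$ is not the identity, the condition for $\mathrm{Em}$ lives over $\underline U\subsetneq U$ and cannot simply be base-changed along $f_{p'}$. The paper needs a separate step for this (its case (2) after Lemma~\ref{agohaoigigi}). There it compares $U'\times_{\underline U}U''$ with $U'\times_U U''$ through a retraction $\mathrm r$ and a left inverse $\widehat{\mathrm h}$ of the induced map $\widehat{\mathrm r}$.
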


\begin{proof}
(i) Let
\[
G : \mathfrak{X}'' \rightarrow \mathfrak{X}
\]
be a fibration represented by
\[
G =
\left[
\left(
\widehat{\mathcal{U}}'',
\widehat{\mathcal{U}},
g,
\left\{g_{p''}\right\},
\left\{\widehat{g}_{p'',x''}\right\}
\right)
\right]
\]
with the commutative diagram
\[
\begin{tikzcd}
    \mathfrak{X}' {}_F \times_G \mathfrak{X}''
    \arrow[r, "\mathrm{Pr}''"]
    \arrow[d, "\mathrm{Pr}'"']
    &
    \mathfrak{X}''
    \arrow[d, "G"]
    \\
    \mathfrak{X}'
    \arrow[r, "F"']
    &
    \mathfrak{X}.
\end{tikzcd}
\]

We show that the canonical projection
\[
\mathrm{Pr}' :
\mathfrak{X}' {}_F \times_G \mathfrak{X}''
\rightarrow
\mathfrak{X}'
\]
represented by
\[
\mathrm{Pr}'
=
\Bigl[
\left(
\widehat{\mathcal{U}}^\mathrm{f},
\widehat{\mathcal{U}}',
\mathrm{pr}',
\left\{\mathrm{pr}'_{(p',p'')}\right\},
\left\{
\widehat{\mathrm{pr}}'_{(p',p''),(x',x'')}
\right\}
\right)
\Bigr]
\]
is a fibration.

\begin{itemize}
    \item[--]
    The map
    \[
    \mathrm{pr}'_{(p',p'')} :
    U'_{p'} {}_{f_{p'}}\!\times_{g_{p''}} U''_{p''}
    \rightarrow
    U'_{p'}
    \]
    is surjective, since $g_{p''}$ is a surjective submersion.

    \item[--]
    There exists an embedding
    \[
    \mathrm{em}'_{(p',p'')} :
    U'_{p'}
    \hookrightarrow
    U'_{p'} {}_{f_{p'}}\!\times_{g_{p''}} U''_{p''},
    \qquad
    y' \mapsto
    \bigl(y',\epsilon_{p',p''}(y')\bigr),
    \]
    for some embedding
    \[
    \epsilon_{p',p''} :
    U'_{p'} \hookrightarrow U''_{p''}
    \]
    satisfying
    \[
    f_{p'}(z')
    =
    g_{p''}\bigl(\epsilon_{p',p''}(z')\bigr).
    \]
    The existence of such an embedding follows from the surjectivity
    of $g_{p''}$.

    \item[--]
    The map
    \[
    \overline{\mathrm{em}}'_{(p',p'')} :
    E'_{p'}
    \rightarrow
    {\pi'}^*E'_{p'}
    \oplus
    {\pi''}^*\left[E''_{p''}\right]
    \]
    is a bundle embedding over
    $\mathrm{em}'_{(p',p'')}$, given by
    \[
    \begin{split}
    \overline{\mathrm{em}}'_{(p',p'')} :
    E'_{p'}
    &\rightarrow
    {\pi'}^*E'_{p'}
    \oplus
    {\pi''}^*\left[E''_{p''}\right],
    \\
    (y',e')
    &\mapsto
    \left(
    \bigl(y',\epsilon_{p',p''}(y')\bigr),
    \left(
    \pi^{\prime *}e',
    \pi^{\prime\prime *}
    \left[
    s''_{p''}\bigl(\epsilon_{p',p''}(y')\bigr)
    \right]
    \right)
    \right).
    \end{split}
    \]
    We then have the commutative diagram
    \begin{equation}\label{ahhheeeeefq}
    \begin{tikzcd}[column sep=large]
        E'_{p'}
        \arrow[r, hook, "{\overline{\mathrm{em}}'_{(p',p'')}}"]
        &
        {\pi'}^*E'_{p'}
        \oplus
        {\pi''}^*\bigl[E''_{p''}\bigr]
        \\
        \arrow[u, swap, "s'_{p'}"']
        U'_{p'}
        \arrow[r, hook, "{\mathrm{em}'_{(p',p'')}}"']
        &
        U'_{p'} {}_{f_{p'}}\!\times_{g_{p''}} U''_{p''}.
        \arrow[u, swap,
        "{{\pi'}^*s'_{p'}
        \oplus
        {\pi''}^*\left[s''_{p''}\right]}"]
    \end{tikzcd}
    \end{equation}
    and the compatibility
    \[
    \overline{\mathrm{em}}'_{(p',p'')}
    \circ
    s'_{p'}
    =
    \left(
    {\pi'}^*s'_{p'}
    \oplus
    {\pi''}^*\left[s''_{p''}\right]
    \right)
    \circ
    \mathrm{em}'_{(p',p'')}
    \]
    follows immediately.

    The map
    $\widehat{\mathrm{pr}}^{\prime\mathrm{K}}_{(p',p''),(x',x'')}$
    is defined by \eqref{bzqqqqgbbbq}, using the above bundle embedding
    and Lemma~\ref{pokhgjb}.

    \item[--]
    The definition of the $L_{\infty}$-coordinate change
    $\widehat{\phi}^{\mathrm{f}}_{pq,x}$ of the fiber product given by
    \eqref{tvev} and \eqref{bbexx} ensures the existence of the two
    $L_{\infty}[1]$-morphisms in \eqref{hqrwbb}.

    \item[--]
    We have
    \[
    \left[\pi''^*s''_{p''}\right]^{-1}(0)
    \cap
    \left(\mathrm{pr}'_{(p',p'')}\right)^{-1}
    \left({s'_{p'}}^{-1}(0)\right)
    =
    \left(
    \pi'^*s'_{p'}
    \oplus
    \pi''^*s''_{p''}
    \right)^{-1}(0),
    \]
    as follows directly from the definition of the fiber product
    section.
\end{itemize}

(ii) Suppose, in addition, that
\[
G =
\left[
\left(
\widehat{\mathcal{U}}'',
\widehat{\mathcal{U}},
g,
\left\{\widehat{g}_{p''}\right\},
\left\{\widehat{g}_{p'',x''}\right\}
\right)
\right]
\]
is a weak equivalence. Thus, $g$ is a homeomorphism and
$\widehat{g}_{p'',x''}$ is a quasi-isomorphism.

Observe that
\[
X' {}_f\!\times_g X'' \simeq X'
\]
and that the projection
\[
\mathrm{pr}' :
X' {}_f\!\times_g X''
\rightarrow
X',
\qquad
(p',p'')\mapsto p',
\]
is a homeomorphism. Indeed, since $g$ is a homeomorphism, for each
$p'\in X'$ there is a unique $p''\in X''$ satisfying
\[
f(p')=g(p'').
\]

It remains to show that
$\widehat{\mathrm{pr}}'_{(p',p''),(x',x'')}$
is a quasi-isomorphism for each
\[
x'\in (s'_{p'})^{-1}(0),
\qquad
x''\in
\left[\pi''^*s''_{p''}\right]^{-1}(0).
\]

Since
\[
\widehat{g}^{\vee\mathrm{K}}_{p'',x'',1} :
\bigwedge\nolimits^{-\bullet}
\Gamma\left(
E_{g(p'')}^*
\big|_{W_{g_{p''}(x'')}}
\right)
\rightarrow
\bigwedge\nolimits^{-\bullet}
\Gamma\left(
E_{p''}^{\prime\prime *}
\big|_{W''_{x''}}
\right)
\]
is the injective quasi-isomorphic chain map induced by
Lemma~\ref{pokhgjb} and Corollary~\ref{ygsdnn}, the quotient complex
\[
\frac{
\bigwedge\nolimits^{-\bullet}
\Gamma\left(
E_{p''}^{\prime\prime *}
\big|_{W''_{x''}}
\right)
}{
\widehat{g}^{\vee\mathrm{K}}_{p'',x'',1}
\left(
\bigwedge\nolimits^{-\bullet}
\Gamma\left(
E_{g(p'')}^*
\big|_{W_{g_{p''}(x'')}}
\right)
\right)
}
\]
is acyclic.

This induces the following acyclic chain complex:
\begin{equation}\label{wes2}
\cdots
\rightarrow
\bigwedge\nolimits^{i+1}
\Gamma\left(
\left[E''_{p''}\right]^*
\big|_{W''_{x''}}
\right)
\xrightarrow{\iota_{s''_{p''}}^{(i+1)}}
\bigwedge\nolimits^i
\Gamma\left(
\left[E''_{p''}\right]^*
\big|_{W''_{x''}}
\right)
\xrightarrow{\iota_{s''_{p''}}^{(i)}}
\bigwedge\nolimits^{i-1}
\Gamma\left(
\left[E''_{p''}\right]^*
\big|_{W''_{x''}}
\right)
\rightarrow
\cdots.
\end{equation}

In particular, in degree $0$, we obtain the surjectivity of the linear
map
\begin{equation}\label{gecwp}
\Gamma\left(
\left[E''_{p''}\right]^*
\big|_{W''_{x''}}
\right)
\xrightarrow{\iota_{s''_{p''}}}
\frac{
\Gamma\left(
\bigwedge^0
\left(
E_{p''}^{\prime\prime *}
\big|_{W''_{x''}}
\right)
\right)
}{
\widehat{g}^{\vee\mathrm{K}}_{p'',x'',1}
\Gamma\left(
\bigwedge^0
E_{g(p'')}^*
\big|_{W_{g_{p''}(x'')}}
\right)
}.
\end{equation}

Moreover,
\begin{equation}\label{cse}
\begin{split}
&
\frac{
\Gamma\left(
\bigwedge^0
\left(
E_{p''}^{\prime\prime *}
\big|_{W''_{x''}}
\right)
\right)
}{
\widehat{g}^{\vee\mathrm{K}}_{p'',x'',1}
\Gamma\left(
\bigwedge^0
E_{g(p'')}^*
\big|_{W_{g_{p''}(x'')}}
\right)
}
\\
&\qquad =
\frac{
C^\infty(W''_{x''})
}{
\widehat{g}^{\vee\mathrm{K}}_{p'',x'',1}
\left(
C^\infty(W_{g_{p''}(x'')})
\right)
}
\\
&\qquad =
\frac{
C^\infty(W''_{x''})
}{
\pi^*
\circ
\left(\mathrm{em}_{p''}^{-1}\right)^*
\left(
C^\infty(W_{g_{p''}(x'')})
\right)
}
\overset{*}{\simeq}
I_{\mathrm{em}_{p''}},
\end{split}
\end{equation}
where
\[
I_{\mathrm{em}_{p''}}
:=
\left\{
h\in C^\infty(W''_{x''})
\mid
h|_{\operatorname{Im}\mathrm{em}_{p''}}
\equiv 0
\right\}.
\]

The isomorphism $*$ follows from the following lemma.

\begin{lem}\cite[Lemma 2.33 (i)]{Kim1}
There exists an $\mathbb{R}$-isomorphism
\begin{equation}\nonumber
\frac{
C^\infty(W''_{x''})
}{
\pi^*
\circ
\left(\mathrm{em}_{p''}^{-1}\right)^*
\left(
C^\infty(W_{g_{p''}(x'')})
\right)
}
\xlongrightarrow{\simeq}
\left\{
h\in C^\infty(W''_{x''})
\mid
h|_{\operatorname{Im}\mathrm{em}_{p''}}
\equiv 0
\right\}.
\end{equation}
\end{lem}

\begin{proof}
Let
\[
\pi :
W''_{x''}
\twoheadrightarrow
\mathrm{em}_{p''}
\left(
W_{g_{p''}(x'')}
\right)
\]
be a choice of retraction between the contractible spaces. Thus,
\[
\pi|_{\mathrm{em}_{p''}(W_{g_{p''}(x'')})}
=
\mathrm{id}_{\mathrm{em}_{p''}(W_{g_{p''}(x'')})}.
\]
Consider the map
\begin{equation}\nonumber
\begin{split}
\kappa :
C^\infty(W''_{x''})
&\longrightarrow
\left\{
h\in C^\infty(W''_{x''})
\mid
h|_{\operatorname{Im}\mathrm{em}_{p''}}
\equiv 0
\right\},
\\
a'
&\longmapsto
a'
-
\pi^*
\left(
a'|_{\operatorname{Im}\mathrm{em}_{p''}}
\right).
\end{split}
\end{equation}
This map is well-defined, since
\[
\left.
\left(
a'
-
\pi^*
\left(
a'|_{\operatorname{Im}\mathrm{em}_{p''}}
\right)
\right)
\right|_{\operatorname{Im}\mathrm{em}_{p''}}
=
a'|_{\operatorname{Im}\mathrm{em}_{p''}}
-
a'|_{\operatorname{Im}\mathrm{em}_{p''}}
=
0.
\]

The map $\kappa$ is obviously surjective, since
\[
\kappa|_{I_{\mathrm{em}_{p''}}}
=
\mathrm{id}.
\]
Its kernel consists of all elements of the form $\pi^*(b)$ for some
\[
b\in
C^\infty
\left(
\mathrm{em}_{p''}(W_{g_{p''}(x'')})
\right).
\]
Since $\mathrm{em}_{p''}$ is an embedding, every such $b$ can be written
as
\[
b=
\left(\mathrm{em}_{p''}^{-1}\right)^*(b')
\]
for some
\[
b'\in
C^\infty(W_{g_{p''}(x'')}).
\]
Hence
\[
\ker(\kappa)
=
\pi^*
\circ
\left(\mathrm{em}_{p''}^{-1}\right)^*
\left(
C^\infty(W_{g_{p''}(x'')})
\right).
\]
The result follows from the first isomorphism theorem.
\end{proof}

The infinitesimal consequence of the surjectivity of the map
\eqref{gecwp} is
\begin{equation}\label{bjsjhhh}
\left(
E''_{p''}\big|_{\operatorname{Im}(\mathrm{em}_{p''})}
\right)_x
\xrightarrow{
(d[s''_{p''}]_x)^{-1},\,\simeq
}
\left(
N_{W''_{x''}}\operatorname{Im}(\mathrm{em}_{p''})
\right)_x.
\end{equation}
In other words, the tangent bundle condition is satisfied at each
zero point
\[
y\in\operatorname{Im}(\mathrm{em}_{p''}).
\]

\begin{lem}\label{agohaoigigi}
In the above situation, $\widehat{\operatorname{pr}}_{(p',p''), (x',x'')}^{\prime \mathrm{K}}$ is a quasi-isomorphism.
\end{lem}

\begin{proof}
Recall that the $L_{\infty}$ Koszul morphism of a fibration is, by definition, induced by a pair of bundle embeddings 
\[
(\overline{\mathrm{em}}_p, \mathrm{em}_p) \qquad\text{and}\qquad (\overline{\mathrm{em}}'_p, \mathrm{em}'_p),
\]
fitting into the commutative diagram
\[
\begin{tikzcd}
E'_{f(p)} \arrow{d} &  \underline{E}^{\prime}_{f(p)} \arrow{d} \arrow[hook]{l}[swap]{\overline{\mathrm{em}}'_p} \arrow[hook]{r}{\overline{\mathrm{em}}_p}  & E_p  \arrow{d} \\
U'_{f(p)} & \underline{U}'_{f(p)} \arrow[hook]{l}[swap]{{\mathrm{em}}'_p} \arrow[hook]{r}{\mathrm{em}_p} & U_p
\end{tikzcd}
\]
for each $p$, and satisfying the compatibility conditions
\begin{equation}\label{gadgashhhh2}
\begin{split}
\left(s_p \circ \mathrm{em}_p\right)\big|_{\mathrm{Im}\left(\overline{\mathrm{em}}_p\right)} &= \overline{\mathrm{em}}_p \circ \underline{s}^{\prime}_{f(p)}, \\
s_p \circ \mathrm{em}'_p &= \overline{\mathrm{em}}'_p \circ s^{\prime \mathrm{c}}_{f(p)},
\end{split} 
\end{equation}
together with the tangent bundle condition for $\mathrm{em}'_p$.

By Lemma~\ref{pokhgjb}, we obtain the $L_{\infty}[1]$-morphisms
\[
\widehat{\eta}^{\vee}_{p,x} \colon \bigwedge\nolimits^{-\bullet}\Gamma\left(\underline{E}^{\prime *}_{f(p)}\big|_{\underline{W}'_{f(p)}}\right) \longrightarrow \bigwedge\nolimits^{-\bullet}\Gamma\left({E}^{*}_{p}\big|_{W_{p}}\right)
\]
and
\[
\underline{\widehat{\eta}}_{p,x}^{-1} \colon \bigwedge\nolimits^{-\bullet}\Gamma\left({E}^{\prime *}_{f(p)}\big|_{W'_{f(p)}}\right) \xrightarrow{\;\simeq\;} \bigwedge\nolimits^{-\bullet}\Gamma\left(\underline{E}^{\prime *}_{f(p)}\big|_{\underline{W}'_{f(p)}}\right).
\]
In particular, $\underline{\widehat{\eta}}_{p,x}^{-1}$ is a quasi-isomorphism by Lemma~\ref{ygsdnn}, as it is obtained from its homotopy inverse $\underline{\widehat{\eta}}_{p,x}$. 

We then define
\begin{equation}\label{bhuaiuhnn1}
\widehat{\eta}_{p,x} := \widehat{\eta}^{\vee}_{p,x} \circ \underline{\widehat{\eta}}_{p,x}.
\end{equation}
By hypothesis, $\widehat{\eta}_{p,x}$ is a weak equivalence, which implies that $\widehat{\eta}^{\vee}_{p,x}$ is also a weak equivalence.

\medskip
\noindent(1) We first prove the desired statement for the special case where $(\overline{\mathrm{em}}'_p, \mathrm{em}'_p)$ is given by
\[
\left( U'_{f(p)}, E'_{f(p)} \right) = \left(\underline{U}'_{f(p)}, \underline{E}^{\prime}_{f(p)}\right)
\qquad\text{and}\qquad
(\overline{\mathrm{em}}'_p, \mathrm{em}'_p) = (\mathrm{id}, \mathrm{id}),
\]
so that we have
\[
\widehat{\eta}_{p,x} = \widehat{\eta}^{\vee}_{p,x}.
\]
It is then straightforward to verify the tangent bundle condition for \eqref{ahhheeeeefq} as follows:
\[
\begin{split}
\frac{\left(\pi^{\prime*} E'_{p'} \oplus \bigl[ \pi^{\prime\prime*} E''_{p''} \bigr]\right)_{\mathrm{em}_{(p',p'')}(x')}}{\overline{\mathrm{em}}'_{(p',p''), x'}\left( (E'_{p'})_{x'} \right)} 
&\simeq \frac{(E'_{p'})_{x'} \oplus \bigl[ E''_{p''} \bigr]_{\epsilon_{p',p''}(x')}}{(E'_{p'})_{x'} \oplus \{0\}} \\
&\simeq \bigl[ E''_{p''} \bigr]_{\epsilon_{p',p''}(x')} \\
&\xrightarrow[\eqref{bjsjhhh}]{(d[s''_{p''}]_x)^{-1},\, \simeq} \left(N_{W_x}W''_{\phi(x)}\right)_{\epsilon_{p',p''}(y')} \\
&\simeq \frac{TU''_{p''}}{\mathrm{em}_{p''*}(TU_{g(p'')})} \\
&\simeq \frac{T_{(x',\epsilon_{p',p''}(x'))}\left(U'_{p'} {}_{f_{p'}}{\times}_{g_{p''}} U''_{p''}\right)}{\mathrm{em}_{(p',p'')*}\left(T_{x'}U'_{p'}\right)}
\end{split}
\]
for each $x' \in (s'_{p'})^{-1}(0)$.
\end{proof}

Moreover, Lemma~\ref{agohaoigigi} together with Proposition~\ref{afec} yields the desired quasi-isomorphism property of $\widehat{\mathrm{pr}}^{\prime \mathrm{K}}_{(p',p''),(x',x'')}$.

\ \

\noindent (2)
We next consider the general case, where we have a nontrivial embedding $(\overline{\mathrm{em}}'_p, \mathrm{em}'_p)$ and the $L_{\infty}[1]$-morphism $\underline{\widehat{\eta}}_{p,x}$.

For the base component, we have the following commutative diagram:
\begin{equation}\label{bsmcb}
\begin{tikzcd}[
    row sep=2.5em,
    column sep=5em,
    arrows={->}
]    
U'_{p'} \times_{\underline{U}_{g(p'')}} U''_{p''} \arrow[r, "\mathrm{r}"] \arrow[rr, bend left=20, "\mathrm{pr}''_{(p',p'')}"] \arrow[rdd, swap, "\underline{\mathrm{pr}}'_{(p',p'')}"] & U'_{p'} \times_{U_{g(p'')}} U''_{p''} \arrow[r, "\mathrm{pr}''_{(p',p'')}"] \arrow[dd, "\mathrm{pr}'_{(p',p'')}"] & U''_{p''} \arrow[d, "\exists \widetilde{g}_{p''}"] \arrow[dd, bend left=30, "g_{p''}"] \\
{} & {} & \underline{U}_{g(p'')} \arrow[u, bend left=20, "\mathrm{em}_{(p',p'')}"] \arrow[d, swap, "\mathrm{em}'_{(p',p'')}"] \\
{} & U'_{p'} \arrow[r, swap, "f_{p'}"] \arrow[ru, "\exists \widetilde{f}_{p'}"] & U_{g(p'')},
\end{tikzcd}
\end{equation}
where $\mathrm{r}$ is the retraction map
\[
U'_{p'} \times_{\underline{U}_{g(p'')}} U''_{p''} \longrightarrow U'_{p'} \times_{U_{g(p'')}} U''_{p''}
\]
between contractible spaces. This restricts to the map between local neighborhoods near the origin (or zero section):
\[
\mathrm{r} \colon W'_{x'} \times_{\underline{W}_x} W''_{x''} \twoheadrightarrow W'_{x'} \times_{{W}_x} W''_{x''}.
\]
Note that the existence of the lifts $\widetilde{f}_{p'}$ and $\widetilde{g}_{p''}$ is guaranteed by the contractibility of $U'_{p'}$, $U''_{p''}$, and $U_{g(p'')}$.

We remark that \eqref{bsmcb} consists of two fiber product diagrams:
\[
\begin{tikzcd}[
    row sep=1.5em,
    column sep=0.5em,
    arrows={->}
]    
 U'_{p'} \times_{U_{g(p'')}} U''_{p''} \arrow[r] \arrow[dd] & U''_{p''} \arrow[dd, bend left=20] \\
{} & {} \\
U'_{p'} \arrow[r] & U_{g(p'')},
\end{tikzcd}
\qquad\qquad 
\begin{tikzcd}[
    row sep=1.5em,
    column sep=1.5em,
    arrows={->}
]    
U'_{p'} \times_{\underline{U}_{g(p'')}} U''_{p''} \arrow[rr, bend left=20] \arrow[rdd] & {} & U''_{p''} \arrow[d] \\
{} & {} & \underline{U}_{g(p'')} \\
{} & U'_{p'} \arrow[ru] & {}
\end{tikzcd}
\]
which reduce to the fiber product diagrams for local neighborhoods
\[
\begin{tikzcd}[
    row sep=1.5em,
    column sep=0.5em,
    arrows={->}
]    
W'_{x'} \times_{W_x} W''_{x''} \arrow[r] \arrow[dd] & W''_{x''} \arrow[dd, bend left=20] \\
{} & {} \\
W'_{x'} \arrow[r] & W_x,
\end{tikzcd}
\qquad\qquad 
\begin{tikzcd}[
    row sep=1.5em,
    column sep=1.5em,
    arrows={->}
]    
W'_{x'} \times_{\underline{W}_x} W''_{x''} \arrow[rr, bend left=20] \arrow[rdd] & {} & W''_{x''} \arrow[d] \\
{} & {} & \underline{W}_{x} \\
{} & W'_{x'} \arrow[ru] & {}
\end{tikzcd}
\]
respectively.

\medskip

For the $L_{\infty}$-component, we have the commutative diagram:
\[
\begin{tikzcd}[
    row sep=2.5em,
    column sep=5em,
    arrows={->}
]    
    \underline{\mathcal{C}}^{\mathrm{f}}_{(p',p''), (x',x'')} & \mathcal{C}^{\mathrm{f}}_{(p',p''), (x',x'')} \arrow[l, "\widehat{\mathrm{r}}"'] & \mathcal{C}''_{p'', x''} \arrow[l, "\widehat{\mathrm{pr}}^{\prime\prime}_{(p',p''), (x',x'')}"'] \arrow[ll, swap, bend right=20, "\underline{\widehat{\mathrm{pr}}}^{\prime\prime}_{(p',p''), (x',x'')}"] \\
{} & {} & \underline{\mathcal{C}}_{p,x} \arrow[u, "{ \underline{\widehat{\eta}},\,\simeq }"'] \arrow[ld] \arrow[d, swap, "{ \widehat{\eta}^{\vee},\,\simeq }"'] \\
{} & \mathcal{C}'_{p', x'} \arrow[luu, swap, "\underline{\widehat{\mathrm{pr}}}^{\prime}_{(p',p''), (x',x'')}"'] \arrow[uu, swap, "\widehat{\mathrm{pr}}'_{(p',p''), (x',x'')}"] & \mathcal{C}_{p,x} \arrow[l, "\widehat{f}_{p',x'}"].
\end{tikzcd}
\]

From case (1), we already know that $\underline{\widehat{\mathrm{pr}}}^{\prime \mathrm{K}}_{(p',p''),(x',x'')}$ is a quasi-isomorphism.
On the other hand, $\widehat{\mathrm{r}}$ is defined as the chain map (and hence an $L_{\infty}[1]$-morphism) induced from the linear map of sections
\begin{equation}\label{qx}
\begin{split}
\Gamma\left(\pi^{\prime*} E^{\prime*}_{p'} \oplus \pi^{\prime\prime*} \bigl[ E^{\prime \prime*}_{p''} \bigr]\big|_{W'_{x'} \times_{{W}_x} W''_{x''}}\right) &\longrightarrow \Gamma\left(\pi^{\prime*} E^{\prime*}_{p'} \oplus \pi^{\prime\prime*} \bigl[ E^{\prime\prime*}_{p''} \bigr]\big|_{W'_{x'} \times_{\underline{W}_x} W''_{x''}}\right), \\
a &\longmapsto \mathrm{r}^*a.
\end{split}
\end{equation}
Here, the pullback by $\mathrm{r}$ is given by
\[
\mathrm{r}^*a(u) := a(\mathrm{r}(u))
\]
for $u \in W'_{x'} \times_{\underline{W}_x} W''_{x''}$ and $a \in \Gamma\left(\pi^{\prime*} E^{\prime*}_{p'} \oplus \pi^{\prime\prime*} \bigl[ E^{\prime \prime*}_{p''} \bigr]\big|_{W'_{x'} \times_{{W}_x} W''_{x''}}\right)$. Using \eqref{bhuaiuhnn1}, one can verify that \eqref{qx} satisfies the chain map condition.

By the definitions of $\widehat{\mathrm{pr}}^{\prime \mathrm{K}}_{(p',p''), (x',x'')}$ and $\underline{\widehat{\mathrm{pr}}}^{\prime \mathrm{K}}_{(p',p''), (x',x'')}$ together with the commutativity of the above diagram, we have
\[
\underline{\widehat{\mathrm{pr}}}^{\prime \mathrm{K}}_{(p',p''), (x',x'')} = \widehat{\mathrm{r}} \circ \widehat{\mathrm{pr}}^{\prime \mathrm{K}}_{(p',p''), (x',x'')}.
\]

Let $\sim$ denote the equivalence relation on linear maps defined by equality modulo composition with isomorphisms. Since $\underline{\widehat{\mathrm{pr}}}^{\prime \mathrm{K}}_{(p',p''), (x',x'')}$ is a quasi-isomorphism, we then have
\begin{equation}\label{mbjjjm}
\widehat{\mathrm{id}}_* \sim \widehat{\mathrm{r}}_* \circ \widehat{\mathrm{pr}}^{\prime \mathrm{K}}_{(p',p''), (x',x'')*}.
\end{equation}

Note that the map $\widehat{\mathrm{r}}$ admits a left inverse $\widehat{\mathrm{h}}$ induced by the restriction of sections, satisfying
\[
\widehat{\mathrm{h}} \circ \widehat{\mathrm{r}} = \widehat{\mathrm{id}},
\]
and hence
\begin{equation}\label{kkoobb}
\widehat{\mathrm{h}}_* \circ \widehat{\mathrm{r}}_* = \widehat{\mathrm{id}}_*.
\end{equation}

By composing \eqref{mbjjjm} with $\widehat{\mathrm{h}}_*$ and applying \eqref{kkoobb}, we obtain
\[
\widehat{\mathrm{h}}_* \sim \widehat{\mathrm{pr}}_{(p',p''), (x',x'')*}^{\prime \mathrm{K}},
\]
which, together with \eqref{kkoobb}, further implies
\begin{equation}\label{bahohdf}
\widehat{\mathrm{pr}}_{(p',p''), (x',x'')*}^{\prime \mathrm{K}} \circ \widehat{\mathrm{r}}_* \sim \widehat{\mathrm{id}}_*.
\end{equation}

From \eqref{mbjjjm} and \eqref{bahohdf}, we conclude that $\widehat{\mathrm{pr}}_{(p',p''), (x',x'')}^{\prime \mathrm{K}}$ is a quasi-isomorphism.
\end{proof}

Note that $\mathfrak{X}' {}_{F}\!\times_{G} \mathfrak{X}''$ does not
define a fiber product in $\mathbf{Kur}$ in the strict sense, since its
definition depends on the choices of representatives for $F$ and $G$.
In general, there is no canonical choice, and different choices need
not give rise to isomorphic Kuranishi spaces. Instead, we will show that
all such choices result in the same objects up to weak equivalence.

\begin{prop}\label{eobjkk}
The resulting Kuranishi spaces
$\mathfrak{X}' {}_{F}\!\times_{G} \mathfrak{X}''$
obtained from different choices of representatives $\overline{F}$ and
$\overline{G}$ are related by weak equivalences.
\end{prop}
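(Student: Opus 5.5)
The plan is to reduce the comparison of two arbitrary choices to a chain of elementary modifications, and to show that each modification yields a weak equivalence between the resulting fiber products. Suppose $\overline{F}_1,\overline{F}_2$ represent $F$ and $\overline{G}_1,\overline{G}_2$ represent $G$, each $\overline{G}_i$ satisfying the fibration axioms of Definition~\ref{adfaf}\,(a). Two equivalent pre-morphisms are related, by \cite[Definition 3.12]{Kim1}, through extensions: passing to open subcharts and to expanded atlases $\widehat{\mathcal{U}}\times\mathbb{R}^m$, with the $L_\infty$-components changed only up to $L_\infty[1]$-homotopy. It therefore suffices to treat three moves separately: (a) replacing an $L_\infty$-component by a homotopic one; (b) restricting to open subcharts; (c) expanding a chart by a trivial factor $\mathbb{R}^m$ on the base, or correspondingly on the obstruction bundle. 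By symmetry of the construction in $F$, it is enough to vary $\overline{F}$ with $\overline{G}$ fixed, and then to vary $\overline{G}$ with $\overline{F}$ fixed.

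For each move, the candidate weak equivalence is the pre-morphism whose underlying map is the identity of $X'{}_f\!\times_g X''$. This already gives condition (i) of Definition~\ref{adfaf}\,(b). The base maps are chosen as follows. For move (b) they are the inclusions of the smaller fiber product bases. For move (c) they are the inclusions $U'_{p'}{}_{f_{p'}}\!\times_{g_{p''}}U''_{p''}\hookrightarrow (U'_{p'}\times\mathbb{R}^m)\,{}_{\tilde f}\!\times_{g}U''_{p''}$ given by the zero slice, together with the bundle embedding into $E^{\mathrm f}\oplus\underline{\mathbb{R}}^m$, whose section is the identity coordinate on $\mathbb{R}^m$. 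The $L_\infty$-components are those induced by Lemma~\ref{pokhgjb}. In case (c), the tangent bundle condition \eqref{tancond} holds tautologically, because the normal directions $\mathbb{R}^m$ map isomorphically to the added fiber $\mathbb{R}^m$. Proposition~\ref{afec} and Corollary~\ref{ygsdnn} then give condition (ii), namely that the Koszul components are quasi-isomorphisms. In case (b), Lemma~\ref{cochqim} together with contractibility of the $W$'s gives the same conclusion. The de Rham components are maps between acyclic algebras and are automatically quasi-isomorphisms. Compatibility with coordinate changes is checked exactly as in the verification that $\overline{\mathrm{Pr}}'$ and $\overline{K}$ are pre-morphisms, factor by factor. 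For move (a), the chart data are unchanged: the fiber product atlas depends on the $L_\infty$-components only through \eqref{xxbswb} and the coordinate changes \eqref{bbexx}, so homotopic components give homotopic coordinate changes, and the identity pre-morphism with identity $L_\infty$-components is a weak equivalence.

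The main obstacle is varying $\overline{G}$. A different representative may come with different auxiliary data $(\overline{\mathrm{em}},\mathrm{em})$ and $(\overline{\mathrm{em}}',\mathrm{em}')$, and hence with a different quotient bundle $[E''_{p''}]$, so the obstruction bundles $E^{\mathrm f}$ of the two models are not directly comparable. The first thing I would try is to pass through the common refinement $\underline{U}'_{f(p)}$, using the same device as the retraction $\mathrm{r}$ in diagram \eqref{bsmcb}. One would form the fiber product over $\underline{U}_{g(p'')}$ and compare each model with it via $\mathrm{r}$ and its left inverse $\widehat{\mathrm{h}}$. The argument of Lemma~\ref{agohaoigigi}, applied to the short exact sequences in \eqref{abbgh} and followed by the five-lemma argument around \eqref{bkllass}, should show that the induced Koszul maps are quasi-isomorphisms. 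This produces a zigzag of weak equivalences, possibly a span rather than a single arrow, and I expect this is the precise meaning of ``related by weak equivalences''. Finally, one composes the weak equivalences obtained from the individual moves; weak equivalences are closed under composition, since homeomorphisms and quasi-isomorphisms compose, and this closure completes the comparison.
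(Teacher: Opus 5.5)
Your reduction is the paper's own argument. The paper treats exactly two moves for $\overline{G}$. The first is an expansion $U''_{2,p''}=U''_{1,p''}\times\mathbb{R}^{n''}$ with $g^{n''}_{p''}=g_{p''}\circ\mathrm{pr}$, handled by the identity of $X'{}_f\!\times_g X''$ and the zero-slice embedding $(y',y'')\mapsto(y',(y'',0))$. The second is an identification of open subcharts by a diffeomorphism $\epsilon$, whose $L_\infty$-component comes from the isomorphism \eqref{bnhh} of Definition~\ref{eqats} rather than from Lemma~\ref{cochqim}. It then says the same works for $\overline{F}$.

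In the expansion step the paper uses the projection of sections onto the unexpanded summand and merely asserts that it is a quasi-isomorphism. Your check of the tangent bundle condition and appeal to Proposition~\ref{afec} supplies a justification the paper leaves out. Your homotopy move is harmless, indeed trivial: apart from the auxiliary data, the atlas $\widehat{\mathcal{U}}^{\mathrm f}$ does not involve $\widehat{f}$ or $\widehat{g}$ at all (formula \eqref{xxbswb} defines $\mathrm{Pr}''$, not the atlas).

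Where you go beyond the paper, the argument is missing. The paper never confronts your ``main obstacle''. In its expansion case it simply takes, as an ``observation'', the auxiliary data of $g^{n''}_{p''}$ to satisfy $\operatorname{Im}(\overline{\mathrm{em}}_{p''})\subset E''_{p''}\subset E''_{p''}\times\mathbb{R}^{n''}$, so the quotient bundle only acquires an $\mathbb{R}^{n''}$ summand. With that convention your move (c) applies verbatim on the $G$-side, and nothing more is needed to reach the paper's conclusion.

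For genuinely unrelated auxiliary data, your moves cannot connect the two models. The base $U'_{p'}{}_{f_{p'}}\!\times_{g_{p''}}U''_{p''}$ is unchanged, while $\operatorname{rk}[E''_{p''}]=\operatorname{rk}E''_{p''}-\operatorname{rk}\underline{E}_{g(p'')}$ can change. So even $\dim U^{\mathrm f}-\operatorname{rk}E^{\mathrm f}$ may differ, and every one of your moves preserves that number. The tools you cite do not bridge this:
\begin{itemize}
\item Lemma~\ref{agohaoigigi} is proved only when $G$ is a weak equivalence; that hypothesis is what produces the tangent bundle condition \eqref{bjsjhhh} used in its proof.
\item The five-lemma argument around \eqref{bkllass} requires vertical maps between two rows $0\to\mathrm{B}'\to\mathrm{A}\to\mathrm{D}\to0$ commuting up to homotopy with the inclusions $\mathrm{B}'\hookrightarrow\mathrm{A}$. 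You have not produced these, and they need not exist when the two subcomplexes of $\mathrm{A}$ come from unrelated choices of $\overline{\mathrm{em}}_{p''}$.
\end{itemize}
So either adopt the paper's convention that the auxiliary data are carried along with the representative, or treat this step as open.

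Two smaller points. First, each expansion gives an arrow from the unexpanded to the expanded model, so two general representatives are joined by a zigzag (for instance a cospan through a common expansion) rather than by a composite; a zigzag is all that is needed in $\mathrm{Ho}(\mathbf{Kur})$. Second, a change of representative may also expand the common target atlas of $\mathfrak{X}$. That forces $\overline{F}$ and $\overline{G}$ to be modified simultaneously, so ``vary one with the other fixed'' does not cover every case; the paper is silent on this too.
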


\begin{proof}
Let
\begin{equation}\nonumber
\begin{cases}
\overline{G}_1 =
\left(
{\widehat{\mathcal{U}_1''}},
\widehat{\mathcal{U}_1},
g_1,
\left\{g_{1,p''}\right\},
\left\{\widehat{g}_{1,p'',x''}\right\}
\right),
\\
\overline{G}_2 =
\left(
{\widehat{\mathcal{U}_2''}},
\widehat{\mathcal{U}_2},
g_2,
\left\{g_{2,p''}\right\},
\left\{\widehat{g}_{2,p'',x''}\right\}
\right)
\end{cases}
\end{equation}
be pre-morphisms such that
\[
\overline{G}_1 \sim \overline{G}_2.
\]
In particular,
\[
{\widehat{\mathcal{U}_1''}}
\sim
{\widehat{\mathcal{U}_2''}}.
\]
We consider two cases, described below, to obtain a weak equivalence
\[
K :
\mathfrak{X}' {}_{F}\!\times_{[\overline{G}_1]} \mathfrak{X}''
\longrightarrow
\mathfrak{X}' {}_{F}\!\times_{[\overline{G}_2]} \mathfrak{X}''.
\]

\begin{enumerate}[label=(\alph*)]
\item
Suppose that
\[
\widehat{\mathcal{U}_1''}
\sim
\widehat{\mathcal{U}_2''}
\]
with an equality
\[
U''_{2,p''}
=
U''_{1,p''}\times\mathbb{R}^{n''}
\]
for some $n''\geq 2$, satisfying
\[
g_{p''}
\equiv
g^{n''}_{p''}\big|_{U''_{p''}},
\]
where
\[
g^{n''}_{p''} :
U''_{p''}\times\mathbb{R}^{n''}
\longrightarrow
U_p
\]
is the map given by
\[
g^{n''}_{p''}(y,r)
:=
g_{p''}(y).
\]

The map between the underlying spaces is the obvious homeomorphism
\[
k :
X' {}_f\!\times_g X''
\xlongrightarrow{\simeq}
X' {}_f\!\times_g X''.
\]

In this case, we consider the embedding
\[
\begin{split}
k_{(p',p'')} :
U'_{p'} {}_{f_{p'}}\!\times_{g_{p''}} U''_{p''}
&\longrightarrow
U'_{p'} {}_{f_{p'}}\!\times_{g^{n''}_{p''}}
\left(U''_{p''}\times\mathbb{R}^{n''}\right)
\\
&\simeq
\left(
U'_{p'} {}_{f_{p'}}\!\times_{g^{n''}_{p''}} U''_{p''}
\right)
\times\mathbb{R}^{n''},
\\
(y',y'')
&\longmapsto
\bigl(y',(y'',0)\bigr).
\end{split}
\]

Observe that the image of the bundle embedding
$\overline{\mathrm{em}}_{p''}$ for the fibration
$g^{n''}_{p''}$ in Definition~\ref{adfaf} (a) (iii) lies inside
$E''_{p''}\subset E''_{p''}\times\mathbb{R}^{n''}$. Hence, we can define
the Koszul $L_{\infty}[1]$-morphism
$\widehat{k}^{\mathrm{K}}_{(p',p''),(x',x'')}$ induced by the projection
\[
\begin{split}
&
\Gamma\Bigl(
\pi'^* E_{p'}'^*
\oplus
\pi''^*
\left[
E_{p''}''^*\oplus\mathbb{R}^{n''}
\right]
\Bigr)
\Big|_{
W'_{x'} {}_{f_{p'}}\times_{g_{p''}}
(W''_{x''}\times\mathbb{R}^{n''})
}
\\
&\hspace{2cm}
\twoheadrightarrow
\Gamma\Bigl(
\pi'^* E_{p'}'^*
\oplus
\pi''^*
\left[
E_{p''}''^*
\right]
\Bigr)
\Big|_{
W'_{x'} {}_{f_{p'}}\times_{g_{p''}} W''_{x''}
}.
\end{split}
\]

Together with the de Rham component, the resulting
$L_{\infty}[1]$-morphism
$\widehat{k}_{(p',p''),(x',x'')}$ is a quasi-isomorphism. In fact, the
data
\[
\overline{K}
:=
\left(
\widehat{\mathcal{U}''_1},
\widehat{\mathcal{U}_1''}\times\mathbb{R}^{n''},
k,
\left\{k_{(p',p'')}\right\},
\left\{
\widehat{k}_{(p',p''),(x',x'')}
\right\}
\right)
\]
determine a pre-morphism, which yields a weak equivalence. The
compatibility with coordinate changes is immediate from the
definitions.

\item
Suppose that
\[
\widehat{\mathcal{U}_1''}
\sim
\widehat{\mathcal{U}_2''}
\]
with $n_1=n_2=1$ in \eqref{urur}. Then, for each $p''$, there exists a
diffeomorphism
\begin{equation}\label{pigbbs}
U''_{1,p''}\cong U''_{2,p''}
\end{equation}
such that
\[
g_{1,p''}\big|_{\overset{\circ}{U}_{1,p''}}
\equiv
g_{2,p''}\big|_{\overset{\circ}{U}_{2,p''}}
\]
for some open subsets
\[
\overset{\circ}{U}_{i,p''}\subset U_{i,p''},
\qquad i=1,2,
\]
and the diffeomorphism restricts to a diffeomorphism
\[
\epsilon :
\overset{\circ}{U}_{1,p''}
\xlongrightarrow{\simeq}
\overset{\circ}{U}_{2,p''}
\]
as described in the following diagram:
\[
\begin{tikzcd}
    U''_{1,p''}\arrow[r, "\simeq"] & U''_{2,p''} \\
    \overset{\circ}{U''}_{1,p''}
    \arrow[r, "\epsilon", "\simeq"']
    \arrow[u, hook]
    &
    \overset{\circ}{U''}_{2,p''}
    \arrow[u, hook]
\end{tikzcd}
\]

We then define the base component map by
\[
\begin{split}
&
U'_{p'} {}_{f_{p'}}\!\times_{g_{1,p''}}
\overset{\circ}{U''}_{1,p''}
\xrightarrow{
\mathrm{id}_{U'_{p'}}\times\epsilon,\ \simeq
}
U'_{p'} {}_{f_{p'}}\!\times_{g_{2,p''}}
\overset{\circ}{U''}_{2,p''},
\\
&
(y',y''_1)
\longmapsto
\bigl(y',\epsilon(y''_1)\bigr),
\end{split}
\]
for $y'\in U'_{p'}$ and
$y''_1\in\overset{\circ}{U''}_{1,p''}$ satisfying
\[
f_{p'}(y')
=
g_{1,p''}(y''_1)
=
g_{2,p''}\bigl(\epsilon(y''_1)\bigr).
\]

The underlying topological map is given by the homeomorphism
\begin{equation}\label{esssv}
X' {}_f\!\times_g X''
\xlongrightarrow{\simeq}
X' {}_f\!\times_g X'',
\end{equation}
obtained from (i).

For the $L_{\infty}$-component, we use the fact that the isomorphism
\eqref{bnhh} in Definition~\ref{eqats}, together with axiom (vii),
restricts to an isomorphism between the local
$L_{\infty}[1]$-algebras on the corresponding open subdata.

It is immediate to verify that these data determine a pre-morphism by
the axioms in Definition~\ref{eqats}. Moreover, the resulting morphism
is a weak equivalence by \eqref{esssv} and condition (iv) in
Definition~\ref{eqats}.
\end{enumerate}

The same argument applies to different choices of representatives for
$F$, so we omit the details.

\end{proof}

By Lemma~\ref{genbvf}, with an appropriate choice of domain atlases,
fiber products of morphisms with manifold targets always exist.
However, this does not imply that we obtain a fiber product in the
strict sense, since its definition involves noncanonical choices, and
the resulting fiber product may depend on these choices. The preceding
proposition shows that it nevertheless defines an honest fiber product
in the homotopy category $\mathrm{Ho}(\mathbf{Kur})$. In other words,
it is a homotopy fiber product.

\begin{cor}
For any two morphisms with a common manifold target, a homotopy fiber
product can be defined in $\mathbf{Kur}$. In other words, it gives a
fiber product in the homotopy category $\mathrm{Ho}(\mathbf{Kur})$.
\end{cor}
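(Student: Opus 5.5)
The plan is to assemble the corollary from three earlier results: Lemma~\ref{genbvf}, Proposition~\ref{fbprokur}, and Proposition~\ref{eobjkk}. Let $F\colon \mathfrak{X}'\to M$ and $G\colon \mathfrak{X}''\to M$ be morphisms, where $M$ is regarded as a Kuranishi space through the embedding $\mathscr{F}$ of Proposition~\ref{pman}.

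The first step is to choose representatives. By Lemma~\ref{genbvf}, I will pick a pre-morphism $\overline{G}$ representing $G$, with domain bases $U''_{p''}$ contractible and large enough. For this $\overline{G}$ every fibration axiom of Definition~\ref{adfaf}(a) except (i) holds. The relevant properties are:
\begin{itemize}
\item each $g_{p''}$ is a surjective submersion;
\item the bundle embedding $\overline{\mathrm{em}}_{p''}$ is the trivial embedding into the zero section, so that $[E''_{p''}]=E''_{p''}$;
\item the Koszul component is homotopic to the zero morphism.
\end{itemize}
The construction in Section~4 was stated under exactly this hypothesis, namely all axioms except (i). So the chart-level data $\mathcal{U}^{\mathrm{f}}_{(p',p'')}$, the coordinate changes $\Phi^{\mathrm{f}}$, and the projections $\mathrm{Pr}',\mathrm{Pr}''$ are all available. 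Proposition~\ref{fbprokur} then produces a Kuranishi space $\mathfrak{X}'{}_F\times_G\mathfrak{X}''$ that satisfies the universal property among commuting squares in $\mathbf{Kur}$. This realises the object $M_1\cap M_2$ of application~(i) and the boundary strata of application~(ii).

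The second step is to show that the construction is independent of choices up to weak equivalence. Proposition~\ref{eobjkk} says that changing the representatives $\overline{F}$ or $\overline{G}$ changes the result only by a weak equivalence. There are two cases: expanding by $\mathbb{R}^{n''}$, and passing to a diffeomorphic open subchart. Any two admissible representatives are related through a finite zigzag of such moves, since the equivalence of atlases is generated by common expansions. Composing the resulting weak equivalences identifies all the candidate fiber products in $\mathrm{Ho}(\mathbf{Kur})$.

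The last step is the universal property in $\mathrm{Ho}(\mathbf{Kur})$. Since morphisms there are zigzags, or roofs $\mathfrak{Y}\xleftarrow{\sim}\mathfrak{Y}'\to\cdot$ once Section~6 is in place, I will first represent the two legs of a test cone by morphisms out of a common $\mathfrak{Y}'$. I will then apply the universal property of Proposition~\ref{fbprokur} in $\mathbf{Kur}$ to get a map $\mathfrak{Y}'\to\mathfrak{X}'{}_F\times_G\mathfrak{X}''$. Uniqueness up to homotopy will come from the lemma showing that $\widehat{k}$ and $\underline{\widehat{k}}$ are $L_\infty[1]$-homotopic.

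I expect the main obstacle to be this passage to the homotopy category. A commuting square in $\mathrm{Ho}(\mathbf{Kur})$ lifts only to a square commuting up to homotopy in $\mathbf{Kur}$, not strictly. I would first try to rectify it by replacing $\mathfrak{Y}'$ with a path-object construction, which is available once $\mathbf{Kur}$ is a category of fibrant objects. A second concern is the minimality axiom (i), which the representative $\overline{G}$ need not satisfy. I would handle it by noting that Section~4 never used (i), together with the fact that Lemma~\ref{afdhjdasf} and Proposition~\ref{eobjkk} render the dimension choice irrelevant up to weak equivalence.
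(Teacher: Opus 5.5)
Your first two steps are exactly the paper's argument. The paper takes existence from Lemma~\ref{genbvf} and Proposition~\ref{fbprokur}, independence of choices from Proposition~\ref{eobjkk}, and then simply declares the result a fiber product in $\mathrm{Ho}(\mathbf{Kur})$ without checking any universal property there. Step two cannot carry that conclusion by itself. It identifies the candidate objects, and as stated not even compatibly with $\mathrm{Pr}'$ and $\mathrm{Pr}''$. Conversely, if each candidate were a pullback in $\mathrm{Ho}(\mathbf{Kur})$, the identification would follow automatically from uniqueness of limits. So everything rests on your third step, and that is where the plan has a genuine gap.

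The missing piece is uniqueness. The lemma that $\widehat{k}$ and $\underline{\widehat{k}}$ are $L_\infty[1]$-homotopic is the uniqueness half of the \emph{strict} universal property in $\mathbf{Kur}$. It compares the $L_\infty$-components of two lifts that have the same $k$ and $k_r$ and whose composites with $\widehat{\mathrm{pr}}'$, $\widehat{\mathrm{pr}}''$ equal $\widehat{i}$, $\widehat{j}$ on the nose. Two maps $k_1,k_2$ into the fiber product that agree after projection in $\mathrm{Ho}(\mathbf{Kur})$ only have right-homotopic projections after precomposing with a weak equivalence, so the lemma does not apply. To get $k_1=k_2$ in $\mathrm{Ho}(\mathbf{Kur})$ you would have to glue the two homotopies into a single homotopy with values in a path object of the fiber product. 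That requires the two induced homotopies between maps into $M$ to agree up to a homotopy relative to their endpoints.

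In a general category of fibrant objects this fails: a pullback along a fibration is only a \emph{weak} pullback in the homotopy category. For example, for $*\to S^1\leftarrow *$ in spaces the homotopy fiber product is $\Omega S^1\simeq\mathbb{Z}$, while compatible pairs of maps out of a point form a singleton. So uniqueness needs an input specific to $\mathbf{Kur}$ that neither your plan nor the paper supplies. Without it, the honest conclusion is only the weak universal property.

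The existence half is also under-justified. Rectifying a homotopy-commutative square uses Brown's homotopy lifting for fibrations, which needs $G\in\mathrm{Fib}$. The representative from Lemma~\ref{genbvf} violates axiom (i), so you would first have to replace $G$ by a genuine fibration (e.g.\ via Brown's factorization lemma) and then compare the result with the Section~4 model. Lemma~\ref{afdhjdasf} is about isomorphisms and does not make the dimension choice irrelevant.
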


\subsection{Examples of fiber products}

In this subsection, we present several examples of fiber products of Kuranishi spaces induced by fibrations. These include generalized intersections of smooth manifolds and boundary descriptions for moduli spaces of pseudoholomorphic discs. We also show that the Weinstein category can be formulated within the homotopy category $\operatorname{Ho}(\mathbf{Kur})$. Finally, we discuss product Kuranishi spaces, interpreting them as a special case of fiber products.

\begin{exam-defn}[Generalized intersections of smooth manifolds]
Let $f \colon M' \to M$ and $g \colon M'' \to M$ be smooth maps between manifolds. We define the fiber product $M' {}_f \!\times_g M''$ to be the object in $\mathbf{Kur}$ determined by the chart-level construction established above. Specifically, we set
\[
(M', f) \cap (M'', g) := M' \,{}_{f}\!\times_{g} M''.
\]
In particular, when $M'$ and $M''$ are embedded submanifolds of $M$ via embeddings $f$ and $g$, respectively, we call this fiber product the \textit{generalized intersection} of $M'$ and $M''$ in $M$.
\end{exam-defn}

\begin{exam}[The moduli space of pseudoholomorphic discs]
Consider the Kuranishi space structure on the moduli space $\mathcal{M}_k(L; \beta)$ in Example~\ref{moduli}, equipped with evaluation morphisms $\mathrm{ev}_j \colon \mathcal{M}_k(L; \beta) \to L$. We define the fiber product
\[
\mathcal{M}_{k_1}(L; \beta_1) {}_{\mathrm{ev}_{j_1}} \!\times_{\mathrm{ev}_{j_2}} \mathcal{M}_{k_2}(L; \beta_2)
\]
to be the object in $\mathbf{Kur}$ determined by the aforementioned construction. This description is expected to be useful in the study of pseudoholomorphic discs, where the boundary strata are canonically identified with fiber products of moduli spaces with fewer marked boundary points.
\end{exam}

\begin{defn}[Symplectic manifolds and Lagrangian Kuranishi spaces]
Symplectic manifolds equipped with the smooth atlases defined in Example~\ref{man} are naturally regarded as objects in $\mathbf{Kur}$. They satisfy the following conditions:
\begin{itemize}
\item[--] $\alpha_p$ is nondegenerate for each $p \in M$;
\item[--] $\alpha_p = \phi_{pq}^* \alpha_q$ on $s_p^{-1}(0)$ for each pair $p, q \in M$.
\end{itemize}
In this case, the foliation de Rham component of the local $L_\infty[1]$-algebra vanishes except in degree zero (smooth functions).

Let $M$ be a symplectic manifold regarded as an object in $\mathbf{Kur}$. We say that another object $\mathfrak{L}$, equipped with a choice of atlas
\[
\widehat{\mathcal{U}}_{\mathfrak{L}} = \left(\left\{\widehat{\mathcal{U}}_{\mathfrak{L},p}\right\}_{p \in L}, \left\{\Phi_{pq}\right\}_{p,q \in L}\right)
\]
for the underlying topological space $L$, is a \textit{Lagrangian Kuranishi space} in $M$ if there exists an embedding $I \colon \mathfrak{L} \to M$ represented by
\[
\left(\widehat{\mathcal{U}}_{\mathfrak{L}}, \widehat{\mathcal{U}}^{\mathrm{man}}_{M}, i, \{i_p\}, \{\widehat{i}_{p, x}\}\right)
\]
such that $i_p^*(\alpha_{i(p)}) = 0$ for every $p \in L$.
\end{defn}

\begin{exam-defn}[Weinstein category]\label{bjsjbhb}
Symplectic manifolds and their Lagrangian submanifolds serve as standard examples of symplectic Kuranishi spaces and their Lagrangians, respectively. Let
\[
\begin{split}
\operatorname{pr}_1 &\colon M \times M' \to M, \\
\operatorname{pr}_2 &\colon M \times M' \to M'
\end{split}
\]
denote the canonical projections.

\textbf{The category $\mathbf{Wein}$:}
We define the Weinstein category $\mathbf{Wein}$ as consisting of:
\[
\begin{split}
\operatorname{Ob}(\mathbf{Wein}) &:= \left\{ \text{symplectic manifolds in } \operatorname{Ho}(\mathbf{Kur}) \right\}, \\
\operatorname{Mor}_{\mathbf{Wein}}(M, M') &:= \left\{ \mathfrak{L} \overset{I}{\hookrightarrow} M^- \times M' \mathrel{\Big|} (\mathfrak{L}, I) \text{ is a Lagrangian Kuranishi space} \right\}.
\end{split}
\]
Composition is given by the fiber product of Kuranishi spaces:
\[
\begin{split}
\circ \colon \operatorname{Mor}_{\mathbf{Wein}}(M', M'') \times \operatorname{Mor}_{\mathbf{Wein}}(M, M') &\longrightarrow \operatorname{Mor}_{\mathbf{Wein}}(M, M''), \\
\left((\mathfrak{L}', I'), (\mathfrak{L}, I)\right) &\longmapsto \mathfrak{L} \,{}_{\operatorname{pr}_2 \circ I}\!\times_{\operatorname{pr}_1' \circ I'} \mathfrak{L}',
\end{split}
\]
induced by the morphisms
\[
\begin{split}
\operatorname{pr}_2 \circ I &\colon \mathfrak{L} \longrightarrow M', \\
\operatorname{pr}_1' \circ I' &\colon \mathfrak{L}' \longrightarrow M',
\end{split}
\]
with identities provided by the diagonal Lagrangians $\Delta_M \overset{I_{\Delta_M}}{\hookrightarrow} M^- \times M$.
\end{exam-defn}

\begin{prop}
$\mathbf{Wein}$ is a well-defined category.
\end{prop}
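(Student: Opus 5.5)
The plan is to verify the category axioms directly, treating $\mathbf{Wein}$ as living inside $\operatorname{Ho}(\mathbf{Kur})$. This lets every identity be checked only up to weak equivalence. There are four steps: composition is well-defined, its output is again a Lagrangian Kuranishi correspondence, composition is associative, and the diagonals are units.

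\emph{Well-definedness of composition.} The two morphisms $\operatorname{pr}_2\circ I\colon\mathfrak{L}\to M'$ and $\operatorname{pr}_1'\circ I'\colon\mathfrak{L}'\to M'$ share the manifold target $M'$. By Lemma~\ref{genbvf}, after enlarging the domain bases we may take a representative of $\operatorname{pr}_1'\circ I'$ satisfying every fibration axiom except (i). The chart-level construction of Section~4 then produces $\mathfrak{L}\,{}_{\operatorname{pr}_2\circ I}\!\times_{\operatorname{pr}_1'\circ I'}\mathfrak{L}'$. By Proposition~\ref{eobjkk}, different choices of representatives give weakly equivalent objects. Hence the composite is a well-defined object of $\operatorname{Ho}(\mathbf{Kur})$.

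\emph{The composite is a Lagrangian correspondence.} The structure map into $M^-\times M''$ is $(\operatorname{pr}_1\circ I\circ\mathrm{Pr}',\,\operatorname{pr}_2'\circ I'\circ\mathrm{Pr}'')$, where $\mathrm{Pr}',\mathrm{Pr}''$ are the canonical projections of Definition~\ref{dfnvfb}. On the fiber product charts the two-form is $\pi'^*\alpha'-\pi''^*\alpha''$, the same form used in the de Rham part of the construction. The pullback of $-\omega_M\oplus\omega_{M''}$ is computed chartwise. The $M'$-components cancel on $U^{\mathrm f}=U'\times_{M'}U''$, because both factors map to $M'$ through the same point. The remaining terms vanish because $i_p^*\alpha=0$ for $\mathfrak{L}$ and for $\mathfrak{L}'$. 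This gives the isotropy condition $i_p^*(\alpha)=0$.

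The embedding property is the delicate point. In the classical setting it is exactly where clean intersection is needed, and here it should be replaced by the fact that the fiber product base maps are embeddings after composing with $U^{\mathrm f}\hookrightarrow U'\times U''$. I expect this step to be the main obstacle: the result is only an embedding up to weak equivalence. I would handle it by passing to the homotopy category, so that the composite map only needs to be represented by a base embedding after a weak equivalence. If that is not enough, I would weaken the morphism class to correspondences that are embeddings up to weak equivalence.

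\emph{Associativity and identities.} For associativity, I would compare both bracketings with the iterated fiber product over $M'\times M''$. Its charts are $U\times_{M'}U'\times_{M''}U''$, the obstruction bundles are sums of pullbacks, and the Koszul operations are the tensor product operations of Section~4. Both bracketings produce the same base manifolds and the same bundles, up to the canonical isomorphisms of iterated smooth fiber products. So the identity maps on the bases, with the evident Koszul identifications, give weak equivalences; the de Rham parts are acyclic and are matched using Theorem~\ref{anhp}.

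For the unit, composing with $\Delta_M$ gives charts $U_p\times_M\Delta_M\cong U_p$ with the identical bundle. By the universal property in Proposition~\ref{fbprokur}, the projection $\mathrm{Pr}'$ is then an isomorphism, and by Proposition~\ref{hnuun} this descends to $\operatorname{Ho}(\mathbf{Kur})$. Together these steps give that $\mathbf{Wein}$ is a well-defined category.
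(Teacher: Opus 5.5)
Your Step 2 is a genuine gap, and it goes beyond the embedding issue you flag. In Step 1 you correctly form $U^{\mathrm f}$ from a representative supplied by Lemma~\ref{genbvf}. That lemma enlarges the bases of $\mathfrak{L}'$ to $U''\times\mathbb{R}^N$ and extends $i'$ to some $\tilde{i}'$ with $\operatorname{pr}_1\circ\tilde{i}'$ a surjective submersion onto the chart of $M'$. Both defining conditions of a Lagrangian Kuranishi space, $i_p^*\alpha=0$ and the embedding property, are conditions on a chosen representing pre-morphism, and this enlargement destroys them.

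Redo your cancellation on the actual chart $U^{\mathrm f}=U'\times_{M'}(U''\times\mathbb{R}^N)$, using only the isotropy of $\mathfrak{L}$ and the fiber product relation. This gives
\[
\bigl(\operatorname{pr}_1 i\circ\pi',\ \operatorname{pr}_2\tilde{i}'\circ\pi''\bigr)^{*}\bigl(-\omega_M\oplus\omega_{M''}\bigr)=\pi''^{*}\,\tilde{i}'^{*}\bigl(-\omega_{M'}\oplus\omega_{M''}\bigr).
\]
Here $\tilde{i}'$ cannot be isotropic when $\dim M''<\dim M'$, since the image of $d\tilde{i}'$ surjects onto $TM'$. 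For instance, take $M''$ a point and $\mathfrak{L}$ the graph of a symplectomorphism. Then the right-hand side is $-\pi'^{*}(\operatorname{pr}_1\circ i)^{*}\omega_M$, which is nonzero because $\operatorname{pr}_1\circ i$ is a local diffeomorphism and $\pi'$ is a submersion.

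The embedding property fails in the same way. For honest Lagrangian submanifolds, $\dim U^{\mathrm f}=\tfrac12(\dim M+\dim M'')+N$, which exceeds $\dim(M\times M'')$ once $N>\tfrac12(\dim M+\dim M'')$. Even in the classical transverse case, a composite of embedded Lagrangian correspondences is only immersed.

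Neither of your fallbacks repairs this. The paper gives no notion of embedding or isotropy for a roof in $\operatorname{Ho}(\mathbf{Kur})$, and these conditions are not stable under composition with weak equivalences. So "passing to the homotopy category" requires a new definition, and weakening the morphism class proves a statement about a different category. As it stands, you have not shown that $\operatorname{Mor}_{\mathbf{Wein}}$ is closed under composition.

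You should know that the paper's own proof does not address closure at all. It assumes the composite is a morphism of $\mathbf{Wein}$ and asserts associativity from associativity of fiber products up to canonical isomorphism in $\operatorname{Ho}(\mathbf{Kur})$. For the unit law, it notes that $\operatorname{pr}_1\circ I_{\Delta}$ is a diffeomorphism, hence a trivial fibration. By Proposition~\ref{hnuun}(ii) the projection from the fiber product with the diagonal is then a weak equivalence, which becomes an isomorphism in $\operatorname{Ho}(\mathbf{Kur})$.

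Your Steps 3 and 4 cover the same ground by a different route:
\begin{itemize}
\item For associativity, you compare both bracketings with an explicit iterated fiber product. This is more detailed than the paper, though you should say that Proposition~\ref{eobjkk} absorbs the different enlargements the two bracketings require.
\item For the unit, you obtain an isomorphism already in $\mathbf{Kur}$, because $\operatorname{pr}_1\circ I_{\Delta}$ is invertible. This makes your appeal to Proposition~\ref{hnuun} unnecessary.
\end{itemize}
What you omit, and the paper checks, is that this isomorphism carries the composite's structure map to $M^-\times M'$ back to $I$. Without that check you have only identified the underlying Kuranishi spaces, not shown that the diagonal is a unit in $\mathbf{Wein}$.
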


\begin{proof}
The associativity of composition follows from the associativity of fiber products up to canonical isomorphism in $\operatorname{Ho}(\mathbf{Kur})$. It therefore suffices to verify that the pair $(\Delta_M, I_{\Delta_M})$ acts as the identity morphism.

Observe that the composition
\[
\Delta_M \overset{I_{\Delta_M}}{\hookrightarrow} M \times M \xrightarrow{\operatorname{pr}_1} M
\]
is a diffeomorphism between smooth manifolds, and hence a trivial fibration. By Proposition~\ref{hnuun}~(ii), the fiber product
\[
\mathfrak{L} \,{}_{\operatorname{pr}_2 \circ I}\!\times_{\operatorname{pr}_1 \circ I_{\Delta_M}} \Delta_M
\]
is weakly equivalent to $\mathfrak{L}$. In the homotopy category $\operatorname{Ho}(\mathbf{Kur})$, this weak equivalence yields a canonical isomorphism. Furthermore, the universal property and commutativity of the fiber product diagram imply that the projections satisfy
\[
\operatorname{pr}_1 \circ \left(I \times_{I_{\Delta_M}} \mathrm{id}\right) = \operatorname{pr}_1 \circ I,
\qquad
\operatorname{pr}_2 \circ \left(I \times_{I_{\Delta_M}} \mathrm{id}\right) = \operatorname{pr}_2 \circ I,
\]
which proves that $(\Delta_M, I_{\Delta_M})$ acts as the identity, as claimed.
\end{proof}

\begin{exam}[Product Kuranishi spaces]
As a special case of the fiber product, we introduce the product of two Kuranishi spaces, which corresponds to the case where both spaces are mapped to the terminal object, namely a point equipped with the trivial chart. Note that in this case, the axioms for a fibration hold vacuously. Hence, unlike the general fiber product, no additional conditions such as the fibration property are required. Since most of the structures are essentially identical to those in the fiber product construction, we omit redundant details.

Let
\[
\mathfrak{X}' = \left(X',\,\left[\widehat{\mathcal{U}}'\right]\right)
\qquad\text{and}\qquad
\mathfrak{X}'' = \left(X'',\,\left[\widehat{\mathcal{U}}''\right]\right)
\]
be Kuranishi spaces.

Denote by $*$ the trivial Kuranishi space whose underlying space is a single point, equipped with the zero-rank vector bundle and trivial chart data. By Lemma~\ref{finob}, $*$ is the terminal object in $\mathbf{Kur}$. Moreover, the canonical morphisms
\[
F_0 \colon \mathfrak{X}' \longrightarrow *,
\qquad
G_0 \colon \mathfrak{X}'' \longrightarrow *
\]
are fibrations, as the axioms in Definition~\ref{adfaf}~(a) hold trivially.

We then define the \textit{product Kuranishi space} $\mathfrak{X}' \times \mathfrak{X}''$ as the fiber product
\[
\mathfrak{X}' \times \mathfrak{X}'' := \mathfrak{X}' {}_{F_0}\times_{G_0} \mathfrak{X}''.
\]
For later use in Sections~5 and~6, and to fix notation, we record the charts and coordinate changes explicitly.

The product atlas on $\mathfrak{X}' \times \mathfrak{X}''$ is given by
\[
\widehat{\mathcal{U}}' \times \widehat{\mathcal{U}}''
:=
\left(
\left\{\,\widehat{\mathcal{U}}^\times_{(p',p'')}\,\right\}_{(p',p'') \in X' \times X''},
\;
\left\{\,
\Phi^\times_{p'q',\,p''q''}
\,\right\}_{p',q' \in X',\; p'',q'' \in X''}
\right),
\]
where to each pair $(p',p'') \in X' \times X''$, we assign the product chart
\[
\mathcal{U}^\times_{(p',p'')}
:= \left( U^\times_{(p',p'')},\, E^\times_{(p',p'')},\, s^\times_{(p',p'')},\, \psi^\times_{(p',p'')} \right)
\]
consisting of:
\begin{itemize}
    \item[--] $U^\times_{(p',p'')} := U'_{p'} \times U''_{p''}$, which is a smooth manifold, equipped with the closed two-form $\pi^{\prime*}\alpha'_{W'_{x'}} - \pi^{\prime\prime*}\alpha''_{W''_{x''}}$;
    \item[--] $E^\times_{(p',p'')} := \pi^{\prime*} E'_{p'} \oplus \pi^{\prime\prime*} E''_{p''}$;
    \item[--] $s^\times_{(p',p'')} := \pi^{\prime*} s'_{p'} \oplus \pi^{\prime\prime*} s''_{p''} \colon U_{(p',p'')}^\times \longrightarrow E_{(p',p'')}^\times$, given explicitly by
    \[
    (y', y'') \longmapsto \left(\pi^{\prime*} s'_{p'}(y'),\, \pi^{\prime\prime*} s''_{p''}(y'')\right);
    \]
    \item[--] $\psi^\times_{(p',p'')}$, defined by the composition
    \begin{equation}\nonumber
    \begin{split}
    \psi^\times_{(p',p'')} \colon \left(s^\times_{(p',p'')}\right)^{-1}(0) &= \left(\pi^{\prime*} s'_{p'} \oplus \pi^{\prime\prime*} s''_{p''}\right)^{-1}(0) \\
    &\simeq (s'_{p'})^{-1}(0) \times (s''_{p''})^{-1}(0) \\
    &\longrightarrow X' \times X'', \\
    (y',y'') &\longmapsto \left(\psi'_{p'}(y'),\, \psi''_{p''}(y'')\right).
    \end{split}    
    \end{equation}
\end{itemize}

The local presymplectic neighborhood for each $(x', x'') \in \left(s^\times_{(p',p'')}\right)^{-1}(0)$ is given by an open subset
\[
W^\times_{(x',x'')} \subset W'_{x'} \times W''_{x''}.
\]
Notice that $\pi^{\prime*}\alpha'_{W'_{x'}} - \pi^{\prime\prime*}\alpha''_{W''_{x''}}$ has constant rank, and thus defines a presymplectic form.

The local $L_{\infty}[1]$-algebra for each $(x', x'') \in \left(s^\times_{(p',p'')}\right)^{-1}(0)$ is defined by
\begin{equation}\nonumber
\begin{split}
\mathcal{C}^\times_{(p',p''),(x',x'')}
&:=
\bigwedge\nolimits^{-\bullet} \Gamma
\left( \left(\pi^{\prime*}E_{p'}^{\prime*} \oplus \pi^{\prime\prime*}E_{p''}^{\prime\prime*}\right) \Big|_{W^\times_{(x',x'')}} \right)
\;\oplus\;
\Omega^{\bullet+1}_{\mathrm{aug}}\left(\mathcal{F}^\times_{(x',x'')}\right) \\
&= \left(\bigoplus_{i \geq 0}\bigwedge\nolimits^{-\bullet +i}\Gamma\left(\pi^{\prime*} (E'_{p'})^* \Big|_{W^\times_{(x',x'')}}\right) \otimes \bigwedge\nolimits^{-i}\Gamma\left(\pi^{\prime\prime*} (E''_{p''})^* \Big|_{W^\times_{(x',x'')}}\right)\right) \\
&\quad\quad\quad\quad\quad\quad\quad\quad\quad\quad\quad\quad\quad\quad\quad\quad\quad\quad\quad\quad\quad\quad \oplus \Omega^{\bullet+1}_{\mathrm{aug}}\left(\mathcal{F}^\times_{(x',x'')}\right),
\end{split}
\end{equation}
where $\Omega^{\bullet+1}_{\mathrm{aug}}\left(\mathcal{F}^\times_{(x',x'')}\right)$ denotes the foliation de Rham complex with respect to the foliation $T\mathcal{F}^\times_{(x',x'')} \subset TW^\times_{(x',x'')}$ given by the kernel of the closed two-form $\left(\pi^{\prime*}\alpha'_{W'_{x'}} - \pi^{\prime\prime*}\alpha''_{W''_{x''}}\right)\big|_{W^\times_{(x',x'')}}$.

Its $L_{\infty}$-operations $l^\times_{(p',p''),(x',x'')} := \left\{l^\times_{(p',p''),(x',x''),k}\right\}_{k \geq 1}$,
\[
l^\times_{(p',p''),(x',x''),k} \colon
\left(\mathcal{C}^\times_{(p',p''),(x',x'')}\right)^{\otimes k}
\longrightarrow
\mathcal{C}^\times_{(p',p''),(x',x'')},
\]
decompose into Koszul and de Rham parts:
\[
l^\times_{(p',p''),(x',x''),k} = l^{\times, \mathrm{K}}_{(p',p''),(x',x''),k} \oplus l^{\times, \mathrm{dR}}_{(p',p''),(x',x''),k}.
\]

The Koszul part $l^{\times, \mathrm{K}}_{(p',p''),(x',x''),m}$ is defined for each homogeneous element
\[
a = \pi^{\prime*}a' \otimes \pi^{\prime\prime*}a'' \in \bigwedge\nolimits^{-i}\Gamma\left(\pi^{\prime*} (E'_{p'})^* \Big|_{W^\times_{(x',x'')}}\right) \otimes \bigwedge\nolimits^{-j}\Gamma\left(\pi^{\prime\prime*} (E''_{p''})^* \Big|_{W^\times_{(x',x'')}}\right)
\]
by
\[
l^{\times, \mathrm{K}}_{(p', p''), (x', x''), m}(a) := 
\begin{cases} 
\pi^{\prime*} \left(l^{\prime, \mathrm{K}}_{p',x', m}(a')\right) & \text{if } j=0 \text{ and } 1 \leq i, \\
\pi^{\prime\prime*} \left(l^{\prime\prime, \mathrm{K}}_{p'', x'', m}(a'')\right) & \text{if } i=0 \text{ and } 1 \leq j, \\
0 & \text{otherwise}.
\end{cases}
\]

The de Rham part $l^{\times, \mathrm{dR}}_{(p',p''),(x',x'')}$ is given by the $L_\infty[1]$-algebra structure on $\Omega^{\bullet+1}_{\mathrm{aug}}\left(\mathcal{F}^\times_{(x',x'')}\right)$ associated with the closed two-form $\left(\pi^{\prime*}\alpha'_{W'_{x'}} - \pi^{\prime\prime*}\alpha''_{W''_{x''}}\right)\big|_{W^\times_{(x',x'')}}$ (cf.~Proposition~\ref{augomega}).

We now define coordinate changes. For $p',q' \in X'$, $p'',q'' \in X''$, and $(x', x'') \in \left(s^\times_{(p', p'')}\right)^{-1}(0) \cap \left(s^\times_{(q', q'')}\right)^{-1}(0)$, we consider the tuple
\[
\Phi^\times_{p'q',\,p''q''}
:=
\left(
U^\times_{p'q',\,p''q''},
\;
\phi^\times_{p'q',\,p''q''},
\;
\left\{\widehat{\phi}^\times_{p'q',\,p''q'',(x',x'')}\right\}
\right),
\]
consisting of:
\begin{itemize}
\item[--] $U^\times_{p'q',\,p''q''} := U'_{p'q'} \times U''_{p''q''}$;
\item[--] the smooth map
\[
\phi^\times_{p'q',\,p''q''} \colon U^\times_{p'q',\,p''q''} \longrightarrow U'_{q'} \times U''_{q''}
\]
defined by
\[
\phi^\times_{p'q',\,p''q''}(y',y'')
:=
\left(
\phi'_{p'q'}(y'),
\;
\phi''_{p''q''}(y'')
\right),
\]
where $\phi'_{p'q'} \colon U'_{p'q'} \to U'_{q'}$ and $\phi''_{p''q''} \colon U''_{p''q''} \to U''_{q''}$ are the coordinate changes of $\mathfrak{X}'$ and $\mathfrak{X}''$, respectively;
\item[--] the $L_\infty[1]$-morphism
\[
\widehat{\phi}^\times_{p'q',\,p''q'',(x',x'')}
\colon
\mathcal{C}^\times_{(q',q''),\phi^\times_{p'q',\,p''q''}(x',x'')}
\longrightarrow
\mathcal{C}^\times_{(p',p''),(x',x'')}
\]
defined analogously to the general fiber product case.
\end{itemize}

The canonical projections
\[
\begin{split}
\operatorname{Pr}' &\colon \mathfrak{X}' \times \mathfrak{X}'' \longrightarrow \mathfrak{X}', \\
\operatorname{Pr}'' &\colon \mathfrak{X}' \times \mathfrak{X}'' \longrightarrow \mathfrak{X}''
\end{split}
\]
are defined in the same manner. Specifically, they are represented by the pre-morphisms
\[
\begin{split}
\overline{\operatorname{Pr}}' &= \left(\widehat{\mathcal{U}}' \times \widehat{\mathcal{U}}'',\, \widehat{\mathcal{U}}',\, \operatorname{pr}',\, \left\{\operatorname{pr}'_{(p',p'')}\right\},\, \left\{ \widehat{\operatorname{pr}}'_{(p',p''),(x',x'')} \right\}\right), \\
\overline{\operatorname{Pr}}'' &= \left(\widehat{\mathcal{U}}' \times \widehat{\mathcal{U}}'',\, \widehat{\mathcal{U}}'',\, \operatorname{pr}'',\, \left\{\operatorname{pr}''_{(p',p'')}\right\},\, \left\{\widehat{\operatorname{pr}}''_{(p',p''),(x',x'')}\right\}\right).
\end{split}
\]

In particular, the Koszul component of the second projection morphism,
\[
\widehat{\operatorname{pr}}^{\prime\prime \mathrm{K}}_{(p',p''),(x',x'')} \colon \bigwedge\nolimits^{-\bullet}\Gamma\left((E''_{p''})^* \Big|_{W''_{x''}}\right) \longrightarrow \bigwedge\nolimits^{-\bullet}\Gamma\left((E^\times_{(p',p'')})^* \Big|_{W^\times_{(x',x'')}}\right),
\]
is given on homogeneous elements $a''$ by
\begin{equation}\label{cqqqqq}
\widehat{\operatorname{pr}}^{\prime\prime \mathrm{K}}_{(p',p''),(x',x''),m}(a'') := 
\begin{cases}
\left(0,\, \pi^{\prime\prime*}a''\right) & \text{if } m = 1, \\
0 & \text{if } m \geq 2.
\end{cases}
\end{equation}
This agrees with the general fiber product formula for $\widehat{\operatorname{pr}}''_{(p',p''),(x',x'')}$ in \eqref{xxbswb} when $\mathfrak{X}$ is taken to be the terminal object $*$ with its trivial Kuranishi structure.

We omit the verification that $\overline{\operatorname{Pr}}'$ and $\overline{\operatorname{Pr}}''$ define valid pre-morphisms, as the proof is identical to that for the fiber product.

In complete analogy with the fiber product construction, we obtain the following universal property:

\begin{lem}\label{dudhbbjbg}
The product Kuranishi space
\[
\mathfrak{X}' \times \mathfrak{X}''
:=
\left(
X' \times X'',\;
\left[\widehat{\mathcal{U}}' \times \widehat{\mathcal{U}}''\right]
\right)
\]
is the categorical product in $\mathbf{Kur}$; that is, for any Kuranishi space $\mathfrak{Y}$ and morphisms $H_1 \colon \mathfrak{Y} \to \mathfrak{X}'$, $H_2 \colon \mathfrak{Y} \to \mathfrak{X}''$, there exists a unique morphism $K \colon \mathfrak{Y} \to \mathfrak{X}' \times \mathfrak{X}''$ making the following diagram commute:
\[
\begin{tikzcd}
\mathfrak{Y} \arrow[rd, "{}^{\exists!} K"] \arrow[bend right=30, swap, "H_1"]{rdd} \arrow[bend left=20, "H_2"]{rrd} & & \\
& \mathfrak{X}' \times \mathfrak{X}''
\arrow[r, "\operatorname{Pr}''"]
\arrow[d, "\operatorname{Pr}'"']
& \mathfrak{X}'' \\
& \mathfrak{X}'. &
\end{tikzcd}
\]
\end{lem}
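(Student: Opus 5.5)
The plan is to treat Lemma~\ref{dudhbbjbg} as the special case of Proposition~\ref{fbprokur} with $\mathfrak{X} = *$. By Lemma~\ref{finob}, $*$ is terminal, and the unique morphisms $F_0 \colon \mathfrak{X}' \to *$ and $G_0 \colon \mathfrak{X}'' \to *$ are fibrations. I will also check directly that the proof of Proposition~\ref{fbprokur} goes through verbatim, since the fibration hypotheses are vacuous here. Given $H_1, H_2$, I first make them composable with the terminal morphisms. Since $*$ is terminal, $F_0 \circ H_1 = G_0 \circ H_2$ holds automatically, so $(H_1, H_2)$ forms a commutative square over $*$. Existence of $K$ then follows from the construction of $\overline{K}$ in the proof of Proposition~\ref{fbprokur}. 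Concretely, I would represent $H_1$ and $H_2$ by pre-morphisms $\overline{H}_1, \overline{H}_2$ with a common domain atlas $\widehat{\mathcal{U}}^Y$. This is possible after extending both to a common representative of $[\widehat{\mathcal{U}}^Y]$, as in the definition of composition. I would then set
\[
k = (h_1, h_2), \qquad k_r = (h_{1,r}, h_{2,r}) \colon U^Y_r \to U'_{h_1(r)} \times U''_{h_2(r)} .
\]
The Koszul component is given by \eqref{khtob1}, using the projections $\mathrm{p}'_0, \mathrm{p}''_0$ of Lemma~\ref{wnqb}. The de Rham component is $(\widehat{h}^{\mathrm{dR}}_1 \oplus \widehat{h}^{\mathrm{dR}}_2)$ composed with homotopy inverses of the pullbacks $\widehat{\mathrm{pr}}'^{\mathrm{dR}}$ and $\widehat{\mathrm{pr}}''^{\mathrm{dR}}$.

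Lemma~\ref{kfprm} specializes to show that $\overline{K}$ is a pre-morphism. The simplification here is that the quotient $[E''_{p''}]$ equals $E''_{p''}$, because $\overline{\mathrm{em}}$ embeds the rank-zero bundle over a point. The relations $\mathrm{Pr}' \circ K = H_1$ and $\mathrm{Pr}'' \circ K = H_2$ then hold on the nose at the level of underlying maps and base maps. On the Koszul side they also hold by the formula \eqref{khtob1}: only the summands with $j = 0$, respectively $i = 0$, are hit by $\widehat{\mathrm{pr}}'^{\mathrm{K}}$ and $\widehat{\mathrm{pr}}''^{\mathrm{K}}$ from \eqref{bzqqqqgbbbq} and \eqref{cqqqqq}. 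On the de Rham side they hold up to homotopy, which suffices because morphisms are equivalence classes.

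For uniqueness, suppose $K'$ is another morphism with $\mathrm{Pr}' \circ K' = H_1$ and $\mathrm{Pr}'' \circ K' = H_2$. I would choose a representative $\overline{K}'$ on the same atlases, again after common extensions. Then $k' = k$ and $k'_r = k_r$ are forced by the universal properties of products in $\mathbf{Top}$ and $\mathbf{Man}$. Note that $k'_r$ is only determined on zero loci, up to the pre-morphism equivalence relation, which should be absorbed by that relation.

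The main obstacle is the $L_\infty$-component: I must show $\widehat{k}'_{r,y} \sim \widehat{k}_{r,y}$. For this I would invoke the lemma in the proof of Proposition~\ref{fbprokur} with $\mathfrak{X} = *$. The homotopy $\widehat{H}$ of \eqref{bqqqqq} agrees with $\widehat{k}$ on pure summands ($i = 0$ or $j = 0$). On mixed summands it interpolates via $t\,\widehat{\underline{k}} + c_m\,dt$, and the $c_m$ are produced from \eqref{bajh} after expanding $\mathfrak{Y}$'s atlas so that the base maps are surjective. The de Rham part is handled by the fact that any two quasi-isomorphisms between the acyclic augmented de Rham algebras are homotopic (Theorem~\ref{anhp}). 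Finally, I will verify that the equivalence relation on pre-morphisms identifies $L_\infty$-homotopic components, so that $[\overline{K}'] = [\overline{K}]$.
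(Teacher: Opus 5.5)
Your proposal follows the paper's own route: the paper gives no separate proof, but defines $\mathfrak{X}'\times\mathfrak{X}''$ as the fiber product over the terminal object $*$ (Lemma~\ref{finob}, with $F_0,G_0$ vacuously fibrations) and gets the universal property in complete analogy with Proposition~\ref{fbprokur}, i.e.\ existence via $\overline{K}$ and uniqueness via the $L_\infty[1]$-homotopy lemma in that proof, exactly as you outline. One caveat, which the paper shares: degree-$0$ elements $\widehat{\mathrm{pr}}'(a')=\pi'^*a'$ land in the $i=j=0$ clause of \eqref{khtob1}, where $\mathrm{p}'_0,\mathrm{p}''_0$ of Lemma~\ref{wnqb} subtract values at the base point, so as written $\widehat{k}(1)=0$ and $\mathrm{Pr}'\circ K=H_1$ already fails on $H^0$ (e.g.\ for $H_1=\mathrm{id}_{\mathfrak{X}'}$) unless that clause is modified to keep constants; and for uniqueness, $\mathrm{Pr}'\circ K'=H_1$ only gives $\widehat{\underline{k}}\circ\widehat{\mathrm{pr}}'\sim\widehat{h}_1$ up to homotopy, whereas that lemma is stated under strict equalities.
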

\end{exam}

\begin{rem}
We will apply the product Kuranishi space construction in Subsection~5.3 to establish the factorization property for categories of fibrant objects.
\end{rem}

\section{A model of path objects and the factorization axiom}

One of the key axioms for a category of fibrant objects is the factorization of any diagonal morphism into a weak equivalence followed by a fibration. In this section, we establish this axiom for the category $\mathbf{Kur}$ by constructing an explicit path object for any given Kuranishi space and showing that the diagonal morphism naturally factors through it.

\subsection{The path object}

In this subsection, for a given object of $\mathbf{Kur}$, we construct its path object.

On a smooth manifold $U$, we fix a Riemannian metric $g$ and its associated Levi-Civita connection. We define the manifold of short geodesics $\mathrm{P}_gU$, which is characterized by the following lemma in \cite{BLX}.

\begin{lem}\cite[Proposition 2.1]{BLX}\label{beaxx}
Let $I$ be an open interval containing $[0,1]$.
There exists a smooth manifold $\mathrm{P}_gU$ parameterizing geodesic paths
\[
\gamma \colon I \longrightarrow U
\]
such that the following diagram commutes:
\begin{equation}\label{pgutu}
\begin{tikzcd}
{} & \mathrm{P}_gU \arrow{ld}[swap]{\gamma(0)\times\gamma'(0)} \arrow{rd}{\gamma(0)\times\gamma(1)} & {}\\
TU & {} & U\times U \\
{} & U  \arrow{uu}{\mathrm{const.}} \arrow{lu}{0} \arrow{ru}[swap]{\Delta} & {}
\end{tikzcd}
\end{equation}
where the two upper diagonal maps 
\[
\gamma(0)\times\gamma'(0)
\quad\text{and}\quad
\gamma(0)\times\gamma(1)
\]
are open embeddings.
\end{lem}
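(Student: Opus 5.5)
The statement to prove is Lemma~\ref{beaxx}: the space of short geodesics $\mathrm{P}_gU$ is a smooth manifold, and the diagram \eqref{pgutu} commutes with both upper maps open embeddings. The plan is to build $\mathrm{P}_gU$ directly from the exponential map of the Levi-Civita connection of $g$.

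\textbf{Step 1 (defining $\mathrm{P}_gU$).} Let $\mathcal{D}\subset TU\times\mathbb{R}$ be the maximal flow domain of the geodesic spray. It is open and contains $TU\times\{0\}$. Set
\[
\mathcal{O}:=\{v\in TU \mid (v,t)\in\mathcal{D}\ \text{for all}\ t\in I\},
\]
where $I$ is first shrunk to a relatively compact open interval containing $[0,1]$. By compactness of $\overline{I}$, $\mathcal{O}$ is an open neighborhood of the zero section. Each $v\in\mathcal{O}$ determines a unique geodesic $\gamma_v\colon I\to U$ with $\gamma_v(0)=\pi(v)$ and $\gamma_v'(0)=v$.

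\textbf{Step 2 (shrinking to make the exponential an embedding).} Let $\mathcal{O}'\subset\mathcal{O}$ be an open neighborhood of the zero section on which
\[
E:=(\pi,\exp)\colon TU\to U\times U,\qquad v\mapsto(\pi(v),\gamma_v(1)),
\]
is an open embedding. This is the standard tubular-neighborhood argument. $dE$ is invertible along the zero section, since its differential there is $(u,w)\mapsto(u,u+w)$. So $E$ is a local diffeomorphism near the zero section and injective on the zero section, which is a closed subset. A lemma of Lang type then gives injectivity on some neighborhood. To handle noncompact $U$, use a positive continuous radius function $\varepsilon(x)$ and a locally finite exhaustion. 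This is the only delicate point, and I would quote it as the usual proof that $\exp$ restricts to a diffeomorphism from a neighborhood of the zero section onto a neighborhood of the diagonal.

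\textbf{Step 3 (manifold structure and commutativity).} Define $\mathrm{P}_gU:=\{\gamma_v\mid v\in\mathcal{O}'\}$. Give it the smooth structure transported through the bijection $v\mapsto\gamma_v$, which is injective by uniqueness of geodesics. With this structure:
\begin{itemize}
\item[--] $\gamma(0)\times\gamma'(0)$ is the inverse of that bijection, hence the open inclusion $\mathcal{O}'\subset TU$;
\item[--] $\gamma(0)\times\gamma(1)$ equals $E\circ(\gamma(0)\times\gamma'(0))$, which is an open embedding by Step 2.
\end{itemize}
The map $\mathrm{const}\colon U\to\mathrm{P}_gU$ sends $x$ to the constant geodesic, i.e.\ to $\gamma_{0_x}$. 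Then $\gamma(0)\times\gamma'(0)$ sends $\gamma_{0_x}$ to $0_x$ and $\gamma(0)\times\gamma(1)$ sends it to $(x,x)=\Delta(x)$, so both triangles in \eqref{pgutu} commute. Smoothness of all maps follows from smooth dependence of geodesics on initial data.

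The main obstacle is the global injectivity in Step 2 when $U$ is noncompact. I would handle it with the radius-function shrinking, or simply cite \cite[Proposition 2.1]{BLX}.
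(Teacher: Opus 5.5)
Your proposal is correct and takes essentially the paper's route: $\mathrm{P}_gU$ is realized as an open neighborhood of the zero section in $TU$ (for bounded $I$) on which $\gamma(0)\times\gamma'(0)$ is the inclusion, and $\gamma(0)\times\gamma(1)=(\pi,\exp)$ becomes an open embedding after shrinking. The one real difference is that you explicitly supply the argument for injectivity on a whole neighborhood of the zero section (local diffeomorphism plus injectivity on the closed zero section, via radius-function shrinking), which the paper's one-line appeal to the inverse function theorem leaves implicit; a minor point is that your $\mathcal{O}$ is not open in general, only containing an open neighborhood of the zero section, but that neighborhood is all you actually use.
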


\begin{proof}
We follow the argument in \cite[Proposition~2.1]{BLX}. Consider an open neighborhood $V$ of the zero section in $TU$ on which the exponential map is well-defined. We may further assume that the map $(t,v) \mapsto \exp(tv)$ is defined for all $t \in I$ and $v \in V$. Thus, $V$ parameterizes a smooth family of geodesics on $I$:
\[
    V \times I \longrightarrow U, \qquad
    (x,v,t) \longmapsto \gamma_{x,v}(t) := \exp_x(tv).
\]
Evaluating the differentials of the smooth maps
\[
    V \longrightarrow TU, \qquad
    (x,v) \longmapsto \left(\gamma_{x,v}(0),\, \gamma'_{x,v}(0)\right),
\]
and
\[
    V \longrightarrow U \times U, \qquad
    (x,v) \longmapsto \left(\gamma_{x,v}(0),\, \gamma_{x,v}(1)\right),
\]
along the zero section, we obtain isomorphisms on tangent spaces. Hence, by the inverse function theorem, there exists a smaller open neighborhood $U_0 \subset V$ of the zero section such that both induced maps $U_0 \to TU$ and $U_0 \to U \times U$ are diffeomorphisms onto open submanifolds. Setting $\mathrm{P}_g U := U_0$ yields the desired smooth manifold.
\end{proof}

\begin{notation}
We denote by 
\[
\gamma_{x,y} \in \mathrm{P}_gU
\]
the unique short geodesic satisfying $\gamma_{x,y}(0) = x$ and $\gamma_{x,y}(1) = y$.
\end{notation}

By means of the upper-left diagonal open embedding in \eqref{pgutu}, $\mathrm{P}_gU$ is identified up to diffeomorphism with an open neighborhood of the zero section of the tangent bundle $TU$. For each $t \in I$, we define the evaluation map
\[
\mathrm{ev}_t \colon \mathrm{P}_gU \longrightarrow U, \qquad  
\gamma \longmapsto \gamma(t).
\]
Given a vector bundle $F \to U$, we denote the corresponding pullback bundle by $\mathrm{ev}_t^*F \to \mathrm{P}_gU$.

\begin{defn}[Path Kuranishi space]\label{mawehe}
Let 
\[
\mathfrak{X} = \left(X,\, \left[\left\{{\mathcal{U}}_p\right\}, \left\{\Phi_{pq}\right\}\right]\right)
\]
be a Kuranishi space. We define its \textit{path space} $\mathrm{P}\mathfrak{X}$ by the collection of charts
\[
\mathrm{P}{\mathcal{U}}_p := \left(\mathrm{P}_gU_p \times W,\, {\mathrm{ev}_1}^*E_p \oplus {\mathrm{ev}_0}^*TU_p \oplus E_{p,W},\, 
s^\mathrm{P}_p,\, \psi^\mathrm{P}_p \right)
\]
specified by the following data:
\begin{itemize}
\item[--]
$W$ is a real vector space of dimension $\dim W = \operatorname{rk} E_{p}$, and $E_{p,W} \to \mathrm{P}_gU_p \times W$ is the trivial vector bundle with fiber $W$, so that $\operatorname{rk} E_{p,W} = \operatorname{rk} E_{p}$.

\item[--]
The base manifold $\mathrm{P}_gU_p \times W$ is equipped with the closed two-form $\alpha^\mathrm{P} \in \Omega^2(\mathrm{P}_gU_p \times W)$ defined by
\[
\alpha^\mathrm{P}_{(\gamma,w)} := \left(\operatorname{pr}_1^* \mathrm{ev}_0^* \alpha_p\right)_{(\gamma,w)}
\]
for each $(\gamma, w) \in \mathrm{P}_gU_p \times W$, where $\operatorname{pr}_1 \colon \mathrm{P}_gU_p \times W \to \mathrm{P}_gU_p$ is the projection.

\item[--] The section $s^\mathrm{P}_p \in \Gamma\left({\mathrm{ev}_1}^*E_p \oplus {\mathrm{ev}_0}^*TU_p \oplus E_{p,W}\right)$ is defined by
\[
\begin{split}
s^\mathrm{P}_p(\gamma, w) &:= \left(w + (\mathrm{ev}_0^* s_p)_\gamma + (\mathrm{ev}_1^* s_p)_\gamma\,, \sigma_p(\gamma),\, 2w 
 + (\mathrm{ev}_0^* s_p)_\gamma + (\mathrm{ev}_1^* s_p)_\gamma \right) \\
&= \left(w + s_{p}(\gamma(0)) + s_{p}(\gamma(1)),\, \dot{\gamma}(0),\, 2w + s_{p}(\gamma(0)) + s_{p}(\gamma(1))\right)
\end{split}
\]
for each $(\gamma, w) \in \mathrm{P}_gU_p \times W$, where $\sigma_p \in \Gamma({\mathrm{ev}}_0^*TU_p)$ is the canonical section given by $\sigma_{p}(\gamma) := \dot\gamma(0)$.

\item[--] 
For the zero locus, the condition $\dot\gamma(0) = 0$ implies that $\gamma$ is a constant path, $\gamma(t) \equiv x \in U_p$. Vanishing of the remaining components then yields $w = 0$ and $s_p(x) = 0$. Consequently,
\begin{equation}\label{zze}
\left(s^\mathrm{P}_p\right)^{-1}(0) = s_p^{-1}(0) \times \{0\} \;\simeq\; s_p^{-1}(0),
\end{equation}
and the underlying topological space of $\mathrm{P}\mathfrak{X}$ canonically identifies with $X$. We therefore set
\[
\psi^\mathrm{P}_p := \psi_p.
\]

\item[--] For each $x \in s_p^{-1}(0)$, we take the presymplectic neighborhood to be a contractible open subset
\begin{equation}\label{onoaong}
W_x^\mathrm{P} \subset \mathrm{ev}_0^{-1}(W_x) \times W
\end{equation}
equipped with the presymplectic form $\alpha_{W_x^\mathrm{P}}^\mathrm{P} := \left(\operatorname{pr}_1^*\mathrm{ev}_0^* \alpha_{W_x}\right)\big|_{W_x^\mathrm{P}}$ and the characteristic foliation
\[
T\mathcal{F}^\mathrm{P}_x := \ker \alpha_{W_x^\mathrm{P}}^\mathrm{P} = \left(\operatorname{pr}_1^*\mathrm{ev}_0^* T\mathcal{F}_x\right)\!\Big|_{W_x^\mathrm{P}} \oplus TW.
\]

\item[--] The local $L_\infty[1]$-algebra is given by
\[
\mathcal{C}^\mathrm{P}_x
:=
\bigwedge\nolimits^{-\bullet} \Gamma\left(
\left({\mathrm{ev}_1}^*E_p \oplus {\mathrm{ev}_0}^*TU_p \oplus E_{p,W}\right)^*\Bigg|_{W^\mathrm{P}_x}
\right)
\;\oplus\;
\Omega_{\mathrm{aug}}^{\bullet+1}\left(\mathcal{F}^\mathrm{P}_x\right).
\]

\item[--] The $L_{\infty}[1]$-algebra structure on $\mathcal{C}^\mathrm{P}_x$ is given by operations $l^\mathrm{P}_{x,k} \colon \left(\mathcal{C}^\mathrm{P}_x\right)^{\otimes k} \to \mathcal{C}^\mathrm{P}_x$ ($k \geq 1$) decomposing as
\[
l^\mathrm{P}_{x,k} := l^\mathrm{P,K}_{x,k} \oplus l^\mathrm{P,dR}_{x,k},
\]
where the Koszul part is
\begin{equation}\nonumber
l^\mathrm{P,K}_{x,k} := \begin{cases}
\iota_{s^\mathrm{P}_p} &\text{if } k = 1, \\
0 &\text{if } k \geq 2,
\end{cases}
\end{equation}
and $l^\mathrm{P,dR}_{x,k}$ denotes the $L_{\infty}[1]$-algebra structure on $\Omega^{\bullet+1}_{\mathrm{aug}}\left(\mathcal{F}^\mathrm{P}_x\right)$ given in Proposition~\ref{augomega}.
\end{itemize}
\end{defn}

For $p,q \in X$, we define the coordinate changes
\begin{equation}\label{bacccvbz}
\mathrm{P}\Phi_{pq}
:=
\left(
\mathrm{P}_gU_{pq} \times W_p,\;
\mathrm{P}\phi_{pq},\;
\left\{\widehat{\mathrm{P}\phi}_{pq,x}\right\}
\right)
\end{equation}
from $\mathrm{P}\mathcal{U}_p$ to $\mathrm{P}\mathcal{U}_q$ as follows:
\begin{itemize}
\item[--]
The open subset $\mathrm{P}_gU_{pq}$ is defined by
\[
\mathrm{P}_gU_{pq} := \left(\mathrm{P}_gU_p\right)\!\big|_{U_{pq}},
\]
where $\mathrm{P}_gU_p$ is viewed as an open neighborhood of the zero section of $TU_p$ via Lemma~\ref{beaxx}.

\item[--]
The smooth map on the base
\begin{equation}\label{efsvscs}
\mathrm{P}\phi_{pq} \colon \mathrm{P}_gU_{pq} \times W_p \longrightarrow \mathrm{P}_gU_q \times W_q
\end{equation}
is given by
\[
\mathrm{P}\phi_{pq}\left(\gamma, w \right) := \left(\phi_{pq}(\gamma)_g , \iota_{pq}(w)\right),
\]
where $\phi_{pq}(\gamma)_g$ denotes the short geodesic in $U_q$ from $\phi_{pq}\left(\gamma(0)\right)$ to $\phi_{pq}\left(\gamma(1)\right).$ Since $\phi_{pq}$ is an embedding, we have $\dim U_p \leq \dim U_q.$ Since we assume the virtual dimensions coincide $\dim U_p - \mathrm{rk}E_p = \dim U_q - \mathrm{rk}E_q$ for each coorinate change pair, so that we have $\mathrm{rk}E_p \leq \mathrm{rk}E_q.$ Then we denote a choice of embedding of vector spaces $\iota_{pq} : W_p \rightarrow W_q.$

\item[--] The compatibility conditions for the base coordinate changes, namely, (i) through (iv) in Definition~\ref{kstr}, hold for the following reasons: (i) is clear. The remaining three conditions are evaluated at the zero locus $(\gamma_{x,x}, 0)$ for each $x \in s_{p}^{-1}(0)$ and $p$, where the compatibilities reduce to those concerning $\phi_{pq}$ and $\psi_p$, which are already known to hold.

\item[--]
The $L_\infty[1]$-morphism
\[
\widehat{\mathrm{P}\phi}_{pq,x}^{\mathrm{c}} \colon
\mathcal{C}^\mathrm{P}_{q,\,\mathrm{P}\phi_{pq}(x),\,\mathrm{P}\phi_{pq}}
\longrightarrow
\mathcal{C}^\mathrm{P}_{p,\,x}
\]
is defined componentwise by $\widehat{\mathrm{P}\phi}^{\mathrm{c}}_{pq,x,k} := \widehat{\mathrm{P}\phi}^{\mathrm{c, K}}_{pq,x,k} \oplus \widehat{\mathrm{P}\phi}^{\mathrm{c, dR}}_{pq,x,k}$.
\end{itemize}

We write $\widehat{\mathrm{P}\phi}^{\mathrm{c}}_{pq,x}$ in the composition of three $L_{\infty}$-morphisms as follows:
\begin{equation}\label{bwccc}
\widehat{\mathrm{P}\phi}^{\mathrm{c}}_{pq,x} \colon \mathcal{C}^\mathrm{P}_{q,\,\mathrm{P}\phi_{pq}(x),\,\mathrm{P}\phi_{pq}} \xrightarrow{(3)} \overset{\circ}{\mathcal{C}}{}^\mathrm{P}_{q,\,\phi_{pq}(x),\,\phi_{pq}} \xrightarrow{(2)} \overset{\circ}{\mathcal{C}}{}^\mathrm{P}_{p,x} \xrightarrow{(1)} \mathcal{C}^\mathrm{P}_{p,x},
\end{equation}
where we denote
\[
\mathcal{C}^\mathrm{P}_{p,x}
:=
\bigwedge\nolimits^{-\bullet} \Gamma\left(
\left({\mathrm{ev}_1}^*E_p \oplus {\mathrm{ev}_0}^*T^*U_p \oplus E_{p,W}\right)^*\Bigg|_{W^\mathrm{P}_x}
\right)
\;\oplus\;
\Omega_{\mathrm{aug}}^{\bullet+1}\left(\mathcal{F}^\mathrm{P}_x\right),
\]
\[
\overset{\circ}{\mathcal{C}}{}^\mathrm{P}_{p,x}
:=
\bigwedge\nolimits^{-\bullet} \Gamma\left({\mathrm{ev}_1}^*E^*_p \Big|_{W_x}
\right)
\;\oplus\;
\Omega_{\mathrm{aug}}^{\bullet+1}\left(\overset{\circ}{\mathcal{F}}{}^\mathrm{P}_x\right),
\]
\[
\mathcal{C}^\mathrm{P}_{q,\,\mathrm{P}\phi_{pq}(x),\,\mathrm{P}\phi_{pq}} :=\left(\bigwedge\nolimits^{-\bullet} \Gamma\left({\mathrm{ev}_1}^*E^*_q \oplus \mathrm{ev}_0T^*U_q \oplus E^*_{q,W} \Big|_{W_{\phi_{pq}(x)}^{\mathrm{P}}}
\right)
\;\oplus\;
\Omega_{\mathrm{aug}}^{\bullet+1}\left({\mathcal{F}}{}^\mathrm{P}_{\phi_{pq}(x)}\right)\right)_{\mathrm{P}\phi_{pq}},
\]
and
\[
\overset{\circ}{\mathcal{C}}{}^\mathrm{P}_{q,\,(\phi_{pq}(x),0),\,\phi_{pq}} := \left(\bigwedge\nolimits^{-\bullet} \Gamma\left({\mathrm{ev}_1}^*E^*_q \Big|_{W'_{\phi_{pq}(x)}}
\right)
\;\oplus\;
\Omega_{\mathrm{aug}}^{\bullet+1}\left(\overset{\circ}{\mathcal{F}}{}^\mathrm{P}_{\phi_{pq}(x)}\right)\right)_{\phi_{pq}}.
\]

In what follows, we verify that each of those maps is a quasi-isomorphism.

\noindent (1) : $\overset{\circ}{\mathcal{C}}{}^\mathrm{P}_{p,x} \xrightarrow{(1)} \mathcal{C}^\mathrm{P}_{p,x}$

\noindent(\textit{de Rham part})

We first consider the base embedding
\[
\begin{split}
{W}_x &\xhookrightarrow{\quad} W^\mathrm{P}_x \\
y &\longmapsto (\gamma_{y,y}, 0),
\end{split}
\]
where we take
\[
W^\mathrm{P}_x \simeq {W}_x \times \left(W_x \times W_p\right).
\]

This base embedding respect the presymplectic forms $\alpha_p$ and $\alpha^\mathrm{P} := \operatorname{pr}_1^* \mathrm{ev}_0^* \alpha_p,$ hence induces an injective map between the foliation tangent bundle
\[
T\overset{\circ}{\mathcal{F}^\mathrm{P}}_{p, x} \xhookrightarrow{\quad} T\mathcal{F}^\mathrm{P}_{p, x}.
\]
This further indcues an $L_{\infty}[1]$-morphism between the foliation de Rham complex
\[
\Omega^{\bullet+1}\left(\mathcal{F}^\mathrm{P}_{p, x}\right) \longrightarrow \Omega^{\bullet+1}\left(\overset{\circ}{\mathcal{F}^\mathrm{P}}_{p, x}\right),
\]
which yields the de Rham $L_{\infty}$-coordinate change of (1), that is, the map between the augmentation by \cite[Lemma B.20]{Kim1}
\[
\Omega^{\bullet+1}_{\text{aug}}\left(\mathcal{F}^\mathrm{P}_{p, x}\right) \longrightarrow \Omega^{\bullet+1}_{\text{aug}}\left(\overset{\circ}{\mathcal{F}^\mathrm{P}}_{p, x}\right),
\]
which is necessarily a quasi-isomorphisms between acyclic $L_{\infty}[1]$-algebras.

\noindent (\textit{Koszul part})

The Koszul part quasi-isomorphism is obtained from the following bundle embedding

\begin{equation}\label{qvsvs}
\begin{tikzcd}[column sep=large, row sep=large]
E_p \arrow[r, hook, "\overline{\delta}_{1,p}"]
&
\mathrm{ev}_1^*E_p \oplus \mathrm{ev}_0^*TU_p \oplus E_{p,W}
\\
U_p \arrow[u, "s_p"] \arrow[r, hook, "\delta_{1,p}"] 
&
\mathrm{P}_gU_p \times W_p \arrow[u, swap, "s_p^{\mathrm{P}}"] 
\end{tikzcd}
\end{equation}
defined on each fiber over $y \in U_p$ by
\begin{equation}\label{nannnn}
\overline{\delta}_{1,p}(y, v) := \left(2v,\, 0,\, 2v\right) \in (E_p)_y \oplus T_y U_p \oplus W_p.
\end{equation}
Along the image $\delta_{1,p}(y) = (\gamma_{y,y}, 0)$, we have $\dot{\gamma}_{y,y}(0) = 0$ and $\mathrm{ev}_0(\gamma_{y,y}) = \mathrm{ev}_1(\gamma_{y,y}) = y$. Substituting $w = 0$ into the definition of $s_p^\mathrm{P}$ yields
\[
\begin{split}
\left(s_p^{\mathrm{P}} \circ \delta_{1,p}\right)(y)
&= s_p^{\mathrm{P}}(\gamma_{y,y}, 0)\\
&= \left(2s_p(y),\, \dot{\gamma}_{y,y}(0),\, 2s_p(y) \right)\\
&= \left(2s_p(y),\, 0,\, 2s_p(y)\right)\\
& = \left(\overline{\delta}_{1,p} \circ s_p \right)(y).
\end{split}
\]
We will revisit this bundle map in the proof of the factorization property of the diagonal morphism in Theorem \ref{facpro}.

\begin{lem}
There exists a bundle isomorphism:
\[
\begin{tikzcd}[column sep=large, row sep=large]
F \oplus F \arrow[r, "{\overline{\delta}, \simeq}"]
&
\Delta \oplus F^-
\\
U \arrow[u, "s_1 \oplus  s_2"] \arrow[r, "\mathrm{id}_U"] 
&
U, \arrow[u, swap, "{(s_1, s_1) \oplus (s_1 - s_2)}"]
\end{tikzcd}
\]
compatible with the section on both sides.
\end{lem}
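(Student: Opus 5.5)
The plan is to write down the bundle map explicitly and check the two required properties directly. Here $F \to U$ is a vector bundle and $\Delta \subset F \oplus F$ is the diagonal subbundle $\{(v,v)\}$. The summand $F^-$ is the anti-diagonal $\{(v,-v)\}$, which is canonically isomorphic to $F$ via $(v,-v)\mapsto v$; I will identify the target with $F\oplus F$ through this. Over $\mathrm{id}_U$, I would define on each fibre $F_y \oplus F_y$
\[
\overline{\delta}(y; v_1, v_2) := \bigl(y;\, (v_1, v_1),\, v_1 - v_2\bigr) \in \Delta_y \oplus F^-_y .
\]

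\textbf{Isomorphism.} The map $\overline{\delta}$ is fibrewise linear and depends smoothly on $y$, so it is a smooth bundle map covering $\mathrm{id}_U$. Its inverse is
\[
\bigl((u,u), z\bigr) \longmapsto (u,\, u - z),
\]
which is again smooth and linear. Composing the two maps in either order gives the identity. Hence $\overline{\delta}$ is a bundle isomorphism.

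\textbf{Compatibility with sections.} The condition $\overline{\delta}\circ(s_1\oplus s_2) = \bigl((s_1,s_1)\oplus(s_1-s_2)\bigr)\circ \mathrm{id}_U$ follows by direct substitution of $v_i = s_i(y)$. As a consequence, Lemma~\ref{pokhgjb} applies to $\overline{\delta}$, and since $\overline{\delta}$ is an isomorphism over the identity, the induced Koszul map is an isomorphism of complexes.

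\textbf{Main obstacle.} The only point needing care is notational: making sure $\Delta\oplus F^-$ is read as the decomposition of $F\oplus F$ into its diagonal and anti-diagonal, so that the displayed target section is the one intended. Under any other reading, the formula for $\overline{\delta}$ is unchanged after fixing the canonical identifications $\Delta\simeq F \simeq F^-$. I expect no analytic difficulty; the rest is the routine verification above. This lemma is what will later let the two $E_p$-type summands of $s^{\mathrm{P}}_p$ be compared with the diagonal in the factorization argument.
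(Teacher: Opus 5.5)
Your proof is correct and follows the paper's argument: the paper uses exactly the same map $(x,(a,b))\mapsto (x,(a,a),a-b)$ and checks section compatibility by direct substitution. Your explicit inverse $((u,u),z)\mapsto (u,u-z)$ is a small improvement, since the paper only asserts that $\overline{\delta}$ is an embedding while the statement claims an isomorphism.
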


\begin{proof}
We can take the bundle map to be
\[
\begin{split}
\overline{\delta} \colon F \oplus F &\xlongrightarrow{\simeq} \Delta \oplus F^- ,\\
(x, (a, b)) & \longmapsto (x, (a, a), (a-b)).
\end{split}
\]
It is immediate to show that this is an embedding. Moreover, it is compatible with the sections
\[
\left(x, \left(s_1(x), s_2(x)\right)\right) \xmapsto{\; \overline{\delta} \;} \left(x, \left(s_1(x),s_1(x)\right), \left(s_1(x) -s_2(x)\right)\right).
\]
\end{proof}

Now composing the above two bundle maps to obtain:
\[
\begin{tikzcd}[column sep=large, row sep=large]
E_p \arrow[r, hook, "\overline{\delta}_{1,p}"]
&
\mathrm{ev}_1^*E_p \oplus \mathrm{ev}_0^*TU_p \oplus E_{p,W} \arrow[r, "{\overline{\delta}, \simeq}"]
&
\Delta \oplus E^- \oplus \mathrm{ev}_0^*TU_p
\\
U_p \arrow[u, "s_p"] \arrow[r, hook, "\delta_{1,p}"] 
&
\mathrm{P}_gU_p \times W_p \arrow[u, swap, "s_p^{\mathrm{P}}"] \arrow[r, "\mathrm{id}"] 
&
\mathrm{P}_gU_p \times W_p, \arrow[u, swap, "s_p^{\Delta}"] 
\end{tikzcd}
\]
where $s_p^{\Delta}$ is the section given by
\[
\begin{split}
s_p^{\Delta} \colon \mathrm{P}_gU_p \times W_p &\longrightarrow \Delta \oplus E^- \oplus \mathrm{ev}_0^*TU_p,\\
(\gamma, w) &\longmapsto \left(\left(w + s_p(\gamma(0)) + s_p(\gamma(1)), w + s_p(\gamma(0)) + s_p(\gamma(1))\right), w, \dot{\gamma}(0)\right)
\end{split}
\]
obtained from the previous Lemma. We also have the compatibility
\begin{equation}\label{ogkksjsg}
\overline{\delta} \circ \overline{\delta}_{1,p} \circ s_p = s_p^{\Delta} \circ \delta_{1,p}.
\end{equation}

We now verify the tangent bundle condition along the zero locus $x \in s_p^{-1}(0)$. As we remarked earlier, the bundle embedding $(\delta_{1,p}, \overline{\delta}_{1,p})$ will be discussed later in the proof of Theorem~\ref{facpro} in Subsection 5.2. There, we verify the tangent bundle condition for the embedding \eqref{qvsvs}. The case of the isomorphism $(\mathrm{id}, \overline{\delta})$ with section compatibility is trivial, so we omit its proof. It then follows immediately that their composition satisfies the tangent bundle condition.

Therefore, by Theorem~\ref{afec} (i.e., \cite[Theorem~2.36]{Kim1}), this bundle embedding induces a quasi-isomorphism (or its homotopy inverse) on the Koszul complexes:
\[
\bigwedge\nolimits^{-\bullet} \Gamma\left(
\left(\mathrm{ev}_1^*E_p^* \oplus \mathrm{ev}_0^*T^*U_p \oplus E_{p,W}^*\right)\!\Big|_{W^\mathrm{P}_x}
\right)
\xrightarrow{\;\simeq\;}
\bigwedge\nolimits^{-\bullet} \Gamma\left(E_p^*\Big|_{W_x}\right).
\]
Composing this with the $L_{\infty}[1]$-isomorphism arising from the bundle isomorphism $(\mathrm{id}, \overline{\delta})$, we obtain 
\[
\widehat{\delta}^{\mathrm{K}}_{p,x,1} \colon
\bigwedge\nolimits^{-\bullet} \Gamma\left(
\left(\Delta^* \oplus E^{-*} \oplus \mathrm{ev}_0^*T^*U_p\right)\!\Big|_{W^\mathrm{P}_x}
\right)
\xrightarrow{\;\simeq\;}
\bigwedge\nolimits^{-\bullet} \Gamma\left(E_p^*\Big|_{W_x}\right).
\]
Setting the higher operations to be trivial ($\widehat{\delta}^{\mathrm{K}}_{p,x,k} = 0$ for $k \geq 2$), we define the Koszul $L_{\infty}$-coordinate change $\widehat{\delta}^{\mathrm{K}}_{p,x}$.

\noindent (2) : $\overset{\circ}{\mathcal{C}}{}^\mathrm{P}_{q,\,\phi_{pq}(x),\,\phi_{pq}} \xrightarrow{(2)} \overset{\circ}{\mathcal{C}}{}^\mathrm{P}_{p,x}$

Consider the $L_{\infty}$-coordinate
\[
\widehat{\phi}_{pq,x} \colon \overset{\circ}{\mathcal{C}}{}^\mathrm{P}_{q,\,\phi_{pq}(x),\,\phi_{pq}} \xrightarrow{(2)} \overset{\circ}{\mathcal{C}}{}^\mathrm{P}_{p,x},
\]
for each $p,q,$ with $x,$ and have it as (2) in \eqref{bwccc}. We already know it is a quasi-isomorphism.

\subsection{Construction of the quasi-isomorphism (3)} In this subsection, we continue to construct the $L_{\infty}$-coordinate change of path Kuranishi spaces with component (3).

\ \

\noindent (3) : $\mathcal{C}^\mathrm{P}_{q,\,\mathrm{P}\phi_{pq}(x),\,\mathrm{P}\phi_{pq}} \xrightarrow{(3)} \overset{\circ}{\mathcal{C}}{}^\mathrm{P}_{q,\,\phi_{pq}(x),\,\phi_{pq}} $

We first construct a map in the opposite direction
\[
\overset{\circ}{\mathcal{C}}{}^\mathrm{P}_{q,\,\phi_{pq}(x),\,\phi_{pq}}
\longrightarrow
\mathcal{C}^\mathrm{P}_{q,\,\mathrm{P}\phi_{pq}(x),\,\mathrm{P}\phi_{pq}},   
\]
show its quasi-isomorphism property, and consider its homotopy inverse to obtain (3) in \eqref{bwccc}.

\ \

\noindent(\textit{de Rham part})

For the base embedding
\[
\begin{split}
{W}'_{\phi_{pq}(x)} &\xhookrightarrow{\quad} W^\mathrm{P}_{\phi_{pq}(x)}, \\
y &\longmapsto (\gamma_{y,y}, 0),
\end{split}
\]
with a choice
\begin{equation}\label{wghssgwb}
W^\mathrm{P}_{\phi_{pq}(x)} \simeq {W}'_{\phi_{pq}(x)} \times \left({W}'_{\phi_{pq}(x)} \times W_q\right),
\end{equation}
we have the compatibility with the presymplectic forms $\alpha_p$ and $\alpha^\mathrm{P} := \operatorname{pr}_1^* \mathrm{ev}_0^* \alpha_q.$ Hence we obtain an injective map between the foliation tangent bundle
\[
T\overset{\circ}{\mathcal{F}^{\mathrm{P}}}_{q, \phi_{pq}(x)} \longrightarrow T\mathcal{F}^\mathrm{P}_{q, \phi_{pq}(x)},
\]
which further induces an $L_{\infty}[1]$-morphism
\[
\Omega^{\bullet+1}\left(\mathcal{F}^{\mathrm{P}}_{q, \phi_{pq}(x)}\right) \longrightarrow \Omega^{\bullet+1}\left(\overset{\circ}{\mathcal{F}^\mathrm{P}}_{q, \phi_{pq}(x)}\right).
\]
Then it yields the map between the augmentation by \cite[Lemma B.20]{Kim1}
\[
\Omega^{\bullet+1}_{\text{aug}}\left(\mathcal{F}^{\mathrm{P}}_{q, \phi_{pq}(x)}\right) \longrightarrow \Omega^{\bullet+1}_{\text{aug}}\left(\overset{\circ}{\mathcal{F}^\mathrm{P}}_{q, \phi_{pq}(x)}\right),
\]

Recall that there are canonical maps (cf. (\ref{varep}) \& Lemma~\ref{vecpf})
\[
\Omega^{\bullet+1}\left(\overset{\circ}{\mathcal{F}^{\mathrm{P}}}_{q, \phi_{pq}(x)}\right) \longrightarrow \Omega^{\bullet+1}\left(\overset{\circ}{\mathcal{F}^{\mathrm{P}}}_{q, \phi_{pq}(x)}\right)_{\phi_{pq}},
\]
\[
\Omega^{\bullet+1}\left(\mathcal{F}^\mathrm{P}_{q, \phi_{pq}(x)}\right) \longrightarrow \Omega^{\bullet+1}\left(\mathcal{F}^\mathrm{P}_{q, \phi_{pq}(x)}\right)_{\mathrm{P}\phi_{pq}}
\]
with their augmentations:
\begin{equation}\label{basshshh}
\Omega^{\bullet+1}_{\text{aug}}\left(\overset{\circ}{\mathcal{F}^{\mathrm{P}}}_{q, \phi_{pq}(x)}\right) \xlongrightarrow{\simeq} \Omega^{\bullet+1}_{\text{aug}}\left(\overset{\circ}{\mathcal{F}^{\mathrm{P}}}_{q, \phi_{pq}(x)}\right)_{\phi_{pq}},
\end{equation}
\[
\Omega^{\bullet+1}_{\text{aug}}\left(\mathcal{F}^\mathrm{P}_{q, \phi_{pq}(x)}\right) \xlongrightarrow{\simeq} \Omega^{\bullet+1}_{\text{aug}}\left(\mathcal{F}^\mathrm{P}_{q, \phi_{pq}(x)}\right)_{\mathrm{P}\phi_{pq}},
\]
which are quasi-isomorphisms between acyclic $L_{\infty}[1]$-algebras. We remark that in (\ref{basshshh}) the completion $(\cdots)_{\phi_{pq}}$ along the image of $\phi_{pq}$ that lies in the first component of the decomposition (\ref{wghssgwb}).

Noting the $L_\infty[1]$-morphism $(\ref{basshshh})$ is a quasi-isomorphism, we consider its homotopy inverse and the compositions
\[
\begin{split}
\Omega^{\bullet+1}_{\text{aug}}\left(\overset{\circ}{\mathcal{F}^{\mathrm{P}}}_{q, \phi_{pq}(x)}\right)_{\phi_{pq}} &\xlongrightarrow{\simeq} \Omega^{\bullet+1}_{\text{aug}}\left(\overset{\circ}{\mathcal{F}^{\mathrm{P}}}_{q, \phi_{pq}(x)}\right)\\ &\xlongrightarrow{\simeq} \Omega^{\bullet+1}_{\text{aug}}\left(\mathcal{F}^\mathrm{P}_{q, \phi_{pq}(x)}\right) \xlongrightarrow{\simeq} \Omega^{\bullet+1}_{\text{aug}}\left(\mathcal{F}^\mathrm{P}_{q, \phi_{pq}(x)}\right)_{\mathrm{P}\phi_{pq}},
\end{split}
\]
which we define to be the $L_\infty$-de Rham coordinate change. The quasi-isomorphism property of the de Rham part follows from the fact that it is an $L_{\infty}[1]$-morphism between acyclic $L_{\infty}[1]$-algebras.

\noindent(\textit{Koszul part})

We first construct a quasi-isomorphism
\[
\bigwedge\nolimits^{-\bullet}\Gamma\left(E^*_q\big|_{W_x}\right)_{\phi_{pq}} \longrightarrow \bigwedge\nolimits^{-\bullet}\Gamma\left(\mathrm{ev}_1^*E^*_q \oplus \mathrm{ev}_0^*T^*U_q \oplus E^*_{q,W}\right)_{\mathrm{P}\phi_{pq}},
\]
and take its homotopy inverse to define the Koszul $L_{\infty}$-coordinate change.

Recall that we can choose $W^{\mathrm{P}}_{\phi_{pq}(x)}$ in \eqref{onoaong} so that it can be decomposed as
\begin{equation}\label{oagjg}
W^{\mathrm{P}}_{\phi_{pq}(x)} \simeq W'_{\phi_{pq}(x)} \times W^{\prime \perp}_{\phi_{pq}(x)}.
\end{equation}
We can guarantee its existence by choosing $\iota_{pq}$ properly.
We then obtain an expression for $C^{\infty}\left(W^\mathrm{P}_{\phi_{pq}(x)}\right)$:
\begin{equation}\label{qqoottt}
C^{\infty}\left(W^\mathrm{P}_{\phi_{pq}(x)}\right) \simeq C^{\infty}\left(W'_{\phi_{pq}(x)}\right) \widehat{\otimes}_{\mathbb{R}} C^{\infty}\left(W^{\prime \perp}_{\phi_{pq}(x)}\right)
\end{equation}
as the completed tensor product between the smooth function spaces on the two components, with respect to the $C^{\infty}$-topology on $C^{\infty}\left(W^\mathrm{P}_{\phi_{pq}(x)}\right)$.

From the definition of the base coordinate change
\[
\mathrm{P}\phi_{pq}\big|_{W^\mathrm{P}_x \times W_p} \colon W^\mathrm{P}_x \times W_p \longrightarrow W^\mathrm{P}_{\phi_{pq}(x)} \times W_q,
\]
we also have the corresponding decomposition from \eqref{efsvscs} and \eqref{oagjg},
\begin{equation}\label{bbsgjgggff}
\mathrm{P}\phi_{pq} = \phi_{pq} \times \phi^\perp_{pq},
\end{equation}
into the starting and endpoint components.

Recall the completion
\[
C^{\infty}_{\mathrm{P}\phi_{pq}}\left(W^\mathrm{P}_{\phi_{pq}(x)}\right) := \lim\limits_{\longleftarrow} \frac{C^{\infty}\left(W^\mathrm{P}_{\phi_{pq}(x)}\right)}{\left(I_{\mathrm{P}\phi_{pq}}\right)^n},
\]
where $I_{\mathrm{P}\phi_{pq}}$ is the ideal in $C^{\infty}\left(W^\mathrm{P}_{\phi_{pq}(x)}\right)$ given by
\[
I_{\mathrm{P}\phi_{pq}} := \left\{ h \in C^{\infty}\left(W^\mathrm{P}_{\phi_{pq}(x)}\right) \Big| \ h\big|_{\mathrm{Im}(\mathrm{P}\phi_{pq})} = 0 \right\}.
\]
Note that under the identification \eqref{qqoottt}, we have
\[
I_{\mathrm{P}\phi_{pq}} \simeq I_{\phi_{pq}} \widehat{\otimes}_{\mathbb{R}} C^{\infty}\left(W^{\prime\perp}_{\phi_{pq}(x)}\right) + C^{\infty}\left(W'_{\phi_{pq}(x)}\right) \widehat{\otimes}_{\mathbb{R}} I^{\perp}_{\phi_{pq}},
\]
where $I_{\phi_{pq}}$ and $I^{\perp}_{\phi_{pq}}$ denote the ideals defined similarly to $I_{\mathrm{P}\phi_{pq}}$ with respect to the decomposition \eqref{bbsgjgggff}.

The following lemma provides a way to identify this space and justifies (1):
\begin{lem}
We have
\[
C^{\infty}_{\mathrm{P}\phi_{pq}}\left(W^\mathrm{P}_{\phi_{pq}(x)}\right) \simeq C^{\infty}_{\phi_{pq}}\left(W'_{\phi_{pq}(x)}\right) \widehat{\otimes}_{\mathbb{R}} C^{\infty}_{\phi^{\perp}_{pq}}\left(W^{\prime \perp}_{\phi_{pq}(x)}\right).
\]
\end{lem}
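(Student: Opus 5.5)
The identity should follow from the product decomposition of the ideal $I_{\mathrm{P}\phi_{pq}}$ recorded just before the statement, together with a term-by-term comparison of the inverse systems defining the two completions. Abbreviate $A := C^{\infty}\bigl(W'_{\phi_{pq}(x)}\bigr)$, $B := C^{\infty}\bigl(W^{\prime\perp}_{\phi_{pq}(x)}\bigr)$, $I := I_{\phi_{pq}} \subset A$ and $J := I^{\perp}_{\phi_{pq}} \subset B$. By \eqref{qqoottt} and the displayed decomposition of the ideal,
\[
C^{\infty}\bigl(W^{\mathrm{P}}_{\phi_{pq}(x)}\bigr) \simeq A \widehat{\otimes}_{\mathbb{R}} B, \qquad I_{\mathrm{P}\phi_{pq}} \simeq I \widehat{\otimes} B + A \widehat{\otimes} J.
\]
The plan is to show that the $I_{\mathrm{P}\phi_{pq}}$-adic filtration and the product filtration $\{I^a \widehat{\otimes} B + A \widehat{\otimes} J^b\}_{a,b}$ on $A\widehat{\otimes}B$ are cofinal in each other. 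Then the two inverse limits agree, and the latter limit is identified with $\varprojlim_{a} A/I^a \,\widehat{\otimes}\, \varprojlim_b B/J^b$.

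\textbf{Step 1 (powers of a sum ideal).} I would first expand
\[
\bigl(I\widehat{\otimes}B + A\widehat{\otimes}J\bigr)^n = \sum_{a+b=n} I^a \widehat{\otimes} J^b .
\]
This yields the two cofinality inclusions:
\[
\bigl(I\widehat{\otimes}B + A\widehat{\otimes}J\bigr)^{2n} \subset I^n\widehat{\otimes}B + A\widehat{\otimes}J^n \subset \bigl(I\widehat{\otimes}B + A\widehat{\otimes}J\bigr)^{n}.
\]
Consequently $\varprojlim_n (A\widehat{\otimes}B)/I_{\mathrm{P}\phi_{pq}}^n \simeq \varprojlim_n (A\widehat{\otimes}B)/(I^n\widehat{\otimes}B + A\widehat{\otimes}J^n)$.

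\textbf{Step 2 (quotients of completed tensor products).} Next I would establish
\[
(A\widehat{\otimes}B)/(I^n\widehat{\otimes}B + A\widehat{\otimes}J^n) \simeq (A/I^n)\widehat{\otimes}(B/J^n).
\]
Everything here is local, since $\phi_{pq}$ and $\phi^\perp_{pq}$ are embeddings of contractible balls. I would therefore choose coordinates in which the images are linear subspaces. Then $I^n$ is the ideal generated by monomials of degree $n$ in the normal coordinates. By Hadamard's lemma, $A/I^n$ is identified with smooth functions on the image with values in polynomials of degree less than $n$ in the normal variables, and similarly for $B/J^n$. In these coordinates the identification reduces to the product structure of $W'\times W'^{\perp}$ and the nuclearity of Fréchet spaces of smooth functions. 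Nuclearity ensures that completed tensor products of closed subspaces and quotients behave exactly, with $C^\infty(X)\widehat{\otimes}C^\infty(Y)\simeq C^\infty(X\times Y)$.

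\textbf{Step 3 (passage to the limit).} Finally I would commute $\varprojlim$ with $\widehat{\otimes}$ along the diagonal system $n \mapsto (n,n)$, which is cofinal in $\mathbb{N}\times\mathbb{N}$. This is justified by the Mittag-Leffler property: all transition maps are surjective. It is also justified by the fact that for nuclear Fréchet spaces the completed projective tensor product commutes with countable reduced projective limits.

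I expect Step 2, together with the topological justification in Step 3, to be the main obstacle, since the algebraic tensor product is not exact on these infinite-dimensional spaces. In the coordinates of Step 2 both sides become explicit spaces of jets, namely smooth functions on the image with values in jets along the normal directions. If the abstract nuclear-space argument resists, I would fall back on directly matching Taylor coefficients in these coordinates.
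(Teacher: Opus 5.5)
Your proposal is correct and follows essentially the same route as the paper's proof sketch. Both arguments identify $I_{\mathrm{P}\phi_{pq}}$ with $I\widehat{\otimes}B + A\widehat{\otimes}J$ and use the inclusions $(I\widehat{\otimes}B + A\widehat{\otimes}J)^{i+j}\subset I^i\widehat{\otimes}B + A\widehat{\otimes}J^j \subset (I\widehat{\otimes}B + A\widehat{\otimes}J)^{\min\{i,j\}}$ to show the two inverse systems are cofinal, then identify each quotient with $(A/I^i)\widehat{\otimes}(B/J^j)$ and pass to the limit. The only differences are that you work along the diagonal $i=j=n$, and that you supply the Hadamard-lemma, nuclearity and Mittag-Leffler justifications for the quotient identification and for interchanging $\varprojlim$ with $\widehat{\otimes}$, which the paper simply asserts as straightforward.
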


\begin{proof}[Proof-sketch]
It is straightforward to show that
\[
\begin{split}
&\frac{C^{\infty}\left(W'_{\phi_{pq}(x)}\right)}{\left(I_{\phi_{pq}}\right)^i} \;\widehat{\otimes}_{\mathbb{R}}\; \frac{C^{\infty}\left(W^{\prime \perp}_{\phi_{pq}(x)}\right)}{\left(I^{\perp}_{\phi_{pq}}\right)^j} \\
&\qquad\qquad \simeq \frac{C^{\infty}\left(W'_{\phi_{pq}(x)}\right) \widehat{\otimes}_{\mathbb{R}} C^{\infty}\left(W^{\prime \perp}_{\phi_{pq}(x)}\right)}{\left(I_{\phi_{pq}}\right)^i \widehat{\otimes}_{\mathbb{R}} C^{\infty}\left(W^{\prime\perp}_{\phi_{pq}(x)}\right) + C^{\infty}\left(W'_{\phi_{pq}(x)}\right) \widehat{\otimes}_{\mathbb{R}} \left(I^{\perp}_{\phi_{pq}}\right)^j}
\end{split}
\]
and
\[
\begin{split}
&\frac{C^{\infty}\left(W'_{\phi_{pq}(x)}\right) \widehat{\otimes}_{\mathbb{R}} C^{\infty}\left(W^{\prime \perp}_{\phi_{pq}(x)}\right)}{\left( I_{\mathrm{P}\phi_{pq}} \right)^k} \\
&\qquad\qquad \simeq \frac{C^{\infty}\left(W'_{\phi_{pq}(x)}\right) \widehat{\otimes}_{\mathbb{R}} C^{\infty}\left(W^{\prime \perp}_{\phi_{pq}(x)}\right)}{\left(I_{\phi_{pq}} \widehat{\otimes}_{\mathbb{R}} C^{\infty}\left(W^{\prime\perp}_{\phi_{pq}(x)}\right) + C^{\infty}\left(W'_{\phi_{pq}(x)}\right) \widehat{\otimes}_{\mathbb{R}} I^{\perp}_{\phi_{pq}}\right)^k}.
\end{split}
\]

From the inclusions
\[
\begin{split}
&\left(I_{\phi_{pq}} \widehat{\otimes}_{\mathbb{R}} C^{\infty}\left(W^{\prime\perp}_{\phi_{pq}(x)}\right) + C^{\infty}\left(W'_{\phi_{pq}(x)}\right) \widehat{\otimes}_{\mathbb{R}} I^{\perp}_{\phi_{pq}}\right)^{i+j} \\
&\qquad\qquad \subset \left(I_{\phi_{pq}}\right)^i \widehat{\otimes}_{\mathbb{R}} C^{\infty}\left(W^{\prime\perp}_{\phi_{pq}(x)}\right) + C^{\infty}\left(W'_{\phi_{pq}(x)}\right) \widehat{\otimes}_{\mathbb{R}} \left(I^{\perp}_{\phi_{pq}}\right)^j \\
&\qquad\qquad \subset \left(I_{\phi_{pq}} \widehat{\otimes}_{\mathbb{R}} C^{\infty}\left(W^{\prime\perp}_{\phi_{pq}(x)}\right) + C^{\infty}\left(W'_{\phi_{pq}(x)}\right) \widehat{\otimes}_{\mathbb{R}} I^{\perp}_{\phi_{pq}}\right)^{\min\{i,j\}},
\end{split}
\]
we see that the two inverse systems are cofinal. Thus, we have
\[
\begin{split}
C^{\infty}_{\mathrm{P}\phi_{pq}}\left(W^\mathrm{P}_{\phi_{pq}(x)}\right) &:= \lim\limits_{\underset{k}{\longleftarrow}} \frac{C^{\infty}\left(W^\mathrm{P}_{\phi_{pq}(x)}\right)}{\left(I_{\mathrm{P}\phi_{pq}}\right)^k} \\
&= \lim\limits_{\underset{k}{\longleftarrow}} \frac{C^{\infty}\left(W'_{\phi_{pq}(x)}\right) \widehat{\otimes}_{\mathbb{R}} C^{\infty}\left(W^{\prime \perp}_{\phi_{pq}(x)}\right)}{\left(I_{\phi_{pq}} \widehat{\otimes}_{\mathbb{R}} C^{\infty}\left(W^{\prime\perp}_{\phi_{pq}(x)}\right) + C^{\infty}\left(W'_{\phi_{pq}(x)}\right) \widehat{\otimes}_{\mathbb{R}} I^{\perp}_{\phi_{pq}}\right)^k} \\
&= \lim\limits_{\underset{i,j}{\longleftarrow}} \frac{C^{\infty}\left(W'_{\phi_{pq}(x)}\right) \widehat{\otimes}_{\mathbb{R}} C^{\infty}\left(W^{\prime \perp}_{\phi_{pq}(x)}\right)}{\left(I_{\phi_{pq}}\right)^i \widehat{\otimes}_{\mathbb{R}} C^{\infty}\left(W^{\prime\perp}_{\phi_{pq}(x)}\right) + C^{\infty}\left(W'_{\phi_{pq}(x)}\right) \widehat{\otimes}_{\mathbb{R}} \left(I^{\perp}_{\phi_{pq}}\right)^j} \\
&= \lim\limits_{\underset{i,j}{\longleftarrow}} \left(\frac{C^{\infty}\left(W'_{\phi_{pq}(x)}\right)}{\left(I_{\phi_{pq}}\right)^i} \;\widehat{\otimes}_{\mathbb{R}}\; \frac{C^{\infty}\left(W^{\prime \perp}_{\phi_{pq}(x)}\right)}{\left(I^{\perp}_{\phi_{pq}}\right)^j}\right) \\
&= C^{\infty}_{\phi_{pq}}\left(W'_{\phi_{pq}(x)}\right) \widehat{\otimes}_{\mathbb{R}} C^{\infty}_{\phi^{\perp}_{pq}}\left(W^{\prime \perp}_{\phi_{pq}(x)}\right).
\end{split}
\]
\end{proof}We then define an $\mathbb{R}$-linear map
\[
\begin{split}
\widehat{\eta}^{\mathrm{K}}_{x,1} \colon \bigwedge\nolimits^{-\bullet} \Gamma\left(E_q^*\big|_{W'_{\phi_{pq}(x)}}\right)_{\phi_{pq}} &= \bigwedge\nolimits^{-\bullet} \Gamma\left(E_q^*\big|_{W'_{\phi_{pq}(x)}}\right) \otimes_{C^\infty(W'_{\phi_{pq}(x)})} C^\infty_{\phi_{pq}}\left(W'_{\phi_{pq}(x)}\right) \\
&\longrightarrow \bigwedge\nolimits^{-\bullet} \Gamma\left(\left(\Delta^*_q \oplus E_q^{-*} \oplus \operatorname{ev}_0^* T^* U_q\right)\!\Big|_{W^\mathrm{P}_{\phi_{pq}(x)}}\right) \\
&\qquad\qquad\qquad\qquad\qquad \otimes_{C^\infty(W^\mathrm{P}_{\phi_{pq}(x)})} C_{\mathrm{P}\phi_{pq}}^\infty\left(W^\mathrm{P}_{\phi_{pq}(x)}\right)
\end{split}
\]
by
\[
a \otimes h \longmapsto \left(\overline{\delta} \circ \overline{\delta}_1 \circ \pi^* \circ \left(\delta_1^{-1}\right)^*(a)\right) \otimes \left(\pi^* \circ \left(\delta_1^{-1}\right)^*(h)\right)
\]
for each
\[
a \in \bigwedge\nolimits^{-\bullet} \Gamma\left(E_q^*\big|_{W'_{\phi_{pq}(x)}}\right) \qquad\text{and}\qquad h \in C^\infty_{\phi_{pq}}\left(W'_{\phi_{pq}(x)}\right),
\]
where
\[
\begin{split}
\pi \colon W^\mathrm{P}_{\phi_{pq}(x)} &\twoheadrightarrow \delta_{1}\left(W'_{\phi_{pq}(x)}\right), \\
(y, y', w) &\longmapsto (\gamma_{y,y'}, 0)
\end{split}
\]
is a choice of retraction map.

We can verify that $\widehat{\eta}^{\mathrm{K}}_{x,1}$ is a chain map using the section compatibility \eqref{ogkksjsg} and the definition of pullback dual sections:
\[
\begin{split}
\overline{\delta} \circ \overline{\delta}_1 \circ \pi^* \circ \left(\delta_1^{-1}\right)^*\!\left(\iota_{s'_q}(a)\right)
&= \overline{\delta} \circ \overline{\delta}_1 \circ \iota_{\pi^* \left(\delta_1^{-1}\right)^*(s'_q)} \left(\pi^* \circ \left(\delta_1^{-1}\right)^*(a)\right) \\
&= \iota_{s^{\Delta}_q} \left(\overline{\delta} \circ \overline{\delta}_1 \circ \pi^* \circ \left(\delta_1^{-1}\right)^*(a)\right).
\end{split}
\]
Moreover, the embedding property of $\overline{\delta} \circ \overline{\delta}_1$ together with the definition of pullbacks immediately implies that $\widehat{\eta}^{\mathrm{K}}_{x,1}$ is injective.

We want $\widehat{\eta}^{\mathrm{K}}_{x,1}$ to be a quasi-isomorphism. To show this, it suffices to prove the acyclicity of the cokernel
\[
\operatorname{coker}\left(\widehat{\eta}^{\mathrm{K}}_{x,1}\right) = \frac{\bigwedge\nolimits^{-\bullet} \Gamma\left(\left(\Delta^*_q \oplus E_q^{-*} \oplus \operatorname{ev}_0^* T^* U_q\right)\!\Big|_{W^\mathrm{P}_{\phi_{pq}(x)}}\right)_{\mathrm{P}\phi_{pq}}}{\widehat{\eta}^{\mathrm{K}}_{x,1}\left(\bigwedge\nolimits^{-\bullet} \Gamma\left(E_q^*\big|_{W'_{\phi_{pq}(x)}}\right)_{\phi_{pq}}\right)}.
\]

For this purpose, we need some preparation. Recall first that, by the definition of the pullback of dual sections, the $C^{\infty}(W^\mathrm{P}_{\phi_{pq}(x)})$-module action on $\widehat{\eta}^{\mathrm{K}}_{x,1}\left(\bigwedge\nolimits^{i}\Gamma\left(E_q^{\prime*}\big|_{W'_{\phi_{pq}(x)}}\right)_{\phi_{pq}}\right)$ by $f \in C^{\infty}(W^\mathrm{P}_{\phi_{pq}(x)})$ is given by
\[
\begin{split}
&\left(\overline{\delta} \circ \overline{\delta}_1 \circ \pi^* \circ \left(\delta_1^{-1}\right)^*(a)\right) \otimes \left(\pi^* \circ \left(\delta_1^{-1}\right)^*(h)\right) \\
&\quad \longmapsto \left(\overline{\delta} \circ \overline{\delta}_1 \circ \pi^* \circ \left(\delta_1^{-1}\right)^*\left(f\big|_{\mathrm{Im}\left(\delta_{1}\big|_{W'_{\phi_{pq}(x)}}\right)} \cdot a\right)\right) \otimes \left(\pi^* \circ \left(\delta_1^{-1}\right)^*\left(f\big|_{\mathrm{Im}\left(\delta_{1}\big|_{W'_{\phi_{pq}(x)}}\right)} \cdot h\right)\right),
\end{split}
\]
while the action on $\bigwedge\nolimits^{i} \Gamma\left(\Delta_q^* \big|_{W^\mathrm{P}_{\phi_{pq}(x)}}\right)_{\mathrm{P}\phi_{pq}}$ is given by the standard module multiplication. It is straightforward to see that these two module structures are not compatible. Therefore, the inclusion 
\begin{equation}\label{b3wgw}
\widehat{\eta}^{\mathrm{K}}_{x,1}\left(\bigwedge\nolimits^{i}\Gamma\left(E_q^{\prime*}\big|_{W'_{\phi_{pq}(x)}}\right)_{\phi_{pq}}\right) \xhookrightarrow{\quad} \bigwedge\nolimits^{i} \Gamma\left(\Delta_q^* \big|_{W^\mathrm{P}_{\phi_{pq}(x)}}\right)_{\mathrm{P}\phi_{pq}}
\end{equation}
can be understood only $\mathbb{R}$-linearly.

Observe that we have the following decomposition
\[
\begin{split}
\left(\Delta_q \oplus E_q^{-} \oplus \mathrm{ev}_0^*TU_q\right)\!\Big|_{W^\mathrm{P}_{\phi_{pq}(x)}}
&\simeq \Delta_q \big|_{W^\mathrm{P}_{\phi_{pq}(x)}} \oplus \left(E_q^{-} \oplus \mathrm{ev}_0^*TU_q\right)\!\Big|_{W^\mathrm{P}_{\phi_{pq}(x)}} \\
&\simeq \overline{\delta} \circ \overline{\delta}_1 \circ \pi^* \left(\left(\delta_1^{-1}\right)^*\left(E_q\big|_{W'_{\phi_{pq}(x)}}\right)\right) \oplus E^\mathrm{c},
\end{split}
\]
where we use the identification
\[
\Delta_q \big|_{W^\mathrm{P}_{\phi_{pq}(x)}} \simeq \overline{\delta} \circ \overline{\delta}_1 \circ \pi^* \left(\left(\delta_1^{-1}\right)^*\left(E_q\big|_{W'_{\phi_{pq}(x)}}\right)\right),
\]
and define
\[
E^\mathrm{c} := \left(E_q^{-} \oplus \mathrm{ev}_0^*TU_q\right)\!\Big|_{W^\mathrm{P}_{\phi_{pq}(x)}}.
\]

Similarly, for the dual bundle, we have
\[
\begin{split}
\left(\Delta_q^* \oplus E_q^{-*} \oplus \mathrm{ev}_0^*T^*U_q\right)\!\Big|_{W^\mathrm{P}_{\phi_{pq}(x)}}
&\simeq \Delta_q^* \big|_{W^\mathrm{P}_{\phi_{pq}(x)}} \oplus \left(E_q^{-*} \oplus \mathrm{ev}_0^*T^*U_q\right)\!\Big|_{W^\mathrm{P}_{\phi_{pq}(x)}} \\
&\simeq \overline{\delta} \circ \overline{\delta}_1 \circ \pi^* \left(\left(\delta_1^{-1}\right)^*\left(E^*_q\big|_{W'_{\phi_{pq}(x)}}\right)\right) \oplus E^{\mathrm{c}}.
\end{split}
\]
By abuse of notation, we write $E^\mathrm{c}$ for both cases. The section 
\[
s^{\Delta}_q \in \Gamma\left(\Delta_q \Big|_{W^\mathrm{P}_{\phi_{pq}(x)}} \oplus \left(E_q^{-} \oplus \operatorname{ev}_0^* T U_q\right)\!\Big|_{W^\mathrm{P}_{\phi_{pq}(x)}}\right)
\]
decomposes accordingly as
\[
s^{\Delta}_q =: s^{\Delta}_{\delta} \oplus s^{\Delta}_{\mathrm{c}}.
\]

Note that there exists a double complex structure
\[
\begin{split}
\bigwedge\nolimits^{-\bullet} &\Gamma\left(\left(\Delta^*_q \oplus E_q^{-*} \oplus \operatorname{ev}_0^* T^* U_q\right)\!\Big|_{W^\mathrm{P}_{\phi_{pq}(x)}}\right)_{\mathrm{P}\phi_{pq}} \\
&= \bigoplus_{i,j} \bigwedge\nolimits^{i,j} \Gamma\left(\left(\Delta_q^* \oplus E_q^{-*} \oplus \operatorname{ev}_0^* T^* U_q\right)\!\Big|_{W^\mathrm{P}_{\phi_{pq}(x)}}\right)_{\mathrm{P}\phi_{pq}},
\end{split}
\]
where each $(i,j)$-component is given by
\[
\begin{split}
&\bigwedge\nolimits^{i,j} \Gamma\left(\left(\Delta_q^* \oplus E_q^{-*} \oplus \operatorname{ev}_0^* T^* U_q\right)\!\Big|_{W^\mathrm{P}_{\phi_{pq}(x)}}\right)_{\mathrm{P}\phi_{pq}} \\
&\qquad := \bigwedge\nolimits^{i} \Gamma\left(\Delta_q^* \Big|_{W^\mathrm{P}_{\phi_{pq}(x)}}\right)_{\mathrm{P}\phi_{pq}} \otimes \bigwedge\nolimits^{j} \Gamma\left(\left(E_q^{-*} \oplus \operatorname{ev}_0^* T^* U_q\right)\!\Big|_{W^\mathrm{P}_{\phi_{pq}(x)}}\right)_{\mathrm{P}\phi_{pq}},
\end{split}
\]
equipped with the differential
\[
d = \iota_{s^{\Delta}_q} = \iota_{s^{\Delta}_{\delta}} + (-1)^i \iota_{s^{\Delta}_{\mathrm{c}}}
\]
on the $(i,j)$-component.

With this differential, $\operatorname{coker} \left(\widehat{\eta}^{\mathrm{K}}_{x,1}\right)$ also inherits a double complex structure:
\[
\begin{split}
&\frac{\bigwedge\nolimits^{-\bullet} \Gamma\left(\left(\Delta_q^* \oplus E_q^{-*} \oplus \operatorname{ev}_0^* T^* U_q\right)\!\Big|_{W^\mathrm{P}_{\phi_{pq}(x)}}\right)_{\mathrm{P}\phi_{pq}}}{\widehat{\eta}^{\mathrm{K}}_{x,1}\left(\bigwedge\nolimits^{-\bullet}\Gamma\left(E_q^*\Big|_{W'_{\phi_{pq}(x)}}\right)_{\phi_{pq}}\right)} \\ 
&\quad = \left( \bigoplus_{i \ge 0,\, j > 0} \bigwedge\nolimits^{i,j} \Gamma\left(\left(\Delta_q^* \oplus E_q^{-*} \oplus \operatorname{ev}_0^* T^* U_q\right)\!\Big|_{W^\mathrm{P}_{\phi_{pq}(x)}}\right)_{\mathrm{P}\phi_{pq}} \right) \\
&\qquad\qquad \oplus \bigoplus_{i \ge 0} \frac{\bigwedge\nolimits^{i,0} \Gamma\left(\left(\Delta_q^* \oplus E_q^{-*} \oplus \operatorname{ev}_0^* T^* U_q\right)\!\Big|_{W^\mathrm{P}_{\phi_{pq}(x)}}\right)_{\mathrm{P}\phi_{pq}}}{\widehat{\eta}^{\mathrm{K}}_{x,1}\left(\bigwedge\nolimits^{i}\Gamma\left(E_q^{\prime*}\Big|_{W'_{\phi_{pq}(x)}}\right)_{\phi_{pq}}\right)} \\
&\quad = \left( \bigoplus_{i \ge 0,\, j > 0} \bigwedge\nolimits^{i} \Gamma\left(\Delta_q^* \Big|_{W^\mathrm{P}_{\phi_{pq}(x)}}\right)_{\mathrm{P}\phi_{pq}} \otimes \bigwedge\nolimits^{j} \Gamma\left(\left(E_q^{-*} \oplus \operatorname{ev}_0^* T^* U_q\right)\!\Big|_{W^\mathrm{P}_{\phi_{pq}(x)}}\right)_{\mathrm{P}\phi_{pq}} \right) \\
&\qquad\qquad \oplus \bigoplus_{i \ge 0} \frac{\bigwedge\nolimits^{i} \Gamma\left(\Delta_q^* \Big|_{W^\mathrm{P}_{\phi_{pq}(x)}}\right)_{\mathrm{P}\phi_{pq}}}{\widehat{\eta}^{\mathrm{K}}_{x,1}\left(\bigwedge\nolimits^{i}\Gamma\left(E_q^{\prime*}\Big|_{W'_{\phi_{pq}(x)}}\right)_{\phi_{pq}}\right)}.
\end{split}
\]

By a standard argument in homological algebra, the acyclicity of this bounded double complex follows from that of its row and column complexes. Thus, to establish the acyclicity of the quotient complex, it suffices to show the acyclicity of each column complex:
\begin{equation}\label{diccpx}
\begin{split}
0 \longrightarrow \bigwedge\nolimits^{i,j} &\Gamma\left(\left(\Delta_q^* \oplus E_q^{-*} \oplus \operatorname{ev}_0^* T^* U_q\right)\!\Big|_{W^\mathrm{P}_{\phi_{pq}(x)}}\right)_{\mathrm{P}\phi_{pq}} \\
&\xrightarrow{\ \iota_{s^{\Delta}_\mathrm{c}}\ } \bigwedge\nolimits^{i,j-1} \Gamma\left(\left(\Delta_q^* \oplus E_q^{-*} \oplus \operatorname{ev}_0^* T^* U_q\right)\!\Big|_{W^\mathrm{P}_{\phi_{pq}(x)}}\right)_{\mathrm{P}\phi_{pq}} \dots \\
&\dots \xrightarrow{\ \iota_{s^{\Delta}_\mathrm{c}}\ } \frac{\bigwedge\nolimits^{i,0} \Gamma\left(\left(\Delta_q^* \oplus E_q^{-*} \oplus \operatorname{ev}_0^* T^* U_q\right)\!\Big|_{W^\mathrm{P}_{\phi_{pq}(x)}}\right)_{\mathrm{P}\phi_{pq}}}{\widehat{\eta}^{\mathrm{K}}_{x,1}\left(\bigwedge\nolimits^{i}\Gamma\left(E_q^*\Big|_{W'_{\phi_{pq}(x)}}\right)_{\phi_{pq}}\right)} \longrightarrow 0
\end{split}
\end{equation}
for each $i$. Note that the final differential $\iota_{s^{\Delta}_\mathrm{c}}$, by abuse of notation, denotes the composition of the interior product with $s^{\Delta}_\mathrm{c}$ and the projection onto the quotient module modulo the $\mathbb{R}$-subspace $\widehat{\eta}^{\mathrm{K}}_{x,1}\left(\bigwedge\nolimits^{i}\Gamma\left(E_q^*\Big|_{W'_{\phi_{pq}(x)}}\right)_{\phi_{pq}}\right)$.

Then, as an $\mathbb{R}$-module, the quotient term
\[
\bigoplus_{i \ge 0} \frac{\bigwedge\nolimits^{i} \Gamma\left(\Delta_q^* \Big|_{W^\mathrm{P}_{\phi_{pq}(x)}}\right)_{\mathrm{P}\phi_{pq}}}{\widehat{\eta}^{\mathrm{K}}_{x,1}\left(\bigwedge\nolimits^{i}\Gamma\left(E_q^{\prime*}\Big|_{W'_{\phi_{pq}(x)}}\right)_{\phi_{pq}}\right)}
\]
is computed as follows:
\[
\begin{split}
&\frac{\bigwedge\nolimits^{i,0} \Gamma\left(\left(\Delta_q^* \oplus E_q^{-*} \oplus \operatorname{ev}_0^* T^* U_q\right)\!\Big|_{W^\mathrm{P}_{\phi_{pq}(x)}}\right) \otimes_{C^\infty(W^\mathrm{P}_{\phi_{pq}(x)})} C_{\mathrm{P}\phi_{pq}}^\infty\left(W^\mathrm{P}_{\phi_{pq}(x)}\right)}{\widehat{\eta}^{\mathrm{K}}_{x,1}\left(\bigwedge\nolimits^{i} \Gamma\left(E_q^*\Big|_{W'_{\phi_{pq}(x)}}\right) \otimes_{C^\infty(W'_{\phi_{pq}(x)})} C^\infty_{\phi_{pq}}\left(W'_{\phi_{pq}(x)}\right)\right)} \\
&\quad \simeq \frac{\bigwedge\nolimits^{i} \Gamma\left(\Delta_q^* \Big|_{W^\mathrm{P}_{\phi_{pq}(x)}}\right) \otimes_{C^\infty(W^\mathrm{P}_{\phi_{pq}(x)})} C_{\mathrm{P}\phi_{pq}}^\infty\left(W^\mathrm{P}_{\phi_{pq}(x)}\right)}{\left(\overline{\delta} \circ \overline{\delta}_1 \circ \pi^* \circ \left(\delta_1^{-1}\right)^*\left(\bigwedge\nolimits^{i} \Gamma\left(E_q^*\Big|_{W'_{\phi_{pq}(x)}}\right)\right)\right) \otimes_{C^\infty(W'_{\phi_{pq}(x)})} \left(C^\infty_{\phi_{pq}}\left(W'_{\phi_{pq}(x)}\right) \widehat{\otimes}_{\mathbb{R}} \{1\}\right)} \\
&\quad \simeq \frac{\bigwedge\nolimits^{i} \Gamma\left(\overline{\delta} \circ \overline{\delta}_1 \circ \pi^* \circ \left(\delta_1^{-1}\right)^*\left(E_q^*\Big|_{W^\mathrm{P}_{\phi_{pq}(x)}}\right)\right) \otimes_{C^\infty(W^\mathrm{P}_{\phi_{pq}(x)})} \left(C^\infty_{\phi_{pq}}\left(W'_{\phi_{pq}(x)}\right) \widehat{\otimes}_{\mathbb{R}} C^{\infty}_{\phi_{pq}^{\perp}}\left(W_{\phi_{pq}(x)}^{\perp}\right)\right)}{\left(\overline{\delta} \circ \overline{\delta}_1 \circ \pi^* \circ \left(\delta_1^{-1}\right)^*\left(\bigwedge\nolimits^{i} \Gamma\left(E_q^*\Big|_{W'_{\phi_{pq}(x)}}\right)\right)\right) \otimes_{C^\infty(W'_{\phi_{pq}(x)})} \left(C^\infty_{\phi_{pq}}\left(W'_{\phi_{pq}(x)}\right) \widehat{\otimes}_{\mathbb{R}} \{1\}\right)} \\
&\quad \simeq \frac{\left(\bigwedge\nolimits^{i} \Gamma\left(E_q^* \Big|_{W'_{\phi_{pq}(x)}}\right) \otimes_{C^\infty(W'_{\phi_{pq}(x)})} C^{\infty}\left(W^{\mathrm{P}}_{\phi_{pq}(x)}\right)\right) \otimes_{C^\infty(W^\mathrm{P}_{\phi_{pq}(x)})} \left(C^\infty_{\phi_{pq}}\left(W'_{\phi_{pq}(x)}\right) \widehat{\otimes}_{\mathbb{R}} C^{\infty}_{\phi_{pq}^{\perp}}\left(W_{\phi_{pq}(x)}^{\perp}\right)\right)}{\left(\bigwedge\nolimits^{i} \Gamma\left(E_q^*\Big|_{W'_{\phi_{pq}(x)}}\right)\right) \otimes_{C^\infty(W'_{\phi_{pq}(x)})} \left(C^\infty_{\phi_{pq}}\left(W'_{\phi_{pq}(x)}\right) \widehat{\otimes}_{\mathbb{R}} \{1\}\right)} \\
&\quad \simeq \frac{\bigwedge\nolimits^{i} \Gamma\left(E_q^* \Big|_{W'_{\phi_{pq}(x)}}\right) \otimes_{C^\infty(W'_{\phi_{pq}(x)})} \left(C^\infty_{\phi_{pq}}\left(W'_{\phi_{pq}(x)}\right) \widehat{\otimes}_{\mathbb{R}} C^{\infty}_{\phi_{pq}^{\perp}}\left(W_{\phi_{pq}(x)}^{\perp}\right)\right)}{\left(\bigwedge\nolimits^{i} \Gamma\left(E_q^*\Big|_{W'_{\phi_{pq}(x)}}\right)\right) \otimes_{C^\infty(W'_{\phi_{pq}(x)})} \left(C^\infty_{\phi_{pq}}\left(W'_{\phi_{pq}(x)}\right) \widehat{\otimes}_{\mathbb{R}} \{1\}\right)} \\
&\quad \simeq \bigwedge\nolimits^{i} \Gamma\left(E_q^*\Big|_{W'_{\phi_{pq}(x)}}\right) \otimes_{C^\infty(W'_{\phi_{pq}(x)})} \frac{C^\infty_{\phi_{pq}}\left(W'_{\phi_{pq}(x)}\right) \widehat{\otimes}_{\mathbb{R}} C^{\infty}_{\phi_{pq}^{\perp}}\left(W_{\phi_{pq}(x)}^{\perp}\right)}{C^\infty_{\phi_{pq}}\left(W'_{\phi_{pq}(x)}\right) \widehat{\otimes}_{\mathbb{R}} \{1\}}.
\end{split}
\]

We remark that the $C^\infty\left(W'_{\phi_{pq}(x)}\right)$-module
\[
\frac{C^\infty_{\phi_{pq}}\left(W'_{\phi_{pq}(x)}\right) \widehat{\otimes}_{\mathbb{R}} C^{\infty}_{\phi_{pq}^{\perp}}\left(W_{\phi_{pq}(x)}^{\perp}\right)}{C^\infty_{\phi_{pq}}\left(W'_{\phi_{pq}(x)}\right) \widehat{\otimes}_{\mathbb{R}} \{1\}}
\]
consists of the elements in
\[
C^\infty_{\phi_{pq}}\left(W'_{\phi_{pq}(x)}\right) \widehat{\otimes}_{\mathbb{R}} C^{\infty}_{\phi_{pq}^{\perp}}\left(W_{\phi_{pq}(x)}^{\perp}\right) \simeq C_{\mathrm{P}\phi_{pq}}^{\infty}\left(W'_{\phi_{pq}(x)} \times W^{\perp}_{\phi_{pq}(x)}\right)
\]
that vanish on
\[
W'_{\phi_{pq}(x)} \times \{0\} \subset W'_{\phi_{pq}(x)} \times W^{\perp}_{\phi_{pq}(x)}.
\]

To prove the quasi-isomorphism property, recall that the bundle embeddings
\[
\begin{tikzcd}[column sep=large, row sep=large]
E_q \arrow[r, hook, "\overline{\delta}_{1,q}"] \arrow[d, swap, "s_q"]
& \operatorname{ev}_1^* E_q \oplus \operatorname{ev}_0^* T U_q \oplus E_{q,W} \arrow[r, "\overline{\delta}", "\simeq"' swap] \arrow[d, swap, "s_q^{\mathrm{P}}"]
& \Delta_q \oplus E_q^- \oplus \operatorname{ev}_0^* T U_q \arrow[d, swap, "s_q^{\Delta}"] \\
U_q \arrow[r, hook, "\delta_{1,q}"]
& \mathrm{P}_{g'} U_q \times W_q \arrow[r, "\mathrm{id}"]
& \mathrm{P}_{g'} U_q \times W_q
\end{tikzcd}
\]
satisfy the tangent bundle condition as in Case (1). Consequently, the decomposition
\[
s^{\Delta}_{q} = s^{\Delta}_{\delta} \oplus s^{\Delta}_{\mathrm{c}} \colon \mathrm{P}_{g'} U_q \times W_q \longrightarrow \Delta_q \oplus E_q^- \oplus \operatorname{ev}_0^* T U_q,
\]
where
\[
s^{\Delta}_{\mathrm{c}} := \sigma_q \oplus \mathrm{id} \colon \mathrm{P}_{g'} U_q \times W_q \longrightarrow E_{\mathrm{c}} := \operatorname{ev}_0^* T U_q \oplus E_q^-,
\]
has full rank at each zero point $\left(\gamma_{y,y}, 0\right) \in \left(s_q^{\mathrm{P}}\right)^{-1}(0)$. In other words, the differential map at $\left(\gamma_{y,y}, 0\right)$,
\begin{equation}\label{dgxdd}
\left(d s^{\Delta}_{\mathrm{c}}\right)_{\left(\gamma_{y,y}, 0\right)} = d_y \sigma_q \oplus \mathrm{id}_0 \colon T_y \mathrm{P}_g U_q \oplus T_0 W_q \longrightarrow T_{\sigma_q(y)} \operatorname{ev}_0^* T U_q \oplus T_{0} E_{q,W},
\end{equation}
has full rank.

To this situation, we apply the following lemma:

\begin{lem}\label{bkkjjhhh}
Let $F \to V$ be a vector bundle and $c \in \Gamma(F)$ a smooth section. For each zero point $y$ of $c$, there exists a neighborhood $W \subset V$ of $y$ such that the following holds: Let $\bigwedge\nolimits^{-\bullet}\Gamma(F^*|_W)$ be the Koszul complex with differential $\iota_c|_W$. If $c$ is transversal to the zero section at $y$, then the cohomology $H^{i}\left(\bigwedge\nolimits^{-\bullet}\Gamma(F^*|_W)\right)$ of the Koszul complex vanishes at every negative degree $i < 0$ on $W$.
\end{lem}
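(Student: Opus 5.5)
The plan is to reduce to the standard Koszul-regular-sequence situation by choosing adapted local coordinates near the transversal zero $y$. Transversality of $c$ at $y$ means $dc_y \colon T_yV \to F_y$ is surjective. By the implicit function theorem we choose a neighborhood $W$ of $y$, diffeomorphic to a ball, on which $F|_W$ is trivialized by a frame $e_1,\dots,e_r$ with dual frame $e^1,\dots,e^r$. On such a $W$ there are coordinates $(z_1,\dots,z_r,u_1,\dots,u_{n-r})$ with $c=\sum_k z_k e_k$. In other words, the components $c_k=e^k(c)$ become the first $r$ coordinate functions. Shrinking $W$ to a product $B^r\times B^{n-r}$ in these coordinates, the Koszul complex $\bigwedge\nolimits^{-\bullet}\Gamma(F^*|_W)$ with differential $\iota_c$ is identified with the ordinary Koszul complex $K_\bullet(z_1,\dots,z_r;C^\infty(W))$.

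The second step is to prove that $z_1,\dots,z_r$ form a regular-type sequence on $C^\infty(W)$ in the sense needed for Koszul acyclicity. Concretely, we show exactness of $K_\bullet$ in negative degrees (homological degree $\ge 1$) by building an explicit contracting homotopy. The tool is Hadamard's lemma: any smooth $h$ on the star-shaped $W$ with $h|_{z_k=0}=0$ can be written as $h=z_k\int_0^1\partial_{z_k}h(\dots,tz_k,\dots)\,dt$. We then define a homotopy $H$ on $\bigwedge\nolimits^{i}$ in the usual de Rham–Poincaré style: write $dz$-type generators $e^k$, and set
\[
H(h\, e^{k_1}\wedge\cdots\wedge e^{k_i})=\sum_k \Bigl(\int_0^1 t^{i}\,\partial_{z_k}h(tz,u)\,dt\Bigr)\, e^{k}\wedge e^{k_1}\wedge\cdots\wedge e^{k_i}.
\]
This is the algebraic analogue of the radial homotopy for the Euler vector field $\sum z_k\partial_{z_k}$. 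We check the identity $\iota_c H+H\iota_c=\mathrm{id}$ on components of positive exterior degree. Alternatively, and more cleanly, we proceed by induction on $r$: we use the short exact sequence of Koszul complexes $K(z_1,\dots,z_{r-1})\to K(z_1,\dots,z_r)\to K(z_1,\dots,z_{r-1})[-1]$. Its connecting map is multiplication by $z_r$, which is injective on $C^\infty(W)/(z_1,\dots,z_{r-1})\cong C^\infty(W\cap\{z_1=\cdots=z_{r-1}=0\})$ (again by Hadamard). This yields vanishing of $H^i$ for $i<0$.

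The main obstacle is the non-Noetherian nature of $C^\infty(W)$. Standard regular-sequence theory does not apply verbatim, so each quotient identification and each injectivity claim has to be justified by hand via Hadamard's lemma on a convex (product) neighborhood. That is why the neighborhood $W$ must be chosen in the statement rather than taken arbitrarily. In particular, we need $C^\infty(W)/(z_1,\dots,z_j)\cong C^\infty(W\cap\{z_1=\dots=z_j=0\})$ and that multiplication by $z_{j+1}$ is injective there; both follow from Hadamard and the density of $\{z_{j+1}\neq0\}$. Once these two facts are in place, the induction closes and gives the vanishing in every negative degree. Degree $0$ is excluded, since there the cohomology is $C^\infty(c^{-1}(0)\cap W)$.
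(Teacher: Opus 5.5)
Your argument is correct. Your inductive route is essentially the paper's. The paper writes $c=\sum_j c_je_j$ in a frame and uses transversality to make $c_1,\dots,c_r$ part of a coordinate system. It then shows, by Taylor's theorem on a small convex $W$, that they form a regular sequence in $C^\infty(W)$: a zero-divisor witness $b_i$ must vanish on $\{c_1=\cdots=c_{i-1}=0\}$ and hence lie in $\langle c_1,\dots,c_{i-1}\rangle$. Finally it cites Eisenbud for exactness of the Koszul complex of a regular sequence. Your short-exact-sequence induction, with injectivity of multiplication by $z_r$ on $C^\infty(W)/(z_1,\dots,z_{r-1})\cong C^\infty(W\cap\{z_1=\cdots=z_{r-1}=0\})$, is exactly that cited fact proved inline. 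Your density argument also fills the step ``$b_ic_i\in\langle c_1,\dots,c_{i-1}\rangle\Rightarrow b_i$ vanishes on the slice'', which the paper leaves implicit.

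Your first route, the explicit radial homotopy, is genuinely different from the paper and also valid. Put $D:=\sum_k e^k\wedge\partial_{z_k}$. On exterior degree $i$ one has $\iota_cD+D\iota_c=\sum_k z_k\partial_{z_k}+i$. For $i\ge 1$ this operator is inverted by $h\mapsto\int_0^1t^{i-1}h(tz,u)\,dt$ and commutes with $\iota_c$. This needs $W$ to be star-shaped in the $z$-directions, which your product ball $B^r\times B^{n-r}$ guarantees. Your $H$ is precisely this inverse in degree $i+1$ composed with $D$, so $\iota_cH+H\iota_c=\mathrm{id}$ in every positive exterior degree. What this buys is an explicit contraction with no regular-sequence formalism, which could be handy for the completed complexes used later in the paper. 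The regular-sequence route is more conceptual and independent of the specific normal form.

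One small correction to your framing: ``regular sequence implies Koszul exactness in positive homological degree'' holds over any commutative ring. Non-Noetherianity of $C^\infty(W)$ therefore only matters for verifying regularity, which is where Hadamard's lemma enters in both your argument and the paper's, not for the homological step.
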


\begin{proof}
Express $c$ in an orthonormal frame $\{e_1, \dots, e_r\}$ for the section space $\Gamma(F)$ as
\begin{equation}\label{ssce}
c = \sum_{j=1}^r c_j e_j, \qquad c_j \in C^{\infty}(W).
\end{equation}

We first show that the tuple $(c_1, \dots, c_r)$ from \eqref{ssce} forms a \textit{regular sequence} in $C^\infty(W)$; that is, for each $1 \le i \le r$, $c_i$ is not a zero-divisor in the quotient ring
\[
\frac{C^{\infty}(W)}{\langle c_1, \dots, c_{i-1} \rangle}.
\]
This property is independent of the choice of frame $\{e_j\}$ and local components $\{c_j\}$.

Suppose, to the contrary, that $c_i$ is a zero-divisor in $C^{\infty}(W)/\langle c_1, \dots, c_{i-1}\rangle$. Then there exists $b_i \in C^{\infty}(W)$ such that $b_i \notin \langle c_1, \dots, c_{i-1} \rangle$ and $b_i c_i \in \langle c_1, \dots, c_{i-1} \rangle$, which allows us to write
\begin{equation}\label{zdld}
b_i c_i = \sum_{j=1}^{i-1} b_j c_j
\end{equation}
for some $b_j \in C^{\infty}(W)$, $j = 1, \dots, i-1$.

Since $c$ is transversal to the zero section at $y$, the matrix $\left( \frac{\partial c_k}{\partial x_l} (y) \right)_{k,l}$ has full rank $r$. By shrinking $W$ if necessary, we can choose a coordinate system around $y$ that extends $\{c_1, \dots, c_r\}$. Under these coordinates, \eqref{zdld} implies that
\[
b_i \Big|_{\{c_1 = \dots = c_{i-1} = 0\}} = 0.
\]
By Taylor's theorem with remainder on a sufficiently small convex neighborhood $W$, it follows that
\[
b_i \in \langle c_1, \dots, c_{i-1} \rangle,
\]
which contradicts the choice of $b_i$. Since $i$ was arbitrary, $(c_1, \dots, c_r)$ is a regular sequence.

Now consider the chain complex
\[
\mathcal{D}_0 \colon 0 \longrightarrow \overbrace{\bigwedge\nolimits^{r}\Gamma(F^*) \xrightarrow{\ \iota_{c}\ } \dots \xrightarrow{\ \iota_{c}\ } \Gamma(F^*)}^{\deg < 0} \xrightarrow{\ \iota_{c}\ } \overbrace{C^{\infty}(W)}^{\deg = 0} \longrightarrow 0.
\]

For any degree $i < 0$, consider the sequence
\begin{equation}\label{wes}
\dots \longrightarrow \bigwedge\nolimits^{i+1}\Gamma(F^*) \xrightarrow{\ \iota_{c}^{(i+1)}\ } \bigwedge\nolimits^{i}\Gamma(F^*) \xrightarrow{\ \iota_{c}^{(i)}\ } \bigwedge\nolimits^{i-1}\Gamma(F^*) \longrightarrow \dots,
\end{equation}
where the exactness $\ker \iota_c^{(i)} = \operatorname{Im} \iota_c^{(i+1)}$ follows from standard homological algebra under the condition that $(c_1, \dots, c_r)$ is a regular sequence (see, e.g., Chapter 17 of \cite{Eisenbud}). Since exactness holds over $C^{\infty}(W)$, the cohomology vanishes as an $\mathbb{R}$-vector space as well.
\end{proof}

Denoting $E^\mathrm{c} := \left(E_q^{-} \oplus \operatorname{ev}_0^* T U_q\right)\!\Big|_{W^\mathrm{P}_{\phi_{pq}(x)}}$, we now establish the acyclicity of $(\mathcal{D}_0)_{\phi}$:
\[
\begin{split}
(\mathcal{D}_0)_{\phi} \colon 0 \longrightarrow &\overbrace{\left(\bigwedge\nolimits^{r}\Gamma\left(E^{\mathrm{c}}\right)\right)_{\mathrm{P}\phi_{pq}} \xrightarrow{\ \iota_{s^{\Delta}_{\mathrm{c}}}\ } \dots \xrightarrow{\ \iota_{s^{\Delta}_{\mathrm{c}}}\ } \Gamma\left(E^\mathrm{c}\right)_{\mathrm{P}\phi_{pq}}}^{\deg < 0} \\
&\quad \xrightarrow{\ \iota_{s^{\Delta}_{\mathrm{c}}}\ } \overbrace{\frac{\bigwedge\nolimits^{0,0} \Gamma\left(\left(\Delta_q^* \oplus E_q^{-*} \oplus \operatorname{ev}_0^* T^* U_q\right)\!\Big|_{W^\mathrm{P}_{\phi_{pq}(x)}}\right)_{\mathrm{P}\phi_{pq}}}{\widehat{\eta}^{\mathrm{K}}_{x,1}\left(\bigwedge\nolimits^{0}\Gamma\left(E_q^*\Big|_{W'_{\phi_{pq}(x)}}\right)_{\phi_{pq}}\right)}}^{\deg = 0} \longrightarrow 0.
\end{split}
\]
The proof for $(\mathcal{D}_{i>0})_{\phi}$ is virtually identical and is therefore omitted.

For the degree $< 0$ case, we first consider the sequence
\begin{equation}\label{wes}
\dots \longrightarrow \bigwedge\nolimits^{j+1}\Gamma\left(E^\mathrm{c}\Big|_{W^\mathrm{P}_{\phi_{pq}(x)}}\right) \xrightarrow{\ \iota_{s^{\Delta}_{\mathrm{c}}}^{(j+1)}\ } \bigwedge\nolimits^{j}\Gamma\left(E^\mathrm{c}\Big|_{W^\mathrm{P}_{\phi_{pq}(x)}}\right) \xrightarrow{\ \iota_{s^{\Delta}_{\mathrm{c}}}^{(j)}\ } \bigwedge\nolimits^{j-1}\Gamma\left(E^\mathrm{c}\Big|_{W^\mathrm{P}_{\phi_{pq}(x)}}\right) \longrightarrow \dots,
\end{equation}
where $\ker \iota_{s^{\Delta}_{\mathrm{c}}}^{(j)} = \operatorname{Im} \iota_{s^{\Delta}_{\mathrm{c}}}^{(j+1)}$ follows directly from Lemma \ref{bkkjjhhh}.

Note that $s^{\Delta}_{\mathrm{c}}$, as a section of the complement bundle, satisfies $(s^{\Delta}_{\mathrm{c}})^{k} \in I_\phi \setminus I_\phi^2$ for each component $k$ (when expanded in an orthonormal frame of $\Gamma(E^\mathrm{c})$ as $s^{\Delta}_{\mathrm{c}} = \sum_k (s^{\Delta}_{\mathrm{c}})^k e_k$) by the tangent bundle condition and section compatibility. By \cite[Lemma 2.9]{Kim1}, the exactness shown in \eqref{wes} carries over to the completed complex:
\begin{equation}\label{wes2}
\dots \longrightarrow \left(\bigwedge\nolimits^{j+1}\Gamma\left(E^\mathrm{c}\Big|_{W^\mathrm{P}_{\phi_{pq}(x)}}\right)\right)_{\mathrm{P}\phi_{pq}} \xrightarrow{\ \iota_{s^{\Delta}_{\mathrm{c}}}^{(j+1)}\ } \left(\bigwedge\nolimits^{j}\Gamma\left(E^\mathrm{c}\Big|_{W^\mathrm{P}_{\phi_{pq}(x)}}\right)\right)_{\mathrm{P}\phi_{pq}} \xrightarrow{\ \iota_{s^{\Delta}_{\mathrm{c}}}^{(j)}\ } \left(\bigwedge\nolimits^{j-1}\Gamma\left(E^\mathrm{c}\Big|_{W^\mathrm{P}_{\phi_{pq}(x)}}\right)\right)_{\mathrm{P}\phi_{pq}} \longrightarrow \dots.
\end{equation}

For the degree $= 0$ case, we observe that
\begin{equation}\label{cse}
\begin{split}
&\frac{\bigwedge\nolimits^{0,0} \Gamma\left(\left(\Delta_q^* \oplus E_q^{-*} \oplus \operatorname{ev}_0^* T^* U_q\right)\!\Big|_{W^\mathrm{P}_{\phi_{pq}(x)}}\right)_{\mathrm{P}\phi_{pq}}}{\widehat{\eta}^{\mathrm{K}}_{x,1}\left(\bigwedge\nolimits^{0}\Gamma\left(E_q^*\Big|_{W'_{\phi_{pq}(x)}}\right)_{\phi_{pq}}\right)} \\
&\quad = \frac{C_{\mathrm{P}\phi_{pq}}^{\infty}\left(W^{\mathrm{P}}_{\phi_{pq}(x)}\right)}{\widehat{\eta}^{\mathrm{K}}_{x,1}\left(C^{\infty}_{\phi_{pq}}\left(W'_{\phi_{pq}(x)}\right)\right)} \\
&\quad = \frac{C_{\mathrm{P}\phi_{pq}}^{\infty}\left(W^{\mathrm{P}}_{\phi_{pq}(x)}\right)}{\pi^* \circ \left(\delta_1^{-1}\right)^*\left(C^{\infty}_{\phi_{pq}}\left(W'_{\phi_{pq}(x)}\right)\right)} \\
&\quad \simeq \frac{C^{\infty}_{\phi_{pq}}\left(W'_{\phi_{pq}(x)}\right) \widehat{\otimes}_{\mathbb{R}} C_{\phi_{pq}^{\perp}}^{\infty}\left(W_{\phi_{pq}(x)}^{\perp}\right)}{C^{\infty}_{\phi_{pq}}\left(W'_{\phi_{pq}(x)}\right) \widehat{\otimes}_{\mathbb{R}} \{1\}} \\
&\quad \simeq \left\{ h \in C_{\mathrm{P}\phi_{pq}}^{\infty}\left(W^{\mathrm{P}}_{\phi_{pq}(x)}\right) \; \middle| \; h \Big|_{\operatorname{Im}\left(\delta_{1,q}\big|_{W'_{\phi_{pq}(x)}}\right)} = 0 \right\},
\end{split}
\end{equation}
where the embedding $\delta_{1,q}\big|_{W'_{\phi_{pq}(x)}}$ is given by
\[
\delta_{1,q}\big|_{W'_{\phi_{pq}(x)}} \colon W'_{\phi_{pq}(x)} \times \{0\} \xhookrightarrow{\ \delta_{1,q}\ } W^{\mathrm{P}}_{\phi_{pq}(x)}.
\]

It remains to prove the surjectivity of the map
\begin{equation}\label{gecwp}
\begin{split}
\Gamma\left(E^\mathrm{c}\Big|_{W^{\mathrm{P}}_{\phi_{pq}(x)}}\right) \otimes_{C^{\infty}\left(W^{\mathrm{P}}_{\phi_{pq}(x)}\right)} C_{\mathrm{P}\phi_{pq}}^{\infty}\left(W^{\mathrm{P}}_{\phi_{pq}(x)}\right) \xrightarrow{\ \iota_{s^{\Delta}_{\mathrm{c}}}\ } \left\{ h \in C_{\mathrm{P}\phi_{pq}}^{\infty}\left(W^{\mathrm{P}}_{\phi_{pq}(x)}\right) \; \middle| \; h \Big|_{\operatorname{Im}\left(\delta_{1,q}\big|_{W'_{\phi_{pq}(x)}}\right)} = 0 \right\}
\end{split}
\end{equation}
under the identification \eqref{cse}, defined by
\[
a \otimes h \longmapsto h \cdot \iota_{s^{\Delta}_{\mathrm{c}}}(a)
\]
for $h \in C_{\mathrm{P}\phi_{pq}}^{\infty}\left(W^{\mathrm{P}}_{\phi_{pq}(x)}\right)$ and $a \in \Gamma\left(E^\mathrm{c}\Big|_{W^{\mathrm{P}}_{\phi_{pq}(x)}}\right)$.
Indeed, $h \cdot \iota_{s^{\Delta}_{\mathrm{c}}}(a)$ vanishes on $\operatorname{Im}\left(\delta_{1,q}\big|_{W'_{\phi_{pq}(x)}}\right) \simeq W'_{\phi_{pq}(x)} \times \{0\}$, since
\[
\iota_{s^{\Delta}_{\mathrm{c}}}(a)\Big|_{W'_{\phi_{pq}(x)} \times \{0\}} = a\left(s^{\Delta}_{\mathrm{c}}\right)\Big|_{W'_{\phi_{pq}(x)} \times \{0\}} = a\Big|_{W'_{\phi_{pq}(x)} \times \{0\}}\left(s^{\Delta}_{\mathrm{c}}\Big|_{W'_{\phi_{pq}(x)} \times \{0\}}\right) = 0
\]
by section compatibility \eqref{ogkksjsg} and the definition of $s^{\Delta}_{\mathrm{c}}$. Hence, \eqref{gecwp} is well-defined.

Let $m$ be the dimension of $W'_{\phi_{pq}(x)}$ and denote $r := m + \operatorname{rk} E'_q = m + \dim W_q$. At each point $(y_1, \dots, y_m) \in \operatorname{Im}\left(\delta_{1,p}\big|_{W'_{\phi_{pq}(x)}}\right) \simeq W'_{\phi_{pq}(x)} \times \{0\}$, we consider the restricted section
\[
\underline{s^{\Delta}_{\mathrm{c}}} := s^{\Delta}_{\mathrm{c}}\Big|_{\pi^{-1}(y_1, \dots, y_m)},
\]
defined on fiber neighborhoods as
\[
\underline{s^{\Delta}_{\mathrm{c}}} \colon W^{\mathrm{P}}_{\phi_{pq}(x)} \supset \pi^{-1}(y_1, \dots, y_m) \longrightarrow \mathbb{R}^r, \qquad (w_1, \dots, w_r) \longmapsto \left(\underline{s^{\Delta}_{\mathrm{c}}}_1(\vec{y},\vec{w}), \dots, \underline{s^{\Delta}_{\mathrm{c}}}_r(\vec{y},\vec{w})\right),
\]
where $\pi \colon W^{\mathrm{P}}_{\phi_{pq}(x)} \twoheadrightarrow \operatorname{Im}\left(\delta_{1,q}\big|_{W'_{\phi_{pq}(x)}}\right)$ is a chosen retraction between contractible spaces. Here, we write $(w_1, \dots, w_r)$ for $(y_1, \dots, y_m; w_1, \dots, w_r)$.

By the tangent bundle condition for the section $s^{\Delta}_{\mathrm{c}}$ at its zero points, the inverse function theorem implies the existence of a local inverse
\begin{equation}\label{spmo}
\underline{s^{\Delta}_{\mathrm{c}}}^{-1} \colon \left(\underline{s^{\Delta}_{\mathrm{c}}}_1, \dots, \underline{s^{\Delta}_{\mathrm{c}}}_r\right) \longmapsto (w_1, \dots, w_r)
\end{equation}
defined on a smaller open neighborhood $\overset{\circ}{W}'_{\phi_{pq}(x)} \subset W'_{\phi_{pq}(x)}$. Importantly, by virtue of Example \ref{dcotopwx}, we may assume without loss of generality that $\overset{\circ}{W}'_{\phi_{pq}(x)}$ coincides with $W'_{\phi_{pq}(x)}$.

Note that any element
\[
h \in \left\{ h \in C_{\mathrm{P}\phi_{pq}}^{\infty}\left(W^{\mathrm{P}}_{\phi_{pq}(x)}\right) \; \middle| \; h \Big|_{\operatorname{Im}\left(\delta_{1,q}\big|_{W'_{\phi_{pq}(x)}}\right)} = 0 \right\}
\]
can be expressed as
\begin{equation}\label{hhy}
h = h\left[\left[ y_1, \dots, y_m; w_1, \dots, w_r \right]\right]
\end{equation}
in the base coordinates $(y_1, \dots, y_m)$ of $W'_{\phi_{pq}(x)} \times \{0\}$ and normal coordinates $(w_1, \dots, w_r)$ in $\{0\} \times W^{\perp}_{\phi_{pq}(x)}$.

Using the local inverse \eqref{spmo}, we substitute $w_i = \underline{s^{\Delta}_{\mathrm{c}}}^{-1}_i\left(\underline{s^{\Delta}_{\mathrm{c}}}_1\left(\vec{y},\vec{w}\right), \dots, \underline{s^{\Delta}_{\mathrm{c}}}_r\left(\vec{y},\vec{w}\right)\right)$ into \eqref{hhy} to obtain
\begin{equation}\label{heee}
\begin{split}
h &= h\left[\left[ y_1, \dots, y_m; \underline{s^{\Delta}_{\mathrm{c}}}_1\left(\vec{y},\vec{w}\right), \dots, \underline{s^{\Delta}_{\mathrm{c}}}_r \left(\vec{y},\vec{w}\right) \right]\right] \\
&\overset{(1)}{=} h(y_1, \dots, y_m, \vec{0}) + \sum_{|\vec{\alpha}| \ge 1} \frac{1}{\vec{\alpha}!} \frac{\partial^{|\vec{\alpha}|} h(\vec{y}, \vec{\underline{s^{\Delta}_{\mathrm{c}}}})}{\partial \underline{s^{\Delta}_{\mathrm{c}}}^{\vec{\alpha}}}\Bigg|_{\vec{\underline{s^{\Delta}_{\mathrm{c}}}} = 0} \cdot \underline{s^{\Delta}_{\mathrm{c}}}^{\vec{\alpha}} \\
&\overset{(2)}{=} \sum_{|\vec{\alpha}| \ge 1} \frac{1}{\vec{\alpha}!} \frac{\partial^{|\vec{\alpha}|} h(\vec{y}, \vec{\underline{s^{\Delta}_{\mathrm{c}}}})}{\partial \underline{s^{\Delta}_{\mathrm{c}}}^{\vec{\alpha}}}\Bigg|_{\vec{\underline{s^{\Delta}_{\mathrm{c}}}} = 0} \cdot \underline{s^{\Delta}_{\mathrm{c}}}^{\vec{\alpha}},
\end{split}
\end{equation}
where equality $(1)$ follows from the Taylor expansion with respect to the section coordinates, and $(2)$ follows from the vanishing condition $h \big|_{\operatorname{Im}\delta_{1,q}} = 0$. Here, we use the multi-index notation $\underline{s^{\Delta}_{\mathrm{c}}}^{\vec{\alpha}} := \underline{s^{\Delta}_{\mathrm{c}}}_1^{\alpha_1} \cdots \underline{s^{\Delta}_{\mathrm{c}}}_r^{\alpha_r}$ for $\vec{\alpha} = (\alpha_1, \dots, \alpha_r)$.

Consequently, $h \in \operatorname{Im} \iota_{\underline{s^{\Delta}_{\mathrm{c}}}}$, showing that $\iota_{\underline{s^{\Delta}_{\mathrm{c}}}}$ is surjective. Indeed, the final line of \eqref{heee} displays $h$ as a linear combination of $\big\{\underline{s^{\Delta}_{\mathrm{c}}}^{\vec{\alpha}}\big\}$ over $C^{\infty}_{\mathrm{P}\phi_{pq}}\left(W^{\mathrm{P}}_{\phi_{pq}(x)}\right)$, since
\[
\frac{\partial^{|\vec{\alpha}|} h(\vec{y}, \vec{\underline{s^{\Delta}_{\mathrm{c}}}})}{\partial \underline{s^{\Delta}_{\mathrm{c}}}^{\vec{\alpha}}}\Bigg|_{\vec{\underline{s^{\Delta}_{\mathrm{c}}}} = 0} \in C^{\infty}_{\mathrm{P}\phi_{pq}}\left(W^{\mathrm{P}}_{\phi_{pq}(x)}\right)
\]
for each multi-index $\vec{\alpha}$.

Recalling the decomposition \eqref{ssce}, we choose an element of $\Gamma\left(E^\mathrm{c}\big|_{W'_{\phi_{pq}(x)}}\right)$ expressed in the dual frame $\{{e'_1}^*, \dots, {e'_r}^*\}$. For a non-zero multi-index $\vec{\alpha} = (\alpha_1, \dots, \alpha_r)$, we define
\[
m(\vec{\alpha}) := \min \{ i \mid \alpha_i \neq 0, \; 1 \le i \le r \}
\]
and
\[
\vec{\alpha}' := \vec{\alpha} - (0, \dots, \overbrace{1}^{m(\vec{\alpha})}, \dots, 0).
\]

We then construct a preimage section
\[
\tau(h) := \sum_{|\vec{\alpha}| \ge 1} \frac{1}{\vec{\alpha}!} \left( \frac{\partial^{|\vec{\alpha}|} h}{\partial \underline{s^{\Delta}_{\mathrm{c}}}^{\vec{\alpha}}}\Bigg|_{\vec{\underline{s^{\Delta}_{\mathrm{c}}}} = 0} \cdot \underline{s^{\Delta}_{\mathrm{c}}}^{\vec{\alpha}'} \right) {e'_{m(\vec{\alpha})}}^*,
\]
which explicitly satisfies $h = \iota_{\underline{s^{\Delta}_{\mathrm{c}}}}(\tau(h))$. This confirms that the contraction map $\iota_{\underline{s^{\Delta}_{\mathrm{c}}}}$ is surjective.

We then define
\[
\widehat{\eta}^{\mathrm{K}}_{x,m} := \begin{cases} \widehat{\eta}^{\mathrm{K}}_{x,1} & \text{if } m = 1, \\ 0 & \text{if } m \ge 2. \end{cases}
\]

Taking into account the bundle isomorphism
\[
\operatorname{ev}_1^* E'_q \oplus \operatorname{ev}_0^* T U_q \oplus E_{q,W} \simeq \Delta_q \oplus E_q^- \oplus \operatorname{ev}_0^* T U_q,
\]
we define the quasi-isomorphism
\[
\mathcal{C}^\mathrm{P}_{q,\,\mathrm{P}\phi_{pq}(x),\,\mathrm{P}\phi_{pq}} \xrightarrow{\ (3)\ } \overset{\circ}{\mathcal{C}}{}^\mathrm{P}_{q,\,\phi_{pq}(x),\,\phi_{pq}}
\]
as a homotopy inverse of $\widehat{\eta}^{\mathrm{K}}_{x} := \left\{\widehat{\eta}^{\mathrm{K}}_{x,m}\right\}$.

This completes the construction of the quasi-isomorphism (3).

\subsection{Factorization property}

In this subsection, we show that the path space $\mathrm{P}\mathfrak{X}$ associated with a Kuranishi space $\mathfrak{X}$ serves as an explicit path object for the diagonal morphism $\Delta \colon \mathfrak{X} \to \mathfrak{X} \times \mathfrak{X}$, thereby factoring $\Delta$ into a weak equivalence followed by a fibration.

Let $\mathfrak{X} = \left(X,\, \left[\widehat{\mathcal{U}}\right]\right)$ be a Kuranishi space. We define the \textit{diagonal morphism}
\[
\Delta \colon \mathfrak{X} \longrightarrow \mathfrak{X} \times \mathfrak{X}
\]
as the unique morphism induced by the universal property of the product Kuranishi space $\mathfrak{X} \times \mathfrak{X}$:
\[
\begin{tikzcd}
\mathfrak{X} \arrow[rd, "{}^{\exists!} \Delta"] \arrow[bend right=30, swap, "\mathrm{id}_{\mathfrak{X}}"]{rdd} \arrow[bend left=20, "\mathrm{id}_{\mathfrak{X}}"]{rrd} & & \\
& \mathfrak{X} \times \mathfrak{X}
\arrow[r, "\operatorname{Pr}''"]
\arrow[d, "\operatorname{Pr}'"']
& \mathfrak{X} \\
& \mathfrak{X}, &
\end{tikzcd}
\]
which is represented by the pre-morphism
\[
\overline{\Delta} := \left(\widehat{\mathcal{U}},\, \left(\widehat{\mathcal{U}} \times \widehat{\mathcal{U}}\right) \times \mathbb{R},\, \delta,\, \{\delta_p\},\, \left\{\widehat{\delta}_{p,x}\right\}\right).
\]
Here, the underlying continuous map $\delta \colon X \to X \times X$ and the base smooth embeddings $\delta_p \colon U_p \to U_p \times U_p$ are given by the standard diagonal maps
\[
\delta(x) := (x, x)
\qquad\text{and}\qquad
\delta_p(y) := (y, y).
\]

For the expanded atlas $(\widehat{\mathcal{U}} \times \widehat{\mathcal{U}}) \times \mathbb{R}$, the local $L_{\infty}[1]$-algebra is given by
\[
\mathcal{C}^{\times}_{(p,p),(x,x,0)}
:=
\bigwedge\nolimits^{-\bullet} \Gamma\left(
\left(\pi^{\prime*}E_p^* \oplus \pi^{\prime\prime*}E_{p}^* \oplus E_{\mathbb{R}}\right)\!\Big|_{W^{\times}_{(x,x,0)}}
\right)
\;\oplus\;
\Omega_{\mathrm{aug}}^{\bullet+1}\left(\mathcal{F}^\times_{(x,x,0)}\right).
\]
The $L_\infty[1]$-morphism
\[
\widehat{\delta}_{p,x} \colon
\mathcal{C}^{\times}_{(p,p),(x,x,0)}
\longrightarrow
\mathcal{C}_{p,x}
\]
is determined uniquely by the universal property of Lemma~\ref{dudhbbjbg}.

\begin{lem}
$\overline{\Delta}$ defines a valid pre-morphism.
\end{lem}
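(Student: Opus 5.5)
We verify the two conditions of Definition~\ref{morphismkur} for $\overline{\Delta}$, using that each chart-level datum of $\overline{\Delta}$ is either a diagonal map or comes from the universal property of the product. First, $\overline{\Delta}$ is a tuple of chart morphisms. The diagonal $\delta_p(y)=(y,y)$, followed by the inclusion $y\mapsto(y,y,0)$ into the expanded base $U_p\times U_p\times\mathbb{R}$, is $(\Gamma_p,\Gamma_p\times\Gamma_p)$-equivariant and smooth. It satisfies $\psi^\times_{(p,p)}\circ\delta_p=\delta\circ\psi_p$ on $s_p^{-1}(0)$, because $(s^\times_{(p,p)})^{-1}(0)\simeq s_p^{-1}(0)\times s_p^{-1}(0)$. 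We shrink the neighbourhoods so that $\delta_p(W_x)\subset W^\times_{(x,x,0)}$. For the factorization condition (iii) on $\widehat{\delta}_{p,x}$, we take $\widehat{\delta}_{p,x}=\widehat{k}_{p,x}$ with $\overline{I}=\overline{J}=\mathrm{id}$, as built in \eqref{khtob1} in the proof of Proposition~\ref{fbprokur}. This is the chart-level content of Lemma~\ref{dudhbbjbg}. The extra $\mathbb{R}$-factor is handled by the projection-type map used in case (a) of Proposition~\ref{eobjkk}. Since that map is defined on the full algebra, it factors through the completion $\widehat{\varepsilon}$ at the image of $\delta_p$.

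Next we check condition (i) of Definition~\ref{morphismkur}. On $s_p^{-1}(0)\cap U_{pq}$ we have
\[
\phi^\times_{pq,pq}\circ\delta_p(y)=(\phi_{pq}(y),\phi_{pq}(y))=\delta_q\circ\phi_{pq}(y),
\]
and the $\mathbb{R}$-coordinate stays $0$ under the expanded coordinate change.

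Then we check condition (ii), that $\widehat{\phi}_{pq,x}\circ\widehat{\delta}_{q,\phi_{pq}(x)}\sim\widehat{\delta}_{p,x}\circ\widehat{\phi}^\times_{pq,pq,(x,x,0)}$ up to $L_\infty[1]$-homotopy. This is Lemma~\ref{kfprm} applied to $\overline{I}=\overline{J}=\mathrm{id}_{\mathfrak{X}}$. We split by bidegree $(i,j)$ as in \eqref{khtob1}.
\begin{itemize}
\item[--] For $j=0$, both sides reduce to $\widehat{\phi}_{pq,x}$ applied to the first factor, so they agree up to homotopy.
\item[--] For $i=0$, the same holds with the second factor.
\item[--] For $i=j=0$, agreement follows from linearity and the projections $\mathrm{p}'_0,\mathrm{p}''_0$ of Lemma~\ref{wnqb}.
\item[--] In mixed degrees both sides vanish, by \eqref{bbexx} and \eqref{khtob1}.
\end{itemize}
The de Rham components are quasi-isomorphisms between acyclic $L_\infty[1]$-algebras. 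By the cited result that any two such quasi-isomorphisms are homotopic, they agree up to homotopy.

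The main obstacle is the expanded factor $\mathbb{R}$, together with the completion at the image of $\delta_p$. The map $\delta_p$ is not surjective, so unlike in Lemma~\ref{kfprm} we cannot identify $\mathcal{C}^\times_{(p,p),\delta_p}$ with $\mathcal{C}^\times$. Our first attempt will be to enlarge the domain atlas by an expansion $U_p\times\mathbb{R}^n$, as in the proof of the universal property. Choosing $n=\dim U_p+1$ and a submersive extension of $\delta_p$ should give a surjective representative in the same equivalence class. Then $\widehat{\varepsilon}$ is an isomorphism by Lemma~\ref{surjcp}, and the factorization through the completion becomes automatic. If this fails, the fallback is to build $\widehat{\delta}^{\mathrm{c}}$ directly as the composite of restriction to the image of $\delta_p$ with $\widehat{\eta}$ from Lemma~\ref{pokhgjb}. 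This would use the diagonal bundle embedding $E_p\to\pi'^*E_p\oplus\pi''^*E_p$, $v\mapsto(v,v)$, which is compatible with the sections.
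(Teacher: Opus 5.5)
Your proposal is correct and follows the paper's route: the paper's entire proof is the one-line observation that $\overline{\Delta}$ is the special case $\overline{I}=\overline{J}=\mathrm{id}_{\mathfrak{X}}$ of the pre-morphism $\overline{K}$ in Lemma~\ref{kfprm}, and your bidegree check and de Rham homotopy argument are exactly that lemma's proof unwound. Your extra treatment of the $\mathbb{R}$-factor and of the completion at the non-surjective image of $\delta_p$ covers points the paper passes over in silence. One caution: expanding the domain atlas to make $\delta_p$ surjective would establish validity of an equivalent representative, not of $\overline{\Delta}$ itself with domain $\widehat{\mathcal{U}}$.
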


\begin{proof}
This is a special case of Lemma~\ref{kfprm}.
\end{proof}

\begin{thm}[Factorization property]\label{facpro}
The diagonal morphism $\Delta$ factors through the path object $\mathrm{P}\mathfrak{X}$. Specifically, there exist morphisms
\[
\mathfrak{X}
\xrightarrow{\ \Delta_1\ }
\mathrm{P}\mathfrak{X}
\xrightarrow{\ \Delta_2\ }
\mathfrak{X} \times \mathfrak{X}
\]
such that
\[
\Delta_2 \circ \Delta_1 = \Delta.
\]
Moreover, $\Delta_1$ is a weak equivalence and $\Delta_2$ is a fibration.
\end{thm}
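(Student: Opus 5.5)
I will define both morphisms chart by chart, using the data already prepared in the construction of $\mathrm{P}\mathfrak{X}$. For $\Delta_1$, I take the pre-morphism
\[
\overline{\Delta}_1 := \left(\widehat{\mathcal{U}},\, \mathrm{P}\widehat{\mathcal{U}},\, \mathrm{id}_X,\, \{\delta_{1,p}\},\, \{\widehat{\delta}_{1,p,x}\}\right),
\]
with base maps $\delta_{1,p}(y) = (\gamma_{y,y},0)$. The $L_\infty[1]$-component is the morphism (1) of \eqref{bwccc}. Its Koszul part comes from the bundle embedding \eqref{qvsvs}, composed with the isomorphism $(\mathrm{id},\overline{\delta})$, and its de Rham part is the pullback along $\delta_{1,p}$ passed to augmentations.

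The key verification is the tangent bundle condition for \eqref{qvsvs} at $x \in s_p^{-1}(0)$. In the coordinates $T_{(\gamma_{x,x},0)}(\mathrm{P}_gU_p\times W) \simeq T_xU_p \oplus T_xU_p \oplus W$ supplied by Lemma~\ref{beaxx}, the normal space to $\delta_{1,p}$ is $T_xU_p \oplus W$, where the first factor is the $\dot\gamma(0)$-direction. The quotient of the obstruction fiber by $\overline{\delta}_{1,p}(E_{p,x})$ is identified with $T_xU_p \oplus W$. On this quotient, $ds^{\mathrm P}_p$ acts by $\dot\gamma(0)\mapsto\dot\gamma(0)$ and by a map on $W$ that is invertible mod the image; this is exactly why $w$ enters with coefficients $1$ and $2$. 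Granting this, Proposition~\ref{afec} and Corollary~\ref{crtbc} show that each $\widehat{\delta}_{1,p,x}$ is a quasi-isomorphism. Since $\mathrm{id}_X$ is a homeomorphism by \eqref{zze}, $\Delta_1$ is a weak equivalence once it is a pre-morphism. Compatibility with the coordinate changes $\mathrm{P}\Phi_{pq}$ holds on zero loci because $\mathrm{P}\phi_{pq}(\gamma_{x,x},0) = (\gamma_{\phi_{pq}(x),\phi_{pq}(x)},0)$. On the $L_\infty$ side it holds up to homotopy, because all maps in \eqref{bwccc} are built from the same embeddings; for the de Rham parts I use that quasi-isomorphisms between acyclic algebras are homotopic (Theorem~\ref{anhp}).

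For $\Delta_2$, I take the base maps $\mathrm{ev}_0\times\mathrm{ev}_1\colon \mathrm{P}_gU_p\times W_p\to U_p\times U_p\times\mathbb R$, $(\gamma,w)\mapsto(\gamma(0),\gamma(1),\lambda(w))$, into the expanded product atlas used for $\overline{\Delta}$, where $\lambda$ is a linear surjection. By Lemma~\ref{beaxx} this map is a submersion. After expanding the target atlas and replacing $U_p$ by a sufficiently large contractible domain, as in Lemma~\ref{genbvf}, it becomes surjective, which gives axiom (ii). For the Koszul data of (iii), I choose the bundle pair with $\underline{U}' = \Delta(U_p)$ and $\mathrm{Em}'_p$ the diagonal embedding of $E_p$ into $\pi'^*E_p\oplus\pi''^*E_p$. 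The embedding $\mathrm{Em}_p$ takes $E_p$ into the $\Delta$-summand of $\Delta\oplus E^-\oplus\mathrm{ev}_0^*TU_p$. This yields the splitting \eqref{jbwwsxxx} with $E^{\mathrm c} = E^-\oplus\mathrm{ev}_0^*TU_p$. Axiom (iv) then follows from the componentwise form of $\widehat{\mathrm{P}\phi}_{pq,x}$. Axiom (v) is the zero-locus computation already done in \eqref{zze}: on $[s^{\mathrm P}_p]^{-1}(0)$ we have $\dot\gamma(0)=0$ and $w=0$, so the intersection with the preimage of the zero set of the product is exactly $s_p^{-1}(0)$. Axiom (i), minimality of the atlas, I will handle by observing that the path chart is not an expansion of a smaller chart; if this is problematic, I fall back on the weaker statement of Lemma~\ref{genbvf}.

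Finally, $\Delta_2\circ\Delta_1 = \Delta$. On spaces and bases this is $\mathrm{ev}_0\times\mathrm{ev}_1\circ\delta_{1,p} = \delta_p$. On $L_\infty$-components, both sides are morphisms into $\mathcal C_{p,x}$ whose compositions with $\widehat{\mathrm{Pr}}'$ and $\widehat{\mathrm{Pr}}''$ are homotopic to the identity. By the uniqueness part of Lemma~\ref{dudhbbjbg}, they are therefore homotopic, so the two pre-morphisms are equivalent. I expect the main obstacle to be the Koszul compatibility in (iii)–(iv) for $\Delta_2$: the obstruction bundle of $\mathrm{P}\mathcal{U}_p$ must be matched with the product bundle plus a complement on which the section is transverse. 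I will handle this by reusing the acyclicity argument for the complement complex $(\mathcal D_0)_\phi$ from the construction of (3).
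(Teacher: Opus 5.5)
Your $\Delta_1$ matches the paper's: the same base map $y\mapsto(\gamma_{y,y},0)$, the same embedding $v\mapsto(2v,0,2v)$, the same normal-differential check, and pullback on the de Rham side. The genuine gap is in the Koszul data you propose for axiom (iii) of $\Delta_2$. You take $\underline{U}'=\Delta(U_p)$, and $\mathrm{Em}'_p$ the diagonal embedding $e\mapsto(e,e,0)$ of $E_p$ into $\pi'^*E_p\oplus\pi''^*E_p\oplus E_{\mathbb{R}}$ over $y\mapsto(y,y,0)$. Definition~\ref{adfaf}(a)(iii) requires the tangent bundle condition for $\mathrm{Em}'_p$, and it fails. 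At $x\in s_p^{-1}(0)$ the normal space is $T_xU_p\oplus\mathbb{R}$, and the quotient of the obstruction fibre by the diagonal is $E_{p,x}\oplus\mathbb{R}$. The induced map is $(v,r)\mapsto(ds_{p,x}(v),r)$, which cannot be an isomorphism once $\dim U_p\neq\operatorname{rk}E_p$, i.e.\ whenever $\mathfrak{X}$ has nonzero virtual dimension. Hence $\underline{\widehat{\eta}}$ is not a quasi-isomorphism; for $\mathfrak{X}$ a manifold it is, on cohomology, pullback $C^\infty(W_x)\to C^\infty(W_x\times W_x)$ along a retraction onto the diagonal. So it has no homotopy inverse, and the Koszul component $\widehat{\eta}^{\vee}\circ\underline{\widehat{\eta}}^{-1}$ of $\widehat{\delta}_2$ is not defined. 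Reusing the acyclicity of $(\mathcal{D}_0)_\phi$ cannot repair this. That argument works on the path side because $\sigma_p\oplus\mathrm{id}$ is transverse, whereas normal to the diagonal of $U_p\times U_p$ the relevant linearization is $ds_p$. The paper avoids the problem by taking the intermediate base to be all of $U_p\times U_p$, with bundle $\pi'^*E_p\oplus\pi''^*E_p$. The target-side embedding is then the zero-slice inclusion into the $\mathbb{R}$-expanded product chart. Its tangent bundle condition holds because the single normal direction is sent isomorphically onto $E_{\mathbb{R}}$ by the section $\mathrm{id}_{\mathbb{R}}$; this is why $\overline{\Delta}$ targets $(\widehat{\mathcal{U}}\times\widehat{\mathcal{U}})\times\mathbb{R}$. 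The path-side embedding lies over $(x,y)\mapsto(\gamma_{x,y},0)$ as in \eqref{bwzzzzbwq}, so it uses non-constant short geodesics. Its section compatibility is imposed only after projecting onto the image, as Lemma~\ref{pokhgjb} permits.

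There are also some smaller points. For $\Delta_2\circ\Delta_1=\Delta$ you only assert that the composites with $\widehat{\mathrm{Pr}}'$ and $\widehat{\mathrm{Pr}}''$ are homotopic to the identity, and you need an actual argument. The paper identifies $\widehat{\delta}^{\mathrm{K}}_2\circ\widehat{\mathrm{pr}}'^{\mathrm{K}}$, up to homotopy, with the map induced by the composite embedding $\overline{\mathrm{em}}_{p,p}\circ\overline{\iota}'_x\colon(y,e')\mapsto((\gamma_{y,y},0),(2e',0,2e'))$. It checks the tangent bundle condition of that composite and then compares with the quasi-isomorphism $\widehat{\delta}_1$ via Theorem~\ref{anhp}. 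Your surjectivity step is also muddled. Enlarging the chart $U_p$ of $\mathfrak{X}$ enlarges the target $U_p\times U_p$ as well, while $\mathrm{ev}_0\times\mathrm{ev}_1$ reaches only a neighbourhood of the diagonal. Instead, after shrinking $U_p$ to a ball, pull back the Euclidean metric along a diffeomorphism $U_p\cong\mathbb{R}^n$; this makes $\mathrm{ev}_0\times\mathrm{ev}_1\colon\mathrm{P}_gU_p\to U_p\times U_p$ a diffeomorphism. With that change, your linear $\lambda$ (which needs $\operatorname{rk}E_p\geq1$) suits axiom (ii) better than the paper's $|w|^2$, which is not a submersion at $w=0$. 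Finally, your treatment of axiom (i) is a hope rather than an argument. The paper's proof offers nothing more there than the remark that one may start from an atlas in $\mathscr{A}_0(\mathfrak{X})$.
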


\begin{proof}
\noindent(\textit{$\Delta_1$ is a weak equivalence.})
Let $\mathfrak{X} = \left(X,\, [\widehat{\mathcal{U}}]\right)$ be a Kuranishi space. We define the morphism
\[
\Delta_1 \colon \mathfrak{X} \longrightarrow \mathrm{P}\mathfrak{X}
\]
by the equivalence class of the pre-morphism
\[
\overline{\Delta}_1 := \left(\widehat{\mathcal{U}},\, \mathrm{P}\widehat{\mathcal{U}},\, \delta_1,\, \left\{\delta_{1,p}\right\},\, \left\{\widehat{\delta}_{1,p,x}\right\}\right).
\]
Here, each path chart $\mathrm{P}{\mathcal{U}}_p$ is given by
\[
\mathrm{P}{\mathcal{U}}_p := \left(\mathrm{P}_gU_p \times W,\, \mathrm{ev}_1^*E_p \oplus \mathrm{ev}_0^*TU_p \oplus E_{p,W},\, s^{\mathrm{P}}_p,\, \psi_p^\mathrm{P}\right)
\]
as in Definition~\ref{mawehe}, with coordinate changes $\{\mathrm{P}\Phi_{pq}\}$ given by \eqref{bacccvbz}.

The continuous map on the underlying topological spaces is the identity
\begin{equation}\label{pwghvbiib}
\delta_1 \colon X \longrightarrow X, \qquad x \longmapsto x,
\end{equation}
and the smooth embedding on each base is given by
\[
\delta_{1,p} \colon U_p \hookrightarrow \mathrm{P}_gU_p \times W, \qquad y \mapsto (\gamma_{y,y},\, 0),
\]
where $\gamma_{y,y} \in \mathrm{P}_gU_p$ denotes the constant geodesic at $y$.

For each $x \in s_p^{-1}(0)$, the local $L_\infty[1]$-algebra of $\mathrm{P}\mathfrak{X}$ is
\[
\mathcal{C}^\mathrm{P}_{(x, 0)}
:=
\bigwedge\nolimits^{-\bullet} \Gamma\left(
\left(\mathrm{ev}_1^*E_p^* \oplus \mathrm{ev}_0^*T^*U_p \oplus E_{p,W}^*\right)\!\Big|_{W^\mathrm{P}_x}
\right)
\;\oplus\;
\Omega_{\mathrm{aug}}^{\bullet+1}\left(\mathcal{F}^\mathrm{P}_x\right).
\]
The $L_{\infty}[1]$-morphism
\[
\widehat{\delta}_{1,p,x} \colon
\mathcal{C}^{\mathrm{P}}_{\delta_1(p),\,\delta_{1,p}(x)}
\longrightarrow
\mathcal{C}_{p,x}
\]
decomposes into Koszul and de Rham parts, defined as follows.

For the Koszul part, we consider the vector bundle embedding
\[
\begin{tikzcd}[column sep=large, row sep=large]
E_p \arrow[r, hook, "\overline{\delta}_{1,p}"]
&
\mathrm{ev}_1^*E_p \oplus \mathrm{ev}_0^*TU_p \oplus E_{p,W}
\\
U_p \arrow[u, "s_p"] \arrow[r, hook, "\delta_{1,p}"] 
&
\mathrm{P}_gU_p \times W \arrow[u, swap, "s_p^{\mathrm{P}}"] 
\end{tikzcd}
\]
defined on each fiber over $y \in U_p$ by
\begin{equation}\label{nannnn}
\overline{\delta}_{1,p}(y, v) := \left(2v,\, 0,\, 2v\right) \in (E_p)_y \oplus T_y U_p \oplus W.
\end{equation}
Along the image $\delta_{1,p}(y) = (\gamma_{y,y}, 0)$, we have $\dot{\gamma}_{y,y}(0) = 0$ and $\mathrm{ev}_0(\gamma_{y,y}) = \mathrm{ev}_1(\gamma_{y,y}) = y$. Substituting $w = 0$ into the definition of $s_p^\mathrm{P}$ yields
\[
\begin{split}
\left(s_p^{\mathrm{P}} \circ \delta_{1,p}\right)(y)
&= s_p^{\mathrm{P}}(\gamma_{y,y}, 0)\\
&= \left(2s_p(y),\, \dot{\gamma}_{y,y}(0),\, 2s_p(y) \right)\\
&= \left(2s_p(y),\, 0,\, 2s_p(y)\right)\\
& = \left(\overline{\delta}_{1,p} \circ s_p \right)(y).
\end{split}
\]

We now verify the tangent bundle condition along the zero locus $x \in s_p^{-1}(0)$. The differential $d_{(x,0)} s_p^{\mathrm{P}}$ induces a linear map on normal spaces:
\begin{equation}\label{beacccc}
\left[d_{(x,0)} s_p^{\mathrm{P}}\right] \colon \frac{T_{(x,0)}\left(\mathrm{P}_gU_p \times W\right)}{\delta_{1,p*}\left(T_xU_p\right)} \xrightarrow{\ \simeq\ } \frac{\left(\mathrm{ev}_1^*E_p \oplus \mathrm{ev}_0^*TU_p \oplus E_{p,W}\right)_{(\gamma_{x,x},0)}}{\operatorname{Im}\left(\overline{\delta}_{1,p}\right)_{(x,0)}}.
\end{equation}
Via the canonical identification $T_{\gamma_{x,x}}(\mathrm{P}_gU_p) \simeq T_xU_p \oplus T_xU_p$, the domain normal space splits as
\[
\frac{T_{(x,0)}\left(\mathrm{P}_gU_p \times T_0W\right)}{\delta_{1,p*}\left(T_xU_p\right)}
\simeq
\frac{\left(T_xU_p \oplus T_xU_p\right) \oplus T_0W}{T_xU_p \oplus \{0\} \oplus \{0\}}
\simeq
T_xU_p \oplus W.
\]
On the target normal space, since $s_p(x) = 0$, the fiber of $\operatorname{Im}(\overline{\delta}_{1,p})$ at $(x,0)$ is $(E_p)_x \oplus \{0\} \oplus \{0\}$, yielding
\[
\frac{\left(\mathrm{ev}_1^*E_p \oplus \mathrm{ev}_0^*TU_p \oplus E_{p,W}\right)_{(\gamma_{x,x},0)}}{\operatorname{Im}\left(\overline{\delta}_{1,p}\right)_{(x,0)}}
\simeq
\frac{(E_p)_x \oplus T_xU_p \oplus W}{(E_p)_x \oplus \{0\} \oplus \{0\}}
\simeq
T_xU_p \oplus W.
\]
Under these identifications, the normal differential $\left[d_{(x,0)} s_p^{\mathrm{P}}\right]$ acts by
\[
({v},\, w) \longmapsto \left({v},\, 2w\right) \in T_xU_p \oplus W,
\]
which is an isomorphism because ${v} \mapsto {v}$ is the identity on $T_xU_p$ and $ w \mapsto 2w$ is an isomorphism on $W$. Therefore, by Proposition~\ref{afec}, this bundle embedding induces a quasi-isomorphism on the Koszul complexes:
\[
\widehat{\delta}^{\mathrm{K}}_{1,p,x} \colon
\bigwedge\nolimits^{-\bullet} \Gamma\left(
\left(\mathrm{ev}_1^*E_p^* \oplus \mathrm{ev}_0^*T^*U_p \oplus E_{p,W}^*\right)\!\Big|_{W^\mathrm{P}_x}
\right)
\xrightarrow{\ \simeq\ }
\bigwedge\nolimits^{-\bullet} \Gamma\left(E_p^* \Big|_{W_x}\right).
\]

For the de Rham part, the differential $d\delta_{1,p}$ satisfies
\[
\alpha^\mathrm{P}\left(d\delta_{1,p}(X)\right) = (\operatorname{pr}_1^* \mathrm{ev}_0^* \alpha_p)(X, 0) = \alpha_p\left(\mathrm{ev}_{0*} X\right) = \alpha_p(X)
\]
for every $X \in \Gamma(TU_p)$, which implies that
\[
d\delta_{1,p}\left(\ker \alpha_p\right) \subset \ker \alpha^\mathrm{P}.
\]
Consequently, pullback along $\delta_{1,p}$ induces a map between the foliation de Rham complexes:
\[
\widehat{\delta}^{\mathrm{dR}}_{1,p,x} \colon \Omega_{\mathrm{aug}}^{\bullet+1}\left(\mathcal{F}^\mathrm{P}_x\right) \rightarrow \Omega_{\mathrm{aug}}^{\bullet+1}\left(\mathcal{F}_x\right).
\]
Combining the Koszul and de Rham parts, $\widehat{\delta}_{1,p,x}$ is a quasi-isomorphism for every $x$, which proves that $\Delta_1$ is a weak equivalence.

\begin{lem}
$\overline{\Delta}_1$ defines a valid pre-morphism.
\end{lem}

\begin{proof}
Let $p, q \in X$ satisfy $\operatorname{Im}\psi_p \cap \operatorname{Im}\psi_q \neq \emptyset$. We verify the three defining conditions for a pre-morphism:

\begin{enumerate}[label = (\roman*)]
\item ($\mathrm{P}\phi_{\delta_1(p)\delta_1(q)} \circ \delta_{1,p} = \delta_{1,q} \circ \phi_{pq}$ \textit{on} $s_{p}^{-1}(0) \cap U_{pq}$)

Recall that $\delta_1 = \mathrm{id}_X$, so $\delta_1(p) = p$ and $\delta_1(q) = q$. For each $x \in s_{p}^{-1}(0) \cap U_{pq}$, the base embedding evaluates to $\delta_{1,p}(x) = (\gamma_{x,x}, 0)$, where $\gamma_{x,x}$ denotes the constant geodesic at $x$. Applying the path coordinate change $\mathrm{P}\phi_{pq}(\gamma, w) = (\phi_{pq}(\gamma)_g, w)$, we obtain
\[
\begin{split}
\mathrm{P}\phi_{pq} \circ \delta_{1,p}(x) &= \mathrm{P}\phi_{pq}(\gamma_{x,x}, 0) = \left(\phi_{pq}(\gamma_{x,x})_g,\, 0\right) = \left(\gamma_{\phi_{pq}(x), \phi_{pq}(x)},\, 0\right) \\
&= \delta_{1,q}\left(\phi_{pq}(x)\right) = \left(\delta_{1,q} \circ \phi_{pq}\right)(x).
\end{split}
\]

\item ($\psi^\mathrm{P}_{\delta_1(p)} \circ \delta_{1,p} = \delta_1 \circ \psi_p$ \textit{on} $s_p^{-1}(0)$)

For each $x \in s_p^{-1}(0)$, we have $\delta_{1,p}(x) = (\gamma_{x,x}, 0)$. By Definition~\ref{mawehe}, the footprint identification satisfies $\psi_p^\mathrm{P}(\gamma_{x,x}, 0) = \psi_p(x)$. Since $\delta_1 = \mathrm{id}_X$, we immediately have
\[
\psi^\mathrm{P}_p\left(\delta_{1,p}(x)\right)
= \psi^\mathrm{P}_p(\gamma_{x,x}, 0)
= \psi_p(x)
= \delta_1\left(\psi_p(x)\right)
= \left(\delta_1 \circ \psi_p\right)(x).
\]

\item ($\widehat{\phi}_{pq,x} \circ \widehat{\delta}_{1, q, \phi_{pq}(x)} \sim \widehat{\delta}_{1,p,x} \circ \widehat{\mathrm{P}\phi}_{pq,\delta_{1,p}(x)}$ \textit{for each} $x \in s_p^{-1}(0) \cap U_{pq}$)

Since the $L_\infty[1]$-morphisms $\widehat{\delta}_{1,p,x}$ and $\widehat{\delta}_{1,q,\phi_{pq}(x)}$ are homotopy equivalences (by Proposition~\ref{afec}), they admit homotopy inverses. Thus, relation~(iii) is equivalent up to $L_\infty[1]$-homotopy to showing that
\begin{equation}\label{bnmzb}
\widehat{\delta}_{1,p,x}^{-1} \circ \widehat{\phi}_{pq,x} \;\sim\; \widehat{\mathrm{P}\phi}_{pq,\delta_{1,p}(x)} \circ \widehat{\delta}_{1, q, \phi_{pq}(x)}^{-1}.
\end{equation}
The de Rham components commute strictly as pullback maps of differential forms under smooth maps. For the Koszul components, the homotopy inverses $\widehat{\delta}_{1,p,x}^{-1}$ are explicitly induced by the bundle embeddings $\overline{\delta}_{1,p}$ of \eqref{nannnn} via the retraction provided by Proposition~\ref{afec}. Then, relation~\eqref{bnmzb} follows directly from the definition of the path coordinate change $\widehat{\mathrm{P}\phi}_{pq,\delta_{1,p}(x)}$ in \eqref{bwccc}, which was defined precisely to make this diagram commute up to homotopy.
\end{enumerate}
\end{proof}

\begin{lem}\label{dlwq}
$\Delta_1 := [\overline{\Delta}_1]$ is a weak equivalence.
\end{lem}

\begin{proof}
From the observation on the zero locus in \eqref{zze} and the definition of the underlying topological map \eqref{pwghvbiib}, the continuous map $\delta_1 = \mathrm{id}_X$ is trivially a homeomorphism.
As established above, each Koszul component $\widehat{\delta}^{\mathrm{K}}_{1,p,x}$ is a quasi-isomorphism by Proposition~\ref{afec}. The de Rham component $\widehat{\delta}^{\mathrm{dR}}_{1,p,x}$ is likewise a quasi-isomorphism, as it is a chain map between acyclic foliation de Rham complexes. Consequently, each $\widehat{\delta}_{1,p,x}$ is a quasi-isomorphism, which proves that $\Delta_1$ is a weak equivalence.
\end{proof}

\noindent(\textit{$\Delta_2$ is a fibration.})

Let $\mathfrak{X} = \left(X,\, [\widehat{\mathcal{U}}]\right)$ be a Kuranishi space. We define the morphism
\[
\Delta_2 \colon \mathrm{P}\mathfrak{X} \longrightarrow \mathfrak{X} \times \mathfrak{X}
\]
by the equivalence class of the pre-morphism
\[
\overline{\Delta}_2 := \left(\mathrm{P}\widehat{\mathcal{U}},\, (\widehat{\mathcal{U}} \times \widehat{\mathcal{U}}) \times \mathbb{R},\, \delta_2,\, \{\delta_{2,p}\},\, \left\{\widehat{\delta}_{2,p,x}\right\}\right),
\]
where the continuous map on the underlying topological spaces is
\[
\delta_2 \colon X \longrightarrow X \times X, \qquad p \longmapsto (p, p),
\]
and the smooth base maps are defined by
\[
\begin{split}
\delta_{2,p} \colon \mathrm{P}_gU_p \times W &\longrightarrow U_p \times U_p \times \mathbb{R}, \\
(\gamma, w) &\longmapsto \left(\gamma(0),\, \gamma(1),\, |w|^2\right).
\end{split}
\]
By choosing each chart domain $U_p$ to be sufficiently small (geodesically convex), the restricted map $\delta_{2,p}|_{\mathrm{P}_gU_p \times \{0\}} \colon \gamma \mapsto (\gamma(0), \gamma(1))$ is a diffeomorphism onto an open neighborhood of the diagonal in $U_p \times U_p$. This property is preserved under shrinkings, and therefore holds for representing atlases in $\mathcal{A}_0(\mathfrak{X})$ (cf.~\eqref{cevb}).

Recall that the local $L_{\infty}[1]$-algebras are given by
\[
\mathcal{C}^{\times}_{(p,p),(x,x,0)}
:=
\bigwedge\nolimits^{-\bullet} \Gamma\left(
\left(\pi^{\prime*}E_p^* \oplus \pi^{\prime\prime*}E_p^* \oplus E_{\mathbb{R}}^*\right)\!\Big|_{W^{\times}_{(x,x,0)}}
\right)
\;\oplus\;
\Omega_{\mathrm{aug}}^{\bullet+1}\left(\mathcal{F}^\times_{(x,x,0)}\right)
\]
and
\[
\mathcal{C}^\mathrm{P}_{p,x}
:=
\bigwedge\nolimits^{-\bullet} \Gamma\left(
\left(\mathrm{ev}_1^*E_p^* \oplus \mathrm{ev}_0^*T^*U_p \oplus E_{p,W}^*\right)\!\Big|_{W^\mathrm{P}_x}
\right)
\;\oplus\;
\Omega_{\mathrm{aug}}^{\bullet+1}\left(\mathcal{F}^\mathrm{P}_x\right).
\]
For each $p \in X$ and $x \in s_p^{-1}(0)$, the $L_\infty[1]$-morphism
\[
\widehat{\delta}_{2,p,x} \colon
\mathcal{C}^{\times}_{(p,p),(x,x,0)}
\longrightarrow
\mathcal{C}^{\mathrm{P}}_{p,x}
\]
is constructed as follows.

Consider the canonical vector bundle embedding
\[
\pi^{\prime*}E_p \oplus \pi^{\prime\prime*}E_p \hookrightarrow \pi^{\prime*}E_p \oplus \pi^{\prime\prime*}E_p \oplus E_{\mathbb{R}}
\]
over the inclusion $U_p \times U_p \hookrightarrow U_p \times U_p \times \mathbb{R}$ as the zero slice. Since the tangent bundle condition is satisfied, Proposition~\ref{afec} yields a quasi-isomorphism
\[
\left(\widehat{\eta}^{\vee}\right)^{-1} \colon \bigwedge\nolimits^{-\bullet}\Gamma\left(
\left(\pi^{\prime*}E^*_p \oplus \pi^{\prime\prime*}E^*_p \oplus E^*_{\mathbb{R}}\right)\!\Big|_{W^\times_{(x,x,0)}}
\right) \xrightarrow{\ \simeq\ } \bigwedge\nolimits^{-\bullet}\Gamma\left(
\left(\pi^{\prime*}E^*_p \oplus \pi^{\prime\prime*}E^*_p\right)\!\Big|_{W^\times_{x,x}}
\right).
\]

Next, consider the vector bundle embedding
\[
\begin{tikzcd}[column sep=large, row sep=large]
\pi^{\prime*}E_p \oplus \pi^{\prime\prime*}E_p \arrow[r, hook, "\overline{\mathrm{em}}_{p,p}"] & \mathrm{ev}_1^*E_p \oplus \mathrm{ev}_0^*TU_p \oplus E_{p,W} \\
U_p \times U_p \arrow[r, hook, "\mathrm{em}_{p,p}"] \arrow[u, "\pi^{\prime*}s_p \times \pi^{\prime\prime*}s_p"] & \mathrm{P}_gU_p \times W \arrow[u, swap, "s^{\mathrm{P}}_p"]
\end{tikzcd}
\]
defined on fibers by
\begin{equation}\label{bwzzzzbwq}
\overline{\mathrm{em}}_{p,p}\left((x,y),\, (e', e'')\right) := \left(\left(\gamma_{x,y}, 0\right),\, \left(e' + e'',\, 0,\, e' + e''\right)\right)
\end{equation}
covering the base embedding
\[
\mathrm{em}_{p,p} \colon (x, y) \longmapsto \left(\gamma_{x,y},\, 0\right),
\]
where $\gamma_{x,y} \in \mathrm{P}_gU_p$ is the unique short geodesic from $x$ to $y$.

Evaluating the section $s_p^\mathrm{P}$ along the image of $\mathrm{em}_{p,p}$ gives
\begin{equation}\label{gadgashhhh2}
\begin{split}
\left(s_p^{\mathrm{P}} \circ \mathrm{em}_{p,p}\right)|_{\mathrm{Im}\overline{\mathrm{em}}_{p,p}}(x, y)
&= s_p^{\mathrm{P}}\left(\gamma_{x,y},\, 0\right)|_{\mathrm{Im}\overline{\mathrm{em}}_{p,p}} \\
&= \left(s_p(x) + s_p(y),\, \dot{\gamma}_{x,y}(0),\, s_p(x) + s_p(y)\right)|_{\mathrm{Im}\overline{\mathrm{em}}_{p,p}} \\
&= \left(s_p(x) + s_p(y),\, 0,\, s_p(x) + s_p(y)\right)\\
&= \overline{\mathrm{em}}_{p,p}\left(s_p(x),\, s_p(y)\right) \\
&= \left(\overline{\mathrm{em}}_{p,p} \circ \left(\pi^{\prime*}s_p \times \pi^{\prime\prime*}s_p\right)\right)(x, y),
\end{split}
\end{equation}
which establishes the required section compatibility. Furthermore, on the base manifolds we have
\[
\left(\delta_{2,p} \circ \mathrm{em}_{p,p}\right)(x, y) = \delta_{2,p}\left(\gamma_{x,y},\, 0\right) = \left(\gamma_{x,y}(0),\, \gamma_{x,y}(1),\, 0\right) = (x, y, 0).
\]

Let $\widehat{\eta}(\overline{\mathrm{em}}_{p,p}|_{W_{x,x}^{\times}})$ denote the $L_{\infty}[1]$-morphism induced by the bundle embedding $\overline{\mathrm{em}}_{p,p}|_{W_{x,x}^{\times}}$ according to \eqref{haahb} and Lemma~\ref{pokhgjb}:
\[
\begin{split}
\widehat{\eta}\left(\overline{\mathrm{em}}_{p,p}\Big|_{W_{x,x}^{\times}}\right) \colon
\bigwedge\nolimits^{-\bullet}&\Gamma\left(\left(\pi^{\prime*}E^*_p \oplus \pi^{\prime\prime*}E^*_p\right)\!\Big|_{W^{\times}_{x,x}}\right)\\ 
&\quad\quad\quad\quad\quad\quad\rightarrow \bigwedge\nolimits^{-\bullet}\Gamma\left(\left(\mathrm{ev}_1^*E^*_p \oplus \mathrm{ev}_0^*T^*U_p \oplus E^*_{p,W}\right)\!\Big|_{W^\mathrm{P}_x}\right).
\end{split}
\]
We define the Koszul component of $\widehat{\delta}_{2,p,x}$ as the composite
\[
\widehat{\delta}_{2,p,x}^\mathrm{K} := \widehat{\eta}\left(\overline{\mathrm{em}}_{p,p}\Big|_{W_{x,x}^{\times}}\right) \circ \left(\widehat{\eta}^{\vee}\right)^{-1}.
\]
Together with the pullback on the foliation de Rham complexes 
\[
\widehat{\delta}_{2,p,x}^\mathrm{dR} \colon \Omega_{\mathrm{aug}}^{\bullet+1}(\mathcal{F}^\times_{(x,x,0)}) \to \Omega_{\mathrm{aug}}^{\bullet+1}(\mathcal{F}^\mathrm{P}_x),
\]
this specifies $\widehat{\delta}_{2,p,x} := \widehat{\delta}_{2,p,x}^\mathrm{K} \oplus \widehat{\delta}_{2,p,x}^\mathrm{dR}$.

Finally, we verify that
\begin{equation}\label{eq:delta2_zero_locus}
\left(s^\mathrm{P}_p\right)^{-1}(0) \;\cap\; \delta_{2,p}^{-1}\left(\left(s_p \times s_p \times \mathrm{id}_{\mathbb{R}}\right)^{-1}(0)\right) = \left(s^\mathrm{P}_p\right)^{-1}(0).
\end{equation}
The inclusion $\subset$ is immediate; thus, it suffices to establish the reverse inclusion. Recall that $(s^\mathrm{P}_p)^{-1}(0)$ consists precisely of constant geodesics $\gamma_{x,x}$ at points $x \in s_p^{-1}(0)$ with $w = 0$. Since
\[
\delta_{2,p}^{-1}\left(\left(s_p \times s_p \times \mathrm{id}_{\mathbb{R}}\right)^{-1}(0)\right) = \left\{ (\gamma, w) \in \mathrm{P}_gU_p \times W \;\middle|\; s_p(\gamma(0)) = 0,\; s_p(\gamma(1)) = 0,\; w = 0 \right\},
\]
every element of $(s^\mathrm{P}_p)^{-1}(0)$ automatically satisfies these conditions, proving equality in \eqref{eq:delta2_zero_locus}.

\begin{lem}
$\overline{\Delta}_2$ defines a valid pre-morphism.
\end{lem}

\begin{proof}
Let $p, q \in X$ satisfy $\operatorname{Im}\psi_p \cap \operatorname{Im}\psi_q \neq \emptyset$. We verify the three defining conditions for a pre-morphism:

\begin{enumerate}[label = (\roman*)]
\item ($\left(\phi^{\times}_{pq} \times \mathrm{id}_{\mathbb{R}}\right) \circ \delta_{2,p} = \delta_{2,q} \circ \mathrm{P}\phi_{pq}$ \textit{on} $(s_p^\mathrm{P})^{-1}(0) \cap \mathrm{P}_gU_{pq}$)

Here, $\phi^\times_{pq} := \phi_{pq} \times \phi_{pq}$. For each $(\gamma_{x,x}, 0) \in (s_p^\mathrm{P})^{-1}(0) \cap \mathrm{P}_gU_{pq}$, we have
\[
\begin{split}
\left(\left(\phi_{pq} \times \phi_{pq} \times \mathrm{id}_{\mathbb{R}}\right) \circ \delta_{2,p}\right)(\gamma_{x,x}, 0)
&= \left(\phi_{pq} \times \phi_{pq} \times \mathrm{id}_{\mathbb{R}}\right)(x, x, 0) \\
&= \left(\phi_{pq}(x),\, \phi_{pq}(x),\, 0\right) \\
&= \delta_{2,q}\left(\gamma_{\phi_{pq}(x), \phi_{pq}(x)},\, 0\right) \\
&= \left(\delta_{2,q} \circ \mathrm{P}\phi_{pq}\right)(\gamma_{x,x}, 0).
\end{split}
\]

\item ($\psi^{\times}_{p,p} \circ \delta_{2,p} = \delta_2 \circ \psi_p^\mathrm{P}$ \textit{on} $(s_p^\mathrm{P})^{-1}(0)$)

For each $(\gamma_{x,x}, 0) \in (s_p^\mathrm{P})^{-1}(0)$, we compute
\[
\begin{split}
\left(\psi^\times_{p,p} \circ \delta_{2,p}\right)(\gamma_{x,x}, 0)
&= \psi^\times_{p,p}(x, x, 0) \\
&= \left(\psi_p(x),\, \psi_p(x)\right) \\
&= \delta_2\left(\psi_p(x)\right) \\
&= \left(\delta_2 \circ \psi_p^\mathrm{P}\right)(\gamma_{x,x}, 0).
\end{split}
\]

\item ($\widehat{\mathrm{P}\phi}_{pq,x} \circ \widehat{\delta}_{2,q, \phi_{pq}(x)} \sim \widehat{\delta}_{2,p,x} \circ \widehat{\phi}^{\times}_{(p,p)(q,q),\,\delta_{2,p}(x)}$ \textit{for each} $x \in s_p^{-1}(0) \cap U_{pq}$)

Recall from the factorization $\Delta = \Delta_2 \circ \Delta_1$ that $\widehat{\delta}_{2,p,x} \sim \widehat{\delta}_{1,p,x}^{-1} \circ \widehat{\delta}_{p,x}$ up to $L_\infty[1]$-homotopy. Using the commutativity relation established for $\overline{\Delta}_1$ in Lemma~\ref{bnmzb} and the pre-morphism property of $\overline{\Delta}$, we compute up to $L_\infty[1]$-homotopy:
\[
\begin{split}
\widehat{\mathrm{P}\phi}_{pq,x} \circ \widehat{\delta}_{2,q, \phi_{pq}(x)}
&\sim \widehat{\mathrm{P}\phi}_{pq,x} \circ \left(\widehat{\delta}_{1, q, \phi_{pq}(x)}^{-1} \circ \widehat{\delta}_{q, \phi_{pq}(x)}\right) \\
&\sim \left(\widehat{\mathrm{P}\phi}_{pq,x} \circ \widehat{\delta}_{1, q, \phi_{pq}(x)}^{-1}\right) \circ \widehat{\delta}_{q, \phi_{pq}(x)} \\
&\sim \left(\widehat{\delta}_{1, p, x}^{-1} \circ \widehat{\phi}_{pq,x}\right) \circ \widehat{\delta}_{q, \phi_{pq}(x)} \\
&= \widehat{\delta}_{1, p, x}^{-1} \circ \left(\widehat{\phi}_{pq,x} \circ \widehat{\delta}_{q, \phi_{pq}(x)}\right) \\
&\sim \widehat{\delta}_{1, p, x}^{-1} \circ \left(\widehat{\delta}_{p,x} \circ \widehat{\phi}^{\times}_{(p,p)(q,q),\,\delta_p(x)}\right) \\
&= \left(\widehat{\delta}_{1, p, x}^{-1} \circ \widehat{\delta}_{p,x}\right) \circ \widehat{\phi}^{\times}_{(p,p)(q,q),\,\delta_p(x)} \\
&\sim \widehat{\delta}_{2,p,x} \circ \widehat{\phi}^{\times}_{(p,p)(q,q),\,\delta_{2,p}(x)}.
\end{split}
\]
This completes the verification that $\overline{\Delta}_2$ is a pre-morphism.
\end{enumerate}
\end{proof}

\noindent(\textit{$\Delta = \Delta_2 \circ \Delta_1$})

By the definition of the composition of morphisms, the composite $\Delta_2 \circ \Delta_1$ is represented by the equivalence class of the pre-morphism
\begin{equation}\label{morcom1}
\overline{\Delta}_2 \circ \overline{\Delta}_1 := \left(\widehat{\mathcal{U}},\, (\widehat{\mathcal{U}} \times \widehat{\mathcal{U}}) \times \mathbb{R},\, \delta_2 \circ \delta_1,\, \left\{\delta_{2,\delta_1(p)} \circ \delta_{1,p}\right\},\, \left\{\widehat{\delta}_{1,p,x} \circ \widehat{\delta}_{2,\delta_1(p),\delta_{1,p}(x)}\right\}\right).
\end{equation}
By the universal property of the product Kuranishi space $\mathfrak{X} \times \mathfrak{X}$, it suffices to show that
\[
\operatorname{Pr}' \circ (\Delta_2 \circ \Delta_1) = \mathrm{id}_{\mathfrak{X}}
\qquad\text{and}\qquad
\operatorname{Pr}'' \circ (\Delta_2 \circ \Delta_1) = \mathrm{id}_{\mathfrak{X}},
\]
which uniquely characterizes the diagonal morphism $\Delta$.

First, on the underlying topological spaces, since $\delta_1 = \mathrm{id}_X$ and $\delta_2(p) = (p, p)$, we obtain
\[
\begin{split}
\left(\operatorname{pr}' \circ (\delta_2 \circ \delta_1)\right)(p) &= \operatorname{pr}'(p, p) = p = \mathrm{id}_X(p), \\
\left(\operatorname{pr}'' \circ (\delta_2 \circ \delta_1)\right)(p) &= \operatorname{pr}''(p, p) = p = \mathrm{id}_X(p).
\end{split}
\]

Next, on the base manifolds, for each chart $U_p$ with $\delta_1(p) = p$, the embedding $\delta_{1,p}$ sends each $y \in U_p$ to $(\gamma_{y,y}, 0) \in \mathrm{P}_gU_p \times W$, where $\gamma_{y,y}$ is the constant geodesic at $y$. Since $\delta_{2,p}(\gamma_{y,y}, 0) = (y, y, 0) \in U_p \times U_p \times \mathbb{R}$, composing with the canonical coordinate projections yields
\[
\begin{split}
\left(\operatorname{pr}'_{(p,p)} \circ (\delta_{2,p} \circ \delta_{1,p})\right)(y)
&= \operatorname{pr}'_{(p,p)}\left(\delta_{2,p}(\gamma_{y,y}, 0)\right)
= \operatorname{pr}'_{(p,p)}(y, y, 0)
= y = \mathrm{id}_{U_p}(y), \\
\left(\operatorname{pr}''_{(p,p)} \circ (\delta_{2,p} \circ \delta_{1,p})\right)(y)
&= \operatorname{pr}''_{(p,p)}\left(\delta_{2,p}(\gamma_{y,y}, 0)\right)
= \operatorname{pr}''_{(p,p)}(y, y, 0)
= y = \mathrm{id}_{U_p}(y).
\end{split}
\]

Finally, the compatibility of the $L_\infty[1]$-components follows once we show that the composites
\[
\widehat{\delta}_{1,p,x} \circ \widehat{\delta}_{2,p,(\gamma_{x,x},0)} \circ \widehat{\operatorname{pr}}'_{(p,p),(x,x,0)}
\qquad\text{and}\qquad
\widehat{\delta}_{1,p,x} \circ \widehat{\delta}_{2,p,(\gamma_{x,x},0)} \circ \widehat{\operatorname{pr}}''_{(p,p),(x,x,0)}
\]
are $L_\infty[1]$-homotopic to the identity morphism $\mathrm{id}_{\mathcal{C}_{p,x}}$ on each local complex. By construction, we recall that $\widehat{\delta}_{2,p,(\gamma_{x,x},0)} \sim \widehat{\delta}_{1,p,x}^{-1} \circ \widehat{\delta}_{p,x}$ up to homotopy. Substituting this relation into the first composite gives
\[
\widehat{\delta}_{1,p,x} \circ \left(\widehat{\delta}_{1,p,x}^{-1} \circ \widehat{\delta}_{p,x}\right) \circ \widehat{\operatorname{pr}}'_{(p,p),(x,x,0)}
\;\sim\;
\widehat{\delta}_{p,x} \circ \widehat{\operatorname{pr}}'_{(p,p),(x,x,0)}
\;\sim\;
\mathrm{id}_{\mathcal{C}_{p,x}},
\]
and similarly for the second projection. Consequently, we conclude that $\Delta_2 \circ \Delta_1 = \Delta$ as morphisms of Kuranishi spaces.

Recall that $\widehat{\delta}_{1,p,x}$ is a quasi-isomorphism by Lemma \ref{dlwq}. Since the de Rham morphism is always a quasi-isomorphism, it suffices to prove the following lemma:

\begin{lem}
$\widehat{\delta}^{\mathrm{K}}_{2,\delta_1(p),\delta_{1,p}(x)} \circ \widehat{\mathrm{pr}}^{\prime\mathrm{K}}_{(p',p''),(x',x'')}$ and $\widehat{\delta}^{\mathrm{K}}_{2,\delta_1(p),\delta_{1,p}(x)} \circ \widehat{\mathrm{pr}}^{\prime\prime\mathrm{K}}_{(p',p''),(x',x'')}$ are quasi-isomorphisms.
\end{lem}

\begin{proof}
We first show that $\widehat{\delta}^{\mathrm{K}}_{2,\delta_1(p),\delta_{1,p}(x)} \circ \widehat{\mathrm{pr}}^{\prime\mathrm{K}}_{(p',p''),(x',x'')}$ is a quasi-isomorphism. Considering the canonical projection
\[
\widehat{\mathrm{pr}}^{\prime\mathrm{K}}_{(p',p''),(x',x''),1} \colon a' \longmapsto \left(\pi^{\prime*}a', 0\right),
\]
it suffices to verify that the $L_{\infty}[1]$-morphism induced from the bundle embedding \eqref{bwzzzzbwq},
\[
\overline{\mathrm{em}}_{p,p} \colon \bigl((y,z), e', e''\bigr) \longmapsto \bigl((\gamma_{y,z},0), (e' + e'', 0, e' + e'')\bigr),
\]
and
\[
\begin{split}
\mathrm{em}_{p,p} \colon W_x \times W_x &\longrightarrow W_x^{\mathrm{P}} \times W, \\
(y, z) &\longmapsto (\gamma_{y, z},0),
\end{split}
\]
as depicted in the diagram
\[
\begin{tikzcd}[column sep=large, row sep=large]
  \pi^{\prime*}E_p \oplus \pi^{\prime\prime*}E_p \arrow[r, hook, "\overline{\mathrm{em}}_{p,p}"] \arrow[d, swap, "\pi^{\prime*}s_p \oplus \pi^{\prime\prime*}s_p"] & \mathrm{ev}_1^*E_p \oplus \mathrm{ev}_0^*TU_p \oplus E_W \arrow[d, swap, "s^{\mathrm{P}}_p"] \\
  U_p \times U_p \arrow[r, hook, "\mathrm{em}_{p,p}"] & \mathrm{P}U_p \times W
\end{tikzcd}
\]
is a quasi-isomorphism. 

It is straightforward to verify that the induced chain map (and hence the induced $L_{\infty}[1]$-morphism) is homotopic, as an $L_{\infty}[1]$-morphism, to the one induced by the composite bundle embedding
\[
\overline{\mathrm{em}}_{p,p} \circ \overline{\iota}'_x \quad \text{over} \quad \mathrm{em}_{p,p} \circ \iota'_x,
\]
as shown in the following commutative diagram:
\[
\begin{tikzcd}[column sep=large, row sep=large]
E_p\big|_{W_x} \arrow[r, hook, "\overline{\iota}'_x"] \arrow[d, swap, "s_p|_{W_x}"] & \pi^{\prime*}\left(E_p\big|_{W_x}\right) \oplus \pi^{\prime\prime*}\left(E_p\big|_{W_x}\right) \arrow[r, hook, "\overline{\mathrm{em}}_{p,p}"] \arrow[d, swap, "\pi^{\prime*}s_p \oplus \pi^{\prime\prime*}s_p|_{W_x \times W_x}"] & \mathrm{ev}_1^*\left(E_p\big|_{W_x}\right) \oplus \mathrm{ev}_0^*TW_x \oplus E_W \arrow[d, swap, "s^{\mathrm{P}}_p"] \\
W_x \arrow[r, hook, "\iota'_x"] & W_x \times W_x \arrow[r, hook, "\mathrm{em}_{p,p}"] & W^\mathrm{P}_x \times W
\end{tikzcd}
\]
where each map is given explicitly by
\[
\begin{split}
\overline{\iota}'_x \colon E_p\big|_{W_x} &\longrightarrow \left.\left(\pi^{\prime*}E_p \oplus \pi^{\prime\prime*}E_p\right)\right|_{W_x \times W_x}, \\
(y,e') &\longmapsto \bigl((y,y), 2e', 0\bigr),
\end{split}
\]
and
\[
\begin{split}
\iota'_x \colon W_x &\longrightarrow W_x \times W_x, \\
y &\longmapsto (y,y).
\end{split}
\]
Here, we choose the retraction map
\[
\begin{split}
\pi'_x \colon W^{\mathrm{P}}_x &\twoheadrightarrow \operatorname{Im}(\iota'_x), \\
(y,z) &\longmapsto (y,y).
\end{split}
\]

Observe that
\[
(\overline{\mathrm{em}}_{p,p} \circ \overline{\iota}'_x)(y,e') \colon (y,e') \longmapsto \bigl((y,y), 2e', 0\bigr) \longmapsto \bigl((\gamma_{y,y},0), (2e', 0, 2e')\bigr), 
\]
and
\[
(\mathrm{em}_{p,p} \circ \iota'_x)(y) \colon y \longmapsto (y,y) \longmapsto (\gamma_{y,y},0).
\]

Recall that $s^\mathrm{P}_p(\gamma, w)$ is defined by
\[
\begin{split}
s^\mathrm{P}_p(\gamma, w) &:= \left(w + (\mathrm{ev}_0^* s_p)_\gamma + (\mathrm{ev}_1^* s_p)_\gamma, \, (\sigma_p)_{\gamma}, \, 2w + (\mathrm{ev}_0^* s_p)_\gamma + (\mathrm{ev}_1^* s_p)_\gamma\right) \\ 
&= \left(w + s_{p}(\gamma(0)) + s_{p}(\gamma(1)), \, \dot{\gamma}(0), \, 2w + s_{p}(\gamma(0)) + s_{p}(\gamma(1))\right),
\end{split}
\]
where we identify the fibers of $\mathrm{ev}_0^*E_p$, $\mathrm{ev}_1^*E_p$, and $E_W$. We have the compatibility conditions with the sections:
\[
\begin{split}
\left(s^{\mathrm{P}}_p \circ \mathrm{em}_{p,p} \circ \iota'_x\right)(y) &= s^{\mathrm{P}}_p(\gamma_{y,y}, 0) \\ 
&= (2s_{p}(y), 0, 2s_{p}(y)) \\
&= \overline{\mathrm{em}}_{p,p}(2s_p(y), 0) \\
&= \left(\overline{\mathrm{em}}_{p,p} \circ \overline{\iota}'_x\right) \circ s_p(y).
\end{split}
\]

The differential $d s^{\mathrm{P}}_p$ at a zero point $(\gamma_{y,y}, 0)$ for $y$ satisfying $s_p(y) = 0$ is given by
\[
\begin{split}
d s^\mathrm{P}_{p,(\gamma_{y,y}, 0)} \colon T_{\gamma_{y,y}}\mathrm{P}W_x \oplus T_0 W &\longrightarrow (\mathrm{ev}_1^* E_p)_{\gamma_{y,y}} \oplus (\mathrm{ev}_0^* T W_x)_{\gamma_{y,y}} \oplus E_{W,0}, \\
(\xi, v) &\longmapsto \bigl(v + d s_p(\xi(0)) + d s_p(\xi(1)), \, \xi(0), \, 2v + d s_p(\xi(0)) + d s_p(\xi(1))\bigr).
\end{split}
\]

It suffices to show that it satisfies the tangent bundle condition, namely that the normal differential
\[
\left[d s_{p,(\gamma_{y,y}, 0)}^\mathrm{P}\right] \colon \frac{T_{\gamma_{y, y}}\mathrm{P}W_x \oplus T_0 W}{d\left(\mathrm{em}_{p,p} \circ \iota'_x\right)(T_y W_x)} \xrightarrow{\ \simeq\ } \frac{\left(\mathrm{ev}_1^* E_p \oplus \mathrm{ev}_0^* T W_x \oplus E_W\right)_{\mathrm{em}_{p,p} \circ \iota'_x(y)}}{\overline{\mathrm{em}}_{p,p} \circ \overline{\iota}'_x \left((E_p)_y\right)}
\]
is an isomorphism. Note that
\[
\frac{T_{\gamma_{y, y}}\mathrm{P}W_x \oplus T_0 W}{d\left(\mathrm{em}_{p,p} \circ \iota'_x\right)(T_y W_x)} \simeq \frac{T_y W_x \oplus T_y W_x \oplus T_0 W}{T_y W_x \oplus \{0\} \oplus \{0\}} \simeq T_y W_x \oplus W,
\]
and
\begin{equation}\label{jiiwdg}
\begin{split}
\frac{\left(\mathrm{ev}_1^* E_p \oplus \mathrm{ev}_0^* T W_x \oplus E_W\right)_{\mathrm{em}_{p,p} \circ \iota'_x(y)}}{\overline{\mathrm{em}}_{p,p} \circ \overline{\iota}'_x \left((E_p)_y\right)} &\simeq \frac{(\mathrm{ev}_1^* E_p)_{\gamma_{y,y}} \oplus (\mathrm{ev}_0^* T W_x)_{\gamma_{y,y}} \oplus E_{W,0}}{(\mathrm{ev}_1^* E_p)_{\gamma_{y,y}} \oplus \{0\} \oplus \{0\}} \\
&\simeq T_y W_x \oplus W,
\end{split}
\end{equation}
where we identify the diagonal $\Delta_{(\mathrm{ev}_1^* E_p)_{\gamma_{y,y}}}$ with $(\mathrm{ev}_1^* E_p)_{\gamma_{y,y}}$ in the denominator of \eqref{jiiwdg}.

Under these identifications, the normal differential $\left[d s_{p,(\gamma_{y,y}, 0)}^\mathrm{P}\right]$ acts as
\[
\begin{split}
\left[d s_{p,(\gamma_{y,y}, 0)}^\mathrm{P}\right] \colon T_y W_x \oplus T_0 W &\longrightarrow T_y W_x \oplus E_{W,0}, \\
(\zeta, v) &\longmapsto (\zeta, 2v),
\end{split}
\]
which is clearly an isomorphism. Therefore, by Proposition~\ref{afec}, this bundle embedding induces a quasi-isomorphism on the Koszul complexes:
\[
\begin{split}
\widehat{\delta}^{\mathrm{K}}_{2,\delta_1(p),\delta_{1,p}(x)} &\circ \widehat{\mathrm{pr}}^{\prime\mathrm{K}}_{(p',p''),(x',x'')} \colon \\
&\bigwedge\nolimits^{-\bullet} \Gamma\left(E_p^* \Big|_{W_x}\right) \xrightarrow{\ \simeq\ } \bigwedge\nolimits^{-\bullet} \Gamma\left(\left(\mathrm{ev}_1^* E_p^* \oplus \mathrm{ev}_0^* T^* U_p \oplus E_{p,W}^*\right)\!\Big|_{W^\mathrm{P}_x \times W}\right).
\end{split}
\]

Similarly, we can verify that $\widehat{\delta}^{\mathrm{K}}_{2,\delta_1(p),\delta_{1,p}(x)} \circ \widehat{\mathrm{pr}}^{\prime\prime\mathrm{K}}_{(p',p''),(x',x'')}$ is a quasi-isomorphism.

We consider the canonical projection
\[
\widehat{\mathrm{pr}}^{\prime\prime\mathrm{K}}_{(p',p''),(x',x''),1} \colon a'' \longmapsto \left(0, \pi^{\prime\prime*} a''\right).
\]

Again, it is straightforward to see that the induced chain map (and hence the induced $L_{\infty}[1]$-morphism) is the one induced by the composite bundle embedding
\[
\overline{\mathrm{em}}_{p,p} \circ \overline{\iota}''_x \quad \text{over} \quad \mathrm{em}_{p,p} \circ \iota''_x,
\]
as shown in the following commutative diagram:
\[
\begin{tikzcd}[column sep=large, row sep=large]
E_p\big|_{W_x} \arrow[r, hook, "\overline{\iota}''_x"] \arrow[d, swap, "s_p|_{W_x}"] & \pi^{\prime*}\left(E_p\big|_{W_x}\right) \oplus \pi^{\prime\prime*}\left(E_p\big|_{W_x}\right) \arrow[r, hook, "\overline{\mathrm{em}}_{p,p}"] \arrow[d, swap, "\pi^{\prime*}s_p \oplus \pi^{\prime\prime*}s_p|_{W_x \times W_x}"] & \mathrm{ev}_1^*\left(E_p\big|_{W_x}\right) \oplus \mathrm{ev}_0^* T W_x \oplus E_W \arrow[d, swap, "s^{\mathrm{P}}_p"] \\
W_x \arrow[r, hook, "\iota''_x"] & W_x \times W_x \arrow[r, hook, "\mathrm{em}_{p,p}"] & W^\mathrm{P}_x \times W
\end{tikzcd}
\]
where each map is given explicitly by
\[
\begin{split}
\overline{\iota}''_x \colon E_p\big|_{W_x} &\longrightarrow \left.\left(\pi^{\prime*} E_p \oplus \pi^{\prime\prime*} E_p\right)\right|_{W_x \times W_x}, \\
(y, e'') &\longmapsto \bigl((y,y), 0, 2e''\bigr),
\end{split}
\]
and
\[
\begin{split}
\iota''_x \colon W_x &\longrightarrow W_x \times W_x, \\
y &\longmapsto (y,y).
\end{split}
\]
Here, we choose the retraction map
\[
\begin{split}
\pi''_x \colon W^{\mathrm{P}}_x &\twoheadrightarrow \operatorname{Im}(\iota''_x), \\
(y,z) &\longmapsto (y,y).
\end{split}
\]
\end{proof}

In conclusion, we have shown that $\Delta = \Delta_2 \circ \Delta_1$.
\end{proof}

\section{$\mathbf{Kur}$ as a category of fibrant objects}

This subsection concludes the paper by proposing a structure of a category of fibrant objects on $\mathbf{Kur}$, which provides a way to compute the homotopy category $\mathrm{Ho}(\mathbf{Kur})$.

\subsection{Category of fibrant objects}

To construct the homotopy category, one usually considers a model category structure in the sense of Quillen \cite{Quillen}. However, we do not need the full machinery of model categories here. Instead, \cite{Brown} shows that the existence of path objects and pullbacks is sufficient for computing the homotopy category $\mathrm{Ho}(\mathbf{Kur})$.

We recall the definition introduced in \cite{Brown}.

\begin{defn}[Category of fibrant objects]\label{njjs}
Let ${C}$ be a category and let $\mathrm{Fib}$ and $\mathrm{W}$ be two classes of morphisms of ${C}$. We say that the triple $({C}, \mathrm{Fib}, \mathrm{W})$ is a \textit{category of fibrant objects} if the following axioms are satisfied:
\begin{enumerate}[label = (\roman*)]
\item ${C}$ has a final object, and the morphisms to the final object are fibrations.
\item All isomorphisms are fibrations.
\item Weak equivalences satisfy the two-out-of-three property.
\item The pullback of each fibration under an arbitrary morphism exists and is again a fibration.
\item The pullback of a trivial fibration is a trivial fibration.
\item Every diagonal morphism factors as a weak equivalence followed by a fibration.
\end{enumerate}
\noindent Here, a \textit{trivial fibration} means a morphism that is both a fibration and a weak equivalence.
\end{defn}

\begin{rem}
\cite{BLX} shows that their notion of derived differentiable manifolds has the structure of a category of fibrant objects.
\end{rem}

\begin{thm}\label{bownbbfab}
$(\mathbf{Kur}, \mathrm{Fib}, \mathrm{W})$ is a category of fibrant objects.
\end{thm}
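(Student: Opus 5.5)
The plan is to check the six axioms of Definition~\ref{njjs} one at a time, using the results already established in the paper. Most of them have essentially been proved earlier, and only the two-out-of-three property requires a new argument.

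For axiom (i), I take the trivial Kuranishi space $*$ as the final object. For any $\mathfrak{X}$, a morphism $\mathfrak{X}\to *$ consists of the constant map on underlying spaces, the constant maps on bases, and the unique $L_\infty[1]$-morphism out of the one-component complex $C^\infty(\mathrm{pt})\simeq\mathbb{R}$. Uniqueness then follows from the definition of the equivalence relation on pre-morphisms. That these morphisms are fibrations was already noted in the product example: the conditions of Definition~\ref{adfaf}(a) hold vacuously. For condition (i) of that definition, I pass to a representative in $\mathscr{A}_0(\mathfrak{X})$, and surjectivity onto a point is automatic. Axiom (ii) is Lemma~\ref{afdhjdasf}(ii). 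For axiom (iv), Proposition~\ref{fbprokur} gives existence of the pullback along a fibration, and Proposition~\ref{hnuun}(i) shows that $\mathrm{Pr}'$ is again a fibration. One point to check is the minimal-dimension condition (a)(i) for the fiber product atlas. I would handle it by shrinking/choosing $\widehat{\mathcal{U}}'$ in $\mathscr{A}_0$ and observing that $U'_{p'}\times_{U}U''_{p''}$ is minimal when both factors are. Axiom (v) is Proposition~\ref{hnuun}(ii), and axiom (vi) is Theorem~\ref{facpro} with the path object $\mathrm{P}\mathfrak{X}$.

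That leaves axiom (iii). I fix composable morphisms $F\colon\mathfrak{X}\to\mathfrak{X}'$ and $G\colon\mathfrak{X}'\to\mathfrak{X}''$ and use the extension procedure in the definition of composition, so that both are represented on a common middle atlas $\widehat{\mathcal{U}}'$. The first step is to check that being a weak equivalence does not depend on the choice of representative. Both conditions are stable under the equivalence of pre-morphisms: homeomorphism of $f$ is unchanged, and quasi-isomorphism is preserved under $L_\infty$-homotopy and under composition with the quasi-isomorphic expansion maps $\widehat{\varepsilon}$ of Lemma~\ref{cochqim}. This lets me take witnessing representatives compatible with the common middle atlas. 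Then, by \eqref{morcom}, the composite has underlying map $g\circ f$ and $L_\infty$-components $\widehat{f}_{p,x}\circ\widehat{g}_{f(p),f_p(x)}$. Two-out-of-three holds for homeomorphisms of topological spaces, and it holds for quasi-isomorphisms because it holds for isomorphisms on cohomology. Since every $x'\in s'^{-1}_{f(p)}(0)$ is of the form $f_p(x)$ when $f$ is a homeomorphism, I get the pointwise statement on all zeros.

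The main obstacle is the representative-independence step inside (iii), and in particular the case where $G\circ F$ and $G$ are weak equivalences and one wants $F$ to be one. There, the witnessing representative of $G\circ F$ need not factor through the chosen representatives of $F$ and $G$. I would try first to reduce to the definition of equivalence of pre-morphisms (common expansions $\times\mathbb{R}^m$ and open restrictions). The aim is to show that every representative of a weak equivalence is a weak equivalence at every zero point, using that expansions $\widehat{\mathcal{U}}\times\mathbb{R}^m$ induce quasi-isomorphisms via the tangent bundle condition (Proposition~\ref{afec}) and that open restrictions containing the zero loci do not change the local algebras $\mathcal{C}_x$.
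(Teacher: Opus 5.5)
Your axiom-by-axiom plan is the paper's for (ii)--(vi). The step that fails is axiom (i). You give $*$ the local algebra $C^\infty(\mathrm{pt})\simeq\mathbb{R}$, which is the honest Koszul term of a rank-zero bundle over a point, and you assert that the $L_\infty[1]$-morphism $\mathbb{R}\to\mathcal{C}_{p,x}$ is unique. It is not unique, even up to homotopy. Since $l_1$ vanishes on the degree-$0$ Koszul term, $1$ may be sent to any $h\in C^\infty(W_x)$. Moreover, $1\mapsto 0$ and $1\mapsto 1$ are not homotopic. Homotopic morphisms induce the same map on cohomology, because both $\mathrm{Eval}_j$ are left inverses of the quasi-isomorphism $\mathrm{Incl}$ in Definition~\ref{mdl1}. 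On the other hand, $1\notin\operatorname{Im}\iota_s$, since $\iota_s(a)(x)=a(s(x))=0$. Concretely, $\mathrm{Hom}_{\mathbf{Kur}}(*,*)$ would already contain both the identity and the morphism with zero $L_\infty$-component. These act differently on $H^0=\mathbb{R}$, and the conjugating maps $\widehat{\pi}$ in Definition~\ref{defmoreq} are quasi-isomorphisms, so they cannot identify the two; the pre-morphisms are inequivalent. Hence your $*$ is not final. The paper avoids exactly this by declaring $\mathcal{C}_*=0$ for the chart $\mathcal{U}_*$ in Lemma~\ref{finob}, which forces the $L_\infty$-component of every morphism to $*$ to be zero. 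You need to adopt that convention.

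The rest is fine at the paper's level of rigor, with a few corrections.
\begin{itemize}
\item Your representative-independence step in (iii) is a genuine refinement of the paper's ``without loss of generality''. However, the maps that enter it are the expansion quasi-isomorphisms $\widehat{\pi}$ of Definition~\ref{defmoreq}, not the completion maps $\widehat{\varepsilon}$ of Lemma~\ref{cochqim}.
\item The claim that every zero of $s'_{f(p)}$ has the form $f_p(x)$ does not follow from $f$ being a homeomorphism. The footprint of $\mathcal{U}'_{f(p)}$ may be strictly larger than $f(\operatorname{Im}\psi_p)$. You should first pass to an open subatlas of the middle atlas with matching footprints; this is allowed because a weak equivalence only requires some representative.
\item In (iv), you assert without proof that the fiber-product atlas is minimal whenever both factors are. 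The paper does not check condition (a)(i) for $\mathrm{Pr}'$ in Proposition~\ref{hnuun} either, so this is not a divergence from the paper, but your sketch does not settle it.
\end{itemize}
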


\begin{proof}
\begin{enumerate}[label = (\roman*)]
\item (\textit{$\mathbf{Kur}$ has a final object, and the morphisms to the final object are fibrations.}) We identify a final object as follows. Define the Kuranishi chart at the point $*$ by
\[
\mathcal{U}_* :=
\left( *, *, \mathrm{id}_*, \{0\}, \mathrm{id}_* \right)
\]
with the trivial $L_{\infty}[1]$-algebra
\[
\mathcal{C}_* = 0.
\]

Let
\[
* := \left( \{*\},\; \left[\widehat{\mathcal U}_*\right] \right).
\]

\begin{lem}\label{finob}
$*$ is the final object in the category $\mathbf{Kur}$.
\end{lem}

\begin{proof}
Let
\[
\mathfrak{X} = \left(X, \left[\widehat{\mathcal U}\right]\right)
\]
be a Kuranishi space.

A morphism
\[
F : \mathfrak{X} \rightarrow *
\]
is given by
\[
\overline{F} := \left( \widehat{\mathcal{U}}_*, \widehat{\mathcal{U}}, f, \left\{f_p\right\},\, \left\{\widehat f_{p,*}\right\} \right),
\]
where
\[
f: X \rightarrow *,
\]
\[
f_p : U_p \rightarrow \{*\}; \  p \in X,
\]
and
\[
\widehat f_{p,*} :
\mathcal{C}_* = \{0\}
\rightarrow
\mathcal{C}_{p,x}; \ p \in X,\; x \in s_p^{-1}(0)
\]
are trivial maps in each corresponding sense. The conditions for $\overline{F}$ to be a pre-morphism are satisfied for trivial reasons. In other words, we obtain a morphism
\[
F := \left[\overline{F}\right]
\]
from $\mathfrak{X}$ to $*$.
\end{proof}

Also, $\overline{F}$ satisfies the axioms for a fibration for trivial reasons, so we omit the details.

\begin{enumerate}[label = (\roman*), start = 2]
\item (\textit{Every isomorphism is a fibration.}) See Lemma \ref{afdhjdasf}.

\item (\textit{Weak equivalences satisfy the two-out-of-three property.})

Let $\mathfrak{X} = \left(X, \left[\widehat{\mathcal{U}}\right]\right)$, $\mathfrak{X}' = \left(X', \left[\widehat{\mathcal{U}'}\right]\right)$, and $\mathfrak{X}'' = \left(X'', \left[\widehat{\mathcal{U}''}\right]\right)$ be Kuranishi spaces. Let
\[
F : \mathfrak{X} \rightarrow \mathfrak{X}', \qquad
G : \mathfrak{X}' \rightarrow \mathfrak{X}'',
\qquad
H : \mathfrak{X} \rightarrow \mathfrak{X}''
\]
be morphisms between them satisfying $H = G \circ F$.

Suppose that $F$, $G$, and $H$ are represented by the pre-morphisms
\begin{equation}\nonumber
\begin{cases}
\overline{F} = \left(\widehat{\mathcal{U}}, \widehat{\mathcal{U}'}, f, \left\{f_{p}\right\}, \left\{\widehat{f}_{p,x}\right\}\right),\\
\overline{G} = \left(\underline{\widehat{\mathcal{U}'}}, \widehat{\mathcal{U}''}, g, \left\{g_{f(p)}\right\}, \left\{\widehat{g}_{f(p),y}\right\}\right),\\
\overline{H} = \left(\underline{\widehat{\mathcal{U}}}, \underline{\widehat{\mathcal{U}''}}, h, \left\{h_{p}\right\}, \left\{\widehat{h}_{p,x}\right\}\right),
\end{cases}
\end{equation}
respectively, with
\[
\left[\widehat{\mathcal{U}'}\right]
=
\left[\underline{\widehat{\mathcal{U}'}}\right]
\qquad\text{and}\qquad
\left[\widehat{\mathcal{U}}\right]
=
\left[\underline{\widehat{\mathcal{U}}}\right].
\]
Without loss of generality, we may assume that
\[
\widehat{\mathcal{U}'} = \underline{\widehat{\mathcal{U}'}}
\qquad\text{and}\qquad
\widehat{\mathcal{U}} = \underline{\widehat{\mathcal{U}}},
\]
so that
\begin{equation}\label{hgf23h}
h = g \circ f,
\end{equation}
and
\begin{equation}\label{hgf23q}
\widehat{h}_{p,x}
=
{\widehat{f}}_{p,x}
\circ
{\widehat{g}}_{f(p),f_p(x)}
\end{equation}
up to $L_{\infty}[1]$-homotopy for each $p \in X$ and $x \in s_p^{-1}(0)$.

The two-out-of-three property for homeomorphisms and quasi-isomorphisms follows immediately from \eqref{hgf23h} and \eqref{hgf23q}, respectively. In particular, for the $L_{\infty}$-component, we use the fact that any $L_{\infty}[1]$-morphism homotopic to a quasi-isomorphism is itself a quasi-isomorphism, which follows directly from the definition of $L_{\infty}[1]$-homotopy.

\item (\textit{The pullback of each fibration under an arbitrary morphism exists and is again a fibration.})
See Proposition \ref{hnuun} (i).

\[
\begin{tikzcd}
\mathfrak{Y} \arrow[rd, "{}^{\exists!} \widetilde{H}"] \arrow[bend right=30,swap, "H'"]{rdd} \arrow[bend left=20, "H''"]{rrd} & {} & {}\\
{} & \mathfrak{X}' \,{}_{F}\!\times_{G}\, \mathfrak{X}''
\arrow[r,"\mathrm{Pr}''"]
\arrow[d,"\mathrm{Pr}'"']
& \mathfrak{X}''
\arrow[d,"G"]
\\
{} & \mathfrak{X}'
\arrow[r,"F"']
& \mathfrak{X}.
\end{tikzcd}
\]

\item (\textit{The pullback of a trivial fibration is a trivial fibration.})
See Proposition \ref{hnuun} (ii).

\item (\textit{Every diagonal morphism factors as a weak equivalence followed by a fibration.})
See Proposition \ref{facpro}.
\end{enumerate}
\end{enumerate}
\end{proof}

\begin{defn}[Homotopy category of Kuranishi spaces]
We define the \textit{homotopy category of Kuranishi spaces} $\mathrm{Ho}(\mathbf{Kur})$ to be the resulting category obtained from the construction in Subsection \ref{pppo} and Definition \ref{dthcaa}.
\end{defn}

\subsection{Homotopy category of a category of fibrant objects}\label{pppo}

In this subsection, we briefly revisit the construction of the homotopy category of a category of fibrant objects, following \cite{Brown} for the reader's convenience.

Let $(C, \mathrm{Fib}, \mathrm{W})$ be a category of fibrant objects. For an object $Y$, the diagonal map
\[
\Delta : Y \rightarrow Y \times Y
\]
can be factored as a weak equivalence followed by a fibration:
\[
Y \xrightarrow{\Delta_1} \mathrm{P}Y \xrightarrow{\Delta_2} Y \times Y,
\]
where $\mathrm{P}Y$ is the path object for $Y$. Two morphisms $f, g: X \rightarrow Y$ are said to be \textit{right homotopic}, denoted by $f \sim_R g$, if there exists a morphism
\[
H: X \rightarrow \mathrm{P}Y
\]
called a homotopy, making the following diagram commutes:
\[
\begin{tikzcd} 
{} & X \arrow[d,"H"] \arrow[dl,"f"'] \arrow[dr,"g"] &{}\\ 
Y & \mathrm{P}Y \arrow[l,"\mathrm{ev}_0"'] \arrow[r,"\mathrm{ev}_1"] &Y,
\end{tikzcd}
\]
where we denote
\[
\mathrm{ev}_0
:=
\operatorname{pr}_1\circ\Delta_2, \ 
\mathrm{ev}_1
:=
\operatorname{pr}_2\circ\Delta_2
:
\mathrm{P}Y\rightarrow Y.
\]

By \cite{Brown}, the right homotopy relation $\sim_R$ is an equivalence relation on the morphism set $\mathrm{Hom}_{{C}}(X, Y)$. We then denote by $C/\sim$ the category consisting of the following data:
\begin{enumerate}
\item[--] $\mathrm{Ob}(C/\sim) := \mathrm{Ob}(C),$
\item[--] $\mathrm{Hom}_{{C}/\sim}(X, Y) := \mathrm{Hom}_{{C}}(X, Y) / \sim_R.$
\end{enumerate}

\begin{lem}{\cite[Brown's factorization lemma]{Brown}}
Let $({C}, \mathrm{Fib}, \mathrm{W})$ be a category of fibrant objects. If $f: X \rightarrow Y$ is any morphism in ${C}$, then $f$ can be factored as
\[
X \xrightarrow{i} Z \xrightarrow{p} Y,
\]
where $i: X \rightarrow Z$ is a weak equivalence and $p: Z \rightarrow Y$ is a fibration. Furthermore, there exists a trivial fibration $q: Z \rightarrow X$ such that
\[
q \circ i = \mathrm{id}_X.
\]
\end{lem}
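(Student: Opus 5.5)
The plan follows Brown's original argument, using only the axioms of Definition~\ref{njjs}, which Theorem~\ref{bownbbfab} establishes for $\mathbf{Kur}$; the argument is the same in any category of fibrant objects. Let $f\colon X\to Y$ be given and factor the diagonal of $Y$ through the path object of Theorem~\ref{facpro}:
\[
Y\xrightarrow{\Delta_1}\mathrm{P}Y\xrightarrow{\Delta_2}Y\times Y .
\]
Here the product $Y\times Y$ exists as the pullback of the two fibrations $Y\to *$ provided by axiom (i) and axiom (iv). Define $Z$ as the pullback of $\Delta_2$ along $f\times\mathrm{id}_Y\colon X\times Y\to Y\times Y$. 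Concretely, $Z=X\times_{Y}\mathrm{P}Y$, where $\mathrm{P}Y\to Y$ is $\mathrm{ev}_0$. By axiom (iv), the projection $(q,p')\colon Z\to X\times Y$ is a fibration. Set $p:=\mathrm{pr}_2\circ(q,p')$ and $q:=\mathrm{pr}_1\circ(q,p')$. Finally, define $i\colon X\to Z$ through the universal property of the pullback as the pair $(\mathrm{id}_X,\ \Delta_1\circ f)$. This pair is well defined because $\Delta_2\circ\Delta_1\circ f=(f,f)=(f\times\mathrm{id})\circ(\mathrm{id}_X,f)$.

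The first identity to check is $q\circ i=\mathrm{id}_X$, which holds by construction, and $p\circ i=f$ follows in the same way. Next we show that $p$ and $q$ are fibrations. The projections $\mathrm{pr}_1,\mathrm{pr}_2\colon X\times Y\to X,Y$ are fibrations, since each is the pullback of a fibration $Y\to *$ or $X\to *$ along the other factor's map to $*$ (axioms (i) and (iv)). The statement also needs that a composite of fibrations is a fibration. This is part of Brown's axioms but is not listed in Definition~\ref{njjs}. For $\mathbf{Kur}$ I would verify it directly from Definition~\ref{adfaf}: the required surjective submersions compose, and the Koszul maps, which are built from bundle embeddings, compose as well. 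Alternatively, one can express $p$ and $q$ as single pullbacks: $q$ is the pullback of $\mathrm{ev}_0=\mathrm{pr}_1\circ\Delta_2$ along $f$, so it suffices that $\mathrm{ev}_0$ is a fibration, and this is where composition is used.

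The remaining claim is that $q$ is a weak equivalence. Since $\mathrm{ev}_0\circ\Delta_1=\mathrm{id}_Y$ and $\Delta_1$ is a weak equivalence, two-out-of-three (axiom (iii)) shows that $\mathrm{ev}_0$ is a weak equivalence; it is a fibration as noted above, so it is a trivial fibration. The map $q$ is its pullback along $f$, hence a trivial fibration by axiom (v). From $q\circ i=\mathrm{id}_X$ and two-out-of-three, $i$ is a weak equivalence.

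I expect the main obstacle to be the composition-of-fibrations step, namely showing that $\mathrm{ev}_0=\mathrm{pr}_1\circ\Delta_2$ is a fibration. Definition~\ref{adfaf} requires minimal-dimension atlases (condition (i)) and a decomposition of the obstruction bundle, and these conditions are not obviously stable under composition. My first attempt would be to check this directly for the explicit maps $\mathrm{P}_gU_p\times W\to U_p$, $(\gamma,w)\mapsto\gamma(0)$. This map is a surjective submersion, and the bundle embedding can be taken as $\overline{\delta}_{1,p}$ from \eqref{nannnn}, whose tangent bundle condition has already been verified in the proof of Theorem~\ref{facpro}.
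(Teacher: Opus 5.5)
Your argument is Brown's own proof of the factorization lemma, and Brown's proof is all the paper relies on: it offers no proof of its own, only the citation. The argument is correct in Brown's axiomatic setting. However, it uses two of Brown's axioms that are \emph{not} among the axioms of Definition~\ref{njjs}: that $\mathrm{Fib}$ is closed under composition, and that every isomorphism lies in $\mathrm{W}$.

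The second axiom is used silently when you apply two-out-of-three to $\mathrm{ev}_0\circ\Delta_1=\mathrm{id}_Y$ and to $q\circ i=\mathrm{id}_X$, since axiom (iii) by itself says nothing about identities. For $\mathbf{Kur}$ this is harmless, because the identity pre-morphism \eqref{idxm} visibly satisfies Definition~\ref{adfaf}(b). The first axiom is the step you correctly flag, but your proposal never supplies it, so as written the proof has a gap.

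Your single-pullback rewriting handles only $q$. It reduces $q$ to the fibrancy of one explicit map, $\mathrm{ev}_0=\mathrm{pr}_1\circ\Delta_2$, which one could hope to check by hand. Even there you would still have to verify conditions (a)(i) and (a)(iv) of Definition~\ref{adfaf} for the path atlas, and your sketch does not do this.

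There is no analogous rewriting for $p$. It is intrinsically the composite $\mathrm{pr}_2\circ(Z\to X\times Y)$ for an \emph{arbitrary} $f$, so showing $p\in\mathrm{Fib}$ needs closure under composition in general.

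Your justification that ``the surjective submersions compose, and the Koszul maps compose as well'' does not match Definition~\ref{adfaf}:
\begin{enumerate}
\item The Koszul part of a composite of two fibrations is a four-term zig-zag $\widehat{\eta}^{\vee}_F\circ\underline{\widehat{\eta}}_F^{-1}\circ\widehat{\eta}^{\vee}_G\circ\underline{\widehat{\eta}}_G^{-1}$.
\item Condition (a)(iii) demands a \emph{single} span of bundle embeddings $\underline{E}\hookrightarrow E_p$, $\underline{E}\hookrightarrow E''_{g(f(p))}$, with the tangent bundle condition on the second leg.
\item Producing such a span requires some fiber product of the two spans over $E'_{f(p)}$, and afterwards (a)(iv) and (a)(v) must be re-verified.
\item Composing representatives generally passes through extensions as in \eqref{extfi}. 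These expand the domain atlas and are not evidently compatible with the minimality condition (a)(i).
\end{enumerate}

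You therefore have two options. You can add to Definition~\ref{njjs} the closure of $\mathrm{Fib}$ under composition and the requirement that isomorphisms lie in $\mathrm{W}$; then your proof is complete and standard. Or, to use the lemma for $\mathbf{Kur}$, you must prove that fibrations compose. That step is genuinely missing here, and the paper also skips it when it invokes the lemma through Theorem~\ref{bownbbfab}.
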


To obtain the homotopy category $\mathrm{Ho}({C}) = {C}[\mathrm{W}^{-1}]$, we need to formally invert the weak equivalences via the \textit{calculus of fractions}. For an arbitrary category, this may require handling infinite and uncomputable zig-zags of morphisms. However, in a category of fibrant objects, Brown's Factorization Lemma ensures that the category ${C}/\sim$ satisfies a right calculus of fractions with respect to trivial fibrations. Consequently, a morphism from $X$ to $Y$ in $\mathrm{Ho}({C})$ can be explicitly represented by a \textit{roof}, i.e., a diagram of the form
\[
\begin{tikzcd}
& Z \arrow[dl, "~"', "w"'] \arrow[dr, "f"] & \\
X & & Y.
\end{tikzcd}
\]
Here $w: Z \xrightarrow{\sim} X$ is a weak equivalence, and $f: Z \rightarrow Y$ is any morphism in ${C}$. Denote by $\mathrm{Roof}(X, Y)$ the collection of all roofs between $X$ and $Y$.

Two roofs $(w: Z \rightarrow X, \, f: Z \rightarrow Y)$ and $(w': Z' \rightarrow X, \, f': Z' \rightarrow Y)$ are defined to be equivalent if they can be dominated by a common roof in the homotopy category ${C}/\sim$, that is, if there exists a third roof $(w'': Z'' \xrightarrow{\sim} X, \, f'': Z'' \rightarrow Y)$ and right homotopy classes of maps $h: Z'' \rightarrow Z$ and $h': Z'' \rightarrow Z'$ making the following diagram commute in ${C}/\sim$:

\[
\begin{tikzcd}[row sep=large, column sep=large]
  {} & {} & Z'' \arrow[ld, "h"'] \arrow[rd, "h'"] & {} & \\
    {} & Z \arrow[rd, "f"] \arrow[ld, "w"'] & {} & Z' \arrow[ld, "f'"'] \arrow[rd, "w'"] & {} \\
    X & {} & Y & {} & X,
\end{tikzcd}
\]
where we require
\[
w \circ h \sim_R w' \circ h'
\qquad\text{and}\qquad
f \circ h \sim_R f' \circ h'.
\]
Because ${C}$ satisfies a right calculus of fractions, this dominance condition is in fact an equivalence relation.

\begin{defn}\cite[The homotopy category $\mathrm{Ho}(C)$]{Brown}\label{dthcaa}
We denote by $\mathrm{Ho}(C)$ the \textit{homotopy category} of the category of fibrant objects $C$, which consists of the following data:
\begin{enumerate}
\item[--] $\mathrm{Ob}(\mathrm{Ho}(C)) := \mathrm{Ob}(C),$
\item[--] $\mathrm{Hom}_{\mathrm{Ho}({C})}(X, Y) := \mathrm{Roof}(X, Y) / \sim.$
\end{enumerate}
\end{defn}

Using Theorem \ref{bownbbfab}, we apply this construction to $(\mathbf{Kur}, \mathrm{Fib}, \mathrm{W})$ to obtain the homotopy category $\operatorname{Ho}(\mathbf{Kur}).$

\begin{prop}
There exists a canonical embedding
\[
\mathbf{Man} \hookrightarrow \mathrm{Ho}\left(\mathbf{Kur}\right).
\]
\end{prop}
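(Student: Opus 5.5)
The embedding will be the composite
\[
\mathbf{Man} \xrightarrow{\ \mathscr{F}\ } \mathbf{Kur} \xrightarrow{\ \gamma\ } \operatorname{Ho}(\mathbf{Kur}),
\]
where $\mathscr{F}$ is the embedding of Proposition~\ref{pman} and $\gamma$ is the localization functor. On objects $\gamma$ is the identity, and a morphism $f$ is sent to the roof $(\mathrm{id}_X, f)$. This composite is a functor. On objects it is injective, because Kuranishi spaces with distinct underlying topological spaces or distinct atlas classes stay distinct in $\operatorname{Ho}(\mathbf{Kur})$, whose objects are those of $\mathbf{Kur}$ (Definition~\ref{dthcaa}). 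So the whole task is to prove faithfulness: if two smooth maps $f,g\colon M\to N$ have roofs $(\mathrm{id},\mathscr{F}f)$ and $(\mathrm{id},\mathscr{F}g)$ that are equivalent, then $f=g$.

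The key tool will be a forgetful functor $\mathrm{Top}\colon \mathbf{Kur}\to\mathbf{Top}$ sending $(X,[\widehat{\mathcal U}])$ to $X$ and $[\overline F]$ to its underlying continuous map $f$. This is well defined because equivalent pre-morphisms share the zero-locus map, and it respects composition by \eqref{morcom}. By Definition~\ref{adfaf}(b)(i), every weak equivalence maps to a homeomorphism. I will then check that right-homotopic maps have equal images. For the path object $\mathrm{P}\mathfrak{Y}$ of Definition~\ref{mawehe}, the underlying space is $Y$ itself by \eqref{zze}, and both $\mathrm{ev}_0=\operatorname{pr}_1\circ\Delta_2$ and $\mathrm{ev}_1=\operatorname{pr}_2\circ\Delta_2$ reduce to $\mathrm{id}_Y$, since $\delta_2(p)=(p,p)$. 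Hence if $f\sim_R g$ through a homotopy $H$, then $\mathrm{Top}(f)=\mathrm{Top}(H)=\mathrm{Top}(g)$. The same holds for any other choice of path object, by Brown's factorization lemma together with the homeomorphism property. It follows that $\mathrm{Top}$ factors through $\mathbf{Kur}/\!\sim$ and, since it inverts $\mathrm{W}$, further through $\operatorname{Ho}(\mathbf{Kur})$. Concretely, a roof $(w,f)$ is sent to $\mathrm{Top}(f)\circ\mathrm{Top}(w)^{-1}$, and the dominance relation is respected because the relations $w\circ h\sim_R w'\circ h'$ and $f\circ h\sim_R f'\circ h'$ become equalities in $\mathbf{Top}$ after cancelling the homeomorphisms.

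With this in hand, faithfulness is immediate. The composite $\mathbf{Man}\to\operatorname{Ho}(\mathbf{Kur})\to\mathbf{Top}$ is the usual forgetful functor, sending $f$ to the underlying continuous map of $f$. Since a smooth map is determined by its underlying map of sets, equal images in $\operatorname{Ho}(\mathbf{Kur})$ force $f=g$.

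The main obstacle is the independence of the choice of path object. The relation $\sim_R$ is stated in terms of a chosen factorization $\Delta=\Delta_2\circ\Delta_1$, so I need $\mathrm{Top}(\mathrm{ev}_0)=\mathrm{Top}(\mathrm{ev}_1)$ for every path object, not only for $\mathrm{P}\mathfrak{Y}$. I will argue it as follows. Since $\mathrm{Top}(\Delta_1)$ is a homeomorphism and $\mathrm{Top}(\operatorname{pr}_i\circ\Delta_2\circ\Delta_1)=\mathrm{id}$, each $\mathrm{Top}(\mathrm{ev}_i)$ is forced to equal $\mathrm{Top}(\Delta_1)^{-1}$, and in particular the two agree. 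This requires that $\mathrm{Top}$ commutes with products, which holds by the construction of Example~\ref{dudhbbjbg}, where the underlying space of $\mathfrak{X}'\times\mathfrak{X}''$ is $X'\times X''$.
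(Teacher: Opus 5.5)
Your proof is correct, but it establishes the key point by a different route than the paper.

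\textbf{The paper's route.} The paper also composes $\mathscr{F}$ with the localization functor. It then argues only that, restricted to manifolds, a weak equivalence is a homeomorphism inducing isomorphisms on local function spaces, hence a diffeomorphism. This observation is aimed at showing that inverting $\mathrm{W}$ creates no new isomorphisms among manifolds. It never addresses whether two distinct smooth maps could be identified in $\operatorname{Ho}(\mathbf{Kur})$, either by right homotopy through a path object or by roofs whose apex is not a manifold.

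\textbf{Your route.} The underlying-space functor $\mathrm{Top}$ handles exactly that gap. It is well defined on equivalence classes of pre-morphisms, sends weak equivalences to homeomorphisms, and sends $\mathrm{ev}_0$ and $\mathrm{ev}_1$ to the same map for any path object. Hence it descends to $\operatorname{Ho}(\mathbf{Kur})$ and recovers the underlying map of a smooth map. This gives injectivity on objects plus faithfulness, which is what ``embedding'' means in Proposition~\ref{pman}.

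\textbf{What each buys.} $\mathrm{Top}$ only sees homeomorphism type. So your argument says nothing about whether manifolds that become isomorphic in $\operatorname{Ho}(\mathbf{Kur})$ must be diffeomorphic; that is the question the paper's remark on weak equivalences between manifolds is aimed at.

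\textbf{Two small corrections.}
\begin{enumerate}
\item You do not need $\mathrm{Top}$ to commute with products. Since $\operatorname{pr}_i\circ\Delta=\mathrm{id}$ already holds in $\mathbf{Kur}$, functoriality gives $\mathrm{Top}(\mathrm{ev}_i)\circ\mathrm{Top}(\Delta_1)=\mathrm{id}$ directly.
\item When checking the roof relation, the maps you must cancel are $\mathrm{Top}(h)$ and $\mathrm{Top}(h')$, not only $\mathrm{Top}(w)$ and $\mathrm{Top}(w')$. These are homeomorphisms because $w\circ h$ is right homotopic to the weak equivalence $w''$. Alternatively, bypass the roof bookkeeping by invoking the universal property of $\mathbf{Kur}[\mathrm{W}^{-1}]$.
\end{enumerate}
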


\begin{proof}
We can compose the functor
\[
\mathbf{Man} \hookrightarrow \mathbf{Kur}
\]
in Theorem \ref{popl} with the localization functor
\[
\mathbf{Kur} \rightarrow \mathrm{Ho}\left(\mathbf{Kur}\right).
\]
We claim that this composition is an embedding. Regarding manifolds as objects in $\mathrm{Ho}\left(\mathbf{Kur}\right),$ observe the following. When restricted to manifolds, a weak equivalence consists of a homeomorphism between the underlying topological spaces together with an isomorphism induced by the pullback on the function spaces of local neighborhoods. Hence, a weak equivalence between manifolds is precisely a diffeomorphism.
\end{proof}

\appendix

\section{$L_{\infty}[1]$-algebras and their homotopy theory}

In this section, we briefly introduce $L_{\infty}[1]$-algebras and their homotopy theories, following \cite{Kim2}. We first recall the notion of \textit{graded symmetric algebra} $SC$ of a vector space $C$ over a field $\mathbf{k},$
$$
S C := TC / \langle v \otimes v' - (-1)^{|v|\cdot|v'|} v' \otimes v \rangle,
$$
with its degree $k$ component $S^k C := \{v \in S C \mid v \text{ is homogeneous of degree } k\}.$
We have a decomposition
$$
S C = \bigoplus\limits_{k=0}^{\infty} S^k C
$$
with the induced product $\odot$ on each component.
We denote by $\text{Sh}(i, k-i)$ the set of $(i, k-i)$-unshuffles, and the sign $\sgn(\tau)$ for $\tau \in \text{Sh}(i, k-i)$ is defined for homogeneous elements $a_1, \cdots, a_k \in C$, we write
$$
a_{\tau(1)} \odot \cdots \odot a_{\tau(k)} = \sgn(\tau) a_1 \odot \cdots \odot a_k.
$$

\begin{defn}
An \textit{$L_{\infty}[1]$-algebra} is a pair $\left(C, \{l_k\}\right)$ consisting of a vector space $C$ and a family of degree 1 linear maps
$$
l_k : S^k C \rightarrow C, \ k \geq 0,
$$
satisfying the relations
\begin{equation}\label{quadrel}
\sum\limits_{i = 0}^{k} \sum\limits_{\tau \in {\text{Sh}}(i, k-i)} {\sgn(\tau)} l_{k-i+1}\left(l_i(a_{\tau(1)}, \cdots, a_{\tau(i)}),a_{\tau(i+1)}, \cdots, a_{\tau(k)}\right) = 0.
\end{equation}
\end{defn}

\begin{defn}
Let $(C,\{l_k\})$ and $(C', \{l'_k\})$ be two $L_{\infty}[1]$-algebras. An $L_{\infty}[1]$\textit{-algebra morphism}, or simply $L_{\infty}[1]$\textit{-morphism}
\begin{equation}\label{lrel}
f : C \rightarrow C'
\end{equation}
is a family of degree 0 linear maps
$$
f_k : S^kC \rightarrow C', \ k \geq 0,
$$
satisfying the relations
\begin{equation}\label{frel}
\begin{split}
\sum\limits_{i = 0}^{k}& \sum\limits_{\tau \in {\text{Sh}}(i, k-i)} {\sgn(\tau)} f_{k-i+1}\left(l_i(a_{\tau(1)}, \cdots, a_{\tau(i)}),a_{\tau(i+1)}, \cdots, a_{\tau(k)}\right)\\
&= \sum\limits_{\substack{t, j_1, \cdots, j_t \geq 1,\\ j_1 + \cdots + j_t = k}} \sum\limits_{\tau \in S_k}  \frac{\sgn(\tau)}{t! j_1! \cdots j_t!} \  l'_{t}\bigl(f_{j_1}(a_{\tau(1)}, \cdots, a_{\tau(j_1)}), \cdots, \\
& \quad \quad \quad \quad \quad \quad \quad \quad \quad \quad \quad \quad f_{j_t}(a_{\tau(k -(j_1 + \cdots + j_{t-1}))}, \cdots, a_{\tau(k)})\bigr).
\end{split}
\end{equation}
Here, $S_k$ denotes the symmetric group of permutations of $k$ elements. 
\end{defn}

\begin{defn}
For two $L_{\infty}[1]$-morphisms
$$
f : C \rightarrow C', \ g: C' \rightarrow C'',
$$
we define their \textit{composition}
$$
g \circ f : C \rightarrow C''
$$
by a family of linear maps of degree 0 for $k \geq 0$
\begin{equation}\nonumber
\begin{split}
(g \circ f)_k := &\sum\limits_{i = 0}^{k} \sum\limits_{\tau \in S_k}  \frac{\sgn(\tau)}{t! j_1! \cdots j_t!} \ g_{t}\bigl(f_{j_1}(a_{\tau(1)}, \cdots, a_{\tau(j_1)}), \cdots,\\
&\quad \quad\quad \quad\quad \quad\quad \quad f_{j_t}(a_{\tau(k -(j_1 + \cdots + j_{t-1}))}, \cdots, a_{\tau(k)})\bigr).
\end{split}
\end{equation}
It is straightforward to verify that $\{(g \circ f)_k\}_{k \geq 0}$ satisfies the relation (\ref{frel}).
\end{defn}

\begin{defn}
We say an $L_{\infty}[1]$-algebra $\{l_k\}_{k \geq 0}$ is \textit{strict} if $l_0 = 0.$ Otherwise, we say it is \textit{curved.} We similarly define \textit{strict/curved} $L_{\infty}[1]$-morphisms.
\end{defn}

In the strict case, the relations (\ref{lrel}) and (\ref{frel}) coincide with the differential and the chain map relations, respectively. That is, they satisfy
\[
l_1 \left( l_1(a) \right) = 0; \ l'_1 \left(f_1 (a) \right) = f_1 \left( l_1(a) \right).
\]
\begin{defn}
We say that a strict $L_{\infty}[1]$-algebra $(C,\{l_k\})$ is \textit{acyclic} if its cohomology for each degree vanishes, that is, if 
\[
H^*(C) = \frac{\ker{l_1}}{\mathrm{Im}l_1} = 0.
\]
We say that a strict $L_{\infty}[1]$-morphism $\{f_k\}_{k \geq 1}$ between strict $L_{\infty}[1]$-algebras is a \textit{quasi-isomorphism} if $f_1$ is a quasi-isomorphic chain map.
\end{defn}

From now on and elsewhere in this paper, we always assume the strictness of $L_{\infty}[1]$-algebras and their morphisms.

To define homotopies between two $L_{\infty}[1]$-morphisms, we need to introduce the following notion:
\begin{defn}[Models of $\Delta^1 \times C$]\label{mdl1}
Let $C$ be an $L_{\infty}[1]$-algebra. We say an $L_{\infty}[1]$-algebra $\mathfrak{C}$ is a \textit{model} of $\Delta^1 \times C$ if there exist $L_{\infty}[1]$-morphisms
\[
\Eval_j : \mathfrak{C} \rightarrow C, \ \ j = 0,1\\
\]
and
\[
\Incl : {C} \rightarrow \mathfrak{C},
\]
with the following properties:
\begin{enumerate}[label = (\roman*)]
\item $\left(\Eval_j\right)_{k \geq 2} \equiv 0, \ j =0,1, \ \Incl_{k \geq 2} \equiv 0.$
\item $\Eval_j, \ j =0,1$ and  $\Incl$ are quasi-isomorphisms.
\item $(\Eval_j)_1 \circ \Incl = \mathrm{id}_C.$
\item $(\Eval_0)_1 \oplus (\Eval_{1})_1 : \mathfrak{C} \rightarrow {C} \oplus C$ is surjective.
\end{enumerate}
\end{defn}

Using the notion of models, we can define homotopies between $L_{\infty}[1]$-morphisms:

\begin{defn}[Homotopy]\label{defn:homotopy}
We say that two $L_{\infty}[1]$-morphisms $f_0, f_1 : (C, \{l_k\}) \rightarrow (C', \{l'_k\})$ are \textit{homotopic} if there exist a \textit{model of} $\Delta^1 \times C'$ denoted by $\mathfrak{C}'$ and an $L_{\infty}[1]$-morphism $h: C \rightarrow \mathfrak{C}'$ such that we have $f_j = \Eval_j \circ h, \ j = 0,1.$
\end{defn}

\begin{exam}\label{hhex}
Let $\Delta^1$ be the unit interval and $\Omega^*(\Delta^1)$ its de Rham complex over a field. For an $L_{\infty}[1]$-algebra $C$, consider the tensor product
\[
\Omega^*(\Delta^1) \otimes C,
\]
which inherits an $L_{\infty}[1]$-algebra structure with operations
\[
l_k \colon (\Omega^*(\Delta^1) \otimes C)^{\otimes k} \longrightarrow \Omega^*(\Delta^1) \otimes C, \quad k \geq 1,
\]
given by
\[
l_k(\alpha_1 \otimes x_1, \dots, \alpha_k \otimes x_k) :=
\begin{cases}
d\alpha_1 \otimes x_1 + (-1)^{|\alpha_1|} \alpha_1 \otimes l_1(x_1) & \text{if } k = 1, \\[1ex]
(-1)^{|\vec{\alpha}|} \alpha_1 \wedge \dots \wedge \alpha_k \otimes l_k(x_1, \dots, x_k) & \text{if } k \geq 2,
\end{cases}
\]
for $\alpha_i \in \Omega^*(\Delta^1)$, $x_i \in C$ ($i = 1, \dots, k$), and $k \geq 1$. Here, the sign factor is defined by
\[
|\vec{\alpha}| := \sum_{i=1}^{k-1} |x_i| \left( \sum_{j=i+1}^k |\alpha_j| \right) + \sum_{i=1}^k |\alpha_i|.
\]

The $L_{\infty}[1]$-morphisms $\mathrm{Eval}_i = \{(\mathrm{Eval}_i)_k\}_{k \geq 1}$ (for $i = 0, 1$) and $\mathrm{Incl} = \{\mathrm{Incl}_k\}_{k \geq 1}$ are given by
\[
(\mathrm{Eval}_i)_k \colon (\Omega^*(\Delta^1) \otimes C)^{\otimes k} \longrightarrow C,
\]
\[
(\mathrm{Eval}_i)_k :=
\begin{cases}
\left.\mathrm{id}\right|_{t=i} \otimes \mathrm{id}_C & \text{if } k = 1, \\[1ex]
0 & \text{if } k \geq 2,
\end{cases}
\]
and
\[
\begin{split}
\mathrm{Incl}_k \colon C^{\otimes k} &\longrightarrow \Omega^*(\Delta^1) \otimes C, \\
\mathrm{Incl}_k &:=
\begin{cases}
1 \otimes \mathrm{id}_C & \text{if } k = 1, \\[1ex]
0 & \text{if } k \geq 2,
\end{cases}
\end{split}
\]
respectively. It follows immediately that this data satisfies the axioms in Definition~\ref{mdl1}. In particular, $\mathrm{Eval}_i$ ($i=0,1$) and $\mathrm{Incl}$ are $L_{\infty}[1]$-algebra morphisms. 

Moreover, they are quasi-isomorphisms, as sketched below. Since $\mathrm{Incl}_1$ is injective, it being a quasi-isomorphism is equivalent to the acyclicity of the quotient complex
\[
\frac{\Omega^*(\Delta^1) \otimes C}{\mathrm{Incl}_1(C)} \simeq \frac{\Omega^*(\Delta^1) \otimes C}{\{1\} \otimes C} \simeq \frac{\Omega^*(\Delta^1)}{\mathbb{R}\cdot 1} \otimes C,
\]
which follows from the acyclicity of $\Omega^*(\Delta^1)/(\mathbb{R}\cdot 1)$ together with the Künneth formula. Finally, axiom (iii) of Definition~\ref{mdl1} implies that $\mathrm{Eval}_i$ is also a quasi-isomorphism for each $i$.
\end{exam}

\begin{lem}\cite[Lemma 3.3]{Kim2}
Homotopies define an equivalence relation.
\end{lem}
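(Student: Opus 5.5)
We must show that homotopy of $L_\infty[1]$-morphisms, as in Definition~\ref{defn:homotopy}, is reflexive, symmetric and transitive. Throughout we use that all algebras and morphisms are strict. We fix the explicit model $\Omega^*(\Delta^1)\otimes C'$ of Example~\ref{hhex} as the reference model, and we first reduce an arbitrary model $\mathfrak{C}'$ to it where needed.

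\emph{Reflexivity.} Given $f\colon C\to C'$, take the model $\Omega^*(\Delta^1)\otimes C'$ and set $h:=\mathrm{Incl}\circ f$. Axiom (iii) of Definition~\ref{mdl1} gives $\mathrm{Eval}_j\circ\mathrm{Incl}=\mathrm{id}$ for $j=0,1$, hence $\mathrm{Eval}_j\circ h=f$ for both $j$.

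\emph{Symmetry.} Given a homotopy $h\colon C\to\mathfrak{C}'$ from $f_0$ to $f_1$, we keep the same algebra $\mathfrak{C}'$ but exchange the roles of $\mathrm{Eval}_0$ and $\mathrm{Eval}_1$. Every axiom of Definition~\ref{mdl1} is symmetric in $j=0,1$, so this is again a model of $\Delta^1\times C'$. The same $h$ is then a homotopy from $f_1$ to $f_0$.

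\emph{Transitivity.} This is the main step. Suppose $h\colon C\to\mathfrak{C}_1$ is a homotopy from $f_0$ to $f_1$, and $h'\colon C\to\mathfrak{C}_2$ is a homotopy from $f_1$ to $f_2$. We form the fiber product
\[
\mathfrak{C}:=\mathfrak{C}_1\times_{C'}\mathfrak{C}_2,
\]
taken over $\mathrm{Eval}_1$ on $\mathfrak{C}_1$ and $\mathrm{Eval}_0$ on $\mathfrak{C}_2$. Because these morphisms are strict with vanishing higher components, $\mathfrak{C}$ is the degreewise pullback of complexes. It carries componentwise operations (so the higher $l_k$ are componentwise), and it is an $L_\infty[1]$-algebra. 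Since $\mathrm{Eval}_1\circ h=f_1=\mathrm{Eval}_0\circ h'$, the pair $(h,h')$ defines an $L_\infty[1]$-morphism $C\to\mathfrak{C}$. We then equip $\mathfrak{C}$ with the following maps:
\begin{itemize}
\item $\mathrm{Eval}_0$ taken from $\mathfrak{C}_1$;
\item $\mathrm{Eval}_1$ taken from $\mathfrak{C}_2$;
\item $\mathrm{Incl}:=(\mathrm{Incl}_1,\mathrm{Incl}_2)$, which lands in $\mathfrak{C}$ by axiom (iii).
\end{itemize}
Axioms (i) and (iii) are then immediate, and composing with $(h,h')$ recovers $f_0$ and $f_2$.

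We expect the quasi-isomorphism axiom (ii) and the surjectivity axiom (iv) to be the main obstacle. By axiom (iv) for $\mathfrak{C}_2$, the map $\mathrm{Eval}_0\colon\mathfrak{C}_2\to C'$ is surjective, so there is a short exact sequence
\[
0\to\mathfrak{C}\to\mathfrak{C}_1\oplus\mathfrak{C}_2\to C'\to 0.
\]
Both evaluations are quasi-isomorphisms, so the long exact cohomology sequence shows that the projection $\mathfrak{C}\to\mathfrak{C}_1$ is a quasi-isomorphism. Composing with $\mathrm{Eval}_0$ shows that the new $\mathrm{Eval}_0$ is a quasi-isomorphism. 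Symmetrically, $\mathrm{Eval}_0$ on $\mathfrak{C}_1$ is surjective by axiom (iv), and the same argument handles the new $\mathrm{Eval}_1$. Then $\mathrm{Incl}$ is a quasi-isomorphism by two-out-of-three, since $\mathrm{Eval}_0\circ\mathrm{Incl}=\mathrm{id}$.

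For axiom (iv), take $(c_0,c_2)\in C'\oplus C'$. Lift $c_0$ to some $x\in\mathfrak{C}_1$ with $\mathrm{Eval}_0(x)=c_0$ and $\mathrm{Eval}_1(x)$ arbitrary, using surjectivity of $\mathfrak{C}_1\to C'\oplus C'$. Then choose $y\in\mathfrak{C}_2$ with $(\mathrm{Eval}_0,\mathrm{Eval}_1)(y)=(\mathrm{Eval}_1(x),c_2)$. The pair $(x,y)$ lies in $\mathfrak{C}$ and maps to $(c_0,c_2)$. Hence $\mathfrak{C}$ is a model of $\Delta^1\times C'$, and $f_0$ is homotopic to $f_2$.
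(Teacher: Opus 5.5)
Your proof is correct. The paper gives no argument of its own for this statement; it only quotes \cite[Lemma 3.3]{Kim2}. What you supply is the standard gluing-of-models argument, and it is self-contained given Definitions~\ref{mdl1} and~\ref{defn:homotopy} and Example~\ref{hhex}.

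Its efficiency comes from the fact that Definition~\ref{defn:homotopy} quantifies over \emph{all} models of $\Delta^1\times C'$. Had homotopy been defined through the fixed model $\Omega^*(\Delta^1)\otimes C'$, transitivity would also require comparing $\mathfrak{C}_1\times_{C'}\mathfrak{C}_2$ with that model, which is a model-independence statement your route avoids.

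A few points to tidy up:
\begin{enumerate}
\item Justify closure of $\mathfrak{C}_1\times_{C'}\mathfrak{C}_2$ under the componentwise $l_k$ explicitly. Reading off from \eqref{frel}, an $L_\infty[1]$-morphism $g$ with vanishing higher components satisfies $g_1(l_k(a_1,\dots,a_k))=l'_k(g_1a_1,\dots,g_1a_k)$ for every $k$; this is the same fact that makes $(h,h')$ and $(\mathrm{Incl}_1,\mathrm{Incl}_2)$ land in the fiber product as $L_\infty[1]$-morphisms.
\item In the ``symmetric'' step, the input you need is that $\mathrm{Eval}_1$ on $\mathfrak{C}_1$ (not $\mathrm{Eval}_0$) is a quasi-isomorphism. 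The short exact sequence is already in hand, and this gives that $\mathfrak{C}\to\mathfrak{C}_2$ is a quasi-isomorphism.
\item Surjectivity of each single $\mathrm{Eval}_j$ already follows from axiom (iii), since $\mathrm{Incl}$ is a section. Axiom (iv) is therefore only genuinely needed to verify (iv) for the glued model.
\item The announced reduction of an arbitrary model to the reference model is never used and can be deleted.
\end{enumerate}
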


We introduce  homotopy equivalences for $L_\infty[1]$-algebras.

\begin{defn}
An $L_{\infty}[1]$-morphism $f : C \rightarrow C'$ is a \textit{homotopy equivalence} if there exists another $L_{\infty}[1]$-morphism $g : C' \rightarrow C$ such that $g \circ f$ and  $f \circ g$ are homotopic to $\mathrm{id}_C$ and $\mathrm{id}_{C'},$ respectively.
\end{defn}

\begin{prop}\cite[Proposition 3.7]{Kim2}\label{prop:heer}
Homotopy equivalences define an equivalence relation.
\end{prop}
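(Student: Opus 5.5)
The plan is to reduce the statement to the composition law for $L_\infty[1]$-morphisms and the transitivity of the homotopy relation (Lemma 3.3 of \cite{Kim2}). Write $f\simeq f'$ when $f,f'$ are homotopic via a model of $\Delta^1\times C'$, and $f\sim f'$ when $f$ and $f'$ are homotopy equivalent in the sense just defined.

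\emph{Reflexivity.} Take $g=\mathrm{id}_C$. Then $\mathrm{id}_C\circ\mathrm{id}_C=\mathrm{id}_C$, and $\mathrm{id}_C$ is homotopic to itself via $h=\mathrm{Incl}$ in the model $\Omega^*(\Delta^1)\otimes C$ of Example~\ref{hhex}. This works because $\mathrm{Eval}_j\circ\mathrm{Incl}=\mathrm{id}_C$ by axiom (iii) of Definition~\ref{mdl1}.

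\emph{Symmetry.} This is immediate, since the definition is symmetric in the pair $(f,g)$.

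\emph{Transitivity.} This is the main step. Let $f\colon C\to C'$ have homotopy inverse $g$, and let $f'\colon C'\to C''$ have homotopy inverse $g'$. I would show that $g\circ g'$ is a homotopy inverse of $f'\circ f$. Composition of $L_\infty[1]$-morphisms is associative, which I take as established in \cite{Kim2}. Hence
\[
(g\circ g')\circ(f'\circ f)=g\circ(g'\circ f')\circ f .
\]
The key claim is that homotopy is compatible with composition on both sides: if $\phi_0\simeq\phi_1$, then $\psi\circ\phi_0\circ\chi\simeq\psi\circ\phi_1\circ\chi$.

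\begin{itemize}
\item Precomposition. If $h$ is a homotopy from $\phi_0$ to $\phi_1$, then $h\circ\chi$ is a homotopy, because $\mathrm{Eval}_j\circ(h\circ\chi)=\phi_j\circ\chi$.
\item Postcomposition. This needs a morphism between models lifting $\psi$. Since all morphisms here are strict, I would use the tensor model $\Omega^*(\Delta^1)\otimes(-)$ and the map $\mathrm{id}\otimes\psi_1$. It commutes with $\mathrm{Eval}_j$, and it is an $L_\infty[1]$-morphism because $\psi$ is strict.
\end{itemize}

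If the given homotopy lives in an arbitrary model, I would first replace it by the tensor model. This is the technical obstacle. The cleanest route is to note that the relation $\simeq$ is independent of the model, and I would cite the corresponding statement in \cite{Kim2}, since Lemma 3.3 there already uses such a comparison to prove transitivity.

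Granting compatibility, the chain
\[
g\circ(g'\circ f')\circ f\;\simeq\;g\circ\mathrm{id}_{C'}\circ f=g\circ f\;\simeq\;\mathrm{id}_C
\]
holds by transitivity of $\simeq$. Symmetrically, $(f'\circ f)\circ(g\circ g')\simeq\mathrm{id}_{C''}$. Therefore $f'\circ f$ is a homotopy equivalence, and the relation is transitive.
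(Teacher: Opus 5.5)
Your outline is the standard one: the identity for reflexivity, swapping $f$ and $g$ for symmetry, and $g\circ g'$ as inverse of $f'\circ f$ for transitivity. The paper gives no argument of its own beyond the citation of \cite{Kim2}. The relation lives on $L_\infty[1]$-algebras ($C\sim C'$ iff a homotopy equivalence $C\to C'$ exists), not on morphisms as your opening notation suggests, though your argument treats it correctly. The gap is in the postcomposition step, which is the only nontrivial point.

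First, ``strict'' in this paper means only that the curvature term vanishes ($\psi_0=0$), not that $\psi_k=0$ for $k\ge 2$. Homotopy inverses, such as those supplied by Theorem~\ref{wht}, typically have nonzero higher components. For such $\psi$, the map $\mathrm{id}\otimes\psi_1$ is in general not an $L_\infty[1]$-morphism. In any case it lifts only the linear part, since $\mathrm{Eval}_j\circ(\mathrm{id}\otimes\psi_1)=\psi_1\circ\mathrm{Eval}_j$, so it cannot produce a homotopy between $\psi\circ\phi_0$ and $\psi\circ\phi_1$. You need the full base change
$\Psi_k(\alpha_1\otimes x_1,\dots,\alpha_k\otimes x_k)=\pm\,\alpha_1\wedge\cdots\wedge\alpha_k\otimes\psi_k(x_1,\dots,x_k)$, with Koszul signs as in Example~\ref{hhex}. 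This is an $L_\infty[1]$-morphism because $\Omega^*(\Delta^1)$ is a graded-commutative dga. It satisfies $\mathrm{Eval}_j\circ\Psi=\psi\circ\mathrm{Eval}_j$ because evaluation at an endpoint is a dga map $\Omega^*(\Delta^1)\to\mathbb{R}$.

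Second, and more seriously, Definition~\ref{defn:homotopy} lets the homotopy $g'\circ f'\simeq\mathrm{id}_{C'}$ live in an arbitrary model of $\Delta^1\times C'$. Your postcomposition argument works only in the tensor model. Transporting a homotopy from an arbitrary model to the tensor model, for instance by producing a morphism $\mathfrak{C}'\to\Omega^*(\Delta^1)\otimes C'$ commuting with the $\mathrm{Eval}_j$, is precisely the technical content here. You defer it to a guess about how Lemma~3.3 of \cite{Kim2} is proved, so as written transitivity is not established.

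You can bypass this issue as follows.
\begin{enumerate}
\item In any model, axioms (ii) and (iii) of Definition~\ref{mdl1} give $H((\mathrm{Eval}_0)_1)=H((\mathrm{Eval}_1)_1)=H(\mathrm{Incl}_1)^{-1}$. Hence homotopic morphisms induce the same map on cohomology.
\item Since $(g\circ f)_1=g_1\circ f_1$ for strict morphisms, a homotopy equivalence is therefore a quasi-isomorphism.
\item Then $(f'\circ f)_1=f'_1\circ f_1$ is a quasi-isomorphism, and Theorem~\ref{wht} makes $f'\circ f$ a homotopy equivalence.
\end{enumerate}
This route is legitimate only if the Whitehead theorem in \cite{Kim2} is proved without using the present proposition, which you would need to check.
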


We show that a key theorem on quasi-isomorphisms, which shows the usefulness of our definition.
\begin{thm}\label{anhp}
Any two quasi-isomorphic $L_{\infty}[1]$-morphisms $f_0, f_1 : C \rightarrow C'$ are homotopic.
\end{thm}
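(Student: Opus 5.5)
The plan is to produce the homotopy of Definition~\ref{defn:homotopy} explicitly, using the polynomial-form model of Example~\ref{hhex}. Write $f_0,f_1 \colon C \to C'$ for the given quasi-isomorphisms. Throughout the paper all $L_\infty[1]$-algebras and morphisms are strict, and the morphisms that occur have vanishing higher components, so I first reduce to the linear parts $f_{0,1}, f_{1,1}$. These are quasi-isomorphic chain maps.

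I look for $\widehat H \colon C \to \Omega^*(\Delta^1)\otimes C'$ of the same shape as the homotopy \eqref{bqqqqq} in the proof of the uniqueness lemma:
\[
\widehat H_1(a) = (1-t)\, f_{0,1}(a) + t\, f_{1,1}(a) + dt\cdot k(a), \qquad \widehat H_{m\ge 2}=0,
\]
with $k \colon C\to C'$ of degree $-1$. Comparing the $t$-free, $t$ and $dt$ coefficients of the $L_\infty$-relation (as in \eqref{bqpdmbm} and \eqref{bkklss}), the first two hold because $f_0$ and $f_1$ are chain maps. The remaining relation, up to the sign conventions of Example~\ref{hhex}, is
\[
f_{1,1}-f_{0,1} = l'_1\circ k + k\circ l_1 .
\]
Once it holds, $\mathrm{Eval}_j\circ\widehat H = f_j$ is immediate from the formulas for $\mathrm{Eval}_j$ in Example~\ref{hhex}. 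So the whole statement reduces to exhibiting a chain homotopy $k$ between $f_{0,1}$ and $f_{1,1}$.

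To construct $k$, I work over the field $\mathbb{R}$, so every complex splits as its cohomology plus a contractible part. I choose homotopy retracts $C \simeq H(C)$ and $C' \simeq H(C')$. Transporting $f_{0,1}$ and $f_{1,1}$ to $H(C)\to H(C')$ then turns the existence of $k$ into the equality $H(f_{0,1}) = H(f_{1,1})$. Given that equality, $k$ is assembled from the contracting homotopies of the two retracts, by the standard formula $k = h'\circ(f_{1,1}-f_{0,1})\circ \iota \circ p + \dots$.

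\textbf{Main obstacle.} The crucial and hardest step is proving $H(f_{0,1}) = H(f_{1,1})$ from the sole hypothesis that both maps are quasi-isomorphisms. This step cannot be skipped, for the following reason. Any model in the sense of Definition~\ref{mdl1} satisfies $(\mathrm{Eval}_j)_1\circ \mathrm{Incl}=\mathrm{id}$ with $\mathrm{Incl}$ a quasi-isomorphism. Hence $\mathrm{Eval}_0$ and $\mathrm{Eval}_1$ induce the same map on cohomology, and so homotopic morphisms necessarily agree on cohomology.

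The first thing I would try is to use the structure of the local algebras in the paper.
\begin{itemize}
\item The de Rham summands $\Omega^{\bullet+1}_{\mathrm{aug}}$ are acyclic, so there the equality is vacuous.
\item For the Koszul summands, the cohomology is the function ring of the (germ of the) zero locus. The quasi-isomorphisms that occur are induced from bundle embeddings via Lemma~\ref{pokhgjb} and Corollary~\ref{ygsdnn}, and on $H^0$ they act by pullback along the base maps.
\end{itemize}
If this description of the cohomology is forced by the quasi-isomorphism condition, the equality of the induced maps follows. I expect this step to be where the argument either succeeds or needs an additional input.
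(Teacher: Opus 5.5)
There is a genuine gap. It is exactly the step you flagged: $H(f_{0,1})=H(f_{1,1})$ cannot be derived from the hypothesis that both maps are quasi-isomorphisms, and the statement as literally formulated is false.

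Your own observation already shows this. In any model of Definition~\ref{mdl1} one has $H(\mathrm{Eval}_0)=H(\mathrm{Incl})^{-1}=H(\mathrm{Eval}_1)$, so homotopic morphisms agree on cohomology. Now take $C=C'=\mathbb{R}$ concentrated in one degree with all $l_k=0$, $f_0=\mathrm{id}$ and $f_1=2\cdot\mathrm{id}$. These are two quasi-isomorphisms that are not homotopic.

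Your proposed rescue through the local algebras fails for the same reason. The Koszul summand of $\mathcal{C}_x$ carries no higher brackets and the two summands are independent. So $2\cdot\mathrm{id}$ on the Koszul summand plus $\mathrm{id}$ on the de Rham summand is an $L_\infty[1]$-quasi-isomorphism $\mathcal{C}_x\to\mathcal{C}_x$ acting by $2$ on $H^0$. In the manifold charts of Example~\ref{man}, pullbacks along two different diffeomorphisms of $W_x$ give further examples. Knowing that $H^0$ is the function ring of the zero locus does not determine what a quasi-isomorphism does to it.

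The paper gives no argument beyond citing \cite[Corollary~4.6]{Kim2}. In view of this counterexample, that citation can only hold under an extra hypothesis. The natural one is that $C'$ be acyclic, which is the situation in which the paper explicitly invokes the theorem (the augmented de Rham components, acyclic by Proposition~\ref{augomega}). The right conclusion of your analysis was to exhibit the counterexample and prove that corrected version.

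Even for the corrected statement, your construction covers only part of it. Suppose all higher brackets and higher components vanish and $H(f_{0,1})=H(f_{1,1})$ (e.g.\ $C'$ acyclic). Then chain maps over $\mathbb{R}$ inducing the same map on cohomology are chain homotopic. Your ansatz $\widehat H_1=(1-t)f_{0,1}+t\,f_{1,1}+dt\cdot k$ with $f_{1,1}-f_{0,1}=l'_1k+kl_1$ then does give a homotopy in the model of Example~\ref{hhex}.

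However, strictness in the paper only means $l_0=0$ and $f_0=0$, so passing to linear parts is a restriction rather than a reduction. The ansatz $\widehat H_{m\geq 2}=0$ breaks down precisely on the de Rham algebras, whose brackets $l^{\mathrm{dR}}_k$ are nonzero. The $dt$-free part of the $m=2$ relation $\widehat H_1(l_2(a,b))=l_2(\widehat H_1(a),\widehat H_1(b))$ leaves the residual term $t(1-t)\,l'_2\bigl((f_{0,1}-f_{1,1})a,(f_{0,1}-f_{1,1})b\bigr)$, which need not vanish.

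There you must build the higher components $\widehat H_m$ inductively. The obstructions are cocycles in the complex of maps $S^mC\to\ker\bigl((\mathrm{Eval}_0)_1\oplus(\mathrm{Eval}_1)_1\bigr)$. When $C'$ is acyclic this kernel is acyclic, hence contractible over $\mathbb{R}$, so the obstructions can be killed.
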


\begin{proof}
This theorem is a special case of \cite[Corollary 4.6]{Kim2} for two morphisms.
\end{proof}

We close this section by stating an $L_{\infty}[1]$-algebra version of the Whitehead theorem over a field.

\begin{thm}[Whitehead theorem]\cite[Theorem 3.13]{Kim2}\label{wht}
Over a field and for strict $L_{\infty}[1]$-algebras, a quasi-isomorphism is a homotopy equivalence.
\end{thm}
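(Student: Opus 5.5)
The statement is the Whitehead theorem (Theorem~\ref{wht}): for strict $L_\infty[1]$-algebras over a field, every quasi-isomorphism $f\colon C\to C'$ is a homotopy equivalence. The plan is to build a homotopy inverse from a linear splitting, and then use Theorem~\ref{anhp} to produce both required homotopies at once.

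\textbf{Step 1: the underlying complexes split.} Since the ground ring is a field, each complex decomposes as $C\cong H(C)\oplus B\oplus B[1]$ with contractible summand. Concretely, choose complements to get $C^k = Z^k\oplus V^k$ and $Z^k=B^k\oplus \mathcal H^k$. Then $l_1$ restricts to an isomorphism $V^k\to B^{k+1}$. Doing the same for $C'$, the map $f_1$ induces an isomorphism $\mathcal H\to\mathcal H'$ on cohomology.

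\textbf{Step 2: construct a chain map $g_1\colon C'\to C$ that is also a quasi-isomorphism.} Let $\iota\colon \mathcal H\to C$ be the inclusion and $p'\colon C'\to\mathcal H'$ the projection, which kills $B'\oplus V'$. Both are chain maps (with zero differential on $\mathcal H,\mathcal H'$) and both are quasi-isomorphisms. Set
\[
g_1 := \iota\circ (H(f_1))^{-1}\circ p' .
\]
This is a quasi-isomorphic chain map. Now take $g=\{g_k\}$ with $g_k=0$ for $k\ge 2$. By the standing strictness convention this is an $L_\infty[1]$-morphism, since for strict algebras only the chain-map relation $l_1 g_1=g_1 l_1$ is required.

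\textbf{Step 3: the homotopies.} The composites $g\circ f\colon C\to C$ and $f\circ g\colon C'\to C'$ are compositions of quasi-isomorphisms, so they are quasi-isomorphisms. The identities $\mathrm{id}_C$ and $\mathrm{id}_{C'}$ are quasi-isomorphisms as well. Theorem~\ref{anhp} says any two quasi-isomorphic $L_\infty[1]$-morphisms with the same source and target are homotopic. Hence $g\circ f\sim\mathrm{id}_C$ and $f\circ g\sim \mathrm{id}_{C'}$, and $f$ is a homotopy equivalence.

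\textbf{Main obstacle.} The argument leans on Theorem~\ref{anhp}, which is only cited from \cite[Corollary 4.6]{Kim2}, and this makes the proof somewhat circular in spirit. If that citation is not to be used, I would build the homotopy $g\circ f\sim\mathrm{id}_C$ directly in the model $\Omega^*(\Delta^1)\otimes C$ of Example~\ref{hhex}. Since $H(g_1 f_1)=\mathrm{id}$ by construction, the map $g_1f_1-\mathrm{id}$ is chain homotopic to zero via an explicit operator $k$ built from the splitting of Step 1. The homotopy is then
\[
h_1(a) := t\,g_1f_1(a) + (1-t)\,a + dt\, k(a).
\]
Its chain-map property reduces exactly to the relation $g_1f_1-\mathrm{id} = l_1 k + k l_1$. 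The same construction handles $f\circ g\sim \mathrm{id}_{C'}$.
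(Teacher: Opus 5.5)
The gap is in Step~2. In this paper ``strict'' only means $l_0=0$. The brackets $l_k$ with $k\ge2$ are still present (the de Rham summands of the local algebras carry such brackets, cf.\ Proposition~\ref{augomega}), so the arity-$k$ relations in \eqref{frel} remain. For $g=(g_1,0,0,\dots)$ they require $g_1\circ l'_k=l_k\circ(g_1\otimes\cdots\otimes g_1)$ for every $k$; these are exactly the identities verified for $\widehat\eta_x$ in Lemma~\ref{pokhgjb}. A map assembled from linear complements has no reason to satisfy them, and it already fails for $f=\mathrm{id}_C$. Take $C^0=\mathrm{span}(x,y,w)$ and $C^1=\mathrm{span}(z)$, with $l_1w=z=l_2(x,y)$ and all other operations zero. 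Then $\mathcal H^0=\mathrm{span}(x,y)$ is forced and $p'$ kills the boundary $z$, so $g_1l_2(x,y)=0\neq z=l_2(g_1x,g_1y)$.

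Independently, you never use the higher components of $f$, although $(g\circ f)_k=g_1f_k$. Let $C=C'=\mathbb R$ in degree $0$ with all operations zero, and $f=(\mathrm{id},f_2,0,\dots)$ with $f_2(a,b)=ab$. Your recipe gives $g=\mathrm{id}$, so $g\circ f=f$. Yet no homotopy in the model of Example~\ref{hhex} joins $f$ to $\mathrm{id}$: the arity-$2$ relation makes $h_2(1,1)$ a closed function on $\Delta^1$, hence constant, while it would have to be $0$ at one end and $1$ at the other. The actual inverse is $(\mathrm{id},-f_2,\dots)$. So your fallback $h=(t\,g_1f_1+(1-t)\mathrm{id}+dt\,k,0,0,\dots)$ works only when all $l_k$ and $f_k$ with $k\ge2$ vanish. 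What you have proved is the classical fact that a quasi-isomorphism of complexes over a field is a chain homotopy equivalence.

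Step~3 should not lean on Theorem~\ref{anhp} either. Read literally, it cannot hold outside the acyclic situations where the paper applies it. Homotopic morphisms induce the same map on cohomology, since $H((\mathrm{Eval}_0)_1)=H(\mathrm{Incl}_1)^{-1}=H((\mathrm{Eval}_1)_1)$. Hence $\mathrm{id}$ and $2\,\mathrm{id}$ on $\mathbb R$ are quasi-isomorphisms that are not homotopic.

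The missing ingredient is the genuinely $L_\infty$ one; the paper gives no proof of its own and only quotes \cite{Kim2}. Use your splitting as deformation-retract data $(\iota,p,k)$ with $\mathrm{id}-\iota p=l_1k+kl_1$, and apply homotopy transfer (equivalently, decompose $C$, up to $L_\infty[1]$-isomorphism, into a minimal part and a linear contractible part). This gives the following:
\begin{itemize}
\item a minimal $L_\infty[1]$-structure on $\mathcal H$;
\item $L_\infty[1]$ quasi-isomorphisms $\iota_\infty,p_\infty$ extending $\iota,p$, given by tree formulas in $k$;
\item a homotopy $\iota_\infty\circ p_\infty\sim\mathrm{id}_C$.
\end{itemize}
Then $\phi:=p'_\infty\circ f\circ\iota_\infty$ is a morphism of minimal algebras with $\phi_1=H(f_1)$ invertible, hence invertible (solve for $(\phi^{-1})_k$ by induction on $k$). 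Set $g:=\iota_\infty\circ\phi^{-1}\circ p'_\infty$, which in general has $g_k\neq0$ for $k\ge2$. It satisfies $g\circ f\sim\iota_\infty\circ\phi^{-1}\circ\phi\circ p_\infty=\iota_\infty\circ p_\infty\sim\mathrm{id}_C$, and similarly $f\circ g\sim\iota'_\infty\circ p'_\infty\sim\mathrm{id}_{C'}$. This uses that homotopy is an equivalence relation compatible with composition.
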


\section{Presymplectic Neighborhoods and Local $L_{\infty}[1]$-Algebras}

In this section, we provide the details on the structures that we put on a Kuranishi chart. For more detail, we refer the reader to \cite{Kim1}.

Let a Kuranishi chart be given with the base $U.$ We equip $U$ with a closed two-form $\beta \in \Omega^2(U).$ 

We require that $U$ has a decomposition 
\begin{equation}\label{aabbbb}
U = \bigcup\limits_i \mathcal{S}_i,
\end{equation}
into (possibly nonconnected) submanifolds,
\[
\mathcal{S}_i := \{x \in U \mid \text{rk} (\ker\beta_x) =i \}, \ 0 \leq i \leq \dim U
\]
together with their tubular neighborhoods:
\[
\begin{cases}
\iota_{i} : N_i \rightarrow  U, \text{ an open neighborhood of each }  \mathcal{S}_{i} \text{ in }U,\\
\pi_{i} : N_i \rightarrow \mathcal{S}_{i}, \text{ the associated projection}.
\end{cases}
\]

\begin{rem}
\cite[Corollary 6.2]{KO} proves that there exists a residual subset of such closed two-forms. Furthermore, the stratification structure turns out to be Whitney (cf. \cite[Definition 6.5 \& Theorem 6.6]{KO}), which leads to the construction of \textit{stratified $L_{\infty}$-spaces.}
\end{rem}

\subsubsection*{The presymplectic open neighborhood $W_x$} We associate an open contractible submanifold $W_x$ to each zero point $x \in s^{-1}(0).$ 

For each zero point $x \in s^{-1}(0) \cap \mathcal{S}_i$ for some $i$, we choose $\overset{\circ}{W}_x \subset \mathcal{S}_i,$ an open ball containing $x$ in $\mathcal{S}_i.$ For the inclusion ${\iota_x} : \overset{\circ}{W}_x {\hookrightarrow} U,$ we denote $\beta|_{\overset{\circ}{W}_x} := \iota_x^*\beta.$ We have $d\left(\beta|_{\overset{\circ}{W}_x}\right) = d\iota_x^*\beta = \iota_x^* d\beta = 0$ and that $\beta|_{\overset{\circ}{W}_x}$ is of constant rank by construction. In other words, $\left(\overset{\circ}{W_x}, \beta|_{\overset{\circ}{W_x}}\right)$ is a presymplectic manifold.

Then we obtain another presymplectic manifold
\begin{equation}\label{adslfjkdh}
W_x = (W_x, \beta_{W_x}) := 
\left(\pi_i^{-1}(\overset{\circ}{W_x}), \pi^*_i(\beta|_{\overset{\circ}{W_x}})\right)
\end{equation}
and call it a \textit{local presymplectic neighborhood of} $x \in s^{-1}(0).$ 

We write 
\[
T\mathcal{F}_x := \ker \beta_{W_x}
\]
for the regular foliation (i.e., each leaf having the same dimension) determined by the kernel of $\beta_{W_x}.$ In \cite[Lemma B.2]{Kim1}, we equip $\Omega^{\bullet +1}(\mathcal{F}_x)$ with an $L_{\infty}[1]$-algebra structure $\{l^{\mathcal{F}}_k\}.$ Furthermore, we have:

\subsubsection*{The $L_{\infty}[1]$-algebra $\mathcal{C}_x$}
At each zero point $x \in s^{-1}(0),$ we associate a \textit{local $L_{\infty}[1]$-algebra,}
\begin{equation}\nonumber 
\mathcal{C}_x := \overbrace{\bigwedge\nolimits^{-\bullet}\Gamma(E^*|_{W_x})}^{\text{Koszul}} \oplus \overbrace{\Omega^{\bullet + 1}_{\mathrm{aug}}(\mathcal{F}_x)}^{\text{de Rham}},
\end{equation}
which consists of the two parts: Koszul and de Rham.

The Koszul part, $\bigwedge\nolimits^{-\bullet}\Gamma(E^*|_{W_x})$ is the Koszul complex,
\[
0 \rightarrow \overbrace{\bigwedge\nolimits^{r}\Gamma(E^*|_{W_x})}^{\text{deg} = -r} \xrightarrow{\iota_{s|_{W_x}}} \cdots \xrightarrow{\iota_{s|_{W_x}}}  \overbrace{\Gamma(E^*|_{W_x})}^{\text{deg} = -1} \xrightarrow{\iota_{s|_{W_x}}}  \overbrace{C^{\infty}(W_x)}^{\text{deg} = 0} \rightarrow 0
\]
with the differential $l_1^{\mathrm{K}} := \iota_{s|_{W_x}},$ given by:
\[
l_1^{\mathrm{K}} : a_1 \wedge \cdots \wedge a_m \mapsto \sum\limits_{i=1}^{m} (-1)^{i+1}a_i(s|_{W_x}) \cdot a_1 \wedge \cdots \wedge \widehat{a_i} \wedge \cdots \wedge a_m,
\]
with all higher $l^{\mathrm{K}}_{k \geq 2}$ being set to zero.

The de Rham part, $\Omega^{\bullet + 1}_{\mathrm{aug}}(\mathcal{F}_x)$ is the augmented foliation de Rham complex degree shifted by 1 equipped with the $L_{\infty}[1]$-algebra structure $\{l^{\mathrm{dR}}_k\}_{k \geq 1}$ obtained from Proposition \ref{augomega}. 

\begin{prop}\cite[Corollary 2.10, Proposition B.16]{Kim1}\label{augomega}
In the above situation, there exists an $L_{\infty}[1]$-algebra structure on the chain complex $\Omega^{\bullet +1}_{\mathrm{aug}}(\mathcal{F}_x)$ that extends $\{l^{\mathcal{F}}_k\}$ on $\Omega^{\bullet +1}(\mathcal{F}_x).$ Moreover, this $L_{\infty}[1]$-algebra has trivial cohomology.
\end{prop}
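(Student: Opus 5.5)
The plan is to build the augmented complex by hand, extend the brackets $\{l^{\mathcal F}_k\}$ of \cite[Lemma B.2]{Kim1} in the most naive way, and then reduce acyclicity to a leafwise Poincaré lemma on the contractible neighborhood $W_x$.

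\emph{Step 1: the foliation and its chart.} Since $\beta_{W_x}=\pi_i^*(\beta|_{\overset{\circ}{W}_x})$ is closed and of constant rank, $T\mathcal F_x=\ker\beta_{W_x}$ is involutive. The reason is that for $X,Y\in\ker\beta$ the formula $d\beta(X,Y,Z)=0$ forces $\beta([X,Y],Z)=0$. By Frobenius, $\mathcal F_x$ is a regular foliation. After shrinking the ball $\overset{\circ}{W}_x$ if necessary, we choose a single foliated chart
\[
W_x\simeq B^{k}\times B^{n-k},
\]
whose leaves are the slices $B^k\times\{b\}$. This shrinking is harmless because $W_x$ was only required to be a contractible presymplectic neighborhood of $x$.

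\emph{Step 2: the augmentation and the extension of the brackets.} We define $\Omega^{\bullet+1}_{\mathrm{aug}}(\mathcal F_x)$ by adjoining, in the degree just below $C^\infty(W_x)=\Omega^0(\mathcal F_x)$, the space $C^\infty_{\mathrm{bas}}(W_x)$ of functions constant along leaves. The augmentation map is the inclusion $C^\infty_{\mathrm{bas}}(W_x)\hookrightarrow C^\infty(W_x)$; its composite with the leafwise differential $d_{\mathcal F}$ vanishes, so $l_1$ squares to zero.

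For the higher brackets we set $l_k$ equal to $l^{\mathcal F}_k$ on $\Omega^{\bullet+1}(\mathcal F_x)$, and extend them by zero whenever an input lies in the augmentation slot. If the construction of $l^{\mathcal F}_k$ in \cite{Kim1} involves basic functions only through $C^\infty_{\mathrm{bas}}$-linearity, then extending by zero is forced by degree counting. We then verify the relations \eqref{quadrel} by splitting into two cases:
\begin{itemize}
\item[--] all inputs are non-augmented, where the relations are those of \cite[Lemma B.2]{Kim1};
\item[--] some input is augmented, where each term contains either $l_k(\dots,c,\dots)=0$ or $l_{k}(l_1(c),\dots)$ with $l_1(c)=c$ a basic function.
\end{itemize}
The second case needs the identity $l^{\mathcal F}_k(c,\dots)=0$ for basic $c$ and $k\ge2$. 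I expect this to follow from the explicit formula, since the brackets are built from $\beta$-dual Hamiltonian-type operations which kill functions annihilated by $d_{\mathcal F}$.

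\emph{Step 3: acyclicity.} In the chart $B^k\times B^{n-k}$, the complex $\Omega^\bullet(\mathcal F_x)$ is the parametrized de Rham complex of $B^k$ with coefficients depending smoothly on $b\in B^{n-k}$. The parametrized Poincaré lemma, using the homotopy operator $\int_0^1 t^{\,\bullet}\iota_{\partial_r}$ on the $B^k$ factor (which is smooth in $b$), shows that the leafwise cohomology is $C^\infty(B^{n-k})=C^\infty_{\mathrm{bas}}(W_x)$ in degree $0$ and zero in positive degrees. Adjoining $C^\infty_{\mathrm{bas}}(W_x)$ in the degree just below kills exactly this degree-$0$ class, and the augmentation map is injective, so the augmented complex is acyclic.

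\emph{Main obstacle.} I expect the delicate point to be Step 2, the compatibility of the higher brackets $l^{\mathcal F}_k$ with the augmentation slot. If the relation $l^{\mathcal F}_k(c,\dots)=0$ fails for basic $c$, I would first try to replace extension by zero with a homotopy transfer. Concretely, I would transfer the $L_\infty[1]$-structure along the explicit contraction from Step 3 onto the augmented complex, using the homotopy perturbation lemma. The leafwise Poincaré homotopy provides the contraction data needed for this, and acyclicity is unaffected since it only concerns $l_1$.
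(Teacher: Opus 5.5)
\paragraph{Where the argument breaks.} Step 2 fails, and the extension-by-zero construction cannot be rescued. Your own case analysis shows the problem. Take inputs $(c,a)$ with $c$ in the augmentation slot and $a\in\Omega^{\bullet+1}(\mathcal F_x)$. The terms $l_1l_2(c,a)$ and $l_2(c,l_1a)$ vanish, so the quadratic relation reduces to $l^{\mathcal F}_2(c,a)=0$ for every basic $c$. That identity is false for the brackets at hand. Those brackets depend on the two-form $\beta_{W_x}$, not only on its kernel. The paper builds them as derived brackets in a ``V-algebra'' framework, and on products it uses $\pi'^*\alpha'-\pi''^*\alpha''$; that sign changes the brackets but not the kernel foliation. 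For such brackets, $l^{\mathcal F}_2(c,\cdot)$ with $c$ basic is the action of the Hamiltonian-type vector field of $c$. In particular, $l^{\mathcal F}_2(c,c')$ is the Poisson bracket of the reduced symplectic form on the local leaf space.

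The symplectic case used in the Weinstein section makes this stark. There $\mathcal F_x$ is the point foliation and $\Omega^{\bullet+1}(\mathcal F_x)=C^\infty(W_x)$ with $l_2$ the Poisson bracket. The augmentation is a second copy of $C^\infty(W_x)$, so extension by zero would force $\{c,f\}=0$ for all $c,f$. Your heuristic is backwards: functions killed by $d_{\mathcal F}$ are exactly the ones that do possess Hamiltonian vector fields.

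\paragraph{The fallback and what is actually needed.} Your fallback does not repair this. Homotopy transfer moves a structure onto a homotopy retract. But $\Omega^{\bullet+1}_{\mathrm{aug}}(\mathcal F_x)$ is not a retract of $\Omega^{\bullet+1}(\mathcal F_x)$: the first is acyclic, while the second has cohomology $C^\infty_{\mathrm{bas}}(W_x)$. Transferring along the Step 3 contraction yields a structure on $C^\infty_{\mathrm{bas}}(W_x)$ (namely the reduced Poisson bracket) or on $0$, never an extension to the cone.

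What you need are new components with augmented inputs. For basic $f$, one needs $l_2(c,f)=\pm\{c,f\}$ landing in the augmentation slot. In higher form degrees, $l_2(c,\cdot)$ must be a null-homotopy of $l^{\mathcal F}_2(c,\cdot)$, with compatible higher terms. In the symplectic example this is just the DGLA $C^\infty(W_x)\otimes\mathbb R[\epsilon]/(\epsilon^2)$ with $\deg\epsilon=-1$ and $d\epsilon=1$.

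In general you can produce these components by obstruction theory, and this is where Step 3 is genuinely used. It makes the augmented complex $V$ contractible, so the complex of maps from $S^kV/S^k\Omega^{\bullet+1}(\mathcal F_x)$ to $V$ is acyclic. At each arity $k$, take the obstruction plus the differential of $l^{\mathcal F}_k$ (extended by zero). This vanishes on $S^k\Omega^{\bullet+1}(\mathcal F_x)$ because $l^{\mathcal F}$ is already $L_\infty[1]$, so it is a cocycle in that acyclic complex. A primitive gives the mixed part of $l_k$.

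Step 3 itself is fine: the parametrized leafwise Poincar\'e lemma, with the augmentation $\ker(l^{\mathrm{dR}}_1)$ on functions placed in degree $-2$, correctly gives acyclicity.
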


The $L_{\infty}[1]$-structure on $\mathcal{C}_x$ is then given by
\[
l_k : \mathcal{C}_x^{\otimes k} \rightarrow \mathcal{C}_x; \ l_k := l^{\mathrm{K}}_k \oplus l^{\mathrm{dR}}_k,
\]
where the direct sum notation indicates that the operations on the two components are defined separately. It is immediate that the family $\{l_k\}_{k \geq 1}$ satisfies the $L_{\infty}[1]$-relation.

\begin{lem}\cite[Lemma 2.3]{Kim1}\label{lemdw}
For different choices of $\overset{\circ}{W}_x$, we obtain isomorphic de Rham $L_\infty[1]$-algebras.  
\end{lem}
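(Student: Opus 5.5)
This is Lemma~\ref{lemdw}: different choices of $\overset{\circ}{W}_x$ give isomorphic de Rham $L_\infty[1]$-algebras. The plan is to reduce the comparison to a diffeomorphism of presymplectic neighborhoods and then transport the structure along it.

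First, let two choices $\overset{\circ}{W}_x, \overset{\circ}{W}{}'_x \subset \mathcal{S}_i$ both be open balls containing $x$. Since both are contractible open balls in the same stratum, I would find a diffeomorphism $\theta\colon \overset{\circ}{W}_x \to \overset{\circ}{W}{}'_x$ that preserves the restricted presymplectic form, $\theta^*(\beta|_{\overset{\circ}{W}{}'_x}) = \beta|_{\overset{\circ}{W}_x}$. To get it, I would pass to a common smaller ball $B \subset \overset{\circ}{W}_x\cap \overset{\circ}{W}{}'_x$ around $x$. By the presymplectic Darboux theorem, $\beta|_{\mathcal{S}_i}$ has constant rank, so locally it is the pullback of a symplectic form along the leaf-space projection. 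Each of the two balls can then be identified with $B$ by a flow that compresses radially in Darboux coordinates. If the resulting flow is not exactly presymplectic, I would correct it by a Moser-type argument.

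Next, I would lift $\theta$ through the tubular projection $\pi_i$ to a diffeomorphism $\Theta\colon W_x=\pi_i^{-1}(\overset{\circ}{W}_x)\to W'_x$ covering $\theta$. This uses a trivialization of the tubular neighborhood over the contractible ball. Because $\beta_{W_x}=\pi_i^*(\beta|_{\overset{\circ}{W}_x})$ by \eqref{adslfjkdh}, the lift satisfies $\Theta^*\beta_{W'_x}=\beta_{W_x}$. Hence $d\Theta(T\mathcal{F}_x)=T\mathcal{F}'_x$, and pullback $\Theta^*$ gives an isomorphism of foliation de Rham complexes $\Omega^{\bullet+1}(\mathcal{F}'_x)\to\Omega^{\bullet+1}(\mathcal{F}_x)$.

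Finally, I would check that $\Theta^*$ intertwines the operations $\{l^{\mathcal{F}}_k\}$ of \cite[Lemma B.2]{Kim1}. These are built naturally from $\beta$ and the foliation, so this should amount to naturality under presymplectomorphisms. I would then verify that the isomorphism extends to the augmentations in Proposition~\ref{augomega}, again using naturality of the augmentation construction, and set the higher components to zero to obtain a strict $L_\infty[1]$-isomorphism.

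The main obstacle is the first step: producing an exact presymplectic diffeomorphism between two arbitrary balls, since balls of different shapes are not obviously presymplectomorphic. If this fails, a fallback is available. Both de Rham algebras are acyclic by Proposition~\ref{augomega}, and the restrictions to $B$ give quasi-isomorphisms. By Theorem~\ref{wht} these become homotopy equivalences, which yields at least an equivalence up to homotopy. I would still aim for the strict isomorphism through the Darboux–Moser argument.
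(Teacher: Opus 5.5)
\emph{The paper gives no proof of this lemma}; it is only quoted from \cite[Lemma 2.3]{Kim1}. Your argument therefore has to stand on its own, and its load-bearing first step is false in general: two open balls in $\mathcal{S}_i$ with the restricted form need not be presymplectomorphic.

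It already fails on a nondegenerate stratum. Take $U=\mathbb{R}^2$ with $\beta=dx\wedge dy$, so that $\mathcal{S}_0=U$ and $\pi_0=\mathrm{id}$. Let $\overset{\circ}{W}_x$ and $\overset{\circ}{W}{}'_x$ be the discs of radius $1$ and $2$ about $x$. A diffeomorphism $\theta$ with $\theta^*\beta=\beta$ would preserve total area, but the areas are $\pi$ and $4\pi$.

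Neither of your tools gets around this obstruction:
\begin{enumerate}
\item Radial compression in Darboux coordinates pulls $\beta$ back to a constant multiple $\lambda^2\beta$ with $\lambda^2\neq 1$, not to $\beta$.
\item A Moser argument produces diffeomorphisms, so it cannot change total symplectic volume.
\item In dimension $4$ and higher, even a conformal presymplectomorphism can fail to exist. A round ball and a long thin ellipsoid in $\mathbb{R}^4$ are both diffeomorphic to a ball, but they have different ratios $w^2/\mathrm{vol}$ of squared Gromov width to volume, and this ratio is invariant under conformal symplectomorphisms.
\item On strata with nontrivial kernel, the same obstruction sits on the leaf space.
\end{enumerate}
Your second and third steps (the lift $\Theta$ and the naturality check) are fine once a presymplectomorphism is given, provided any auxiliary data entering $\{l^{\mathcal{F}}_k\}$ in \cite[Lemma B.2]{Kim1} are transported as well. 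As it stands they have nothing to act on.

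Your fallback does not prove the statement either. Both augmented algebras are acyclic by Proposition~\ref{augomega}, so every chain map between them, including the zero map, is a quasi-isomorphism. The homotopy equivalence you extract via Theorem~\ref{wht} is therefore automatic, and it is strictly weaker than the asserted isomorphism.

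The missing idea is an upgrade from equivalence to isomorphism that does not pass through a presymplectomorphism. If non-strict $L_\infty[1]$-isomorphisms are allowed, acyclicity actually suffices:
\begin{enumerate}
\item By the decomposition (minimal model) theorem, an acyclic $L_\infty[1]$-algebra over $\mathbb{R}$ is $L_\infty$-isomorphic to its underlying complex with all higher brackets zero.
\item Two acyclic complexes are isomorphic as soon as the images of $l_1$ have the same dimension in each degree.
\item You would verify this for the two choices: the leaf rank is the same, hence so is the range of degrees, and you would need to check that all nonzero pieces have continuum dimension.
\end{enumerate}
This yields an isomorphism, though a non-canonical one. Whether a strict isomorphism induced by a map of neighborhoods exists depends on how $\{l^{\mathcal{F}}_k\}$ involves $\beta$. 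By the example above, it cannot in general come from a presymplectomorphism.
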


For the definition of chart morphisms we need the notion of completed algebras:
\begin{defn}[Completed algebras]\label{ladef}
Given a smooth map $\phi: V \rightarrow U,$ We define the \textit{completion of} $\mathcal{C}_{x}$ by
\begin{equation}\nonumber 
\mathcal{C}_{x, \phi} := \left(\bigwedge\nolimits^{-\bullet}\Gamma(E^*|_{W_x})\right)_{\phi} \oplus \Omega^{\bullet + 1}_{\mathrm{aug}, \phi}(\mathcal{F}_x),
\end{equation}
where the Koszul part
\begin{equation}\label{kdzpt}
\left(\bigwedge\nolimits^{-\bullet}\Gamma(E^*|_{W_x})\right)_{\phi} :=  C^{\infty}_{\phi}(W_x) \otimes_{C^{\infty}(W_x)} \bigwedge\nolimits^{-\bullet}\Gamma(E^*|_{W_x}),
\end{equation}
where we consider the inverse limit
\begin{equation}\label{ivlmt}
C_{\phi}^{\infty}(W) := \lim_{\longleftarrow} C^{\infty}(W_x)/ I_{\phi}^{j} \cdot C^{\infty}(W_x),
\end{equation}
where we denote $I_{\phi} := \left\{ f \in C^{\infty}(W_x) \mid f|_{\text{Im}\phi} \equiv 0\right\}.$

Its $L_{\infty}[1]$-structure
\[
l^{\mathrm{K}, \phi}_k: {\left(\bigwedge\nolimits^{i}\Gamma(E^*|_{W_x})\right)_{\phi}}^{\otimes k} \rightarrow \left(\bigwedge\nolimits^{i-1}\Gamma(E^*|_{W_x})\right)_{\phi},\\
\]
for each $1 \leq i \leq r$ is defined as follows: For each $j\geq 1,$ $h \in C^{\infty}(W_x)^{(j)},$ and $a \in \bigwedge\nolimits^{-\bullet}\Gamma(E^*|_{W_x}),$ we set
\[
(j) : l^{\mathrm{K}, \phi}_1(h \otimes a) := [1]_{j-2} \otimes \iota_{s|_{W_x}}(\widetilde{h}a),
\]
where $\widetilde{h}$ is a choice of representative in $C^{\infty}(W_x)$ such that $h = \widetilde{h} + I_{\phi}^j,$ and we set $[1]_{j-2} := 0$ for $j \leq 2$ by definition. All higher $l_{\phi, k \geq 2}^{\mathrm{K}}$'s are set to zero, so the $L_{\infty}$-relation of $\{l^{\mathrm{K}}_{\phi, k}\}_{k \geq 1}$ holds trivially.

The de Rham part $\Omega^{\bullet + 1}_{\mathrm{aug}, \phi}(\mathcal{F}_x)$ is the completed foliation de Rham complex \textit{with augmentation,} given by
\[
\Omega^{\bullet + 1}_{\mathrm{aug}, \phi}(\mathcal{F}_x) := \overbrace{\Omega^{\bullet + 1}(\mathcal{F}_x)_{\phi}}^{\deg \geq -1} \oplus \overbrace{\left(C^{\infty}(W_x)_{\mathcal{F}_x}\right)_{\phi}}^{\deg = -2},
\]
where we denote
\[
\begin{cases}
\Omega^{\bullet + 1}(\mathcal{F}_x)_{\phi} := C_{\phi}^{\infty}(W_x) \otimes_{C^{\infty}(W_x)} \Omega^{\bullet + 1}(\mathcal{F}_x),\\
\big(C^{\infty}(W_x)_{\mathcal{F}_x}\big)_{\phi} : = \ker \big(l_1^{\mathrm{dR}} : \Omega^{-1}(\mathcal{F}_x)[1]_{\phi} \rightarrow \Omega^{0}(\mathcal{F}_x)[1]_{\phi}\big).
\end{cases}
\]
The de Rham part $L_{\infty}[1]$-structure $l_{\phi, k}^{\mathrm{dR}}$ is obtained by applying the following Proposition \ref{augomega}. The formula can be given similarly to $l^{\mathrm{K}, \phi}_{\phi, k}$ in the setting of $V$-algebras in Appendix B of \cite{Kim1}. We omit its precise formulation here. For more details, we refer the reader to \cite[Definition B.19]{Kim1}.

In conclusion, $\mathcal{C}_{x,\phi}$ with $\left\{l_{\phi, k} := l^{\mathrm{K}}_{\phi, k} \oplus l^{\mathrm{dR}}_{\phi, k}\right\}$ is an $L_{\infty}[1]$-algebra with the $L_{\infty}[1]$-relation, which can be verified in a straightforward manner.
\end{defn}

Given a local algebra $\mathcal{C}_x,$ there exists a natural map to its completion:
For each $k \geq 1$, We define
\begin{equation}\label{varep}
\widehat{\varepsilon}_{\phi(x), \phi,k} : \mathcal{C}_x^{\otimes k} \rightarrow \mathcal{C}_{x, \phi}
\end{equation}
by
\begin{equation}\nonumber
\widehat{\varepsilon}_{\phi(x), \phi, k}\big((a_1, \xi_1), \cdots, (a_k,\xi_k)\big) := \begin{cases}
1 \otimes (a_1, \xi_1) = (1 \otimes a_1, 1 \otimes \xi_1) &\text{ if } k = 1,\\
0 &\text{ if } k \geq 2,
\end{cases}
\end{equation}
and consider the family $\widehat{\varepsilon}_{\phi(x), \phi} := \left\{\widehat{\varepsilon}_{\phi(x), \phi,k}\right\}_{k \geq 1}.$
\begin{lem}\cite[Lemma 2.8]{Kim1}\label{vecpf}
$\widehat{\varepsilon}_{\phi(x), \phi}$ is an $L_{\infty}[1]$-morphism.
\end{lem}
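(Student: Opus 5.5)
The statement is that $\widehat{\varepsilon}_{\phi(x),\phi}$ is an $L_\infty[1]$-morphism. Since all higher components $\widehat{\varepsilon}_{\phi(x),\phi,k}$ with $k\ge 2$ vanish, and both $\mathcal{C}_x$ and $\mathcal{C}_{x,\phi}$ are strict, the relation \eqref{frel} reduces to checking two things. First, in arity one, $\widehat{\varepsilon}_1$ must be a chain map: $l_{\phi,1}(1\otimes a)=1\otimes l_1(a)$. Second, in arity $k\ge 2$, we need
\[
l_{\phi,k}(1\otimes a_1,\dots,1\otimes a_k)=1\otimes l_k(a_1,\dots,a_k),
\]
because all the mixed terms on the left of \eqref{frel} contain some $\widehat{\varepsilon}_{j}$ with $j\ge 2$ and so vanish. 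I will verify these separately on the Koszul and de Rham summands, since both $l$ and $l_\phi$ are defined as direct sums $l^{\mathrm{K}}\oplus l^{\mathrm{dR}}$.

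\emph{Koszul summand.} The higher operations vanish on both sides, so only the chain map identity is needed. The element $1\otimes a$ lies in the image of $C^\infty(W_x)$ in the inverse limit \eqref{ivlmt}, so its projection to each level $C^\infty(W_x)/I_\phi^j$ is represented by the constant function $\widetilde{h}=1$. Unwinding the definition of $l^{\mathrm{K},\phi}_1$ level by level should then give $1\otimes\iota_{s|_{W_x}}(a)$ compatibly in $j$. The main subtlety is the bookkeeping in that definition, in particular the term $[1]_{j-2}$ and the convention that it is zero for $j\le 2$. I would check that this is consistent with the transition maps of the inverse system, so that the levelwise identities assemble into an identity in the completion. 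Behind this lies the observation that $\iota_s$ is $C^\infty(W_x)$-linear, so the differential on the completion is just $\mathrm{id}\otimes\iota_s$ under the identification \eqref{kdzpt}.

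\emph{De Rham summand.} For this part I would use the explicit description of $l^{\mathrm{dR}}_{\phi,k}$ in \cite[Definition B.19]{Kim1}. The completed operations are obtained by extending $l^{\mathcal{F}}_k$ (and their augmented versions from Proposition~\ref{augomega}) $C^\infty_\phi(W_x)$-multilinearly, up to the derivation terms of $l_1$. Hence, on elements of the form $1\otimes\xi_i$, they restrict to $1\otimes l^{\mathrm{dR}}_k(\xi_1,\dots,\xi_k)$.

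The degree $-2$ augmentation piece needs separate attention. There I would use that $\bigl(C^\infty(W_x)_{\mathcal{F}_x}\bigr)_\phi$ is defined as a kernel of $l^{\mathrm{dR}}_1$. If $l^{\mathrm{dR}}_1 f=0$, then $l^{\mathrm{dR}}_{\phi,1}(1\otimes f)=1\otimes l^{\mathrm{dR}}_1 f=0$. So $1\otimes f$ lands in that kernel and the map is well-defined on this summand.

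I expect this de Rham verification to be the main obstacle, because the relevant formulas are only cited rather than reproduced. I would argue structurally: the completion functor $C^\infty_\phi(W_x)\otimes_{C^\infty(W_x)}(-)$ is applied to operations built from the multiplication and the $C^\infty$-module structure, so the unit map $\xi\mapsto 1\otimes\xi$ intertwines them.
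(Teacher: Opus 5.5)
Your proposal is correct and is essentially the routine verification the paper has in mind; the paper itself only cites \cite[Lemma 2.8]{Kim1} and remarks that $\widehat{\varepsilon}_{\phi(x),\phi}$ ``can be easily shown'' to be an $L_\infty[1]$-morphism. As you note, the vanishing higher components reduce everything to $\widehat{\varepsilon}_1\circ l_k = l_{\phi,k}\circ\widehat{\varepsilon}_1^{\otimes k}$, and this holds because the completed operations are defined as extensions of the original ones. One correction: the completed de Rham operations are not $C^\infty_\phi(W_x)$-multilinear extensions, since the foliated operations involve derivatives and not only in $l_1$; they are defined levelwise through representatives $\widetilde{h}$ with respect to the $I_\phi$-adic filtration, as in the Koszul part, and taking $\widetilde{h}=1$ gives the required identities directly, including the level shift $[1]_{j-2}$.
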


In the context of Definition \ref{ladef} regarding local $L_{\infty}[1]$-algebras, several special cases are worth discussing. First, we have a simple type of completions for surjective $\phi.$
\begin{lem}\cite[Lemma 2.12]{Kim1}\label{surjcp}
If $\phi$ is surjective, then we have an isomorphism
\[
\mathcal{C}_{\phi(x),\phi} \simeq \mathcal{C}_{\phi(x)}.
\]
\end{lem}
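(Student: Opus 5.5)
The statement is Lemma~\ref{surjcp}: for surjective $\phi$, the completion $\mathcal{C}_{\phi(x),\phi}$ is isomorphic to $\mathcal{C}_{\phi(x)}$. The plan is to compute the completed function ring $C^\infty_\phi(W)$ directly and then let the isomorphism propagate to both summands of $\mathcal{C}_{\phi(x),\phi}$. The proposed isomorphism is the canonical map $\widehat{\varepsilon}_{\phi(x),\phi}$ of \eqref{varep}, which is already an $L_\infty[1]$-morphism by Lemma~\ref{vecpf}. So it suffices to show that its linear part is bijective, and then check that the completed operations match the original ones.

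\emph{Step 1 (the ideal vanishes).} When $\phi$ is surjective onto the relevant neighborhood $W$ (the local presymplectic neighborhood at $\phi(x)$), we have $\operatorname{Im}\phi\supset W$. Hence any $f\in I_\phi$ satisfies $f|_{W}\equiv 0$, so $I_\phi=0$ and $I_\phi^j=0$ for all $j\ge 1$. The inverse system in \eqref{ivlmt} is then constant, equal to $C^\infty(W)$, and its limit is $C^\infty(W)$ itself. The canonical map $C^\infty(W)\to C^\infty_\phi(W)$ is therefore the identity.

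\emph{Step 2 (transfer to both parts).} The Koszul part \eqref{kdzpt} becomes
\[
C^\infty(W)\otimes_{C^\infty(W)}\bigwedge\nolimits^{-\bullet}\Gamma(E^*|_W)\cong \bigwedge\nolimits^{-\bullet}\Gamma(E^*|_W),
\]
via $a\mapsto 1\otimes a$, which is exactly $(\widehat{\varepsilon}_{\phi(x),\phi})_1$. The same argument applies to $\Omega^{\bullet+1}(\mathcal{F})_\phi$. For the degree $-2$ augmentation term, $(C^\infty(W)_{\mathcal{F}})_\phi$ is defined as a kernel of $l_1^{\mathrm{dR}}$ on the completed complex; since the completion is the identity, this kernel coincides with the uncompleted one.

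\emph{Step 3 (operations agree).} One must check that $l^{\mathrm{K},\phi}_1$ agrees with $\iota_{s}$ under this identification. Here I expect the only friction, because of the normalization $[1]_{j-2}$ in the definition: the notation indexes the formula by the truncation level $j$, with $[1]_{j-2}=0$ for $j\le 2$. I would first check that, since all truncation levels coincide, the system is stationary and the limiting formula reduces to $\iota_{s}(\widetilde h a)=h\,\iota_s(a)$ with $\widetilde h=h$ now unique. That gives $l^{\mathrm{K},\phi}_1=\iota_s$, and the higher Koszul operations vanish on both sides.

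For the de Rham part, I would cite the construction of \cite[Definition B.19]{Kim1} to see that the completed operations are obtained by extending scalars along $C^\infty\to C^\infty_\phi$. They therefore reduce to $\{l^{\mathrm{dR}}_k\}$ when that map is the identity. Consequently $\widehat{\varepsilon}_{\phi(x),\phi}$ is a strict $L_\infty[1]$-isomorphism, whose inverse is given by its inverse linear map with vanishing higher components.
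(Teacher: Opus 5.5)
Your argument is correct and is the intended one. The paper only cites \cite[Lemma 2.12]{Kim1} here, but it uses the lemma exactly as you prove it, e.g.\ writing $\mathcal{C}'_{p',x',\mathrm{pr}'_{(p',p'')}}=\mathcal{C}'_{p',x'}$ for the surjective projection when defining $\mathrm{Pr}'$: surjectivity forces $I_\phi=0$, the inverse system is constant, and $\widehat{\varepsilon}_{\phi(x),\phi}$ is the identity on both the Koszul and the augmented de Rham parts. Your Step~3 can be skipped, since Lemma~\ref{vecpf} already makes $\widehat{\varepsilon}_{\phi(x),\phi}$ an $L_\infty[1]$-morphism with vanishing higher components, so bijectivity of its linear part alone makes the inverse linear map a strict $L_\infty[1]$-morphism.
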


Second, we consider the completion for open subcharts: Let  $o : U \hookrightarrow U'.$ be an open inclusion and $\mathcal{U} := \mathcal{U}'|_U$ the open subchart on $U.$

\begin{lem}\cite[Lemma 2.13]{Kim1}\label{itavsteai}
In the above situation, there exists an $L_\infty[1]$-quasi-isomorphism:
\[
\widehat{o}_x : \mathcal{C}'_{o(x),o} \simeq \mathcal{C}_x.
\]
\end{lem}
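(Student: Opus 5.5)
The plan is to reduce to the surjective case of Lemma~\ref{surjcp}, using the freedom in choosing the local presymplectic neighborhoods, and then to observe that the two local algebras are literally the same object. Since $o$ is an open inclusion, the chart $\mathcal{U}=\mathcal{U}'|_U$ has base $U$ with bundle $E'|_U$, section $s'|_U$ and two-form $\beta'|_U$. Consequently the stratification (\ref{aabbbb}) of $U$ and its tubular neighborhoods are just the restrictions of those of $U'$.

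\emph{Step 1 (choice of neighborhoods).} For $x\in s^{-1}(0)\subset U$, the point $o(x)$ lies in some stratum $\mathcal{S}'_i$. I would choose the open ball $\overset{\circ}{W}{}'_{o(x)}\subset \mathcal{S}'_i\cap U$ small enough that its tubular preimage $W'_{o(x)}=\pi_i^{-1}(\overset{\circ}{W}{}'_{o(x)})$ lies inside $U$. Then I set $W_x:=W'_{o(x)}$ with the restricted presymplectic form. By Lemma~\ref{lemdw}, any other choice of $\overset{\circ}{W}$ yields an isomorphic de Rham $L_\infty[1]$-algebra. On the Koszul side, a different choice changes the algebra only by a restriction between contractible neighborhoods of a zero, which I handle in Step 3.

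\emph{Step 2 (the completion is trivial).} With this choice, $\operatorname{Im} o\supset W'_{o(x)}$. So the ideal $I_o=\{f\in C^\infty(W'_{o(x)}) : f|_{\operatorname{Im}o}\equiv 0\}$ is zero, and the inverse limit (\ref{ivlmt}) gives $C^\infty_o(W'_{o(x)})=C^\infty(W'_{o(x)})$. In other words, $o$ is surjective onto the relevant neighborhood, and Lemma~\ref{surjcp} applies. The resulting identification is compatible with the completed operations $l^{\mathrm K,o}_k$ and $l^{\mathrm{dR}}_{o,k}$, because $[1]_{j-2}\otimes\iota_{s}(\widetilde h a)$ just reduces to $\iota_s(ha)$. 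Hence
\[
\mathcal{C}'_{o(x),o}\simeq\mathcal{C}'_{o(x)}=\bigwedge\nolimits^{-\bullet}\Gamma\bigl(E'^*|_{W'_{o(x)}}\bigr)\oplus\Omega^{\bullet+1}_{\mathrm{aug}}(\mathcal{F}'_{o(x)}),
\]
and this equals $\mathcal{C}_x$ on the nose. The reason is that $E'|_{W_x}=E|_{W_x}$, $s'|_{W_x}=s|_{W_x}$, and the foliations coincide. I define $\widehat o_x$ to be this identification, with vanishing higher components; it is then a strict isomorphism.

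\emph{Step 3 (independence of choices; main obstacle).} If the neighborhood $W'_{o(x)}$ has already been fixed and is not contained in $U$, I must compare it with a smaller one $W\subset W'_{o(x)}\cap U$. On the de Rham part both sides are acyclic by Proposition~\ref{augomega}, so any $L_\infty[1]$-morphism between them is a quasi-isomorphism. The real difficulty is the Koszul part: I need the completed restriction $\bigl(\bigwedge^{-\bullet}\Gamma(E'^*|_{W'})\bigr)_o\to\bigwedge^{-\bullet}\Gamma(E^*|_{W})$ to be a quasi-isomorphism.

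My first attempt would use the fact that completing along $I_o$ only remembers the infinite-order behaviour of functions along $\operatorname{Im}o\cap W'$. I would use a retraction of $W'$ onto the contractible set $W$ to build a chain map in the opposite direction via pullback, in the manner of $\pi^*$ in Lemma~\ref{pokhgjb}. I would then check that both composites induce the identity on cohomology. The key input would be that the Koszul cohomology is supported on $s^{-1}(0)\cap W$, which lies in the common region. If that argument proves too delicate, I would fall back on invoking the structure of the chart morphisms already established for open subcharts in \cite{Kim1}, together with Theorem~\ref{anhp}. That theorem makes the resulting quasi-isomorphism canonical up to homotopy.
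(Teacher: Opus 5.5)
The paper does not prove this lemma; it quotes it from \cite{Kim1}, so there is no in-paper argument to compare your outline with. Judged on its own, the proposal has a genuine gap, and it sits where you suspected.

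Steps~1--2 establish the statement only for a favourable choice of data, under which it is a tautology: $I_o=0$ and $\mathcal{C}'_{o(x)}=\mathcal{C}_x$ literally. But $W'_{o(x)}$ is part of the given chart $\mathcal{U}'$ (Definition~\ref{kurdef}), so you are not free to re-choose it. Even if you could, shrinking the ball $\overset{\circ}{W}{}'_{o(x)}$ does not shrink the fibres of the fixed tubular neighbourhood $N_i$. For instance, for $\beta=(u^2+v^2)\,du\wedge dv$ on $\mathbb{R}^2$ one has $\mathcal{S}_2=\{0\}$ and $W'_0=\pi_2^{-1}(0)=N_2$, which need not lie in $U$. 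For the same reason, the tubular data of $U$ are not ``just the restrictions'' of those of $U'$, since $\pi_i$ need not map $N_i\cap U$ into $U$. So in general the open subchart has $W_x\subsetneq W'_{o(x)}$. Indeed, Example~\ref{dcotopwx} invokes this lemma in exactly such a pure-shrinking situation. All the content is therefore in your Step~3.

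Step~3 cannot be completed as proposed, because the map you want to be a quasi-isomorphism is not one.
\begin{itemize}
\item There is no retraction of the connected set $W'$ onto a proper open subset $W$, since a retract of a Hausdorff space is closed.
\item More decisively, take the rank-zero case of Example~\ref{man}. There the Koszul part is $C^\infty(W)$ in degree $0$ with zero differential, so a quasi-isomorphism must be a linear bijection. The completed restriction $C^\infty_o(W'_{o(x)})\to C^\infty(W_x)$ kills every function supported in $(W'_{o(x)}\cap U)\setminus\overline{W_x}$. Such a function lies outside $I_o\supseteq I_o^j$, hence is nonzero in the completion.
\item The image of that map consists of restrictions of smooth functions on $W'_{o(x)}$. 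So, in a chart, it misses $y\mapsto|y-p|^{-2}$ for any $p\in\partial W_x\cap W'_{o(x)}$. This set is nonempty because $W_x\subsetneq W'_{o(x)}$ and $W'_{o(x)}$ is connected. Surjectivity fails even for $W_x=W'_{o(x)}\cap U$.
\item Your heuristic that Koszul cohomology is ``supported on $s^{-1}(0)\cap W$, in the common region'' does not help. $H^0=C^\infty(W)/\langle s\rangle$ sees the whole zero locus in $W$, and restriction between two such quotients is not bijective once that locus has positive dimension.
\item Your fallback is circular, since the open-subchart structure established in \cite{Kim1} is this very lemma. Theorem~\ref{anhp} only gives uniqueness of quasi-isomorphisms up to homotopy, not existence.
\end{itemize}

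What is missing is either a quasi-isomorphism that is not induced by restriction, or a precise convention for the local data of open subcharts under which the identity of Step~2 is actually available. No argument along the lines of Step~3 can close the gap.
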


\section{Equivalence relations}

In this section, we review the equivalence relations that are needed in the definition of Kuranishi atlases and spaces. More details are provided in \cite{Kim1}.

We define expansion of a chart as follows:
\begin{defn}[Expansion of a chart]\label{expvs}
Let $\mathcal{U} = (U, E, s, \Gamma, \psi)$ be a Kuranishi chart on $X$ as in Definition \ref{kurdef}, and let $V$ be a finite-dimensional vector space. From $\mathcal{U}$ we construct another chart on $X$, called the \textit{expansion of $\mathcal{U}$ by $V$},
\[
\mathcal{U} \times V := (U \times V, E \times V, s \times \mathrm{id}_V, \Gamma, \psi),
\]
consisting of the following data:
\begin{itemize}
\item[--] $U \times V$, equipped with the closed two-form $\pi^* \beta$, where $\pi: U \times V \to U$ denotes projection onto the $U$-component.
\item[--] The vector bundle $E \times V \to U \times V$, obtained in the natural way from $E \to U.$
\item[--] The section $s \times \text{id}_V: U \times V \longrightarrow E \times V$, $(y,v) \mapsto \big(s(y), v\big).$
\item[--] The group $\Gamma$, acting only on the $U$-component of $U \times V.$
\item[--] The homeomorphism $\psi : {(s \times \text{id}_V)^{-1}(0) }/{\Gamma}\simeq {s^{-1}(0)}/{\Gamma} \xhookrightarrow{\psi} X$, coinciding with the $\psi$ of $\mathcal{U}.$
\item[--] The open neighborhood $W_{(x,0)} := W_x \times V$ of the zero point $(x,0).$
\item[--] The local $L_\infty[1]$-algebra at $(x,0) \in (s \times \mathrm{id}_V)^{-1}(0)$,
\[
\mathcal{C}^V_{(x,0)} := \bigwedge\nolimits^{-\bullet} \Gamma\left(\left(\pi^* E \oplus V\right)^*\big|_{W_{(x,0)}}\right) \oplus \Gamma_{\mathrm{aug}}\left(\bigwedge\nolimits^{\bullet+1}\left(\pi^*T\mathcal{F} \oplus V\right)^*\big|_{W_{(x,0)}}\right).
\]
\item[--] The $L_{\infty}[1]$-operations $\left\{l_{(x,0),k}^V: {\mathcal{C}^V}_{(x,0)}^{\otimes k} \rightarrow \mathcal{C}_{(x,0)}^V\right\}_{k \geq 1}$ on $\mathcal{C}^V_{(x,0)}$,
given by $l_{(x,0), k}^{V, \mathrm{K}} \oplus l_{(x,0), k}^{V, \mathrm{dR}},$ with each component given by
\[
\begin{cases}
&\begin{split}
l_{(x,0), k}^{V, \mathrm{K}}&\big((a_1, w_1^*) , \cdots , (a_k, w_k^*)\big)\\
&\quad \quad := \begin{cases}
\iota_{s \times \mathrm{id}_V}(a_1,w^*_1) = \left\{\iota_s(a_1)(x), \iota_{v}w_1^*(x,v)\right\}_{(x,v) \in W_x \times V} &\text{ if } k =1,\\
0 &\text{ if } k \geq 2,
\end{cases}
\end{split}\\
&l^{V,\mathrm{dR}}_{(x,0),k} \big((\xi_1, \tau_1), \cdots , (\xi_k, \tau_k)\big) := \left(l^{\mathrm{dR}}_{x,k} \left(\xi_1, \cdots,\xi_k \right), l^V_{k}(\tau_1, \cdots, \tau_k)\right).
\end{cases}
\]
Here, we define
\[
 l^V_{k}(\tau_1, \cdots, \tau_k) := \begin{cases}
\sum_i\tau_1(v_i) & \text{with respect to a fixed basis } \{v_i\} \text{ of } V, \text{ if } k = 1,\\
0  & \text{ if } k \geq 2,
\end{cases}
\]
for $a_i \in \Gamma\left(\pi^*E^*\big|_{W_{(x,v)}}\right), \ \xi_i \in \pi^*\left(\Omega^{\bullet +1}\left(\mathcal{F}\big|_{W_{(x,v)}}\right)\right)$, and $w^*_i, \tau_i \in \Gamma\left(V^*|_{W_{(x,v)}}\right)$. It follows immediately that the family $\{l_{(x,0),k}^V\}$ forms an $L_\infty[1]$-algebra. Moreover, we have
\[
H^*\left(\Gamma\big(\bigwedge\nolimits^{\bullet+1}(\pi^*T\mathcal{F} \oplus V)^*\big|_{W_{(x,0)}}\big), l^{V, \mathrm{dR}}_{(x,0),1}\right) \simeq H^*\big(\Omega^{\bullet + 1}(\mathcal{F}_x), l^{\mathrm{dR}}_{x,1}\big),
\]
which can be verified as follows: since $l^{\mathrm{dR}}_{x,k}$ and $l^V_k$ are defined independently, one may apply the K\"{u}nneth theorem together with the fact that the cohomology computed by $l_1$ on an affine space is trivial. We then add the augmentation to the de Rham part, and equip it with an $L_{\infty}[1]$-structure using Proposition \ref{augomega}.

\end{itemize}
\end{defn}

\begin{defn}[Expanded atlases]\label{expatla}

Given a Kuranishi atlas $\widehat{\mathcal{U}}$ and a nonnegative number $m,$ we define the \textit{expanded atlas} of $\widehat{\mathcal{U}}$ by
\[
\widehat{\mathcal{U}} \times \mathbb{R}^{{m}}
:= 
\Big(
\left\{\mathcal{U}_p \times \mathbb{R}^{{m}} \right\}_p,\,
\left\{ \left(U_{pq} \times \mathbb{R}^{{m}}, \phi_{pq}^{\mathbb{R}^{{m}}}, \left\{\phi_{pq,x}^{\mathbb{R}^{{m}}} \right\}\right)\right\}_{p,q}
\Big),
\]
where each component is given by:
\begin{itemize}
    \item[--] $\mathcal{U}_p \times \mathbb{R}^{{m}}$ is the expanded chart for each $p \in X$.

    \item[--] $U_{pq} \times \mathbb{R}^{{m}}$ is an open subset of $U_p \times \mathbb{R}^{{m}}$.
    
    \item[--] $\phi_{pq}^{\mathbb{R}^{{m}}} : U_{pq} \times \mathbb{R}^{{m}} \to U_q \times \mathbb{R}^{{m}}$ is the base coordinate change given by
    \[
    \phi_{pq}^{\mathbb{R}^{{m}}} := \phi_{pq} \times \mathrm{id}_{\mathbb{R}^{{m}}}.
    \]
   \item[--] $\widehat{\phi}_{pq,x}^{\mathbb{R}^{m}} : \mathcal{C}_{q,\,(\phi_{pq}(x),0)}^{\mathbb{R}^{m}}
\to \mathcal{C}_{p,(x,0)}$, for each $x \in s_{p}^{-1}(0)$, is the $L_{\infty}[1]$-coordinate change given by the composition
\[
\mathcal{C}^{\mathbb{R}^{m}}_{q,\,(\phi_{pq}(x),0)}
\xrightarrow{\widehat{\pi}_{q, (\phi_{pq}(x),0)}^{-1}, \simeq}
\mathcal{C}_{q,\,\phi_{pq}(x)}
\xrightarrow{\widehat{\phi}_{pq,x}, \simeq}
\mathcal{C}_{p,x} \xrightarrow{\widehat{\pi}_{p, (x,0)}, \simeq} \mathcal{C}^{\mathbb{R}^{m}}_{p, (x,0)},
\]
Here, the $L_{\infty}[1]$-quasi-isomorphisms $(1)$ and $(2)$ are defined as in Example 2.23 and Lemma 2.24 of \cite{Kim1}.
\end{itemize}
\end{defn}

Any chart naturally restricts on an open subset of the base under the condition that the group action is closed:

\begin{defn}[Open subchart]
Let $\mathcal{U}= (U, E, s, \Gamma, \psi)$ be a Kuranishi chart of $X$ and $U_0 \subset U$ an open subset with $\Gamma \cdot U_0 \subset U_0.$ Then the restricted tuple 
\[
\mathcal{U}|_{U_0}= (U_0, E|_{U_{0}}, s|_{U_{0}}, \Gamma, \psi |_{(U_0 \cap s^{-1}(0))/\Gamma})
\]
canonically determines a Kuranishi chart called the \textit{open subchart} of $\mathcal{U}$ on $U_0.$
\end{defn}

We can consider completions for open subcharts. Let $o : U \hookrightarrow U'$ be an open inclusion, and let $\mathcal{U} := \mathcal{U}'|_U$ denote the open subchart on $U.$

\begin{lem}[Completion at an open embedding with open subchart data]\label{itavsteai}\cite[Lemma 2.13]{Kim1}
In the situation above, there exists an $L_\infty[1]$-quasi-isomorphism
\[
\widehat{o}_x : \mathcal{C}'_{o(x),o} \simeq \mathcal{C}_x.
\]
\end{lem}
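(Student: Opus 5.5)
The plan is to reduce to the case in which the completion is trivial, and then to take $\widehat{o}_x$ to be the tautological identification of local algebras. The key observation is that $o\colon U\hookrightarrow U'$ is an open inclusion and $\mathcal{U}=\mathcal{U}'|_U$ carries the restricted data $E=E'|_U$, $s=s'|_U$ and $\beta=\beta'|_U$. Consequently, the local presymplectic neighborhoods of Appendix~B can be chosen compatibly: for $x\in s^{-1}(0)$ we pick the ball $\overset{\circ}{W}_x\subset\mathcal{S}_i\cap U$ so small that $W_x=\pi_i^{-1}(\overset{\circ}{W}_x)\subset U$. We then use the same set as $W'_{o(x)}$ for the chart $\mathcal{U}'$. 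By Lemma~\ref{lemdw}, a different choice of $\overset{\circ}{W}'_{o(x)}$ changes the de Rham part only by an $L_\infty[1]$-isomorphism. The Koszul parts for two nested contractible choices are also related by restriction, so I would fix this choice first and compose with those identifications at the end.

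With $W'_{o(x)}=W_x\subset\operatorname{Im} o$, the ideal $I_o=\{f\in C^\infty(W'_{o(x)})\mid f|_{\operatorname{Im}o}\equiv 0\}$ is zero. Hence the inverse limit \eqref{ivlmt} gives $C^\infty_o(W'_{o(x)})=C^\infty(W'_{o(x)})$, and by \eqref{kdzpt} the Koszul part of $\mathcal{C}'_{o(x),o}$ equals $\bigwedge^{-\bullet}\Gamma(E'^*|_{W'_{o(x)}})=\bigwedge^{-\bullet}\Gamma(E^*|_{W_x})$. The differential $l^{\mathrm{K},o}_1$ reduces to $\iota_{s'|_{W'}}=\iota_{s|_{W_x}}$, since $s'$ restricts to $s$. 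The same argument applies verbatim to the de Rham part $\Omega^{\bullet+1}_{\mathrm{aug},o}(\mathcal{F}'_{o(x)})$: the foliations agree because $\beta'|_U=\beta$, the completion is trivial, and the augmentation $\ker l_1^{\mathrm{dR}}$ in degree $-2$ is computed from identical data. This is exactly the argument behind Lemma~\ref{surjcp}, now applied with $\operatorname{Im}o\supset W'_{o(x)}$ playing the role of surjectivity. I then define $\widehat{o}_x$ with $(\widehat{o}_x)_1$ equal to this identity identification and $(\widehat{o}_x)_{k\ge2}=0$. It is a strict $L_\infty[1]$-isomorphism, because the higher operations $l_k^{\mathrm{dR}}$ produced by Proposition~\ref{augomega} depend only on $(W_x,\beta_{W_x})$.

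The main obstacle is the case where $W'_{o(x)}$ is already prescribed by the atlas of $\mathcal{U}'$ and is not contained in $U$. One cannot simply restrict Koszul complexes in that case, since restriction is not a quasi-isomorphism on $H^0=C^\infty/(s)$. To handle this, I would first show that the completion map $C^\infty(W'_{o(x)})\to C^\infty_o(W'_{o(x)})$ factors through restriction to $W'_{o(x)}\cap U$. Then, using Lemma~\ref{lemdw} together with the comparison maps of Lemma~\ref{vecpf} and Lemma~\ref{cochqim} (the latter via the identity FOOO embedding on the smaller contractible neighborhood), I would produce a zig-zag of quasi-isomorphisms linking $\mathcal{C}'_{o(x),o}$ to the algebra computed on $W_x\subset W'_{o(x)}\cap U$. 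Finally, I would invert the backward arrows up to homotopy by Theorem~\ref{wht}. The delicate point I would check carefully is that these comparison maps really are quasi-isomorphisms on the Koszul side in degree $0$, which is where contractibility of both neighborhoods and the tangent bundle condition for the identity embedding are used.
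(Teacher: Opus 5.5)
The paper gives no argument for this lemma (it only quotes \cite[Lemma 2.13]{Kim1}), so I am judging your argument on its own merits. Your second paragraph is correct, but it only covers the tautological case. If the subchart's neighborhood is literally $W'_{o(x)}$ and $W'_{o(x)}\subset U$, then $I_o=0$, the completion is trivial, and the identity is a strict isomorphism. Nothing beyond that case is actually proved.

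The reduction in your first paragraph fails. It re-chooses $W'_{o(x)}$, but that set is part of the given chart $\mathcal{U}'$ and determines the source $\mathcal{C}'_{o(x),o}$. To transport the result back, you need the Koszul restriction maps between nested neighborhoods to be quasi-isomorphisms. As you say yourself in the last paragraph, they are not. This nested comparison is also exactly what the paper derives from the present lemma in Example~\ref{dcotopwx}, so invoking it would be circular.

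The zig-zag in your last paragraph is never specified, and its one concrete ingredient fails. In rank zero the Koszul part is $C^\infty(W'_{o(x)})$ in degree $0$ with zero differential. Suppose $W'_{o(x)}\not\subset\overline{U}$, and let $f$ be a bump function supported in $W'_{o(x)}\setminus\overline{U}$. Choose a cutoff $\chi\in I_o$ equal to $1$ on $\operatorname{supp}f$. Then $f=f\chi^{j-1}\in I_o^j$ for every $j$, so $\widehat{\varepsilon}_{o(x),o}(f)=0$. Hence completion along a non-surjective open inclusion is not a quasi-isomorphism, and the appeal to Lemma~\ref{cochqim} for the open inclusion cannot supply one. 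Restriction $C^\infty(W'_{o(x)})\to C^\infty(W_x)$ to a smaller ball is not bijective either.

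In fact no cleverer zig-zag can exist when the two neighborhoods are unrelated. Take the following data:
\begin{itemize}
\item $U'=\mathbb{R}$ with $\beta=0$, $E'$ the trivial line bundle, and $s'(t)=t^2-1$;
\item $U=(-2,2)$, $x=1$, and $W'_{o(1)}=(-3,3)$;
\item the subchart's neighborhood $W_1=(1/2,3/2)$.
\end{itemize}
Evaluation at $t=\pm1$ is well defined on $C^\infty_o(W'_{o(1)})$, because every element of $I_o^j$ vanishes on $(-2,2)$. It also kills the image of the Koszul differential, which lies in the ideal generated by $s'$. So $H^0(\mathcal{C}'_{o(1),o})$ surjects onto $\mathbb{R}^2$. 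On the other hand, $H^0(\mathcal{C}_1)\cong C^\infty((1/2,3/2))/(t-1)\cong\mathbb{R}$, since the de Rham summand is acyclic. Hence no quasi-isomorphism exists for these choices.

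So the missing ingredient is not a better chain of comparison maps. It is a convention fixing the open subchart's local neighborhoods in terms of those of $\mathcal{U}'$, for instance $W_x=W'_{o(x)}$, which requires $W'_{o(x)}\subset U$. Under such a convention your second paragraph is the entire proof. You should state that convention as part of the setup, rather than try to reduce to it by re-choosing data of $\mathcal{U}'$.
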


\begin{exam}[Different choices for the open neighborhood $W_x$]\label{dcotopwx}
Let $\mathcal{U}$ and $\mathcal{U}'$ be Kuranishi charts that are identical \textit{except} for the choice of open neighborhoods $W_x \underset{\text{open}} {\subset} W'_x$ at a zero point $x \in s^{-1}(0)$. Then Kuranishi charts are independent of such choices, in the following sense: there exists a morphism of charts $\Phi: \mathcal{U} \rightarrow \mathcal{U}'$ given by
\begin{itemize}
    \item[--] $\phi: U \rightarrow U$, the identity map;
    \item[--] $\widehat{\phi}^{\mathrm{c}}_x: \mathcal{C}'_{\phi(x),\phi} \xrightarrow{\simeq} \mathcal{C}_x$, the quasi-isomorphism from Lemma \ref{itavsteai}.
\end{itemize}
\end{exam}

\begin{notation}
Let $\left(X, \widehat{\mathcal{U}}\right)$ be a Kuranishi atlas. We write 
\[
\left(X, \widehat{\mathcal{U}}^0\right) < \left(X, \widehat{\mathcal{U}}\right), \text{ or simply } \widehat{\mathcal{U}}^0 <\widehat{\mathcal{U}}, 
\]
for its open subatlas $\left(X, \widehat{\mathcal{U}}^0\right)$ that consist of the open subcharts and the coordinate changes induced to them.
\end{notation}
With this notation, we define an equivalence relation between the atlases.

\begin{defn}[Equivalence of atlases]\label{eqats}
Let $\left(X, \widehat{\mathcal{U}}_1\right)$ and $\left(X, \widehat{\mathcal{U}}_2\right)$ be Kuranishi atlases. We say that they are \textit{equivalent} and write
\[
\left(X, \widehat{\mathcal{U}}_1\right) \sim \left(X, \widehat{\mathcal{U}}_2\right), \text{ or simply } \widehat{\mathcal{U}}_1 \sim \widehat{\mathcal{U}}_2
\]
if
\begin{equation}\label{urur}
\widehat{\mathcal{U}}^0_1 \times \mathbb{R}^{n_1} = \widehat{\mathcal{U}}^0_2 \times \mathbb{R}^{n_2}
\end{equation}
by which we mean that the following conditions hold:

\begin{enumerate}[label = (\roman*)]
    \item There exists a commutative diagram as follows
    \[
    \begin{tikzcd}[row sep=large, column sep=large]
    E^0_{1,p}|_{U_{1,p}^0 \times \mathbb{R}^{n_1}} \arrow[r, "\simeq"] 
    & E^0_{2,p}|_{U_{2,p}^0 \times \mathbb{R}^{n_2}}\\
    U_{1,p}^0 \times \mathbb{R}^{n_1} \arrow[r, "\simeq"] \arrow[u, "s_{1,p}^0 \times \mathrm{id}_{\mathbb{R}^{n_1}}"'] 
    & U_{2,p}^0 \times \mathbb{R}^{n_2} \arrow[u, "s_{2,p}^0 \times \mathrm{id}_{\mathbb{R}^{n_2}}"]\\
   (s_{1,p}^0 \times \mathrm{id}_{\mathbb{R}^{n_1}})^{-1}(0) \arrow{r}{(1), \simeq} \arrow[u, hook] 
& (s_{2,p}^0 \times \mathrm{id}_{\mathbb{R}^{n_2}})^{-1}(0). \arrow[u, hook]
    \end{tikzcd}
    \]

    \item There exists a group isomorphism $\Gamma^0_{1,p} \simeq \Gamma^0_{2,p}$.
    
    \item There exists a commutative diagram as follows
  \[
    \begin{tikzcd}[row sep=large, column sep=large]
     \frac{(s_{1,p}^0)^{-1}(0)}{\Gamma_{1,p}}
    \arrow[r, "\simeq"] \arrow[d, swap, "\simeq"] 
    & \frac{(s_{1,p}^0 \times \mathrm{id}_{\mathbb{R}^{n_1}})^{-1}(0)}{\Gamma_{1,p}} \arrow[r, hook, "\psi_{1,p}^0"] & X  \\
   \frac{(s_{2,p}^0)^{-1}(0)}{\Gamma_{2,p}} \arrow[r, "\simeq"] 
    &  \frac{(s_{2,p}^0 \times \mathrm{id}_{\mathbb{R}^{n_2}})^{-1}(0)}{\Gamma_{2,p}}. \arrow[ru, hook, "\psi_{2,p}^0"] &{}
    \end{tikzcd}
    \]
    \item For each pair of the zero points $x_1 \overset{(1), \simeq}{\leftrightarrow} x_2$, there exists an \textit{isomorphism}
    \begin{equation}\label{bnhh}
    \mathcal{C}_{{1,p}, (x_1, 0)}^{0,\mathbb{R}^{n_1}} \xrightarrow{\simeq} \mathcal{C}_{{2,p}, (x_2, 0)}^{0,\mathbb{R}^{n_2}}.
    \end{equation}
    
    \item There exists a commutative diagram as follows
    \[
    \begin{tikzcd}[row sep=large, column sep=large]
   U_{1,p}^0 \times \mathbb{R}^{n_1} \arrow[r, "\simeq"] 
    & U_{2,p}^0 \times \mathbb{R}^{n_2}\\
    U_{1,pq}^0 \times \mathbb{R}^{n_1} 
    \arrow{r}{(2), \simeq} \arrow[u, hook] 
    & U_{2,pq}^0 \times \mathbb{R}^{n_2}. \arrow[u, hook]
    \end{tikzcd}
    \]
    
    \item We have 
\[
\begin{split}
\phi_{1,pq}^{0, \mathbb{R}^{n_1}} = \phi_{2,pq}^{0, \mathbb{R}^{n_2}}
\end{split}
\] 
modulo the diffeomorphism $(2)$ in (v).

\item We have 
\[
\begin{split}
\widehat{\phi}_{1,pq,x}^{0, \mathbb{R}^{n_1}} = \widehat{\phi}_{2,pq,x}^{0, \mathbb{R}^{n_2}}
\end{split}
\] 
for each $x \in s_{p}^{-1}(0)$ modulo the isomorphism $(\ref{bnhh})$ in (iv).

\end{enumerate}

\end{defn}

We list some of the properties of the above-mentioned equivalence by the following lemma whose proof can be found in \cite[Lemma 3.8]{Kim1}.
\begin{lem}\cite[Lemma 3.8]{Kim1}
We have:
\begin{enumerate}[label=(\roman*)]
\item $\widehat{\mathcal{U}}^0 \sim \widehat{\mathcal{U}}$ for an open subatlas $\widehat{\mathcal{U}}^0 < \widehat{\mathcal{U}}.$
\item $\widehat{\mathcal{U}} \sim \widehat{\mathcal{U}} \times V$ for a finite dimensional vector space $V.$
\item $\sim$ is an equivalence relation.
\item $(\widehat{\mathcal{U}} \times \mathbb{R}^{n}) \times \mathbb{R}^{n'} = \widehat{\mathcal{U}} \times \mathbb{R}^{n + n'}$ for all $n, n' \geq 0.$
\end{enumerate}
\end{lem}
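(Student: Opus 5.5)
We verify the four items directly from Definition~\ref{eqats}, proving (iv) first since it is used in the others, then (i) and (ii), and finally (iii). Throughout, ``$=$'' between expanded atlases means the seven conditions (i)--(vii) of Definition~\ref{eqats} hold modulo the indicated diffeomorphisms and $L_\infty[1]$-isomorphisms.

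For (iv), we use the canonical identification $(U_p\times\mathbb{R}^n)\times\mathbb{R}^{n'}\cong U_p\times\mathbb{R}^{n+n'}$ on bases, together with $(E_p\times\mathbb{R}^n)\times\mathbb{R}^{n'}\cong E_p\times\mathbb{R}^{n+n'}$ on bundles. These intertwine the sections $(s_p\times\mathrm{id})\times\mathrm{id}$ and $s_p\times\mathrm{id}$, so conditions (i)--(iii) and (v)--(vi) are immediate. For (iv) and (vii) we compare the local algebras of Definition~\ref{expvs}. The Koszul parts are exterior algebras on $(\pi^*E\oplus\mathbb{R}^n\oplus\mathbb{R}^{n'})^*$ with differential $\iota_{s\times\mathrm{id}}$, hence literally isomorphic. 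The de Rham parts are both built from $\pi^*T\mathcal{F}\oplus\mathbb{R}^n\oplus\mathbb{R}^{n'}$, with $l^V_1$ summed over a fixed basis. Choosing the basis of $\mathbb{R}^{n+n'}$ to be the union of the chosen bases gives an identity of $L_\infty[1]$-algebras, and the augmentations of Proposition~\ref{augomega} agree. The coordinate changes of Definition~\ref{expatla} are compositions $\widehat{\pi}\circ\widehat{\phi}\circ\widehat{\pi}^{-1}$; iterating twice yields the single-step one once the homotopy inverses are chosen compatibly. This last compatibility is the one point needing care.

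For (i), take $\widehat{\mathcal{U}}^0_1=\widehat{\mathcal{U}}^0$, $\widehat{\mathcal{U}}^0_2=\widehat{\mathcal{U}}^0$ (viewing it as an open subatlas of $\widehat{\mathcal{U}}$ itself) and $n_1=n_2=0$; all conditions hold with identity maps. For (ii), take the open subatlases to be $\widehat{\mathcal{U}}$ and $\widehat{\mathcal{U}}\times V$ themselves, and stabilization numbers $n_1=\dim V$, $n_2=0$. Identifying $V\cong\mathbb{R}^{\dim V}$, this reduces to the tautology $\widehat{\mathcal{U}}\times\mathbb{R}^{\dim V}=\widehat{\mathcal{U}}\times V$ after fixing the basis used in $l^V_1$.

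For (iii), reflexivity is (i) with the whole atlas, and symmetry is clear since all the data in Definition~\ref{eqats} are isomorphisms. Transitivity is the main obstacle. Suppose $\widehat{\mathcal{U}}^0_1\times\mathbb{R}^{n_1}=\widehat{\mathcal{U}}^{0}_2\times\mathbb{R}^{n_2}$ and $\widehat{\mathcal{U}}^{0\prime}_2\times\mathbb{R}^{m_2}=\widehat{\mathcal{U}}^0_3\times\mathbb{R}^{m_3}$. We first pass to the common open subatlas $\widehat{\mathcal{U}}^{00}_2:=\widehat{\mathcal{U}}^0_2\cap\widehat{\mathcal{U}}^{0\prime}_2$, obtained by intersecting chart domains, which is legitimate because these are open and $\Gamma$-invariant. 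Transporting this subatlas through the two given identifications yields open subatlases $\widehat{\mathcal{U}}^{00}_1$ and $\widehat{\mathcal{U}}^{00}_3$; restricting an identification to open subcharts preserves all seven conditions, using Lemma~\ref{itavsteai} for the algebras. Next we stabilize the first identity by $\mathbb{R}^{m_2}$ and the second by $\mathbb{R}^{n_2}$. Using (iv), both sides become $\widehat{\mathcal{U}}^{00}_2\times\mathbb{R}^{n_2+m_2}$, and composing the diffeomorphisms and $L_\infty[1]$-isomorphisms gives
\[
\widehat{\mathcal{U}}^{00}_1\times\mathbb{R}^{n_1+m_2}=\widehat{\mathcal{U}}^{00}_3\times\mathbb{R}^{m_3+n_2}.
\]
The subtle point is that the restricted subatlases on the outer sides may not be literally of the form ``open subatlas times $\mathbb{R}^{n}$'', since the intersection in the middle need not be a product in the $\mathbb{R}^{n_1}$ direction. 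We handle this by shrinking $\widehat{\mathcal{U}}^{00}_2$ further to a product neighborhood $U^{000}\times\mathbb{R}^{n_2}$, after rescaling $\mathbb{R}^{n_2}$ diffeomorphically onto a ball, and by invoking Example~\ref{dcotopwx} to absorb the resulting change of the neighborhoods $W_x$.
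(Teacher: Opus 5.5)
Your arguments for (i), (ii), (iv), reflexivity and symmetry are fine in outline. Two small fixes are needed. First, condition (vii) of Definition~\ref{eqats} is a strict equality, so in (iv) you cannot leave the compatible choice of $\widehat{\pi}^{-1}$ open. You have to fix a canonical choice, e.g.\ on the Koszul part, restriction to the zero slice, killing terms with a factor from the trivial summand. This is a strict left inverse of $\widehat{\pi}$ and composes correctly under iterated expansion. Second, when you restrict an identification to open subcharts, condition (iv) requires the given \emph{isomorphisms} to restrict; the quasi-isomorphism of Lemma~\ref{itavsteai} is not enough.

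Your strategy for transitivity (intersect in the middle, stabilize crosswise, invoke (iv)) is the natural one, but its first step has a genuine gap. An identification $\widehat{\mathcal{U}}^0_1\times\mathbb{R}^{n_1}=\widehat{\mathcal{U}}^0_2\times\mathbb{R}^{n_2}$ consists of diffeomorphisms $\Theta_p\colon U^0_{1,p}\times\mathbb{R}^{n_1}\to U^0_{2,p}\times\mathbb{R}^{n_2}$. These need not respect the product structures; when $n_1\neq n_2$ they cannot, since then $\dim U^0_{1,p}\neq\dim U^0_{2,p}$. So $\Theta_p^{-1}(U^{00}_{2,p}\times\mathbb{R}^{n_2})$ is just some open neighbourhood of the zero locus in $U^0_{1,p}\times\mathbb{R}^{n_1}$, and it can be bounded in the $\mathbb{R}^{n_1}$-directions. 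This already happens for $\widehat{\mathcal{U}}_2=\widehat{\mathcal{U}}_1\times\mathbb{R}$ with $n_1=1$, $n_2=0$, $\Theta_p=\mathrm{id}$ and $U^{00}_{2,p}$ a ball. Then it contains no set of the form $U'\times\mathbb{R}^{n_1}$, so it is not the expansion of an open subatlas of $\widehat{\mathcal{U}}_1$, which is all Definition~\ref{eqats} allows. Your $\widehat{\mathcal{U}}^{00}_1$ and $\widehat{\mathcal{U}}^{00}_3$ are therefore not admissible, and the final displayed identity is not an instance of the relation.

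Your proposed repair does not address this.
\begin{itemize}
\item Shrinking the middle to $U^{000}\times(\text{ball}\cong\mathbb{R}^{n_2})$ changes the shape only on side $2$, which was already a product. Its preimage under $\Theta_p$ on side $1$, and its image on side $3$, are again arbitrary open sets.
\item If instead you shrink on side $1$ to a tube $\{(y,v)\colon y\in U',\ |v|<\epsilon(y)\}$ and rescale it to $U'\times\mathbb{R}^{n_1}$, you also need a compensating fibrewise rescaling of the trivial summand so the section keeps the form $s\times\mathrm{id}$. Even then, the image on side $2$ is no longer a product and no longer matches the second identification.
\item Example~\ref{dcotopwx} only concerns the neighbourhoods $W_x$ used for the local algebras, not the chart domains, so it cannot absorb this.
\end{itemize}
What is missing is one of two things. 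Either you construct, for every $p$, a neighbourhood of the zero locus that is simultaneously of (rescaled) product form on both outer sides and compatible with the coordinate changes, so that (v)--(vii) survive. Or you reformulate the equivalence via germs of expanded charts along the zero loci. Without one of these, transitivity, which is the real content of (iii), is not established.
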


We now consider a pair of pre-morphisms from $\mathfrak{X} = \left(X, \left[\widehat{\mathcal{U}}\right]\right)$ to $\mathfrak{X}' = \left(X', \left[\widehat{\mathcal{U}}'\right]\right)$
\begin{equation}\label{f1f2}
\begin{cases}
\overline{F}_1 = \left(\widehat{\mathcal{U}}_1, \widehat{\mathcal{U}'}_1, f_1, \left\{f_{1,p}\right\}, \left\{\widehat{f}_{1,p,x}\right\}\right), \\
\overline{F}_2 = \left(\widehat{\mathcal{U}}_2, \widehat{\mathcal{U}'}_2, f_2, \left\{f_{2,p}\right\}, \left\{\widehat{f}_{2,p,x}\right\}\right)
\end{cases}
\end{equation}
with the properties:
\[
\left[\widehat{\mathcal{U}}_1\right] = \left[\widehat{\mathcal{U}}_2\right] = \left[\widehat{\mathcal{U}}\right]; \ \ \left[\widehat{\mathcal{U}}'_1\right] = \left[\widehat{\mathcal{U}}'_2\right] = \left[\widehat{\mathcal{U}}'\right].
\]

Note that $\overline{F}_1$ and $\overline{F}_2$ can be extended to
\begin{equation}\label{extfi}
\begin{cases}
\overline{F}^{n_1,n'_1}_1 = \left(\widehat{\mathcal{U}}^0_1 \times \mathbb{R}^{n_1}, \widehat{\mathcal{U}'}_1 \times \mathbb{R}^{n'_1}, f_1, \left\{\widetilde{f}_{1,p}\right\}, \left\{\widetilde{\widehat{f}}_{1,p,(x,0)}\right\}\right) \\
\overline{F}^{n_2,n'_2}_2 = \left(\widehat{\mathcal{U}}^0_2 \times \mathbb{R}^{n_2}, \widehat{\mathcal{U}'}_2 \times \mathbb{R}^{n'_2}, f_2, \left\{\widetilde{f}_{2,p}\right\}, \left\{\widetilde{\widehat{f}}_{2,p,(x,0)}\right\}\right),
\end{cases}
\end{equation}
with following properties:
\begin{enumerate}[label=(\roman*)]
\item $\widehat{\mathcal{U}}^0_1 \times \mathbb{R}^{n_1} = \widehat{\mathcal{U}}^0_2 \times \mathbb{R}^{n_2},$
\item $n_i \geq n'_i, \ i= 1,2,$
\item $\widetilde{f}_{i,p} : U^{0}_{i,p} \times \mathbb{R}^{n_i} \rightarrow  U^{'0}_{i,p} \times \mathbb{R}^{n'_i}$ is a \textit{surjective} map that extends $\widetilde{f}_{i,p},$ that is, $\widetilde{f}_{i,p}|_{U_1^0 \times \{0\}} \equiv {f}_{i,p}$ (cf. the assumption $\max\limits_{p \in X}U_p < \infty$ in Definition \ref{kstr}).  In particular, we have $\widetilde{f}_{i,p}|_{s^{-1}_p(0) \times \{0\}} \equiv {f}_{i,p}|_{s_p^{-1}(0)}.$  
\end{enumerate}
We remark that having $\widetilde{f}_{i,p}$ of condition (ii) for each $i$ is always possible by the contractibility of the base $U_i^0.$ The $L_{\infty}[1]$-morphisms $\left\{\widetilde{\widehat{f}}_{i,p,x}\right\}, \ i =1,2$ in (\ref{f1f2}) are given by the following compositions:

We define the $L_\infty$-component of extended morphism by
\begin{equation}
\begin{split}
    \widetilde{\widehat{f}}&_{i, p, x} :=
   \widehat{\pi}_{i,p,(x,0)} \circ \widehat{f}_{i,p,x} \circ \widehat{\pi}_{i, f(p), (f_{i,p}(x),0)}^{-1}\\
& :{} \mathcal{C}^{'\mathbb{R}^{n_i'}}_{f(p),(f_{i,p}(x),0)} \xrightarrow{\widehat{\pi}_{i, f(p), (f_{i,p}(x),0)}^{-1}} \mathcal{C}'_{f(p), f_{i,p}(x)} \xrightarrow{\widehat{f}_{i,p,x}} \mathcal{C}_{p, x} \xrightarrow{\widehat{\pi}_{i, p, (x,0)}} \mathcal{C}^{\mathbb{R}^{n_i}}_{p, (x,0)},
\end{split}
\end{equation}
where $\widehat{\pi}_{i, f(p), (f_{i,p}(x),0)}^{-1}$ and $\widehat{\pi}_{i, p, (x,0)}, \ i =1,2$ are the $L_{\infty}[1]$-quasi-isomorphisms defined in \cite[Example 2.23 \& Lemma 2.24]{Kim1} . 

\begin{defn}[Equivalence of pre-morphisms]\label{defmoreq}
Without loss of generality, one can assume that $n_1 \geq n_2$ in (\ref{urur}). We say that two pre-morphisms are \textit{equivalent} and write
\begin{equation}\label{morrel}
\overline{F}_1 \sim \overline{F}_2
\end{equation}
if there exist extensions $\overline{F}^{n_1,n'_1}_1$ and $\overline{F}^{n_2,n'_2}_2$ as in (\ref{extfi}) such that the following hold:
\begin{enumerate}[label = (\roman*)]
\item $f_1 = f_2,$
\item $\widetilde{f}_{1, p}|_{V_1}  =  \widetilde{f}_{2, p}|_{V_2}$ 
for some open neighborhoods $V_1 \simeq V_2$ of $(s_{1,p}^{0})^{-1}(0) \times \{0\}$ and $(s_{2,p}^{0})^{-1}(0) \times \{0\},$ respectively in ${{U}}^0_1 \times \mathbb{R}^{n_1}$ and ${{U}}^0_2 \times \mathbb{R}^{n_2}$ (precise meaning provided below),
\item $\widetilde{\widehat{f}}_{1,p,x} = \widetilde{\widehat{f}}_{2,p,x}$ up to $L_{\infty}[1]$-homotopy, that is, the following diagram commutes up to $L_{\infty}[1]$-homotopy:
\begin{equation}\label{octa}
\begin{tikzcd}
\mathcal{C}'_{f(p),\widetilde{f}_{1,p}(x)} \arrow{rr}{{\widehat{f}}_{1,p,x}}  & {} & \mathcal{C}_{p, x}  \arrow{d}{\widehat{\pi}_{1,p,(x,0)}}\\
 \mathcal{C}^{'\mathbb{R}^{n_1'}}_{f(p),(\widetilde{f}_{1,p}(x),0)} \cong \mathcal{C}^{'\mathbb{R}^{n_2'}}_{f(p),(f_{2,p}(x),0)} \arrow{u}{\widehat{\pi}^{-1}_{1,f(p),(\widetilde{f}_{1,p}(x),0)}} \arrow{d}[swap]{\widehat{\pi}^{-1}_{2,f(p),(\widetilde{f}_{2,p}(x),0)}} &{}& \mathcal{C}^{\mathbb{R}^{n_1}}_{p, (x,0)} \cong \mathcal{C}^{\mathbb{R}^{n_2}}_{p, (x,0)} \\
\mathcal{C}'_{f(p),\widetilde{f}_{2,p}(x)}  \arrow{rr}{{\widehat{f}}_{2,p,x}}  & {} & \mathcal{C}_{p, x} \arrow{u}[swap]{\widehat{\pi}_{2,p,(x,0)}}
\end{tikzcd}
\end{equation}
for each $x \in (s_{1,p}^{0})^{-1}(0) \times \{0\}.$ 

\end{enumerate}
Here ${(s_{i,p}^{0})^{-1}(0) \times \{0\}}$'s in the conditions (ii) and (iii) are to be understood as the same subset of both ${{U}}^0_1 \times \mathbb{R}^{n_1}$ and ${{U}}^0_2 \times \mathbb{R}^{n_2}$ modulo the identification from (\ref{urur}).
\end{defn}

\begin{lem}\cite[Lemma 3.13]{Kim1}
$\sim$ is an equivalence relation.
\end{lem}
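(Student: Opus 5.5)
The statement is that $\sim$ on pre-morphisms, given in Definition~\ref{defmoreq}, is an equivalence relation. I would check reflexivity, symmetry and transitivity in turn, working with the extensions $\overline{F}^{n_i,n'_i}_i$ of \eqref{extfi}.

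\textbf{Reflexivity.} Take $\overline{F}_1=\overline{F}_2$, with the trivial extensions $n_i=n'_i=0$ (or any common extension). Then conditions (i) and (ii) hold with $V_1=V_2$. For (iii), the square \eqref{octa} commutes strictly, since both paths are the same composite.

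\textbf{Symmetry.} All three conditions are symmetric in the indices $1,2$. The only asymmetric point is the normalization $n_1\ge n_2$, and it is removed by relabelling. Since $L_\infty[1]$-homotopy is an equivalence relation (\cite[Lemma 3.3]{Kim2}), the homotopy-commutativity in (iii) is symmetric as well.

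\textbf{Transitivity.} This is the main step. Suppose $\overline{F}_1\sim\overline{F}_2$ via extensions with parameters $(n_1,n'_1),(n_2,n'_2)$, and $\overline{F}_2\sim\overline{F}_3$ via extensions with $(m_2,m'_2),(m_3,m'_3)$. The plan is to pass to a common further expansion and compare there.

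First I would use \cite[Lemma 3.8]{Kim1}: part (iv), $(\widehat{\mathcal U}\times\mathbb R^n)\times\mathbb R^{n'}=\widehat{\mathcal U}\times\mathbb R^{n+n'}$, together with part (i), which allows passing to open subatlases. This lets me expand all three pre-morphisms by additional trivial factors until the two extensions of $\overline{F}_2$ live on the same atlas $\widehat{\mathcal U}^{00}_2\times\mathbb R^{N}$ (and similarly on the target). I would intersect the open subatlases and take $N$ large. The extended base maps are taken to be $\widetilde f\times\mathrm{id}$ on the new factors, which keeps surjectivity.

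Next, condition (i) follows because equality of the maps $f$ is transitive. For condition (ii), I would set $V_1\cap V_3$ to be the image of $V_1^{(12)}\cap V_2^{(12)}\cap V_2^{(23)}$ under the identifications \eqref{urur}. This is an open neighbourhood of the zero set $\times\{0\}$, and on it $\widetilde f_{1,p}=\widetilde f_{2,p}=\widetilde f_{3,p}$.

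For condition (iii), I would stack the two octagons \eqref{octa} along their common middle row for $\overline{F}_2$. The maps $\widehat\pi$ and $\widehat\pi^{-1}$ are quasi-isomorphisms (\cite[Example 2.23, Lemma 2.24]{Kim1}), and by construction the composition of the two expansion steps agrees with the one-step expansion. Then $\widehat\pi_{2}\circ\widehat\pi_2^{-1}\sim\mathrm{id}$ cancels in the middle. Finally, transitivity and composition-compatibility of $L_\infty[1]$-homotopy give the required homotopy between the composites for $\overline{F}_1$ and $\overline{F}_3$.

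I expect the main obstacle to be this compatibility of the $\widehat\pi$ maps under iterated expansion: the statement that expanding by $\mathbb R^{n}$ and then by $\mathbb R^{m}$ agrees up to homotopy with expanding by $\mathbb R^{n+m}$, and that these maps commute with the identifications \eqref{bnhh}. I would first try to deduce this from Theorem~\ref{anhp}. All the maps involved are quasi-isomorphisms between the same algebras, so any two such composites are homotopic, and the octagon then commutes up to homotopy automatically.
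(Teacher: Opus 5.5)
The paper gives no argument for this lemma; it only cites \cite[Lemma 3.13]{Kim1}, so your proposal has to stand on its own. Reflexivity and symmetry are fine, with one correction. An extension as in \eqref{extfi} must have \emph{surjective} base maps $\widetilde f_{i,p}$. So $n_i=n'_i=0$ is admissible only if $f_p$ is already surjective; instead, use one admissible extension on both sides with the identity identification.

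The real gap is condition (ii) in transitivity. The relation only asserts that \emph{some} extensions exist. So the witness for $\overline F_1\sim\overline F_2$ uses an extension $\widetilde f^{(12)}_{2,p}$ of $f_{2,p}$, while the witness for $\overline F_2\sim\overline F_3$ uses in general a different one, $\widetilde f^{(23)}_{2,p}$, on another open subatlas, with other dimensions and another identification \eqref{urur}.

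Expanding both trivially to a common $\widehat{\mathcal U}^{00}_2\times\mathbb R^N$ puts them on the same space but does not make them equal. The maps $\widetilde f^{(12)}_{2,p}\times\mathrm{id}$ and $\widetilde f^{(23)}_{2,p}\times\mathrm{id}$ agree on $U^{00}_{2,p}\times\{0\}$, but an extension is unconstrained in the normal directions. Already $(y,r)\mapsto(f_{2,p}(y),r)$ and $(y,r)\mapsto(f_{2,p}(y),2r)$ differ, and nothing forces agreement on a neighbourhood of the zero locus. Your chain $\widetilde f_{1,p}=\widetilde f_{2,p}=\widetilde f_{3,p}$ on $V^{(12)}_1\cap V^{(12)}_2\cap V^{(23)}_2$ therefore breaks in the middle.

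You also cannot fall back on comparing $f_{1,p}$ and $f_{3,p}$ directly. The identifications \eqref{urur} only have to intertwine bundles and sections, so off the zero locus they need not carry $U^0_{1,p}\times\{0\}$ onto $U^0_{2,p}\times\{0\}$. For instance, with $U=E=\mathbb R$ and $s(y)=y^2$, the map $(y,r)\mapsto(y,r+y^2)$ intertwines $s\times\mathrm{id}$ with itself via a bundle automorphism.

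What is missing is a change-of-witness statement. Suppose $\overline F_1\sim\overline F_2$ is witnessed using one extension of $\overline F_2$. You need that for any other admissible extension of $\overline F_2$ there exist an extension of $\overline F_1$ and an identification \eqref{urur} matching it near the zero locus. Equivalently, two extensions of the same pre-morphism can be carried into each other near the zero locus by an admissible re-identification of the expanded atlas. This is where the content of transitivity lies, and your argument does not supply it.

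Two smaller points. First, for (iii), do not rely on Theorem~\ref{anhp} in its blanket form. With homotopies defined via models (Definition~\ref{mdl1}) one has $H(\mathrm{Eval}_0)=H(\mathrm{Incl})^{-1}=H(\mathrm{Eval}_1)$, so homotopic morphisms induce the same map on cohomology. Hence $\mathrm{id}$ and $2\,\mathrm{id}$ on a non-acyclic complex with trivial brackets are non-homotopic quasi-isomorphisms, and the principle is safe only for acyclic algebras (the de Rham parts). For the Koszul parts, check directly that the $\widehat\pi$'s compose strictly under \cite[Lemma 3.8\,(iv)]{Kim1} and commute with \eqref{bnhh}. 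Then cancel $\widehat\pi\circ\widehat\pi^{-1}$ using homotopy inverses.

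Second, after shrinking to an intersection of open subatlases, $\widetilde f\times\mathrm{id}$ need not remain surjective. So the extensions must be modified there too, which again changes them away from $U\times\{0\}$.
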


\label{appen:forms}


\begin{thebibliography}{9}

\bibitem[1]{BLX}
Kai Behrend, Hsuan-Yi Liao, Ping Xu, \emph{Derived Differentiable Manifolds}, Preprint, arXiv:2006.01376, 2020.

\bibitem[2]{Brown}
Kenneth Brown, \emph{Abstract Homotopy Theory and Generalized Sheaf Cohomology}, Transactions of the American Mathematical Society, Vol. 186, 419-458, 1973.

\bibitem[3]{Eisenbud}
David Eisenbud, \emph{Commutative algebra with a view toward algebraic geometry,} Graduate Texts in Mathematics 150, Springer, 2004.

\bibitem[4]{FO}
Kenji Fukaya, Kaoru Ono, \emph{Arnold conjecture and Gromov-Witten invariants}, Topology, Volume 38, Issue 5, Pages 933-1048, 1999.

\bibitem[5]{FOOO1}
Kenji Fukaya, Yong-Geun Oh, Hiroshi Ohta, Kaoru Ono, \emph{Kuranishi structures and Virtual fundamental chain}, Springer Monographs in Mathematics, Springer, 2020.

\bibitem[6]{FOOO2}
Kenji Fukaya, Yong-Geun Oh, Hiroshi Ohta, Kaoru Ono, \emph{Lagrangian Intersection Floer Theory : Anomaly and Obstruction Part I, II}, 2009.

\bibitem[7]{Kim1}
Taesu Kim, \emph{Categorical structures of Kuranishi spaces with $L_{\infty}[1]$-algebras,} Preprint, arXiv:arXiv:2607.01371, 2026.

\bibitem[8]{Kim2}
Taesu Kim, \emph{Homotopy models for $L_{\infty}[1]$-algebras in higher degrees,} Preprint, arXiv:2606.28985, 2026.

\bibitem[9]{Kim3}
Taesu Kim, \emph{$L_\infty$-Kuranishi spaces and the moduli space of pseudoholomorphic disks,} Preprint, arXiv:2511.05206, 2025.

\bibitem[10]{Kim4}
Taesu Kim, \emph{Kuranishi chart categories and higher cocycle conditions,} Preprint, arXiv:2607.01732, 2026.

\bibitem[11]{Kim4}
Taesu Kim, \emph{A canonical approach to Morse theory,} in preparation.

\bibitem[12]{KO}
Taesu Kim, Yong-Geun Oh, \emph{Stratifications associated to generic closed two-forms and stratified $L_{\infty}$ spaces,} Preprint, arXiv:2602.24099, 2026.

\bibitem[13]{Quillen}
Daniel, Quillen, \emph{Rational homotopy theory,} Ann. of Math. (2), 90, pp. 205-295, 1969.

\end{thebibliography}
\end{document}